\newcommand{\RR}[0]{\mathbb{R}}
\newcommand{\Z}{\mathbb{Z}}
\newcommand{\CC}[0]{\mathcal{C}}
\newcommand{\ZZ}[0]{\mathbb{Z}}
\newcommand{\id}[0]{\text{id}}
\newcommand{\im}{\mathrm{Im}}
\newcommand{\R}{\mathbb{R}}
\newcommand{\wt}{\widetilde}
\newcommand{\mc}{\mathcal}
\newcommand{\cone}{\mathrm{cone}}
\newcommand{\ab}{\mathcal{AB}}
\newcommand{\uM}{\wt M}
\newcommand{\aM}{\wt M^{\text{ab}}} 
\newcommand{\mr}{\mathring}
\newcommand{\hbs}{\tau^{(2)}}
\newcommand{\bs}{\boldsymbol}
\newcommand{\T}{\mathcal{T}}
\newcommand{\C}{\mathcal{C}}
\newcommand{\define}[1]{\textbf{#1}}
\newcommand\tsim{\kern-.4em\sim}
\newcommand\ssm{\smallsetminus}
\renewcommand{\int}{\mathrm{int}}
\renewcommand{\hom}{\mathrm{Hom}}
\newcommand{\e}{\mathcal{E}}
\newcommand{\ind}{\mathrm{index}}
\newcommand{\algcusps}{\mathrm{algcusps}}
\newcommand{\inv}{\mathrm{inv}}
\renewcommand{\phi}{\varphi}
\DeclareMathOperator{\supp}{supp}
\DeclareMathOperator{\closure}{cl}
\DeclareMathOperator{\intr}{int}
\newtheorem{thm}{Theorem}
\newtheorem{theorem}{Theorem}[section]
\newtheorem{lemma}[theorem]{Lemma}
\newtheorem{proposition}[theorem]{Proposition}
\newtheorem{corollary}[theorem]{Corollary}
\newtheorem{fact}{Fact}
\newtheorem{claim}{Claim}
\theoremstyle{definition}
\newtheorem{remark}[theorem]{Remark}
\begin{document}
\title{A polynomial invariant for veering triangulations}
\author[M. Landry]{Michael Landry}
\address{Department of Mathematics\\
Washington University in Saint Louis }
\email{\href{mailto:mlandry@wustl.edu}{mlandry@wustl.edu}}
\author[Y.N. Minsky]{Yair N. Minsky}
\address{Department of Mathematics\\ 
Yale University}
\email{\href{mailto:yair.minsky@yale.edu}{yair.minsky@yale.edu}}
\author[S.J. Taylor]{Samuel J. Taylor}
\address{Department of Mathematics\\ 
Temple University}
\email{\href{mailto:samuel.taylor@temple.edu}{samuel.taylor@temple.edu}}
\date{\today}
\thanks{This work was partially supported by NSF grants DMS-1610827, DMS-1744551, and the Sloan Foundation.}

\begin{abstract}
We introduce a polynomial invariant $V_\tau \in \Z[H_1(M)/\text{torsion}]$ associated to a 
veering triangulation $\tau$ of a $3$-manifold $M$. In the special case where the triangulation is layered, i.e. comes from a fibration, $V_\tau$ recovers the Teichm\"uller polynomial of the fibered faces canonically associated to $\tau$. Via Dehn filling, this gives a combinatorial description
of the Teichm\"uller polynomial for any hyperbolic fibered $3$-manifold. 

For a general veering triangulation
$\tau$, we show that the surfaces carried by $\tau$ determine a cone in homology 
that is dual to its cone of positive closed transversals. 
Moreover, we prove that this is \emph{equal} to the cone over a (generally non-fibered)
face of the Thurston norm ball, and that $\tau$ computes the norm on this cone in a precise sense. 
We also give a combinatorial description of $V_\tau$ in terms of the \emph{flow graph} for $\tau$ and its Perron polynomial. This perspective allows us to characterize when a veering triangulation 
 comes from a fibration, and more generally to compute the face of the Thurston norm determined by $\tau$. 
\end{abstract}

\maketitle

\setcounter{tocdepth}{1}
\tableofcontents

\section{Introduction}
Veering triangulations of cusped hyperbolic 3-manifolds were introduced by Agol as a means
to canonically triangulate certain pseudo-Anosov mapping tori \cite{agol2011ideal}. In
particular, Agol showed that for any pseudo-Anosov homeomorphism $f$ on a surface $S$, the
surface $S$ may be punctured along the (invariant) singularities of $f$ so that the
resulting mapping torus $M$ of the punctured surface admits a \emph{layered ideal
  triangulation} $\tau$, called the \define{veering triangulation}. This triangulation is
layered in the sense that it is built by layering tetrahedra on an ideal triangulation of
the punctured surface. The triangulation $\tau$ of $M$ is in fact not only an invariant of
the monodromy $f$, but it is also an invariant of the fibered face of the Thurston norm ball that it determines. 

Among all layered triangulations of hyperbolic 3-manifolds, Agol characterized the veering triangulation $\tau$ in terms of a combinatorial condition which can be interpreted as a bicoloring of the edges of $\tau$. Answering a question of Agol, Hodgson-Rubinstein-Segerman-Tillmann \cite{hodgson2011veering} showed that this combinatorial condition can be satisfied by \emph{nonlayered} triangulations and the resulting class of ideal triangulations is now referred to as \define{veering}. Veering triangulations have since emerged as an important object in several areas of low-dimension geometry and have attracted much attention. In particular, they have connections to hyperbolic geometry \cite{hodgson2011veering, futer2013explicit, gueritaud} (although they are usually not geometric \cite{futer2018random}), are used to study algorithmic problems in the mapping class group \cite{bell2014pseudo}, encode the hierarchy of subsurface projections associated to their monodromies \cite{minsky2017fibered}, and, most important for this paper, directly correspond to certain flows on the manifold $M$ \cite{landry2018taut, landry2019stable, schleimer2019veering}.

In this paper, we introduce a polynomial invariant of veering triangulations that generalizes McMullen's Teichm\"uller polynomial from the fibered setting \cite{mcmullen2000polynomial}. 
We remark that although veering triangulations occur only on manifolds with cusps, our construction recovers the Teichm\"uller polynomial in general via Dehn filling.
Before giving the details, let us informally summarize what we see as the central points of our construction. 

$\bullet$ First, in that it recovers the Teichm\"uller polynomial when the veering triangulation is layered, our construction gives a general, straightforward procedure 
to compute the Teichm\"uller polynomial directly from the data of the veering triangulation.
Moreover, this data is readily available through either Mark Bell's program \texttt{flipper} \cite{flipper}, which computes the veering triangulation in the fibered setting, or the veering census of Giannopolous, Schleimer, and Segerman \cite{VeeringCensus}. In fact such an algorithm has already been devised by Parlak \cite{Parlak1}, and implemented by Parlak, Schleimer, and Segerman (see \cite{VeeringCensus}). 

$\bullet$ Second, we show that any veering triangulation $\tau$ is naturally associated to a face $\bf F_\tau$ of the Thurston norm ball of $M$. Indeed, an integral homology class is contained in the cone over $\bf F_\tau$ if and only if it is represented by a surface carried by the underlying branched surface of $\tau$. 
Moreover, the face $\bf F_\tau$ is
fibered \emph{exactly} when $\tau$ is layered; so in particular the surfaces carried by a nonlayered veering triangulation determine a non-fibered face of the Thurston norm ball.
In this sense, our polynomial invariant
provides a generalization of the Teichmuller polynomial as requested by both McMullen \cite{mcmullen2000polynomial} and Calegari \cite[Question 3.2]{calegari2002problems}. 
In general, the polynomial is a quotient of the Perron polynomial 
of
a certain directed graph (the \define{flow graph} $\Phi_\tau$) associated to $\tau$. As in Fried's theory of homology directions \cite{fried1982geometry}, the cone of directed cycles of this graph is dual to $\bf F_\tau$, and 
therefore
$\bf F_\tau$ can also be directly computed from the Perron polynomial of $\Phi_\tau$.

$\bullet$ Third is the connection to certain flows, which we investigate in a sequel paper \cite{veeringpoly2}. 
 Following unpublished work of Agol-Gu\'eritaud and work-in-progress of Schleimer-Segerman (\cite{schleimer2019veering} and a forthcoming sequel), veering triangulations of hyperbolic manifolds precisely correspond to 
 \emph{pseudo-Anosov flows without perfect fits} as introduced and studied by Fenley \cite[Definition 4.2]{fenley1999foliations}.
These form an important class of flows on $3$-manifolds that generalize the suspension flow of a pseudo-Anosov homeomorphism on its mapping torus. When our veering triangulation comes from such a flow $\phi$, we will show that
 \begin{itemize}
 \item the (combinatorially defined) flow graph $\Phi_\tau$ codes $\phi$'s orbits, 
 in a manner similar to a Markov partition for $\varphi$, and 
 \item the Perron polynomial of $\Phi_\tau$ packages growth rates of closed orbits of $\phi$, after cutting $M$ along certain surfaces transverse to the flow. 
 \end{itemize}

We next turn to giving a more formal explanation of our results. 

\subsection{The veering and taut polynomials}
Let $M$ be a $3$-manifold with veering triangulation $\tau$, and let $G = H_1(M)/\text{torsion}$. In \Cref{sec:veering}, we define the \define{veering polynomial} $V_\tau$ which is an invariant of $\tau$ contained in the group ring $\Z[G]$.
Essentially by its construction, $V_\tau$ comes with a canonical factor $\Theta_\tau$,
defined up to multiplication by a unit $\pm g\in \Z[G]$, which we call the \define{taut
  polynomial}. These polynomials are invariants of modules 
defined by relations among the edges of
the veering triangulation on the universal free abelian cover of $M$, which are determined
by the tetrahedra and faces of $\tau$, respectively. 
Informally, the tetrahedron relations for the veering polynomial 
impose conditions modeled on a train track fold,
while the face relations for the taut polynomial impose conditions modeled on the switch
conditions of a train track.

One main result, which in particular is needed for the explicit connection to the Teichm\"uller polynomial in the fibered setting (see \Cref{th:teich_intro}), is the precise relation between these polynomials. For its statement, we note that there is a canonical collection of directed cycles $c_1, \ldots, c_n$ in $M$, which we call \emph{$AB$-cycles}, whose homology classes are denoted by $g_i = [c_i] \in H_1(M)$. See \Cref{sec:intro_faces} for more on their significance.

\begin{thm}[Factorization]
\label{th:factor_intro}
Suppose $\mathrm{rank}(H_1(M)) > 1$. Then up to multiplication by a unit $\pm g\in \Z[G]$,
\[
V_\tau = \Theta_\tau \cdot \prod_{i=1}^n(1 \pm g_i).
\]
\end{thm}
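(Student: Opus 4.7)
\medskip\noindent
\textbf{Proof plan.}
Both $V_\tau$ and $\Theta_\tau$ are invariants of finitely presented $\Z[G]$-modules on the same set of generators, namely the (orbits of) edges of $\wt\tau$ in the universal free abelian cover $\wt M^{\mathrm{ab}}$. Let $E$ denote the free $\Z[G]$-module on the edges of $\tau$. The taut module $E_T$ is the cokernel of a presentation whose relations are indexed by faces of $\tau$ (one ``switch condition'' per face), and the veering module $E_V$ is the cokernel of a presentation whose relations are indexed by tetrahedra (one ``fold relation'' per tetrahedron). The polynomials $\Theta_\tau$ and $V_\tau$ are then the generators of the 0th Fitting ideals of $E_T$ and $E_V$ respectively. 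The plan is to construct an explicit short exact sequence of $\Z[G]$-modules relating $E_V$ and $E_T$ whose third term is a module whose 0th Fitting ideal is generated by $\prod_i(1\pm g_i)$, and then invoke the multiplicativity of Fitting invariants under short exact sequences.

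\medskip\noindent
\emph{Step 1: Compare the two relation submodules.} Each tetrahedron $t$ has two faces, each of which contributes a switch relation; I expect to check that the fold relation for $t$ equals a $\Z[G]$-linear combination of these two face relations up to an explicit ``error term'' supported on the equatorial edge of $t$. The veering/coloring structure forces the error to be of the form $(1\pm g)\cdot e$ for a single edge $e$ and a single group element $g$ attached to the tetrahedron via the deck action. Summing these local identifications, I should obtain a commutative diagram of presentations showing that the face relations, together with a distinguished set of ``$AB$-correction'' vectors, span the tetrahedron relations inside $E$.

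\medskip\noindent
\emph{Step 2: Identify the cokernel with the $AB$-cycle module.} The $AB$-cycles $c_1,\dots,c_n$ arise combinatorially as the closed loops in $\tau$ traced out by the equator edges of a maximal ``ladder''; they carry the extra data that a fold relation sees but a switch relation does not. After Step 1 the natural surjection $E_T\twoheadrightarrow E_V$ will have kernel $K$ generated by the $AB$-correction vectors, and the combinatorics of the $AB$-cycles should identify $K$ with a direct sum of cyclic modules $\Z[G]/(1\pm g_i)$, one per cycle $c_i$. The key computation is that, in the presentation matrix of $K$, the diagonal entries are exactly $1\pm g_i$ and the off-diagonal contributions vanish because distinct $AB$-cycles involve disjoint equator edges.

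\medskip\noindent
\emph{Step 3: Extract the polynomial identity.} Applying the exact sequence
\[
0 \longrightarrow K \longrightarrow E_T \longrightarrow E_V \longrightarrow 0
\]
and the standard multiplicativity of 0th Fitting ideals for modules of the same rank, one obtains $\mathrm{Fit}_0(E_V) = \mathrm{Fit}_0(E_T)\cdot \mathrm{Fit}_0(K)$, which after passing to generators gives
\[
V_\tau \;=\; \Theta_\tau \cdot \prod_{i=1}^n (1\pm g_i)
\]
up to a unit $\pm g$ of $\Z[G]$. The hypothesis $\mathrm{rank}\, H_1(M)>1$ is used here to ensure that the relevant Fitting ideals are principal and nonzero, so that the factorization is a genuine equality of polynomials rather than of ideals.

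\medskip\noindent
\textbf{Main obstacle.} The delicate part is Step~2: one must verify that the correction vectors produced locally at each tetrahedron assemble globally into a presentation of $K$ that is diagonal (with entries $1\pm g_i$) after a change of basis consistent with the deck action. This requires a careful analysis of how the fold relation deviates from the two switch relations of its bounding faces, tracking both the coloring (red/blue) and the coorientation of each edge to pin down the sign in $1\pm g_i$ and to confirm that the $AB$-cycles exhaust all such corrections.
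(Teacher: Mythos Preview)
Your proposal has a genuine gap at the crucial step. First, the surjection goes the other way: since each tetrahedron relation is a \emph{sum} of two face relations (this is Lemma~3.1 in the paper), the face relations generate a larger submodule, so the natural map is $E_V \twoheadrightarrow E_T$, not $E_T \twoheadrightarrow E_V$. More importantly, even after correcting this, your Step~3 cannot work as stated: Fitting ideals are only \emph{sub}-multiplicative in short exact sequences. For $0 \to A \to B \to C \to 0$ one has $\mathrm{Fitt}(A)\cdot\mathrm{Fitt}(C) \subset \mathrm{Fitt}(B)$, which is a containment, not an equality. Applied to the sequence $\mathcal{AB} \to E_V \to E_T \to 0$ (which is the one that actually exists, and is not even short exact), this yields only the ``easy'' divisibility $V_\tau \mid V^{\mathcal{AB}}\cdot \Theta_\tau$; the paper notes exactly this in the remark following Proposition~6.4. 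The reverse divisibility $V^{\mathcal{AB}}\cdot\Theta_\tau \mid V_\tau$ is the entire content of the argument and does not follow from homological algebra alone.

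The paper's proof of the hard direction is quite different from what you sketch. It produces an alternate presentation of $E_T$ (Lemma~6.6) exhibiting $\Theta_\tau$ as a $\gcd$ of minors of a matrix built from $L$ and selected columns of $L^\triangle$. It then uses an explicit linear identity, the \emph{$AB$-cycle equation} (Lemma~6.7), to perform column operations on $L$ that pull out factors $(1\pm g_i)$ one at a time; this works directly only for proper subfamilies of $AB$-cycles (``reducing families''), yielding $\Theta_\tau\cdot\prod_{i\ne j}(1\pm g_i)\mid V_\tau$ for each $j$. To close the gap left by the missing factor, the paper introduces \emph{$AB$-chains} and shows $(V^{\mathcal{AB}}\cdot\Theta_\tau)\mid (1\pm z)V_\tau$ for every $AB$-chain class $z$. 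Finally, the hypothesis $\mathrm{rank}\,H_1(M)>1$ enters not to make ideals principal (the Fitting ideal of $E_V$ is always principal, being $(\det L)$) but in a $\gcd$ argument: since $AB$-cycles and $AB$-chains generate $H_1(M)$, two of the classes $g_i,z$ are independent, so the corresponding factors $(1\pm g)$ are coprime (Lemma~6.13), forcing the extraneous factor to be a unit. Your identification of where the rank hypothesis is used is therefore incorrect, and this is a symptom of the larger issue that the multiplicativity shortcut does not exist.
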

We note the possibility that some $AB$-cycles may be trivial in $H_1(M)$ and refer the
reader to \Cref{th:factorization!} for a more precise formulation, including an explicit
recipe for the signs in the factorization formula. 

Since $V_\tau$ is always computed as a determinant of a square matrix, we will see that \Cref{th:factor_intro} can be interpreted as an extension of McMullen's determinant formula for the Teichm\"uller polynomial (see \Cref{sec:layered}).

\subsection{Connection to the Teichm\"uller polynomial}
Next suppose that $\tau$ is layered. In this case, it corresponds to a fibered face $\bf F = \bf F_\tau$ of the Thurston norm ball and has an associate Teichm\"uller polynomial $\Theta_{\bf F} \in \Z[G]$, defined up to a unit. In \Cref{sec:layered}, we prove

\begin{thm}[Teichm\"uller $=$ taut] \label{th:teich_intro}
The Teichm\"uller polynomial $\Theta_{\bf F}$ agrees with the taut polynomial $\Theta_\tau$: 
\[
\Theta_{\bf F} = \Theta_\tau,
\]
up to a unit $\pm g\in \Z[G]$.

Moreover, if $N$ is \emph{any} hyperbolic $3$-manifold with fibered face ${\bf F}_N$ and $M$ is obtained by puncturing $N$ along the singular orbits of its suspension flow, then
\[
\Theta_{{\bf F}_N} = i_*(\Theta_\tau).
\]
where $i_*$ is induced by the inclusion $i \colon M \to N$ and $\tau$ is the veering triangulation associated to $M$.
\end{thm}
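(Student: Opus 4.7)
The overall strategy is to realize both $\Theta_{\bf F}$ and $\Theta_\tau$ as generators of the zeroth Fitting ideal of the same $\Z[G]$-module. Fix a fiber $S \subset M$ in general position, so that the slice $\tau_0 := \tau \cap S$ is the invariant train track of the pseudo-Anosov monodromy $\phi : S \to S$; this is the content of Agol's construction, where each tetrahedron of $\tau$ records a split of $\tau_0$ as one flows around $S^1$. In particular, edges of $\tau$ correspond canonically to branches of $\tau_0$, and faces of $\tau$ correspond to the splits used to pass between consecutive layers of tetrahedra.

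McMullen's polynomial $\Theta_{\bf F}$ is, up to a unit, a generator of the zeroth Fitting ideal of the train track module $T_{\tau_0}$: the free $\Z[G]$-module on branch orbits of $\tau_0$ in the appropriate $G$-equivariant cover of $S$, modulo switch conditions twisted by the monodromy action. By its very construction, $\Theta_\tau$ is a generator of the zeroth Fitting ideal of the face module $E_\tau$ on $\wt M^{\mathrm{ab}}$, free on edges modulo one relation per face (expressing the ``diagonal'' edge as a signed sum of the other two). The plan is to exhibit a $\Z[G]$-module isomorphism $E_\tau \cong T_{\tau_0}$: on generators this is the bijection above, and for relations I will accumulate the face relations across a single layer of tetrahedra and verify, flip by flip, that the resulting identity at each switch of $\tau_0$ is precisely the switch condition twisted by $\phi_*$. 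This switch-versus-face matching is the technical heart of the argument and the main obstacle; one has to track how each local pair of top/bottom triangles of $\tau$ simultaneously presents a face relation in $M$ and a half-branch identity at the corresponding switch of $\tau_0$, and check that the deck-transformation weights line up consistently.

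Given the first part, the Dehn filling statement follows by a naturality argument. Puncturing $N$ along the singular orbits of its suspension flow removes only the vertices of $\tau_0$ (which are $\phi$-invariant), so the train track together with its switch conditions and monodromy action is unchanged; what changes is the coefficient ring. Writing $G_M$ and $G_N$ for the torsion-free quotients of $H_1(M)$ and $H_1(N)$, the inclusion $i \colon M \to N$ kills the meridians around the newly drilled cusps and induces a surjection $i_* \colon \Z[G_M] \to \Z[G_N]$. The train track module defining $\Theta_{{\bf F}_N}$ is obtained from $T_{\tau_0}$ over $\Z[G_M]$ by extension of scalars along $i_*$, and since the zeroth Fitting ideal is functorial under such base change, this yields $\Theta_{{\bf F}_N} = i_*(\Theta_\tau)$ up to a unit.
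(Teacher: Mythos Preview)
Your overall strategy---identify both polynomials as gcd's of Fitting ideals of isomorphic $\Z[G]$-modules---is exactly what the paper does, but two of the steps you outline do not go through as stated.

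\textbf{Part 1.} The claimed bijection ``edges of $\tau$ correspond canonically to branches of $\tau_0$'' is false. If $S$ is a fiber carried by $\tau^{(2)}$, the induced ideal triangulation $\mathcal{T}$ of $S$ has $-3\chi(S)$ edges, and these are the branches of the dual track $\mathcal{V}$. But $\tau$ has $|E|=|T|$ edges, where $|T|$ is the number of tetrahedra (equivalently the number of flips in the monodromy's flip sequence). These numbers are generally different: for a once-punctured torus with a monodromy requiring five flips, one has $3$ branches versus $5$ edges of $\tau$. So there is no bijection of generators. The paper's argument instead constructs a $\Z[H]$-module map $\mathcal{D}\colon T(\wt{\mathcal{V}})\to \e_H^\bigtriangledown(\wt\tau)$ sending each branch to its dual $\wt{\mathcal{T}}$-edge regarded as a $\wt\tau$-edge, and then proves this is surjective using the nontrivial fact that every $\wt\tau$-edge can be reached from $\wt{\mathcal{T}}$ by a finite sequence of diagonal exchanges (each of which is a face relation). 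Injectivity comes from composing with Gu\'eritaud's projection $\Pi$ to transversals of $\wt\lambda$ and invoking McMullen's isomorphism $T(\wt{\mathcal{V}})\cong T(\wt\lambda)$. You also do not address two technical points the paper has to handle: the face module $\e^\triangle$ uses \emph{bottom}-edge relations while the natural match with the expanding lamination uses \emph{top}-edge relations (the module $\e^\bigtriangledown$), and McMullen's $G$-action is by preimages rather than images, forcing an intervention of the involution $g\mapsto g^{-1}$ together with the known symmetry $\Theta_{\bf F}=\inv(\Theta_{\bf F})$.

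\textbf{Part 2.} Your base-change argument is appealing but incomplete: while the Fitting \emph{ideal} does satisfy $\mathrm{Fitt}(M\otimes_R S)=\mathrm{Fitt}(M)\cdot S$, the \emph{gcd} of a non-principal ideal need not commute with ring maps (e.g.\ $(x,y)\subset\Z[x,y]$ has gcd $1$, but after $y\mapsto 0$ the image has gcd $x$). Since $\Theta$ is defined as a gcd, you owe an argument that this particular base change preserves it. The paper sidesteps this by showing equality of specializations $\Theta_{{\bf F}_N}(u^\alpha)=\Theta_{{\bf F}_M}(u^{i^*\alpha})$ for every integral $\alpha$ in the open fibered cone: both specialize (via McMullen's Theorem~4.2) to the characteristic polynomial of the monodromy acting on $\lambda$, which is literally the same lamination before and after puncturing. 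Since the cone is open one can choose $\alpha$ separating all support elements, forcing the polynomials to agree.
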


The above statement combines \Cref{th:modules_agree} and \Cref{prop:Teich_punctured}. Note that \Cref{th:teich_intro}, together with \Cref{th:factor_intro}, gives a way to compute the Teichm\"uller polynomial for any fibered hyperbolic $3$-manifold by first puncturing along singular orbits. In the layered (i.e. fibered) setting, no $AB$-cycle can be trivial by \Cref{th:faces_intro} below.

In fact, using our work Parlak \cite{Parlak1} describes an algorithm to compute the veering and taut polynomials given a (possibly nonlayered) 
veering triangulation. In separate work \cite{Parlak2}, she also relates the taut polynomial defined here to the Alexander polynomial, 
thereby generalizing a result of McMullen for the Teichm\"uller polynomial \cite{mcmullen2000polynomial} via \Cref{th:teich_intro}.

We conclude by noting that 
there has been recent interest in developing
algorithms to compute the Teichm\"uller polynomial 
and several special cases were previously considered
in \cite{lanneau2017computing,baik2020algorithm, billet2019teichmuller}.

\subsection{Faces of the Thurston norm ball} \label{sec:intro_faces}
Moving beyond the fibered setting, we obtain results for
general veering triangulations, which we show determine (generally non-fibered) faces of the Thurston norm ball. 

For this, we first describe an alternative construction of $V_\tau$. In
\Cref{sec:flowgraph}, we define a graph $\Phi = \Phi_\tau$ in $M$ associated to $\tau$, which we call the \define{flow graph}. Let $P_\Phi \in \Z[H_1(\Phi)]$ be the Perron polynomial of $\Phi_\tau$ which is defined as $P_\Phi = \det(I-A_\Phi)$, where $A_\Phi$ is an `adjacency matrix'  for $\Phi$ (see \Cref{sec:flowgraph}). In \Cref{th:veering_digraph}, we show

\begin{thm}[From flow graph to veering polynomial]\label{thm:perron_intro}
Let $i_* \colon \Z[H_1(\Phi)] \to \Z[G]$ be induced by the inclusion $i \colon \Phi_\tau \to M$. Then
\[
V_\tau =  i_*(P_{\Phi}).
\]
\end{thm}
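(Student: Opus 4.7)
The plan is to prove the equality by matching the defining matrices entry-by-entry. Recall that $V_\tau$ is (by the construction in \Cref{sec:veering}) the determinant of a square matrix $M_\tau$ whose rows and columns are indexed by the edges of $\tau$: the $e$-th row encodes the tetrahedron (fold) relation at the unique tetrahedron $t_e$ having $e$ as its top edge. Similarly, $P_\Phi = \det(I - A_\Phi)$ is the determinant of a matrix whose rows and columns are indexed by the vertices of $\Phi_\tau$, which by construction are naturally in bijection with the edges of $\tau$. So both matrices live on the same index set, and the task is to show they coincide after applying $i_*$.

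First I would unpack the fold relation at $t_e$: it expresses $e$, in the edge module over $\Z[G]$ defining $V_\tau$, as a linear combination $e = \sum_i g_i\, e_i$, where the $e_i$'s are certain edges of $\tau$ appearing ``below'' $e$ in $t_e$ and each $g_i \in G$ records the deck-translation weight coming from a lift of $t_e$ to the universal free abelian cover. Consequently the $e$-th row of $M_\tau$ has a $1$ on the diagonal, entries $-g_i$ in the columns indexed by the $e_i$'s, and zeros elsewhere.

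Next I would verify that the directed edges of $\Phi_\tau$ emanating from the vertex $e$ are indexed by exactly this same collection of $e_i$'s, and that the homology class in $H_1(\Phi_\tau)$ attached to each such edge is sent under $i_*$ to the corresponding $g_i$. This should be essentially tautological from the construction of $\Phi_\tau$ in \Cref{sec:flowgraph}, since the flow graph is designed to record precisely the fold data of $\tau$; the only content of the check is that the lifts used to define the weights in $H_1(\Phi_\tau)$ map, under $i_*$, to the lifts used to define the weights in $G$. Once this is in hand, we obtain the matrix identity $i_*(I - A_\Phi) = M_\tau$, and taking determinants yields $V_\tau = i_*(P_\Phi)$.

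The main obstacle is bookkeeping: one must confirm the bijection between the terms of the fold relation and the outgoing edges of $\Phi_\tau$ at each vertex, and track the weights consistently. The subtlety is that $i_* \colon H_1(\Phi_\tau) \to G$ need not be injective — the flow graph typically carries cycles (for instance the $AB$-cycles) that collapse in $H_1(M)$ — so the claimed equality honestly requires the push-forward rather than being an identity in a common ring. This non-injectivity is what makes the subsequent factorization result \Cref{th:factor_intro} meaningful: the kernel of $i_*$ is precisely what allows $P_\Phi$ to split off the $(1 \pm g_i)$ factors before the remaining data descends to the taut polynomial.
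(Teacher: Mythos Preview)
Your approach is the same as the paper's in spirit: both argue by identifying the presentation matrix $L$ for the edge module with a $\Z[G]$-valued matrix coming from the flow graph, then take determinants. Two points need correction.

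First, a minor misreading of the setup: the bijection between tetrahedra and $\tau$-edges sends a tetrahedron to its \emph{bottom} edge, not its top. The tetrahedron relation expresses the bottom edge $\mathbf b$ in terms of the top edge and two side edges, and correspondingly the flow graph has directed edges \emph{from} the vertex $\mathbf b$ \emph{to} those three vertices. Your description reverses this.

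Second, and more substantively, the identity ``$i_*(I - A_\Phi) = M_\tau$'' is not well-formed. By definition $A_\Phi$ has entries in $\Z[C_1(\Phi)]$ (formal sums of directed edges), and it is only the \emph{determinant} $P_\Phi = \det(I-A_\Phi)$ that lands in $\Z[H_1(\Phi)]$ via the clique-polynomial formula; individual entries cannot be pushed through $i_*$, and a single $\Phi$-edge carries no canonical homology class. The paper supplies the missing mechanism in \Cref{prop:image_poly}: one introduces the $i_*$-labeled vertex module of $\Phi$, whose presentation matrix $L_{\Phi,i_*}$ genuinely has entries in $\Z[G]$ (built from a choice of lifts of vertices, or equivalently a maximal tree in $\Phi$), proves $\det(L_{\Phi,i_*}) = i_*(P_\Phi)$, and only then matches $L_{\Phi,i_*}$ with $L$ entry-by-entry for compatible lifts. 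The paper also invokes \Cref{lem:surj_hom} ($\pi_1$-surjectivity of $i$) to identify the relevant cover of $\Phi$ with its preimage in $\aM$. Your sketch gestures at the right bookkeeping but does not supply this intermediate step, which is where the actual content lies.
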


Next, we recall that the $2$-skeleton $\tau^{(2)}$ of $\tau$ is a transversely oriented
branched surface which can carry surfaces similar to the way a train track on a surface can carry curves (see \Cref{sec:veering_basics}). We let $\cone_2(\tau)$ be the closed cone in $H_2(M,\partial M)$ positively generated by classes that are represented by the surfaces 
that $\tau$ carries.
We call $\cone_2(\tau)$ the \define{cone of carried classes} and note that it can be explicitly computed as the nonnegative solutions to the \emph{switch conditions} for $\tau^{(2)}$.

We show that $\cone_2(\tau)$ is dual to the cone in $H_1(M)$ generated by closed positive transversals to $\hbs$ in $M$, which we call the \define{cone of homology directions} (see \Cref{sec:conedefs}). The cone of homology directions is in turn generated by the support of $P_\Phi$ (see \Cref{th:cones_equal} and \Cref{lem:support_gen}):

\begin{thm}[Duality of cones]\label{th:cones_intro}
For $\alpha \in H_2(M,\partial M)$, the following are equivalent:
\begin{enumerate}
\item $\alpha \in \cone_2(\tau)$,
\item $\langle \gamma, \alpha \rangle \ge 0$ all closed positive transversals $\gamma$ to $\tau$, and 
\item $\langle i(c), \alpha \rangle \ge 0$, for each $c \in H_1(\Phi_\tau)$ in the support of $P_\Phi$. 
\end{enumerate} 
\end{thm}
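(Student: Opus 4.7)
The plan is to prove the chain (1) $\Leftrightarrow$ (2) $\Leftrightarrow$ (3) in two independent steps, as signaled by the paper's own splitting into \Cref{th:cones_equal} (for (1) $\Leftrightarrow$ (2)) and \Cref{lem:support_gen} (for (2) $\Leftrightarrow$ (3)). For (1) $\Rightarrow$ (2), if $S$ is a surface carried by $\tau^{(2)}$ and $\gamma$ is a closed positive transversal, then along the coorientation of $\tau^{(2)}$ every intersection point contributes $+1$, so $\gamma\cdot S \ge 0$; linearity extends this to the full cone. For the reverse direction (2) $\Rightarrow$ (1), the natural tool is a Farkas/LP-duality argument on the polyhedral cone $\mathcal{W}$ of nonnegative face-weights satisfying the switch conditions for $\tau^{(2)}$. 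The map sending a weight system $w$ to the carried surface $S_w$ shows that $\mathcal{W}$ surjects onto $\cone_2(\tau)$. If $\alpha \notin \cone_2(\tau)$, a rational separating hyperplane in $H_2(M,\partial M;\mathbb{R})$ provides, by Poincar\'e--Lefschetz duality, a class $\beta \in H_1(M;\mathbb{R})$ pairing nonnegatively with every carried class but negatively with $\alpha$. The content of the step is to realize such a $\beta$ as a nonnegative combination of closed positive transversals, which I would do by matching the extremal rays of the cone dual to the switch conditions with the incidence data of positive transversals passing through each face of $\tau$.

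For (2) $\Leftrightarrow$ (3), I would begin with the standard expansion
\[
P_\Phi = \det(I - A_\Phi) = \sum_\sigma \mathrm{sgn}(\sigma)\cdot [\sigma],
\]
where the sum ranges over (possibly empty) disjoint unions of simple directed cycles in $\Phi_\tau$ and $[\sigma] \in H_1(\Phi_\tau)$ is the class of $\sigma$. Under $i_*$, each such class becomes the class of a closed positive transversal to $\tau^{(2)}$ in $M$, since the edges of $\Phi_\tau$ are themselves positive transversals by the construction of the flow graph. This establishes containment of the cone spanned by $i_*(\supp(P_\Phi))$ in the cone of homology directions, giving (3) $\Rightarrow$ (2). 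The converse requires showing that every closed positive transversal is homologous to a nonnegative rational combination of simple cycles in $\Phi_\tau$; I would first push any closed positive transversal into $\Phi_\tau$ using the fact that $\Phi_\tau$ carries all positive transversals to $\tau^{(2)}$ up to homotopy, and then apply the elementary fact that in a finite directed graph every directed-cycle class decomposes as a nonnegative rational combination of simple directed-cycle classes.

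The main obstacle is expected to be the LP-duality step (2) $\Rightarrow$ (1): exhibiting an \emph{honest} closed positive transversal realizing a given separating hyperplane requires a careful dictionary between the switch conditions on $\tau^{(2)}$ and the local combinatorics of $\Phi_\tau$ at each tetrahedron and face of $\tau$. A secondary but more routine difficulty is verifying that the push-into-$\Phi_\tau$ step in (2) $\Rightarrow$ (3) preserves the positive-transversality condition, which follows from the construction of $\Phi_\tau$ as the graph of upward-directed arcs through the tetrahedra of $\tau$. Once these identifications are in place, the Perron-polynomial half of the argument becomes a repackaging of the classical graph-theoretic interpretation of $\det(I-A)$.
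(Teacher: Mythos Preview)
Your overall architecture matches the paper's: the equivalence (1) $\Leftrightarrow$ (2) is \Cref{prop:dual_carried}, and (2) $\Leftrightarrow$ (3) is \Cref{prop_samecone} together with \Cref{lem:support_gen}. But there is a genuine gap in your treatment of (2) $\Leftrightarrow$ (3). You assert that one can ``push any closed positive transversal into $\Phi_\tau$ using the fact that $\Phi_\tau$ carries all positive transversals to $\tau^{(2)}$ up to homotopy,'' and call this a ``secondary but more routine difficulty.'' In fact this is the hardest step in the paper's proof and is \emph{not} a formal consequence of the construction of $\Phi_\tau$. The paper establishes it as \Cref{prop_samecone}, whose proof occupies most of \Cref{sec:relating}: one first develops the structure of tubes, bands, and branch lines on $\partial \wt U$ (\Cref{lem:gammabandcombinatorics}, \Cref{lem:bandflowedges}, \Cref{lem:bandbehavior}), then for a given dual cycle $c$ constructs a flow ray inductively along branch lines, and finally uses a ``to the left'' ordering argument to show periodicity. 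Even then the conclusion is only that $c$ or $c^2$ is homotopic to a flow cycle; this suffices for the cone equality but is not what you claimed. Without this analysis there is no reason a closed positive transversal should be homotopic (or even homologous) to a nonnegative combination of $\Phi$-cycles. Separately, you have the implication labels reversed in that paragraph: containment of the $\Phi$-cycle cone in the transversal cone yields (2) $\Rightarrow$ (3), not (3) $\Rightarrow$ (2).

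For (2) $\Rightarrow$ (1) your separating-hyperplane/LP-duality sketch is dual to the paper's approach but less direct. Rather than separate $\alpha$ from $\cone_2(\tau)$ and then try to realize the separating functional by transversals, the paper starts with $\eta \in \cone_1^\vee(\Gamma)$, pulls it back to $H^1(\Gamma)$, and applies an extension lemma (\Cref{lem:ext}, a Hahn--Banach argument on the edge cone of $\Gamma$) to represent it by a \emph{nonnegative cocycle} $m$ on $\Gamma$-edges. The key observation is then that the two sides of any $B^s$-sector form a nullhomotopic loop in $M$, so $m$ automatically satisfies the switch conditions when reinterpreted as weights on $\tau$-faces. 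This sidesteps entirely the problem you flag of ``matching extremal rays with incidence data of positive transversals.''
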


For the connection to faces of the Thurston norm ball,
let $x$ denote the Thurston norm on $H_2(M,\partial M)$ and let $B_x(M)$ be its unit ball. 
In \Cref{sec:faces_thurston_norm}, we associate to $\tau$ a combinatorial 
\define{Euler class} $e_\tau \in H^2(M,\partial M)$, one definition of which is
\[
e_\tau=-\frac{1}{2}\langle \prod_{i=1}^ng_i ,\cdot\rangle \in H^2(M,\partial M),
\]
where $g_1,\ldots, g_n$ are classes represented by the $AB$-cycles as in \Cref{th:factor_intro}. In \Cref{th:omni}, we prove

\begin{thm}[$\tau$ determines a face] \label{th:faces_intro}
The cone of carried classes $\cone_2(\tau)$ is \emph{equal} to the cone over a 
(possibly empty)
face $\bf F_\tau$ of the Thurston norm ball $B_x(M)$. 
This cone is characterized by the property that it is the subset of 
$H_2(M,\partial M)$ on which $-e_\tau = x$.

Furthermore, the following are equivalent:
\begin{enumerate}[label=(\roman*)]
\item $i_*(\supp(P_{\Phi_\tau}))$ lies in an open half-space of $H_1(M;\R)$,
\item there exists $\eta\in H^1(M)$ with $\eta([\gamma])>0$ for each closed $\tau$-transversal $\gamma$,
\item $\tau$ is layered, and
\item $\bf F_\tau$ is a fibered face.
\end{enumerate}
\end{thm}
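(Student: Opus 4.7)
The plan is to handle the two halves of the statement in turn: first identify $\cone_2(\tau)$ as a cone over a face of $B_x(M)$ cut out by $-e_\tau$, then establish the four equivalences via a cycle of implications.

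For the face identification, I would exploit the fact that $\tau^{(2)}$ is a transversely oriented branched surface arising from the taut structure on $\tau$. By the Gabai--Oertel philosophy, any surface $S$ carried by such a branched surface is norm-minimizing in its class, so $x([S]) = -\chi(S)$ on all of $\cone_2(\tau)$. The combinatorial Euler class is designed so that $\langle -e_\tau, [S]\rangle = -\chi(S)$ for carried $S$: expanding the formula $e_\tau = -\frac{1}{2}\langle \prod g_i, \cdot\rangle$, one counts the signed intersections of $S$ with each $AB$-cycle and matches the result to $\chi(S)$ through a Gauss--Bonnet-style accounting at the singular fibers of $\tau^{(2)}$. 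One then verifies the inequality $\langle -e_\tau, \alpha\rangle \le x(\alpha)$ on all of $H_2(M,\partial M)$; this follows because $-e_\tau$ is realized by an explicit cellular cochain whose value on any norm-minimizing representative is bounded by $-\chi$. Thurston's convex analysis then shows that the locus $\{-e_\tau = x\}$ is a union of faces of $B_x(M)$ extended conically. Finally, \Cref{th:cones_intro} pins down $\cone_2(\tau)$ as the dual to a fixed cone in $H_1(M)$, so it is a single closed convex cone sitting inside this face locus, and hence equals the cone over one face $\bf F_\tau$.

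For the equivalences, I would prove (i)$\Leftrightarrow$(ii)$\Rightarrow$(iii)$\Rightarrow$(iv)$\Rightarrow$(ii). The equivalence (i)$\Leftrightarrow$(ii) is pure convex duality: by \Cref{th:cones_intro} together with the stated fact that $i_*(\supp(P_\Phi))$ generates the cone of homology directions, a class $\eta \in H^1(M)$ is positive on every closed $\tau$-transversal iff it is strictly positive on that generating set, which is iff the set lies in an open half-space. For (ii)$\Rightarrow$(iii), I would use a Mosher-style dynamic argument: a cohomology class positive on every closed transversal to the transversely oriented branched surface $\tau^{(2)}$ yields a transverse cross-section, and one then rebuilds $\tau$ by layering tetrahedra between successive crossings of this section, exhibiting $\tau$ as a layered triangulation. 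The implication (iii)$\Rightarrow$(iv) is essentially by definition: a layered $\tau$ comes from a fibration whose fiber class lies in the interior of $\cone_2(\tau)$, making $\bf F_\tau$ fibered. For (iv)$\Rightarrow$(ii), a fibered face contains a rational fiber class whose Poincar\'e dual is a cohomology class strictly positive on every closed transversal to $\tau$.

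The main obstacle I expect is the Euler class computation that drives the face identification: establishing $\langle -e_\tau, [S]\rangle = -\chi(S)$ for carried surfaces and the ambient inequality $-e_\tau \le x$. The $AB$-cycles package delicate information about the local branching behavior of $\tau^{(2)}$ near cusps, and relating a product of their homology classes to Euler characteristics of carried surfaces is the key technical step; everything else in part (1) is formal convex analysis once this identity and bound are in hand. A secondary difficulty is the step (ii)$\Rightarrow$(iii), where a purely cohomological positivity must be converted into the topological statement that $\tau$ is actually built by stacking; this is a combinatorial analogue of Fried's theorem on cross sections to pseudo-Anosov flows, and will likely require moving through the flow graph $\Phi_\tau$ and \Cref{thm:perron_intro} to translate the Perron-theoretic positivity into a layering of $M$.
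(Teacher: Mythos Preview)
Your proposal has two genuine gaps, one in each half of the theorem.

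\textbf{Face identification.} You correctly argue that carried surfaces are taut and satisfy $-e_\tau = x$, so $\cone_2(\tau) \subset \{-e_\tau = x\}$. But your argument for the reverse inclusion does not work: knowing that $\cone_2(\tau)$ is a single closed convex cone sitting inside a union of face-cones does not force it to \emph{equal} one of them---it could be a proper subcone of a face-cone. The paper attacks this directly: given any integral $\alpha$ with $-e_\tau(\alpha) = x(\alpha)$, it takes a taut representative $S$ and shows $S$ is actually carried by $\tau^{(2)}$. This uses a carrying criterion of Landry (\Cref{th:landry}) for the partial branched surface $\tau^{(2)}$, whose hypothesis is verified by an index computation on the train tracks $S \cap B^s$ and $S \cap B^u$: the equality $-e_\tau(\alpha) = x(\alpha)$ translates into a signed-cusp count that forces every negative cusp of these tracks to lie in a bigon patch. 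Nothing in your outline supplies this carrying step.

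\textbf{The implication (iv) $\Rightarrow$ (ii).} You write that a fiber class in $\bf F_\tau$ has dual cohomology class strictly positive on every closed $\tau$-transversal. This is exactly the hard content and is not automatic. A fiber $S$ is carried by $\tau^{(2)}$, so its pairing with any transversal is \emph{nonnegative}, but strict positivity would require $S$ to be \emph{fully} carried (traversing every face). If $\tau$ were nonlayered with $\bf F_\tau$ fibered, the fiber would miss some faces and there would be dual cycles in the resulting ``guts'' region with zero pairing against $[S]$; ruling this out is precisely the implication (iv) $\Rightarrow$ (iii). The paper proves this (\Cref{prop:layered}) by an Agol-style argument using the virtual RFRS property of $\pi_1(M)$: one shows the guts have trivial image in $H_1$ of every cover in a RFRS tower, hence lift to arbitrarily high covers, hence have trivial $\pi_1$---contradicting that guts contain essential dual cycles. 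Your proposed closing of the cycle via (iv) $\Rightarrow$ (ii) is circular without this.

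The remaining implications---(i) $\Leftrightarrow$ (ii), (ii) $\Rightarrow$ (iii) via a positive cocycle giving a fully carried surface, and (iii) $\Rightarrow$ (iv)---are handled in your outline essentially as the paper does.
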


We emphasize that \Cref{th:faces_intro} also gives a characterization of layeredness of $\tau$ (and fiberedness of $\bf F_\tau$) in terms of 
a cohomological positivity condition. This makes it 
a combinatorial analog of Fried's criterion \cite[Theorem D]{fried1982geometry} for 
a flow to be circular, i.e. admit a cross section. 

As a last remark, we note that by \cite{Landry_norm} if $\overline M$ is a closed manifold obtained from $M$ by Dehn filling along slopes intersecting the ladderpoles (see \Cref{sec:veeringonboundary}) of $\partial M$ enough times, the image of $\hbs$ under the inclusion $j\colon M\hookrightarrow \overline M$ determines a face $\overline {\bf F}$ of the Thurston norm ball. Thus $j_*(V_\tau)$ is an object associated to $\overline {\bf F}$ generalizing the Teichm\"uller polynomial. Moreover, $\overline {\bf F}$ is fibered exactly when the image of $\supp(P_{\Phi_\tau})$ lies in an open half-space of $H_1(\overline M;\R)$.

\subsection{Sequel paper: Flows, growth rates, and the veering polynomial}
In a followup \cite{veeringpoly2} to the current paper, we develop the connection between the combinatorial approach to the veering polynomial developed here and the pseudo-Anosov flow associated to the veering triangulation.
Since this is relevant for motivating our constructions, we briefly summarize the main points of \cite{veeringpoly2}.

Suppose that $\phi$ is a \emph{pseudo-Anosov flow without perfect fits} on a closed manifold $\overline M$. 
Then unpublished work of Agol-Gu\'eritaud produces a veering triangulation $\tau$ on the manifold $M$ obtained by puncturing $\overline M$ along the singular orbits of $\phi$.

First, the flow graph $\Phi_\tau$ codes the orbits of $\phi$ in the following precise sense. There is a map from directed cycles of $\Phi_\tau$ to closed orbits of $\phi$ so that each directed cycle is homotopic to its image. This map is uniformly bounded-to-one and for any closed orbit $\gamma$, either $\gamma$ or $\gamma^2$ is in its image. These properties are similar to those of a Markov partition for $\phi$, but we emphasize that $\Phi_\tau$ is combinatorially defined (\Cref{sec:flowgraph}) and canonically associated to $\tau$.

Second, from the connection between orbits of the flow and directed cycles of $\Phi_\tau$, we use $P_\Phi$ to compute the growth rates of orbits of $\phi$.
If $S$ is a fiber surface carried by a layered triangulation $\tau$, this recovers 
well-known properties that McMullen established for the Teichm\"uller polynomial (using \Cref{th:factor_intro} and \Cref{th:teich_intro}).
If $S$ is a transverse surface carried by a (possibly nonlayered) veering triangulation $\tau$ which is \emph{not} a fiber in $\R_+\bf F_\tau$, then $S$ necessarily misses closed orbits of the flow by Fried's criterion, and so the usual counting results are not possible.

However, we can consider the manifold $M|S$ obtained by cutting $M$ along $S$ and ask whether there is a class $\xi \in H^1(M|S)$ that is positive on the surviving closed orbits. In this case, we show how the growth rate of closed orbits \emph{in} $M|S$ with respect to $\xi$ is recorded by the flow graph $\Phi_\tau$ and its Perron polynomial $P_\Phi$. 
This analysis includes the special case where
 $S$ is in the boundary of a fibered cone $\R_+\bf F_\tau$, and our results are new
even in this setting.

\subsection*{Acknowledgements} We thank Anna Parlak for interesting conversations on the subject of this paper and for pointing out an algebraic mistake in an earlier draft. We also thank Spencer Dowdall, Curtis McMullen, and Henry Segerman for helpful feedback.

\section{Background} \label{sec:background}
Here we record some background that we will need throughout the paper. In what follows, all $3$-manifolds are assumed to be connected and oriented.

\subsection{Veering triangulations}
\label{sec:veering_basics}
We begin by defining a taut ideal triangulation following Lackenby \cite{lackenby2000taut} (see also \cite{hodgson2011veering}). Such triangulations are also called \emph{transverse taut} by e.g. \cite{futer2013explicit}.

A \define{taut} ideal tetrahedron is an ideal tetrahedron (i.e. a tetrahedron minus its vertices) along with a coorientation on each face such that
two of its faces point into the tetrahedron and two of its faces point out of the tetrahedron. The inward pointing faces are called its \emph{bottom faces} and the outward faces are called its \emph{top faces}.
Each of its edges is then assigned angle $\pi$ or $0$ depending on whether
 the coorientations on the adjacent faces agree or disagree, respectively. See \Cref{fig:taut}.
 
\begin{figure}
\centering
\includegraphics[height=1.5in]{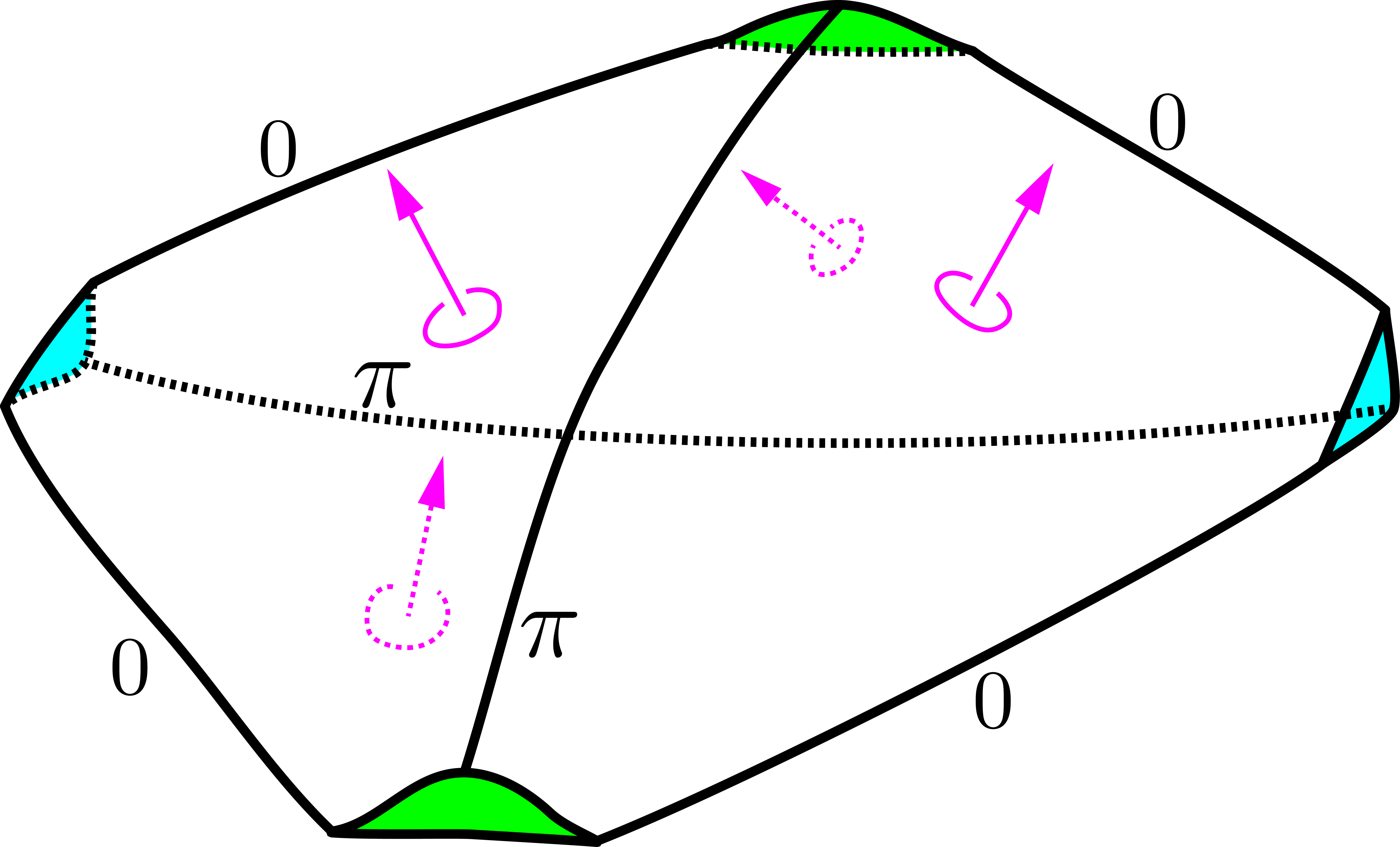}
\caption{A (truncated) taut tetrahedron with its face coorientations and angles.}
\label{fig:taut}
\end{figure}
 
An ideal triangulation of $M$ is \define{taut} if each of its faces has been cooriented so that each ideal tetrahedron is taut and the angle sum around each edge is $2\pi$. 
The local structure around each edge $\bf{e}$ is as follows: $\bf{e}$ includes as a $\pi$-edge in two tetrahedra. For the other tetrahedra meeting $\bf{e}$, $\bf{e}$ includes as a $0$-edge and these tetrahedra form the \define{fan} of $\bf{e}$. 
We observe that the fan of $e$ has two \define{sides} each of which is linearly ordered by the coorientation on faces. See, for example, \Cref{fig:branch_edge}, where the coorientation points upwards. 

A \define{veering triangulation} $\tau$ of $M$ is a taut ideal triangulation of $M$ in
which each edge has a consistent \define{veer}; that is, each edge is labeled to be either
\emph{right} or \emph{left} veering such that each tetrahedron of $\tau$ admits an
orientation preserving isomorphism to the model veering tetrahedron pictured in
\Cref{fig:veer}, in which the veers of the 0-edges are specified: 
right veering edges have positive slope and left veering edges have negative slope. The
$\pi$-edges can veer either way, as long as adjacent tetrahedra satisfy the same rule. 
In other words (c.f. \cite{gueritaud}), each oriented taut tetrahedron $t$ of $\tau$ can be realized as a thickened rhombus in $\mathbb{R}^2 \times \mathbb{R}$ with angle $\pi$ at its diagonal edges and angle $0$ at its side edges such that its vertical diagonal lies above its horizontal diagonal and its right/left veering side edges have positive/negative slope, respectively. 

\begin{figure}[h]
\centering
\includegraphics[height=1.5in]{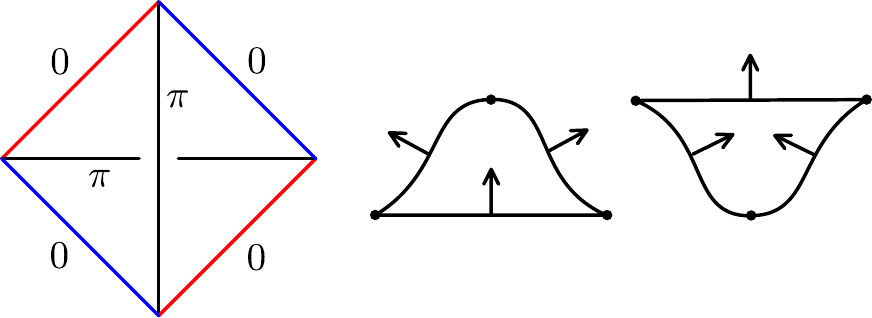}
\caption{On the left is a model veering tetrahedron: right veering edges are red, left veering edges are blue. If this tetrahedron appears in a veering triangulation, then the veers of the top and bottom ($\pi$-) edges are determined by adjacent tetrahedra. In the center is an upward flat triangle, and on the right is a downward flat triangle.} 
\label{fig:veer}
\end{figure}

Note that these conditions imply that the $0$-edges around a tetrahedron $t$ have alternating veers, and that each face has edges that veer in both directions.
From our definition it also follows that for each edge $e$ of $\tau$, each side of $e$ is nonempty (i.e. contains a tetrahedron).
This was observed in \cite[Lemma 2.3]{hodgson2011veering} and is also part of Agol's original definition \cite[Definition 4.1]{agol2011ideal}.
To prove it, note that along a bottom face of a tetrahedron we encounter the $\pi$-edge followed by the left veering followed by the right veering edge in the counterclockwise order, using the coorientation on faces. However, along a top face we encounter the $\pi$-edge followed by the right veering followed by the left veering edge, and so two tetrahedra cannot be glued along faces so that their $\pi$-edges are identified. 

A veering (or taut) triangulation is said to be \define{layered} if it can be built by stacking tetrahedra onto a triangulated surface and quotienting by a homeomorphism of the surface. (For another, more formal definition, we refer the reader to \cite[Definition 2.15]{schleimer2019veering}).  
Finally, we recall that the constructions of layered veering triangulations of Agol \cite{agol2011ideal} and Gu\'eritaud \cite{gueritaud} start with a pseudo-Anosov homeomorphism $f$ of a surface $S$ and produces a veering triangulation $\tau$ on the mapping torus $M$ of $f \colon S \ssm \{\text{singularities}\} \to  S \ssm \{\text{singularities}\}$. 

\begin{remark}[Veering definitions]
For us, a veering triangulation of $M$ is a taut ideal triangulation such that each edge has a consistent veer. Elsewhere in the literature, this is known as a \emph{transverse} veering triangulation \cite{hodgson2011veering, futer2013explicit, schleimer2019veering}. A slightly more general definition can be given where the taut structure is replaced by a \emph{taut angle structure}, which does not impose a coorientation on faces.
The two conditions, however, are equivalent up to a double cover \cite[Lemma 5.4]{futer2013explicit} and agree for layered veering triangulations, which was the setting of Agol's original definition \cite{agol2011ideal}.
\end{remark}

\subsubsection{The $2$-skeleton $\hbs$ as a branched surface}
As observed by Lackenby \cite{lackenby2000taut}, the taut structure of $\tau$ naturally gives its $2$-skeleton $\tau^{(2)}$ the structure of a transversely oriented branched surface in $M$. 
(See \cite{floyd1984incompressible, oertel1984incompressible} for general facts about branched surfaces.)
The smooth structure on $\tau^{(2)}$ can be obtained by, within each tetrahedron, smoothing along the $\pi$-edges and pinching along the $0$-edges, thus giving $\tau^{(2)}$ a well-defined tangent plane field at each of its points. See \Cref{fig:branch_edge}. 

\begin{figure}[h]
\centering
\includegraphics[height=1.5in]{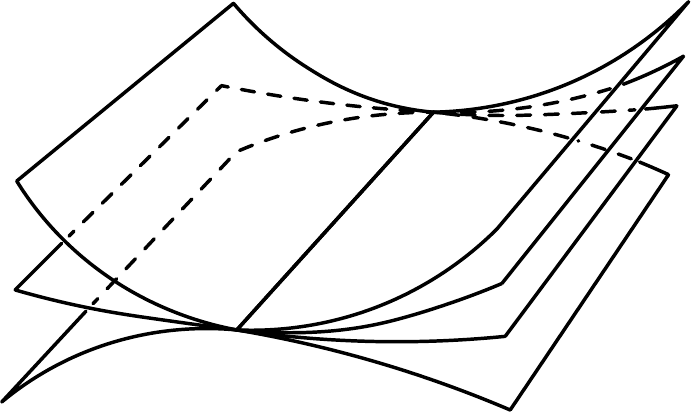}
\caption{The branched surface $\tau^{(2)}$ near an edge of $\tau$.} 
\label{fig:branch_edge}
\end{figure}

With this structure, the branching locus of $\tau^{(2)}$ is the disjoint union of edges of $\tau$ and the sectors (i.e. the complementary components of the branching locus) are the faces of $\tau$. 

The branched surface $\tau^{(2)}$ has a branched surface fibered neighborhood $N= N(\tau^{(2)})$ foliated by intervals such that collapsing $N$ along its $I$-fibers recovers $\tau^{(2)}$. 
The transverse orientation on the faces of $\tau$ consistently orients the fibers of $N$, and a properly embedded oriented surface $S$ in $M$ is \define{carried} by $\tau^{(2)}$ if it 
is contained in $N$ where it is positively transverse to its $I$-fibers. 
We also say that $S$ is carried by $\tau$. Note that up to isotopy a surface may be carried by $\tau$ in different ways.

A carried surface $S$
embedded in $N$ transverse to the fibers 
defines a nonnegative integral weight on each face of $\tau$ given by the number of times the $I$-fibers over that face intersect $S$. These weights satisfy the \define{matching (or switch) conditions} stating that the sum of weights on one side of a edge match the sum of weights on the other side. Conversely, a collection of nonnegative integral weights satisfying the matching conditions gives rise to a surface embedded in $N$ transverse to the fibers in the usual way. 

More generally, any collection of nonnegative weights on faces of $\tau$ satisfying the matching conditions defines a nonnegative relative cycle giving an element of $H_2(M, \partial M ; \RR)$ and we say that a class is \define{carried} by $\tau^{(2)}$ if it can be realized by such a nonnegative cycle. Just as with surfaces, a carried class can be represented by more than one nonnegative cycle on faces.

We conclude by observing the following:

\begin{lemma} \label{lem:integral_classes}
If $\alpha \in H_2(M, \partial M ; \ZZ)$ is carried by $\tau^{(2)}$, then it is realized by a nonnegative integral cycle on $\tau^{(2)}$ and hence an embedded surface carried by $\hbs$.
\end{lemma}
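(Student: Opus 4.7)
The plan is to start from a nonnegative real weighting $w$ on faces of $\tau$ representing $\alpha$ (given by hypothesis) and modify it by an integer $2$-boundary to produce a nonnegative \emph{integer} weighting representing the same class. The key input will be total unimodularity: the available modifications are parametrized by integers $(c_t)$ indexed by tetrahedra of $\tau$, and the resulting nonnegativity conditions translate into a linear system whose constraint matrix is a network matrix.

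To set things up, I would first use that the integer simplicial chain complex associated to $\tau$ computes $H_2(M,\partial M;\ZZ)$ to fix some integer (not necessarily nonnegative) relative $2$-cycle $w_{\ZZ}$ representing $\alpha$. Since $w$ and $w_{\ZZ}$ are homologous as real cycles, their difference is a real $2$-boundary, so
\[
w \;=\; w_{\ZZ} + \sum_t r_t\, \partial t \quad \text{for some reals } r_t,
\]
where the taut coorientation gives $\partial t = \sum_{F\in\mathrm{top}(t)} F - \sum_{F\in\mathrm{bot}(t)} F$. Any replacement $w'' := w_{\ZZ} + \sum_t c_t\, \partial t$ with integer $c_t$ is automatically an integer relative $2$-cycle representing $\alpha$; the remaining task is to arrange nonnegativity.

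Next I would examine the nonnegativity condition face-by-face. For each (interior) face $F$, let $t^{\pm}(F)$ denote the tetrahedra above and below $F$ with respect to the coorientation; then $w''_F \ge 0$ becomes
\[
c_{t^+(F)} - c_{t^-(F)} \;\leq\; w_{\ZZ,F}.
\]
Because each face of an ideal triangulation is adjacent to exactly two tetrahedra, the coefficient matrix of this system has precisely one $+1$ and one $-1$ per row, so it is the incidence matrix of a directed graph (vertices $=$ tetrahedra, edges $=$ faces, each oriented from $t^-$ to $t^+$) and is therefore totally unimodular. Real feasibility is delivered by $c_t := r_t$: the inequality at $F$ reduces to $w_F \ge 0$, which holds by hypothesis. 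Applying the Hoffman--Kruskal theorem (after pinning $c_{t_0} = 0$ for one fixed tetrahedron to kill the $1$-dimensional lineality coming from $\sum_t \partial t = 0$), total unimodularity together with integer right-hand side upgrades real feasibility to integer feasibility, producing integer $c_t$ and hence the desired $w''$.

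Finally, the passage from a nonnegative integer cycle to an embedded carried surface is standard branched-surface technology: in the fibered neighborhood $N(\hbs)$, take $w''_F$ parallel sheets over each face $F$, which glue coherently along edges by the switch conditions. The main obstacle is the integrality step; it is resolved by the TU argument above, which depends essentially on the $2$-to-$1$ face-tetrahedron incidence intrinsic to an ideal triangulation---this is what forces the constraint matrix to be a network matrix.
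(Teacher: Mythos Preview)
Your argument is correct and takes a genuinely different route from the paper's. The paper first finds a \emph{rational} nonnegative cycle representing $\alpha$ (by the density of rational points in a rationally-defined affine subspace intersected with a face of the orthant), clears denominators to get an integral carried surface representing $n\alpha$, and then invokes \cite[Lemma~1]{thurston1986norm} to split that surface into $n$ carried pieces each representing $\alpha$. Your approach instead parametrizes the affine space of relative $2$-cycles in the class $\alpha$ as $w_\ZZ + \partial C_3$, notes that the nonnegativity constraints form a network system (one $+1$ and one $-1$ per row, coming from the two tetrahedra adjacent to each face), and applies Hoffman--Kruskal to pass directly from real to integer feasibility.

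Your method is cleaner in that it is purely linear-algebraic and avoids the appeal to Thurston's surface-splitting lemma, which is a genuinely topological input. It also lands on an integral representative for $\alpha$ itself rather than detouring through $n\alpha$. The paper's argument, on the other hand, needs no integer-programming machinery: the first step is just elementary rationality of affine subspaces, and the topological work is offloaded to a well-known result. One small point worth noting in your write-up: the ``one $+1$, one $-1$ per row'' claim uses that each face is adjacent to two \emph{distinct} tetrahedra; if a tetrahedron were glued to itself across a face, the corresponding row would vanish, but then $w_F = w_{\ZZ,F}$ is forced and nonnegativity is automatic, so this does not affect the argument. The same incidence structure persists for any transversely oriented branched surface (sectors meet two complementary regions), so your proof extends to that generality just as the paper remarks its own does.
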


In fact, the lemma holds for any transversely oriented branched surface.

\begin{proof}
Let $W=\R^F$ be the weight space of $\hbs$ and let $Z\subset W$ be the subspace of
relative 2-cycles, i.e. weights satisfying the branching conditions. Let $A\colon Z\to
H_2(M,\partial M;\R)$ be the map to relative homology. We can choose rational bases for
$Z$ and $H_2$ so that $A$ is represented by an integer matrix.

Let $P\subset Z$ be the set of solutions to $Ax=\alpha$. Because $\alpha$ is carried by
$\hbs$, $P$ contains a point $m$ lying in the nonnegative orthant $\R_{\ge0}^F$. Let $Q$
denote the face of the nonnegative orthant containing $m$ in its relative interior. Since
$P\cap Q$ is nonempty and cut out by integer equations, there is a rational point $w\in
P\cap Q$.
(Here we are using the fact that for any rational linear map $L \colon \R^n\to\R^m$ and rational
$w\in \R^m$, if the equation $Lx=w$ has solutions then rational solutions exist and are
dense among all solutions). 

Clearing denominators, there is an integer $n \ge 0$ such that $n\alpha$ is represented by the integral cycle $nw$, giving rise to an embedded surface $S$. By \cite[Lemma 1]{thurston1986norm}, $S$ is a union of $n$ surfaces, each representing $\alpha$ and carried by $\hbs$. 
\end{proof}

\subsubsection{The veering triangulation as seen from $\partial M$}\label{sec:veeringonboundary}
If we truncate the tips of the tetrahedra of $\tau$, as in \Cref{fig:taut}, we obtain a compact manifold whose boundary components are tori. For details, see \cite{lackenby2000taut, hodgson2011veering}.
We denote this manifold $\mr M$ and continue to use $\tau$ to refer to the modified veering structure. 

In what follows, we will often write $\partial M$ to mean $\partial \mr M$ and $(M, \partial M)$ to mean $(\mr M, \partial \mr M)$. This simplifies our notation and should cause no confusion.

The intersection $\partial \tau = \tau^{(2)} \cap \partial M$ is a cooriented train track with each complementary component a \define{flat triangle} (i.e. a bigon with three branches of $\partial \tau$ in its boundary) that corresponds to the tip of truncated taut tetrahedron. See the righthand side of \Cref{fig:veer}. The veering structure of $\tau$ determines (and is determined by) the structure of these induced train tracks. We will recall some facts here that are needed for \Cref{sec:relating}, but we refer the reader to \cite{futer2013explicit} and \cite{Landry_norm}
 for a more detailed analysis.

Each flat triangle $T$ on $\partial M$ complementary to the track $\partial \tau$ has two vertices at cusps, corresponding to $0$-edges of the associated tetrahedron, and one smooth vertex, corresponding to the $\pi$-edge. If the coorientation at the smooth vertex points out of $T$, when $T$ is called an \define{upward} triangle, and otherwise its called \define{downward}. Note that the veer of the $\tau$-edges corresponding to the $0$-vertices (i.e. cusps) of $T$ are determined by whether $T$ is upward or downward, as in \Cref{fig:veer}.

\begin{figure}[h]
\centering
\includegraphics[height=2in]{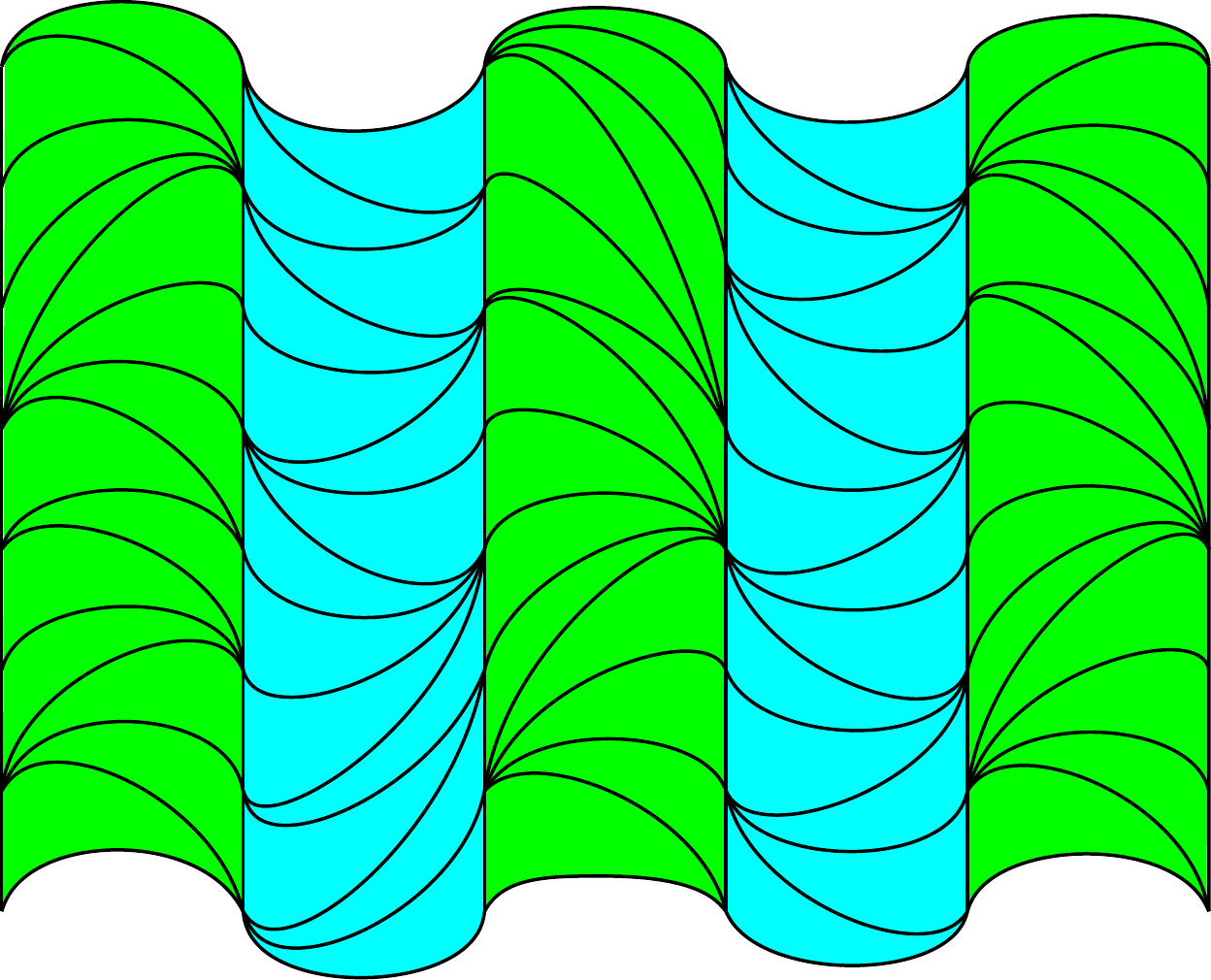}
\caption{The tessellation of $\partial M$ by flat triangles organized into upward and downward ladders, colored green and cyan respectively for compatibility with \Cref{fig:taut}.} 
\label{fig:ladders}
\end{figure}

The flat triangles of $\partial M$ are organized into \emph{upward and downward ladders} as follows (see \cite[Observation 2.8]{futer2013explicit}):
For each component of $\partial M$, the union of all upward flat triangles is a disjoint collection of annuli such that each annulus $A$ in the collection, called an \define{upward ladder}, has $\partial A$ carried by $\partial \tau$ and all other branches of $\partial \tau$ meeting $A$ (called \define{rungs}) join its two boundary components. We define a \define{downward ladder} similarly, and observe that each component of $\partial M$ is an alternating union of upward and downward ladders meeting along their common boundary (called \define{ladder poles}). See \Cref{fig:ladders}.

\subsection{Thurston norm}\label{sec:normbkgd}
Suppose that $M$ is a finite volume hyperbolic manifold. 
Then the \define{Thurston norm} $x$ is a norm on the vector space $H_2(M,\partial M; \RR)$ which extends the formula
\[
x(\alpha) = \min\{ -\chi(S)\}
\]
where $\alpha \in  H_2(M,\partial M; \ZZ)$
and the minimum is over all properly embedded surfaces representing $\alpha$ without sphere or disk components
\cite{thurston1986norm}. At times it will be convenient to identify $H_2(M,\partial M;\R)$ with $H^1(M;\R)$ via Lefschetz duality.

The unit ball $B_x = B_x(M)$ in $H_2(M,\partial M; \RR)$ with respect to $x$ is called the \define{Thurston norm ball} and it is a rational finite-sided polyhedron. There is a (possibly empty) collection of top dimension faces of $B_x$, called \define{fibered faces}, such that 
\begin{itemize}
\item every class $\alpha$ representing a fiber $S$ in a fibration of $M$ over $S^1$ is contained in the interior of the cone $\RR_+\bf F$ over a fibered face $\bf F$ of $B_x$,
\item every primitive integral class $\alpha$ in the interior of the cone over a fibered face represents a fiber in a fibration of $M$ over $S^1$ \cite{thurston1986norm}.
\end{itemize}

According to \cite{agol-overflow} and \cite[Proposition 2.7]{minsky2017fibered}, the Agol-Gu\'eritaud construction applied to any fiber in a fixed fibered face $\bf F$ produces the same veering triangulation $\tau$. In particular, a \emph{layered} veering triangulation $\tau$ of $M$ is canonically associated to a fibered face $\bf F_\tau$ of $B_x(M)$. 

In general, we say that a properly embedded surface $S$ is \define{taut} if no component of $S$ is nullhomologous and $S$ is Thurston norm minimizing, i.e. $-\chi(S) = x([S])$. For example, fibers are necessarily taut. 

Returning to the situation of interest, Lackenby \cite[Theorem 3]{lackenby2000taut} proved that for any taut ideal triangulation, the underlying branched surface is \define{taut} in the sense that every surface it carries is taut.

\subsection{Polynomials and specialization} \label{sec:spec}
Fix a finitely generated free abelian group $G$ and denote its group ring with integer coefficients by $\Z[G]$.
Recall that $\Z[G]$ is a UFD.

Let $P \in \Z[G]$ and write $P= \sum_{g\in G} a_g  \cdot g$. The \define{support} of $P$ is
\[
\mathrm{supp}(P) = \{g \in G : a_g \neq 0 \}.
\]

For $P \in \Z[G]$ with $P = \sum_{g\in G} a_g \cdot g$  and $\alpha \in \hom(G,\R)$
the \define{specialization of $P$ at $\alpha$} is the single variable expression
\[
P(u^\alpha) = \sum_{g\in G} a_g \cdot u^{\alpha(g)} \in \Z[u^r : r \in \RR].
\]
If $G$ has rank $n$ and is written multiplicatively, then we can identify $\Z[G]$ with $\Z[t_1^{\pm1}, \ldots, t_n^{\pm 1}]$. Making this identification, specialization is the image of $P$ under the homomorphism 
\linebreak
$\alpha_* \colon \Z[t_1^{\pm1}, \ldots, t_n^{\pm 1}] \to \Z[u^r : r \in \RR]$ obtained by replacing $t_i$ with $u^{\alpha(t_i)}$.


\section{The veering and taut polynomials} \label{sec:veering}
Let $M$ be a $3$-manifold with a
 veering triangulation $\tau$. Denote the sets of edges, faces, and tetrahedra of $\tau$ by $E$, $F$, and $T$, respectively. Since $\chi(M) = 0$, we see that $|E| = |T| = \frac{1}{2}|F|$. Each tetrahedron has a unique \define{bottom} edge and this induces a bijection
from tetrahedra to edges
 that we will use throughout. If ${\bf e}$ is the bottom of the tetrahedron $t$, then we also say that $t$ lies \define{above} ${\bf e}$.

As $\tau$ is veering, each of its edges is labeled to be either right or left veering. See \Cref{sec:veering_basics} and \Cref{fig:veer}. 
Each abstract veering tetrahedron $t$ gives a relation among its edges. Let $\mathbf{b}$ and $\mathbf{t}$ denote the the bottom and top edges of $t$, respectively. Among the 4 side edges of $t$, let $\mathbf{s}_1$ and $\mathbf{s}_2$ be those which have the \emph{opposite} veer from $\mathbf{t}$.
Then we have the \define{tetrahedron relation}
\begin{align} \label{eq:tet_rel}
\mathbf{b} = \mathbf{t} +\mathbf{s}_1 + \mathbf{s}_2,
\end{align}
associated to $t$. We also say that this tetrahedron relation is 
associated to the bottom edge $\bf b$ of $t$.
Informally, this relation mimics the map on measured train tracks induced by a fold. See \Cref{fig:stablebs}.

Let $G = \hom(H^1(M), \Z)$ be the first homology of $M$ modulo torsion and let $\aM$ be the associated covering space. This is the universal free abelian cover of $M$, and its deck group is $G$. Let $\widetilde \tau$ be the preimage of $\tau$. Note that edges, faces, and tetrahedra of $\widetilde{\tau}$ are in bijective correspondence with $E \times G$, $F \times G$, and $T \times G$. The correspondence is determined by any choice of lifts of simplices from $\tau$.

We define the \define{edge module} 
$\e(\widetilde{\tau})$ to be the free $\Z$-module on the edges of $\wt \tau$ modulo the relations from \Cref{eq:tet_rel} for each tetrahedron of $\widetilde{\tau}$. The action of $G$ on $\aM$ by deck transformations makes $\e(\widetilde{\tau})$ into a module over the group ring $\Z[G]$ and we will henceforth consider $\e(\widetilde{\tau})$ as a $\Z[G]$-module. 
Now \emph{choose} a lift of each edge of $\tau$ to $\wt \tau$.
Since the free $\Z$-module on the edges of $\wt \tau$ is isomorphic to $\Z[G]^E$ as a $\Z[G]$-module, $\e(\widetilde{\tau})$ has the presentation
\begin{align} \label{eq:edge_mod}
 \Z[G]^E \overset{L}{\longrightarrow} \Z[G]^E \longrightarrow \e(\widetilde{\tau}) \to 0,
\end{align}
where $L$ maps $\bf b$ to $\mathbf{b} - (\mathbf{t} +\mathbf{s}_1 + \mathbf{s}_2)$. 
That is, the image of each edge is determined by the tetrahedron relation for the 
tetrahedron lying above that edge.

We define the \define{veering polynomial} of $\tau$ to be the element
\[
V = V_\tau = \det(L) \in \Z[G].
\]

We remark that the map $L$ can we written in the form $L = I - A$, where $I$ is the identity matrix and $A$ is a matrix with coefficients in $G$.
In \Cref{sec:digraph}, we will see that $A$ can be interpreted as the adjacency matrix for a directed graph (the \emph{flow graph} of \Cref{sec:flowgraph}) associated to $\tau$.

The following lemma shows that $V_\tau$ is well-defined:

\begin{lemma} \label{lem:V_tau_defined}
The veering polynomial $V_\tau = \det(L) \in \Z[G]$ depends only on $\tau$ and not the choice of lifts of edges to $\wt \tau$.
\end{lemma}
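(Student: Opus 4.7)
The plan is to verify invariance of $\det(L)$ under modifying the lift of a single edge; since any two global choices of lifts are related by a finite sequence of such single-edge modifications, the lemma will follow by induction. First I would spell out the matrix $L$ in the chosen basis. Under the convention that the chosen lift $\wt t$ of a tetrahedron $t$ is the unique lift in $\wt\tau$ whose bottom edge equals the chosen lift $\wt{\mathbf{b}}$ of $\mathbf{b}$, the tetrahedron relation reads
\[
L(\wt t) \;=\; \wt{\mathbf{b}} \;-\; h_t\,\wt{\mathbf{t}} \;-\; k_t\,\wt{\mathbf{s}}_1 \;-\; \ell_t\,\wt{\mathbf{s}}_2
\]
for unique $h_t, k_t, \ell_t \in G$, and this determines the column of $L$ indexed by $t$.

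Next, suppose we modify only the lift $\wt{\mathbf{e}}_0$ of a single edge $\mathbf{e}_0$, replacing it by $g_0 \wt{\mathbf{e}}_0$ for some $g_0 \in G$. Let $t_0$ be the tetrahedron above $\mathbf{e}_0$. Because the chosen lift of $t_0$ is required to sit above the chosen lift of $\mathbf{e}_0$, it is forced simultaneously to change from $\wt t_0$ to $g_0 \wt t_0$; all other lifts remain the same. Re-expressing the old relations in the new basis, the row of $L$ indexed by $\mathbf{e}_0$ gets multiplied by $g_0^{-1}$ (since $\wt{\mathbf{e}}_0 = g_0^{-1}\!\cdot(g_0\wt{\mathbf{e}}_0)$, every coefficient on $\wt{\mathbf{e}}_0$ is rescaled), while the column indexed by $t_0$ gets multiplied by $g_0$ (since $L(g_0 \wt t_0) = g_0 L(\wt t_0)$). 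In matrix form,
\[
L' \;=\; D_E^{-1}\, L\, D_T,
\]
where $D_E$ and $D_T$ are diagonal matrices having $g_0$ in the slot indexed by $\mathbf{e}_0$ and $t_0$ respectively, and $1$ elsewhere.

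Since $\Z[G]$ is commutative, $\det(L') = \det(D_E^{-1})\det(L)\det(D_T) = g_0^{-1}\det(L)\,g_0 = \det(L)$, which completes the proof. I do not expect any serious obstacle. The main point to watch is the coordinated nature of the basis change: modifying the lift of a single edge forces a matching modification of exactly one tetrahedron lift (the one whose bottom is that edge), and it is this rigid pairing that produces an exact cancellation of the $g_0$ and $g_0^{-1}$ factors, yielding literal equality rather than equality only up to a unit in $\Z[G]$.
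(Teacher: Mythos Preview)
Your proof is correct and essentially identical to the paper's. The paper states it more tersely: replacing $\wt e$ by $g\cdot\wt e$ amounts to conjugating $L$ by the diagonal matrix with $g$ in the slot for $e$, which leaves the determinant unchanged. Your separation into $D_E^{-1}$ (row scaling) and $D_T$ (column scaling) is the same conjugation once one uses the bottom-edge bijection $T\leftrightarrow E$ to identify $D_T$ with $D_E$; the extra explanation of why the tetrahedron lift is forced to move along with the edge lift is helpful but not a different idea.
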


\begin{proof}
If $e$ is an edge of $\tau$ and $\widetilde e$ is the chosen lift in $\widetilde \tau$, then the effect of replacing $\widetilde e$ with $g \cdot  \widetilde e$ amounts to conjugating the matrix $L$  by the $|E| \times |E|$ matrix which is obtained from the identity matrix by replacing the $1$ in the diagonal entry corresponding to $e$ with $g$. This does not affect the determinant.
\end{proof}

\subsection{The face module and $\Theta_\tau$}
\label{sec:taut}
Our construction of the veering polynomial $V_\tau$ make explicit use of the veering structure of $\tau$. Here, we define a closely related polynomial, which we call the taut polynomial of $\tau$, whose construction uses only the taut structure of $\tau$ (see \Cref{sec:veering_basics}). 
Although we begin to describe the connection between the two polynomials in this section, 
their precise relationship will be fully
explained in \Cref{sec:polys}. In \Cref{sec:layered} we will show that when $\tau$ is layered, its taut polynomial is \emph{equal} to the Teichm\"uller polynomial of the associated fibered face of the Thurston norm ball. Taken together, these results will give the explicit connection between $V_\tau$ and the Teichm\"uller polynomial.

Any face $f$ of the veering triangulation lies at the bottom of a unique tetrahedron $t$ and has a distinguished edge $\bf b$ which is the bottom edge of $t$. We often call $\bf b$ the \define{bottom} edge of $f$.
Let $\bf x$ and $\bf y$ be the other edges of $f$. Then the \define{face relation} associated to $f$ is
\begin{align} \label{eq:face_rel}
\mathbf{b} = \mathbf{x} + \mathbf{y}.
\end{align}
Since there are two faces lying at the bottom of tetrahedron, $t$ has two associated face relations.

Again, identify $\Z[G]^E$ with the free module of edges of $\widetilde \tau$, we quotient by the relations given in \Cref{eq:face_rel} for each face of $\widetilde \tau$ to obtain the \define{face module} $\e^\triangle(\widetilde \tau)$. 
We have the presentation
\begin{align}\label{eq:face_mod}
\Z[G]^{F}  \overset{L^\triangle}{\longrightarrow} \Z[G]^E \longrightarrow \e^\triangle(\widetilde \tau) \to 0,
\end{align}
where $L^\triangle$ is determined by mapping a face $f$ to its associated relation $\bf b - (x_1 +x_2)$.

The module $\e^\triangle(\widetilde \tau)$ has a well defined Fitting ideal $I^\triangle \subset \Z[G]$ generated by determinants of $|E| \times |E|$ submatrices of $L^\triangle$. (See, for example, \cite[Theorem 1]{northcott2004finite}.)
The \define{taut polynomial}
$\Theta_\tau \in \Z[G]$ is defined to be the greatest common divisor of the elements of $I^\triangle$. Note that this only determines $\Theta_\tau$ up to multiplication by a unit $\pm g \in \Z[G]$.

We next observe the following immediate consequence of our definitions and use it to relate the two polynomials. 

\begin{lemma} \label{lem:face_implies_tet}
For each bottom face $f$ of a tetrahedron $t$, there is a unique top face $f'$ of $t$ such that the sum of the face relations for $f$ and $f'$ is equal to the tetrahedron relation for $t$.

Further, $f'$ is characterized by the property that it meets $f$ within $t$ along the edge that has the same veer as the top edge of $t$.
\end{lemma}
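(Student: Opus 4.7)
The plan is to analyze the combinatorics of a single veering tetrahedron $t$ and to extract both existence and uniqueness of $f'$ from two inputs: the algebraic shape of the tetrahedron relation \eqref{eq:tet_rel}, and the structural fact (recorded in \Cref{sec:veering_basics}) that two tetrahedra cannot be glued along a face so that their $\pi$-edges are identified.

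First I would fix notation for the six edges of $t$: the bottom $\pi$-edge $\mathbf{b}$, the top $\pi$-edge $\mathbf{t}$, and the four side edges which alternate in veer around the rhombus. Among the four sides, two (say $\mathbf{a}_1, \mathbf{a}_2$) have the same veer as $\mathbf{t}$ and the other two are the $\mathbf{s}_1, \mathbf{s}_2$ appearing in the tetrahedron relation. Because the two side edges in any face of $t$ are adjacent in the rhombus, they have opposite veers; consequently each face of $t$ contains exactly one edge from $\{\mathbf{a}_1,\mathbf{a}_2\}$ and one from $\{\mathbf{s}_1,\mathbf{s}_2\}$, together with $\mathbf{b}$ (bottom face) or $\mathbf{t}$ (top face).

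Next, given a bottom face $f$ with edges $\{\mathbf{b},\mathbf{a},\mathbf{s}\}$ where $\mathbf{a} \in \{\mathbf{a}_1,\mathbf{a}_2\}$ and $\mathbf{s} \in \{\mathbf{s}_1,\mathbf{s}_2\}$, I would take $f'$ to be the unique top face of $t$ containing $\mathbf{a}$; its edges are $\{\mathbf{t},\mathbf{a},\mathbf{s}'\}$ where $\mathbf{s}'$ is the other element of $\{\mathbf{s}_1,\mathbf{s}_2\}$. The key step is to identify the bottom edge of $f'$ as a face, i.e.\ the bottom $\pi$-edge of the tetrahedron $t''$ lying above $t$ across $f'$. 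By the $\pi$-identification prohibition this cannot be $\mathbf{t}$, so it is $\mathbf{a}$ or $\mathbf{s}'$; if it were $\mathbf{s}'$, then $\mathbf{t}$ and $\mathbf{a}$ would be the two side edges of $t''$ inside $f'$, forcing them to have opposite veers---contradicting that they have the same veer by construction. Hence the bottom edge of $f'$ is $\mathbf{a}$, and the face relation of $f'$ reads $\mathbf{a} = \mathbf{t} + \mathbf{s}'$. Adding this to the face relation $\mathbf{b} = \mathbf{a} + \mathbf{s}$ of $f$ gives $\mathbf{b} = \mathbf{t} + \mathbf{s} + \mathbf{s}' = \mathbf{t} + \mathbf{s}_1 + \mathbf{s}_2$, which is \eqref{eq:tet_rel}. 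Moreover $f \cap f' = \mathbf{a}$, the side with the same veer as $\mathbf{t}$, confirming the stated characterization.

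For uniqueness I would repeat the analysis for the other top face $f''$, the one containing $\mathbf{s}$ in place of $\mathbf{a}$. Its edges are $\{\mathbf{t},\mathbf{s},\mathbf{a}''\}$, and the same veer/$\pi$-gluing argument pins its bottom edge as $\mathbf{a}''$, yielding face relation $\mathbf{a}'' = \mathbf{t}+\mathbf{s}$; summing with $f$'s relation produces $\mathbf{b} = \mathbf{t} + \mathbf{a} - \mathbf{a}'' + 2\mathbf{s}$, which cannot equal the tetrahedron relation due to the $-\mathbf{a}''$ and $2\mathbf{s}$ coefficients. The main obstacle is precisely the middle step---determining which edge of $f'$ plays the role of bottom in $t''$---since this depends on $t''$ and not just on $t$; everything else then reduces to a one-line algebraic check once the $\pi$-identification prohibition and the veer-alternation of sides within any face are combined.
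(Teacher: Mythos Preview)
Your proof is correct and follows essentially the same approach as the paper's: both arguments identify $f'$ as the top face sharing with $f$ the side edge having the same veer as $\mathbf{t}$, and both determine the bottom edge of $f'$ by ruling out $\mathbf{t}$ (via the $\pi$-edge non-identification) and ruling out $\mathbf{s}'$ (via the veer constraint on the two side edges in a face). Your explicit uniqueness check for $f''$ is a minor addition; the paper instead lets uniqueness follow from the stated characterization, since only one top face meets $f$ along an edge with the same veer as $\mathbf{t}$.
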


\begin{proof}
Consider the face relation \Cref{eq:face_rel} for $f$. 
Recall that the tetrahedron relation for $t$ is $\mathbf{b} = \mathbf{t} + \mathbf{s_1} + \mathbf{s_2}$, where $\mathbf{t}$ is the top edge of $t$ and $\mathbf{s_1}, \mathbf{s_2}$ are the side edges which have opposite veer from that of $\mathbf{t}$. Let $\mathbf{r_1}, \mathbf{r_2}$ be the other two edges of $t$ labeled so that $\mathbf{b}, \mathbf{s_i}, \mathbf{r_i}$ give the bottom faces of $t$ for $i =1,2$. 

After possibly swapping indices, we may suppose that 
$\mathbf{b}, \mathbf{s_1}, \mathbf{r_1}$ span the face $f$ and so
$\mathbf{b} -( \mathbf{s_1} + \mathbf{r_1}) =0$ is the face relation for $f$. 
By construction, $\mathbf{r_1}$ has the {same} veer as $\mathbf{t}$
and so the edges $\mathbf{t}, \mathbf{r_1}, \mathbf{s_2}$ span a top face of $t$. Indeed since the third edge in the face with $\mathbf{t}$ and $\mathbf{r_1}$ must have opposite veer this leaves only 
$\mathbf{s_1}$ and $\mathbf{s_2}$ as possibilities and $\mathbf{b}, \mathbf{s_1} , \mathbf{r_1}$ is already a face on the bottom of $t$.
Hence, we see that $\mathbf{t}, \mathbf{r_1}, \mathbf{s_2}$ span a face $f'$ at the top of $t$.

From this we can see that the tetrahedron glued to $t$ along $f'$ has $\mathbf{r_1}$ as its bottom edge.
Otherwise, the bottom edge would be either $\mathbf{t}$ or $\mathbf{s_2}$. However, in the first case $\mathbf{t}$ would have an empty side of its fan and in the second case the
new tetrahedron would have $4$ side edges all the same veer (that of $\mathbf{t}$), 
either giving a contradiction. 
We conclude that the face relation for $f'$ is $\mathbf{r_1} -( \mathbf{t} + \mathbf{s_2})=0$. 

Finally, summing the face relations for the bottom face $f$ and top face $f'$ gives $\mathbf{b} -( \mathbf{t} + \mathbf{s_1} + \mathbf{s_2}) = 0$, which is the tetrahedron relation for $t$.  This completes the proof.
\end{proof}

We record the following observation made in the proof of \Cref{lem:face_implies_tet}.
\begin{fact}\label{rmk:top_of_fan}
Let $\bf{e}$ be an edge of $\tau$ and let $t$ be a tetrahedron in the fan of $\bf{e}$. Then the top edge of $t$ has the same veer as $\bf{e}$ if and only if $t$ is topmost in its side of $\bf{e}$.
\end{fact}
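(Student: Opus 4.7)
The plan is to observe that this fact is essentially a direct corollary of the case analysis already performed in the proof of \Cref{lem:face_implies_tet}. Label the edges of $t$ as $\mathbf{b}, \mathbf{t}, \mathbf{s}_1, \mathbf{s}_2, \mathbf{r}_1, \mathbf{r}_2$ as in that lemma, so that the $\mathbf{s}_i$ have veer opposite to $\mathbf{t}$ while the $\mathbf{r}_i$ share the veer of $\mathbf{t}$. Since $\mathbf{e}$ lies in the fan of $t$, it is one of these four side edges, and by definition $t$ is topmost on its side of the fan of $\mathbf{e}$ if and only if the tetrahedron $t^{\ast}$ glued to $t$ across the top face of $t$ containing $\mathbf{e}$ has $\mathbf{e}$ as its bottom edge (so that $t^{\ast}$ is the unique tetrahedron with $\mathbf{e}$ on its bottom, and hence is not itself in the fan).

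For the direction ``same veer $\Rightarrow$ topmost'', assume $\mathbf{e} = \mathbf{r}_1$ (the case $\mathbf{e}=\mathbf{r}_2$ is symmetric). The top face of $t$ containing $\mathbf{r}_1$ is precisely the face $f'$ paired with the bottom face $f = \{\mathbf{b},\mathbf{s}_1,\mathbf{r}_1\}$ in \Cref{lem:face_implies_tet}, since $f'$ is characterized there as meeting $f$ along the edge whose veer matches that of $\mathbf{t}$. The proof of that lemma already shows the tetrahedron glued to $t$ across $f'$ has bottom edge $\mathbf{r}_1$, which is exactly what is needed.

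For the direction ``opposite veer $\Rightarrow$ not topmost'', assume $\mathbf{e} = \mathbf{s}_1$ (the case $\mathbf{s}_2$ is immediate from the previous paragraph applied to the other bottom face of $t$, since the paired top face is $\{\mathbf{t},\mathbf{s}_2,\mathbf{r}_1\}$ and has bottom $\mathbf{r}_1\neq \mathbf{s}_2$ above it). The top face of $t$ containing $\mathbf{s}_1$ is $\{\mathbf{t},\mathbf{s}_1,\mathbf{r}_2\}$. Running the same two exclusions used in the proof of \Cref{lem:face_implies_tet} against the tetrahedron $t^{\ast}$ glued across this face---namely that its bottom edge cannot be $\mathbf{t}$ (else two $\pi$-edges would be identified across a face, forbidden by the contrasting counterclockwise orders around a $\pi$-edge on top versus bottom faces) and cannot be $\mathbf{s}_1$ (else $t^{\ast}$ would have all four side edges of the same veer as $\mathbf{t}$, violating the alternation of veers around a veering tetrahedron)---forces the bottom edge of $t^{\ast}$ to be $\mathbf{r}_2$. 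Hence $\mathbf{s}_1$ is still a side edge of $t^{\ast}$, so $t^{\ast}$ is in the fan of $\mathbf{s}_1$ and $t$ is not topmost on this side. There is no substantive obstacle; this is really a bookkeeping consequence of the lemma's proof, recording which of the two possible diagnoses of $t$'s position in a fan applies to each of its side edges.
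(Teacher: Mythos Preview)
Your proof is correct and takes essentially the same approach as the paper: the Fact is recorded there without a standalone argument, simply as an observation extracted from the proof of \Cref{lem:face_implies_tet}, and you have done exactly that---spelling out explicitly how the case analysis for the top face $\{\mathbf{t},\mathbf{r}_1,\mathbf{s}_2\}$ (and its symmetric counterpart $\{\mathbf{t},\mathbf{r}_2,\mathbf{s}_1\}$) yields both directions for all four side edges.
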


Since the faces relations determine the tetrahedra relations, we have

\begin{corollary} \label{cor:module_surject}
There is a surjective $\Z[G]$-module homomorphism:
\[
\e(\widetilde \tau) \longrightarrow \e^\triangle(\widetilde \tau) \longrightarrow 0,
\]
and so $\Theta_\tau$ divides $V_\tau$.
\end{corollary}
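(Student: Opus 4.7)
The plan is to read \Cref{cor:module_surject} as a direct consequence of \Cref{lem:face_implies_tet} plus a standard Fitting ideal estimate. Both $\e(\wt\tau)$ and $\e^\triangle(\wt\tau)$ are, by the presentations \Cref{eq:edge_mod} and \Cref{eq:face_mod}, quotients of the \emph{same} free $\Z[G]$-module $\Z[G]^E$ on (the chosen lifts of) the edges. So to produce the surjection it suffices to show that the submodule $\im L \subseteq \Z[G]^E$ generated by the tetrahedron relations is contained in the submodule $\im L^\triangle \subseteq \Z[G]^E$ generated by the face relations.

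First I would fix, once and for all, a lift of each edge of $\tau$ to $\wt\tau$, so that both matrices $L$ and $L^\triangle$ are written in terms of the same basis of $\Z[G]^E$. Given a tetrahedron $t$ with bottom edge $\bf b$, \Cref{lem:face_implies_tet} exhibits a bottom face $f$ and a top face $f'$ of $t$ whose face relations sum exactly to the tetrahedron relation for $t$. Lifting this equation $G$-equivariantly to $\wt\tau$, the column of $L$ indexed by $\bf b$ is written as a $\Z[G]$-combination of two columns of $L^\triangle$ (with coefficients $\pm g\in G$ recording how the chosen lifts of $f$, $f'$ differ from their lifts sitting inside the chosen lift of $t$). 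Ranging over all tetrahedra gives $\im L\subseteq \im L^\triangle$, so the identity on $\Z[G]^E$ descends to the desired $\Z[G]$-surjection
\[
\e(\wt\tau) \longrightarrow \e^\triangle(\wt\tau) \longrightarrow 0.
\]

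For the divisibility claim, I would invoke Fitting ideals. Because $L$ is a square $|E|\times |E|$ matrix, the $0$-th Fitting ideal of $\e(\wt\tau)$ is the principal ideal $(\det L) = (V_\tau)$, while the $0$-th Fitting ideal of $\e^\triangle(\wt\tau)$ is, by definition, $I^\triangle$. A standard fact (see e.g.\ Northcott's book cited at \Cref{sec:taut}) is that a surjection of finitely generated modules $M \twoheadrightarrow N$ induces $\mathrm{Fitt}_0(M)\subseteq \mathrm{Fitt}_0(N)$: any presentation of $M$ gives a presentation of $N$ by adjoining extra relations, and adding rows can only enlarge the collection of top-size minors. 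Applying this to the surjection above yields $V_\tau \in I^\triangle$. Since $\Z[G]$ is a UFD and $\Theta_\tau = \gcd(I^\triangle)$, this forces $\Theta_\tau \mid V_\tau$.

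There is essentially no obstacle here; the content is entirely in \Cref{lem:face_implies_tet}. The only points to handle carefully are the bookkeeping with the $G$-action when expressing the tetrahedron relation as a sum of two face relations in $\Z[G]^E$ (which only introduces harmless unit factors), and citing the Fitting-ideal-under-surjection inequality in the form needed. Neither should require more than a line to dispatch.
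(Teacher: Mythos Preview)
Your proof is correct and follows essentially the same route as the paper: the surjection comes directly from \Cref{lem:face_implies_tet} (showing $\im L\subseteq \im L^\triangle$ so that the identity on $\Z[G]^E$ descends), and the divisibility is deduced from the containment of Fitting ideals under a surjection, with the paper citing Northcott for exactly this. The only minor slip is that in the paper's conventions the relations are columns rather than rows of the presentation matrix, but this does not affect the argument.
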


\begin{proof}
The surjection is immediately given by \Cref{lem:face_implies_tet} along with the definitions of the modules. This implies that $I^\triangle$ contains the Fitting ideal for $\e(\widetilde \tau)$ (see e.g. \cite[Appendix]{mazur1984class} or \cite[Chapter 3]{northcott2004finite}), which is principally generated by $V_\tau$. This completes the proof.
\end{proof}

In fact, the quotient polynomial $V_\tau / \Theta_\tau$ can be easily described as a products of polynomials of the form $(1\pm g)$ for $g \in G$ (\Cref{th:factorization!}). The proof, however, is quite involved and postponed until \Cref{sec:polys}.

\section{The flow graph and its Perron polynomial} \label{sec:digraph}
In this section, we reinterpret our construction in terms of the \emph{flow graph} (\Cref{sec:flowgraph}) associated to the veering triangulation. This will have implications for identifying the cone of classes carried by the veering triangulation in \Cref{sec:cones}.

For a directed graph $D$ with $n$ vertices, let $A$ denote the matrix with entries
\begin{align} \label{eq:adj}
A_{ab} = \sum_{\partial e=b-a} e,
\end{align}
where the sum is over all edges $e$ from the vertex $a$ to the vertex $b$. This is similar to a standard adjacency matrix for a directed graph, but note that $A$ lives in the matrix ring $M_{n\times n}(\Z[C_1(D)])$ where $C_1(D)$ is the group of simplicial 1-chains in $D$. We call $A$ the \define{adjacency matrix} for $D$.
The \define{Perron polynomial} of $D$ is defined to be $P_D = \det(I - A)$.
 
Following McMullen \cite{mcmullen2015entropy}, we define the \define{cycle complex} $\C(D)$ of $D$ to be the graph whose vertices are directed simple cycles of $D$ and whose edges correspond to disjoint cycles. We recall that $P_D$ equals the \define{clique polynomial} of $\C(D)$, which in particular shows that $P_D \in \Z[H_1(D)]$ (see \cite[Theorem 1.4 and Section 3]{mcmullen2015entropy}).  
Here, the clique polynomial associated to $\C(D)$ is
\begin{equation}\label{eq:cliquepoly}
P_D = 1 + \sum_{C} (-1)^{|C|} C \in \Z[H_1(D)],
\end{equation}
where the sum is over nonempty cliques $C$ of the graph $\C(D)$ and  $|C|$ is the number of vertices of $C$. 
One proves the above formula by writing the determinant as a sum over elements of the symmetric group $S_n$
and relating the sign of a permutation to the number of cycles in its cycle decomposition.

We begin by introducing an alternative construction of (a variant of) the Perron polynomial which  mimics the construction of the veering polynomial.

\subsection{Vertex modules of labeled graphs}

Let $D$ be an arbitrary directed graph with vertex set $V$, and let $\alpha \colon H_1(D) \to G$ be a surjective group homomorphism. As before, we also use $\alpha$ to denote the induced ring homomorphism $\alpha \colon \Z[H_1(D)] \to \Z[G]$. We think of $\alpha$ as labeling the cycles of $D$, and hence labeling each clique in $\C(D)$ by the product in $G$ of its constituent cycles.

Let $\widetilde D_\alpha$ be the cover of $D$ corresponding to $\mathrm{Ker}(\alpha \circ \mathrm{ab}\colon \pi_1(D) \to G)$ with deck group $G$. For example, if $\alpha$ is the identity homomorphism then $\widetilde D _\alpha$ is the universal free abelian cover of $D$. Note that by making a choice of lift for each vertex of $D$ and letting $G$ act by deck transformations, we obtain a bijection between the vertex set of $\widetilde D_\alpha$ and $V \times G$, and similarly for the edges. 

Define the \define{$\boldsymbol{\alpha}$-labeled vertex module} $V(\widetilde{D}_\alpha)$ of $D$ to be the $\Z[G]$-module obtained by taking the free module on the vertices of $\widetilde{D}_\alpha$ modulo the following relations: for each vertex $v$ of $\widetilde D_\alpha$ if $e_1, \ldots, e_k$ are the edges of $\widetilde D_\alpha$ with initial vertex $v$ and terminal vertices $w_1, \ldots, w_n$, respectively, then
\[
v = w_1 + \cdots + w_n.
\] 

This gives a presentation
\begin{align} \label{eq:vertex_mod}
\Z[G]^V\xrightarrow{L_{D, \alpha}} \Z[G]^V \longrightarrow V(\widetilde{D}_\alpha) \to 0,
\end{align}
where $L_{D,\alpha}$ is a square matrix determined by $L_{D,\alpha}(v) = v - (w_1 + \ldots + w_n)$. We set $P_{D,\alpha} = \det (L_{D,\alpha}) \in \Z[G]$. 
The following lemma is proven in the same manner as \Cref{lem:V_tau_defined}.

\begin{lemma} \label{lem:basis_ind_2}
The polynomial $P_{D,\alpha}$ depends only on $D$ and $\alpha$, and not on the choice of lifts of vertices to $\widetilde D_\alpha$.
\end{lemma}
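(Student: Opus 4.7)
The plan is to mimic the proof of \Cref{lem:V_tau_defined} verbatim, replacing the role of edges of $\tau$ by vertices of $D$. Since any two choices of lifts of $V$ differ by a sequence of one-vertex modifications, it suffices to check that changing a single chosen lift $\widetilde v$ to $g \cdot \widetilde v$, for some fixed $v \in V$ and $g \in G$, does not alter $\det(L_{D,\alpha})$.

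First, I would identify the change-of-basis matrix. Under the new identification of $\Z[G]^V$ with the free $\Z[G]$-module on the vertices of $\wt D_\alpha$, the basis element associated to $v$ equals $g$ times the old basis element associated to $v$, while every other basis element is unchanged. Hence the change of basis is the invertible diagonal matrix $P \in \mathrm{GL}_{|V|}(\Z[G])$ with $g$ in the $(v,v)$-entry and $1$ on every other diagonal entry.

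Next, I would observe that the crucial structural feature of the defining relation $\widetilde v - \sum_i \widetilde{w_i}$ is that it uses the \emph{same} collection of lifts for both the starting vertex and the terminal vertices of outgoing edges. Consequently, the change of basis affects the source and target of $L_{D,\alpha}$ identically, and we obtain $L_{D,\alpha}^{\mathrm{new}} = P^{-1} L_{D,\alpha}^{\mathrm{old}} P$. Conjugation preserves the determinant, so $P_{D,\alpha}$ is unchanged. The only bookkeeping to verify is that, on the $v$-th column of $L_{D,\alpha}$, off-diagonal entries are scaled by $g$ (since each terminal lift $\widetilde{w_i}$ starting from $g\widetilde v$ is scaled by $g$), while on the $v$-th row off-diagonal entries are scaled by $g^{-1}$ (since any edge into $\widetilde v$ from another chosen lift now lands on $g^{-1}$ times the new basis vector at $v$); the $(v,v)$-entry is invariant because $G$ is abelian. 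This is precisely the action of conjugation by $P$.

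I expect no serious obstacle, as the argument is purely formal and parallels \Cref{lem:V_tau_defined} step for step; the only minor care required is to track the two different ways that edges incident to $v$ enter the matrix, which is handled transparently by the conjugation formula.
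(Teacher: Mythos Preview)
Your proposal is correct and is precisely the argument the paper has in mind: the paper does not spell out a separate proof but simply states that the lemma is proven in the same manner as \Cref{lem:V_tau_defined}, i.e., by the conjugation-by-a-diagonal-matrix computation you carry out. Your tracking of the row and column scalings and the invariance of the diagonal entry (using commutativity of $G$) is exactly right.
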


The key technical result of this section is the following:

\begin{proposition} \label{prop:image_poly}
With notation as above, $P_{D,\alpha}  = \alpha(P_D) \in \Z[G]$.
\end{proposition}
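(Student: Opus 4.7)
The plan is to show that $L_{D,\alpha}$ itself is obtained from the edge-level adjacency matrix $I - A$ by applying, coefficient-wise, a ring homomorphism that agrees with $\alpha$ on closed cycles. Once set up correctly, the proposition reduces to the fact that determinants commute with ring homomorphisms, together with the observation (used in the clique-polynomial formula recalled in the excerpt) that $P_D = \det(I-A)$ actually lies in the subring $\Z[H_1(D)] \subset \Z[C_1(D)]$.

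Concretely, I would begin by fixing the choice of lifts $\{\widetilde v : v \in V\}$ used in the definition of $L_{D,\alpha}$. For each oriented edge $e \colon a \to b$ of $D$, there is a unique lift $\widetilde e$ starting at $\widetilde a$, and this lift terminates at $g_e \cdot \widetilde b$ for a unique $g_e \in G$. Setting $\beta(e) = g_e$ and extending by linearity gives a group homomorphism $\beta \colon C_1(D) \to G$ (well-defined since $C_1(D)$ is free abelian on edges), which induces a ring homomorphism $\beta \colon \Z[C_1(D)] \to \Z[G]$. Directly from the definition of $L_{D,\alpha}$ in \Cref{eq:vertex_mod}, the entry $(L_{D,\alpha})_{vw}$ equals $\delta_{vw} - \sum_{e \colon v \to w} g_e$, so in matrix form
\[
L_{D,\alpha} = I - \beta(A),
\]
where $\beta$ is applied entrywise to the adjacency matrix $A$ defined in \Cref{eq:adj}.

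Next I would verify that $\beta$ restricted to the subgroup $Z_1(D) = H_1(D) \subset C_1(D)$ coincides with $\alpha$. This is a routine but essential lift-chasing argument: if $\gamma = e_1 e_2 \cdots e_k$ is an oriented closed loop based at $v_0$, then successively concatenating lifts shows that the lift of $\gamma$ starting at $\widetilde{v_0}$ terminates at $\beta(e_1)\cdots \beta(e_k) \cdot \widetilde{v_0}$; since the lift is closed in $\widetilde D_\alpha$ iff this element is the class determined by $[\gamma] \in H_1(D)$ under $\alpha$, we conclude $\beta([\gamma]) = \alpha([\gamma])$.

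Finally, since determinants commute with the ring homomorphism $\beta$ applied entrywise,
\[
P_{D,\alpha} = \det(L_{D,\alpha}) = \det\bigl(I - \beta(A)\bigr) = \beta\bigl(\det(I - A)\bigr) = \beta(P_D).
\]
The cited clique-polynomial identity \Cref{eq:cliquepoly} places $P_D$ in the subring $\Z[H_1(D)]$, on which $\beta$ agrees with $\alpha$, so $\beta(P_D) = \alpha(P_D)$, as desired. The only real subtlety in this plan is the dependence of $\beta$ on the chosen lifts; this is harmless because different choices modify $\beta$ by a coboundary on edges (conjugation of $L_{D,\alpha}$ by a diagonal matrix, as in \Cref{lem:basis_ind_2}), which alters $\beta$ on general $1$-chains but fixes its values on cycles, so $\beta|_{H_1(D)} = \alpha$ independently of the choice.
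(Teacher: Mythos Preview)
Your proof is correct and follows essentially the same strategy as the paper's: construct a $G$-labeling of the edges of $D$, identify $L_{D,\alpha}$ with $I$ minus the resulting $G$-adjacency matrix, and take determinants. The execution differs slightly. The paper fixes a maximal spanning tree $T\subset D$, lifts it to $\widetilde D_\alpha$, and uses the vertices of the lifted tree as the chosen lifts; this forces the edge-labels on $T$ to be trivial and makes the label on each non-tree edge equal, by construction, to the $\alpha$-image of the associated basis cycle. You instead allow arbitrary lifts, define $\beta\colon C_1(D)\to G$ directly from them, and then verify $\beta|_{H_1(D)}=\alpha$ by a deck-transformation argument. Your route is a bit cleaner in one respect: because you already know $\beta$ is a ring homomorphism on all of $\Z[C_1(D)]$ and that $P_D\in\Z[H_1(D)]$ by \eqref{eq:cliquepoly}, the identity $\det(I-\beta(A))=\alpha(P_D)$ is immediate, whereas the paper re-invokes the permutation expansion of the determinant to establish $\det(I-A^G)=\alpha(P_D)$. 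The paper's spanning-tree choice, on the other hand, makes the verification $L_{D,\alpha}=I-A^G$ and the agreement with $\alpha$ on cycles completely transparent without any lift-chasing.
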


In words, the polynomial $P_{D,\alpha}$ is obtained by replacing the terms of the Perron polynomial with their images under $\alpha$.

\begin{proof}
Choose a maximal tree $T\subset D$, and label each edge of $T$ with the trivial element of $G$. For each directed edge $e$ outside of $T$, there is a unique (not necessarily directed) cycle in $T\cup\{e\}$ traversing $e$ in the positive direction. Label $e$ with the image of this loop under $\alpha$. The corresponding labeling of the vertices of $\C(D)$ (obtained via concatenation of edges) is given by the homomorphism $\alpha$. This labeling gives $D$ the structure of a \emph{$G$-labeled directed graph} $D^G$. 
It has an adjacency matrix $A^G$ with entries in the group ring $\Z[G]$, given by replacing the edges in the entries of $A$ in \cref{eq:adj} with their $G$-labels. A proof entirely similar to that of \cref{eq:cliquepoly} gives that $\det(I-A^G) = \alpha(P_D)$ (c.f. \cite[Theorem 2.14]{algom2015digraphs}). 
Hence it suffices to show that $\det(I-A^G) = P_{D,\alpha}$.

 Lift $T$ to a tree $\widetilde T$ in $\widetilde D_\alpha$ which contains an orbit of each vertex. By \Cref{lem:basis_ind_2}, we are free to choose our lifts of vertices to be the vertices of $\wt T$. We claim that for this choice, $I-A^G = L_{D,\alpha}$.
For simplicity we reuse the symbol $v$ to denote the vertex of $\widetilde T$ lying above $v$.  We can write $L_{D,\alpha}v_i = v_i - (g_{i,1}v_1 + \cdots +g_{i,m}v_m)$ where $g_{i,j}\in G$ and each $g_{i,j}v_j$ corresponds to a unique edge $e_{i,j}$ from $v_i$ to $v_j$.
Note that $g_{i,j}=1=\id_G$ if and only if $e_{i,j}$ lies in $\wt T$. If $e_{i,j}$ lies outside of $\wt T$, then $g_{i,j}$ is the $\alpha$-image of the loop in $T$ corresponding to $e_{i,j}$.
This is exactly the description of the matrix $I-A^G$, so $I-A^G = L_{D,\alpha}$.
In conclusion,
\[
P_{D,\alpha} = \det (L_{D,\alpha}) = \det(I -A^G) = \alpha(P_D). \qedhere
\]
\end{proof}

We next turn to applying these results to a particular directed graph associated to a veering triangulation.

\subsection{The stable branched surface $B^s$ and the dual graph $\Gamma_\tau$}
\label{sec:stable_brached}
Here we define the \define{stable branched surface} $B^s$ in $M$ associated to the veering triangulation $\tau$. It will play an important role in this section and the next. The branched surface $B^s$ also appears in \cite{schleimer2019veering}, where is is called the \emph{upper branched surface in dual position}. 

Topologically, $B^s$ is the $2$-skeleton of the dual complex of $\tau$ in $M$. For each tetrahedron $t$ we define a smooth structure on $B^s_t = B^s \cap t$  as follows: if the top edge of $t$ is left veering, then we smooth according to the lefthand side of \Cref{fig:stablebs} and otherwise we smooth according the the righthand side. It remains to show that this defines a smooth structure globally, i.e. that it agrees across the faces of $\tau$. Note that for a face $f$ of $t$, $f \cap B^s_t$ is a train track with $3$ branches and a single interior switch. So $f \cap B^s_t$ has a single large branch.

\begin{figure}[h]
\centering
\includegraphics[height=1.5in]{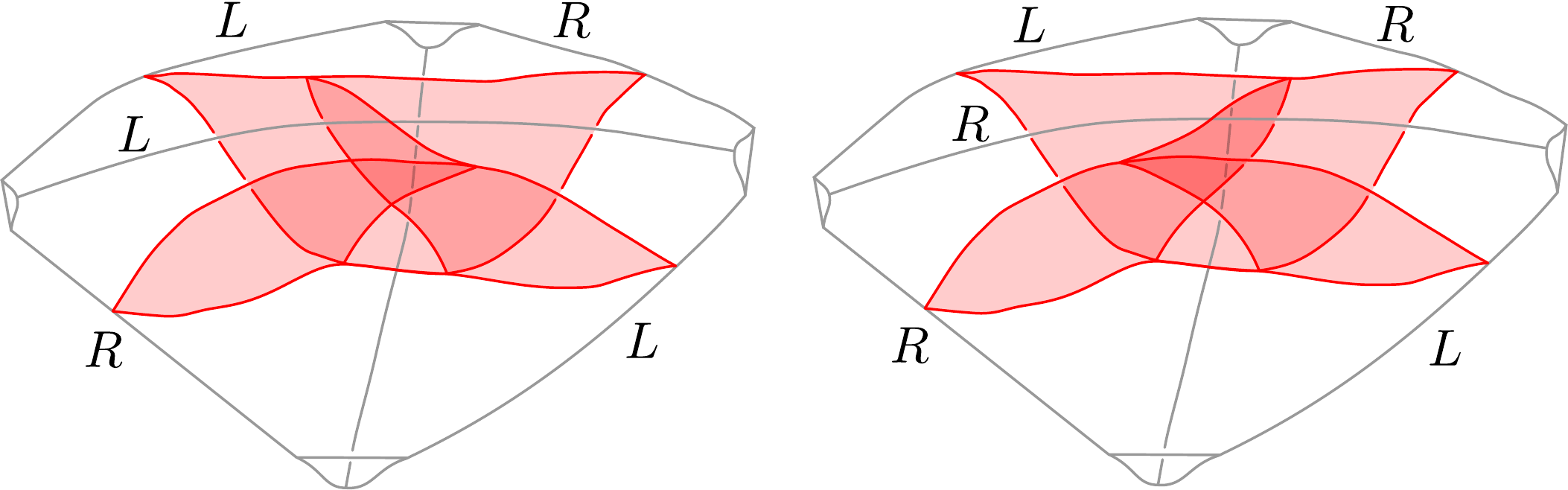}
\caption{The stable branched surface $B^s$ in a tetrahedron $t$ depending on whether the top edge of $t$ is left or right veering. The tetrahedron is a rotated version of the usual veering tetrahedron of \Cref{fig:veer} in order to make the picture easier to draw.}
\label{fig:stablebs}
\end{figure}

\begin{lemma}\label{lem:comp_smooth}
Let $f$ be a face of $\tau$ and let $e$ be its  bottom edge. For either of the two tetrahedra $t$ containing $f$, $f \cap B^s_t$ has its large branch meeting $e$. 
Hence $B^s$ has a well-defined smooth structure making it a branched surface.
\end{lemma}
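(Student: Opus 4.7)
The plan is to prove the lemma by a direct local analysis at $f$. Since $f\cap B^s_{t_i}$ is a tripod (a train track with three branches meeting at a single interior switch) for each of the two tetrahedra $t_1,t_2$ incident to $f$, it carries a single large branch. To show that $B^s$ has a well-defined smooth structure across $f$, it suffices to verify that for $i=1,2$ the large branch of $f\cap B^s_{t_i}$ ends at the same edge $e$ of $f$; the smooth structure will then extend consistently across the face.

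First I would extract from \Cref{fig:stablebs} an explicit combinatorial rule: for each face $g$ of a tetrahedron $t$, identify which edge of $g$ is met by the large branch of $g\cap B^s_t$. The figure splits into two cases by the veer of the top edge of $t$, and in each case the locations of the large branches of the four tripods (one per face of $t$) can be read off directly. A first consequence is that for any \emph{bottom} face $g$ of $t$, the large branch of $g\cap B^s_t$ meets the bottom $\pi$-edge of $t$. Applied to $t_1$, for which $f$ is a bottom face, this immediately shows that the large branch of $f\cap B^s_{t_1}$ meets $\mathbf{b}$, which equals $e$ by the definition of the bottom edge of $f$.

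For the remaining tetrahedron $t_2$, the face $f$ is a \emph{top} face, and one must identify which of the three edges of $f$ is $e$ using only the veering data of $t_2$. Here I would apply \Cref{lem:face_implies_tet} to $t_2$: it produces the unique bottom face $f'$ of $t_2$ paired with $f$, and identifies $f\cap f'$ as the edge of $t_2$ whose veer agrees with the top edge of $t_2$. Combined with \Cref{rmk:top_of_fan}, this pins down $e$ among the edges of $f$ purely in terms of the veers and the taut structure of $t_2$. One then matches this veering-theoretic identification against the rule from \Cref{fig:stablebs} for the large branch of a top face of $t_2$. The main technical obstacle is precisely this last compatibility step: it naturally splits into sub-cases depending on the position of $e$ in $t_2$ (top $\pi$-edge versus side edge, with a further subdivision on which side of the fan in the latter situation), and each sub-case is an elementary but careful check of veers, after which the two large branches coincide in $e$ and $B^s$ inherits a well-defined branched surface structure.
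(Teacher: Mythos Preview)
Your approach is correct and essentially the same as the paper's: handle the bottom-face case directly from \Cref{fig:stablebs}, and for the top-face case use \Cref{lem:face_implies_tet} together with \Cref{rmk:top_of_fan} to identify $e$ in terms of veers and match against the figure. The paper is slightly more direct---it reads off from \Cref{fig:stablebs} that for a top face of $t_2$ the large branch meets the edge whose veer agrees with the top edge of $t_2$, and then cites the proof of \Cref{lem:face_implies_tet} (c.f.\ \Cref{rmk:top_of_fan}) to conclude that $e$ is exactly that edge---so your threatened sub-case analysis on the position of $e$ in $t_2$ is unnecessary.
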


We note that the smooth structure on $B^s$ is characterized by its intersection with each face, as given in the first sentence of the lemma. 

\begin{proof}
We argue within a single tetrahedron $t$.

If $f$ is a bottom face of $t$, then this is immediate from \Cref{fig:stablebs}. If $f$ is a top face of $t$, then we also see from the figure that the large brach of $f \cap B^s_t$ meets the edge of $f$ with the same veer as the top edge of $t$. Hence, it remains to observe that the bottom edge of tetrahedron $t'$ glued to $t$ along $f$ has the same veer as the top of $t$. This was established in the proof of \Cref{lem:face_implies_tet} (c.f. \Cref{rmk:top_of_fan}).
\end{proof}

We note that since it is topologically a spine for the ideal triangulation $\tau$, $B^s$ is a deformation retract of $M$. This can be seen directly in \Cref{fig:stablebs}.

\smallskip

The $1$-skeleton of $B^s$ is the graph $\Gamma_\tau$ dual to $\tau$ whose edges are directed by the coorientation on the faces of $\tau$. We call $\Gamma_\tau$ (or simply $\Gamma$) the \define{dual graph}. Alternatively, $\Gamma$ can be described as the graph with a vertex interior to each tetrahedron and a directed edge crossing each cooriented face from the vertex in the tetrahedron below the face to the vertex in the tetrahedron above the face. We will always view $\Gamma$ as embedded in $M$.

Note that since $\Gamma$ is the $1$-skeleton of $B^s$ onto which $M$ deformation retracts, $\pi_1(\Gamma) \to \pi_1(M)$ is surjective.

\subsection{The flow graph $\Phi_\tau$}\label{sec:flowgraph}
For a veering triangulation $\tau$, its \define{flow graph} 
is a directed graph denoted by $\Phi_\tau$ (or simply $\Phi$ if there is no chance of confusion) and defined as follows: the vertices of $\Phi$  are in correspondence with $\tau$-edges, and for each tetrahedron $t$ with relation \Cref{eq:tet_rel}, there are directed edges from $\mathbf{b}$ to each of $\mathbf{t}$,  $\mathbf{s_1}$, and $\mathbf{s_2}$. 
That is, there are $\Phi$-edges from the bottom $\tau$-edge of each tetrahedron to its top $\tau$-edge and the two side $\tau$-edges whose veer is opposite that of the top $\tau$-edge. 
Let $\bf e$ be a $\tau$-edge with corresponding $\Phi$-vertex $v$. Then $v$ has outgoing valence 3 and it is a consequence of the lemmas to follow that the incoming valence of $v$ is $n-3$, where $n$ is the degree of $\bf e$.
The name ``flow graph" is motivated by results in 
\cite{veeringpoly2}
showing that in the presence of a certain flow related to $\tau
$, directed cycles in $\Phi_\tau$ correspond in a uniform way to closed orbits of the flow.

\begin{figure}[h]
\centering
\includegraphics[height=1.5in]{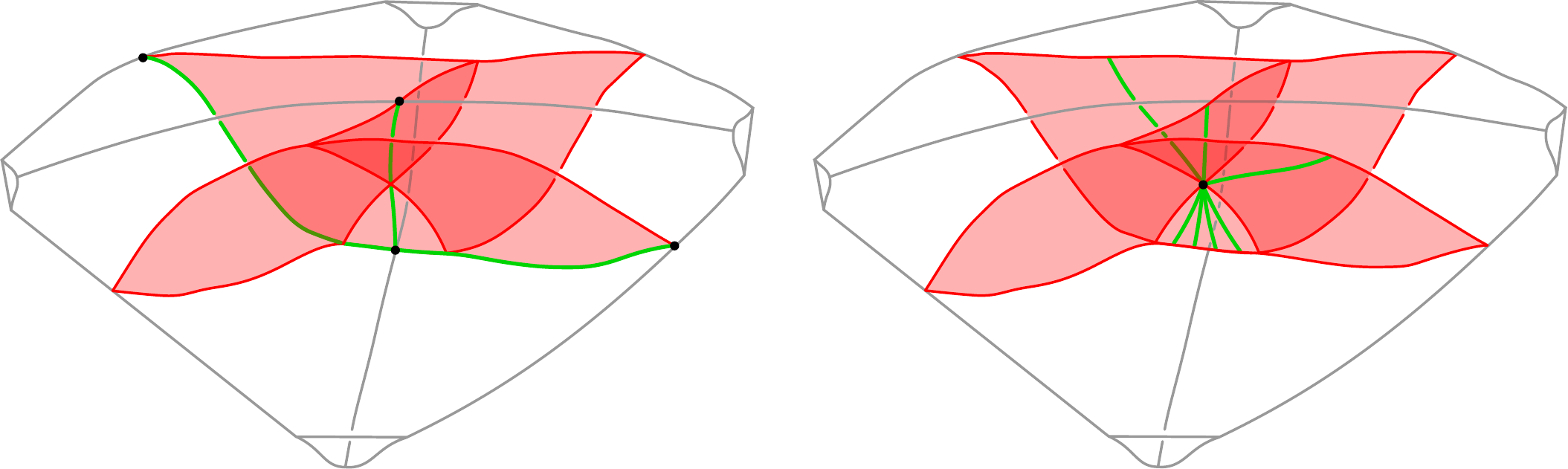} 
\caption{On the left is a local picture of $\Phi$ (green) in \emph{standard position}; $\Phi$-edges are oriented out of the vertex lying in the tetrahedron's bottom $\tau$-edge. On the right we have isotoped $\Phi$ upwards to lie in \emph{dual position}; the vertex which used to lie on the bottom $\tau$-edge is now equal to the $\Gamma$-vertex interior to the tetrahedron.}
\label{fig:flow_posns}
\end{figure}

There is an embedding $i\colon\Phi \hookrightarrow M$  which maps each vertex of $\Phi$ into its corresponding $\tau$-edge, and maps each $\Phi$-edge into either a $\tau$-face or a segment connecting the top and bottom $\tau$-edges of a tetrahedron. This can be done in such a way that the interior of each edge is smoothly embedded in $B^s$, as depicted on the lefthand side of \Cref{fig:flow_posns}. We call this the \textbf{standard position} for $\Phi$. This embedding induces a homomorphism $i_* \colon H_1(\Phi) \to G=H_1(M)/\text{torsion}$ and hence a ring homomorphism $i_* \colon \Z[H_1(\Phi)] \to \Z[G]$. The following lemma shows that these homomorphisms are surjective.

\begin{lemma} \label{lem:surj_hom}
The inclusion $i \colon \Phi \to M$ induces a surjection
\[
i_* \colon \pi_1(\Phi) \twoheadrightarrow \pi_1(M).
\] 
\end{lemma}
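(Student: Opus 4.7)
The plan is to reduce the surjectivity of $i_*$ to a fact the paper already records. Recall (from the paragraph following \Cref{lem:comp_smooth}) that $B^s$ is a deformation retract of $M$, so $\pi_1(M) \cong \pi_1(B^s)$. Since $\Gamma$ is the $1$-skeleton of the $2$-complex $B^s$, the inclusion-induced map $\pi_1(\Gamma) \twoheadrightarrow \pi_1(B^s) = \pi_1(M)$ is surjective, as already noted just after the definition of $\Gamma$. It therefore suffices to show that every loop in $\Gamma$ is freely homotopic in $M$ to a loop in $\Phi$, and for this it is enough to realize each individual $\Gamma$-edge by a path in $\Phi$ with the same endpoints, up to homotopy rel endpoints in $M$.

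I would first place $\Phi$ in dual position (the right panel of \Cref{fig:flow_posns}) by using the bottom-edge bijection between tetrahedra and $\tau$-edges: isotope each $\Phi$-vertex $\mathbf{b}_t$ until it coincides with the $\Gamma$-vertex at the center of $t$. After this isotopy the vertex sets of $\Phi$ and $\Gamma$ coincide, so each $\Gamma$-edge already has its initial and terminal vertices sitting in $\Phi$.

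The main step is then local. The $\Gamma$-edge dual to a face $f$ of $\tau$ runs from the vertex $\mathbf{b}_{t^-}$ (the center of the tetrahedron $t^-$ below $f$) to the vertex $\mathbf{b}_{t^+} = \mathbf{b}_f$ (the center of the tetrahedron $t^+$ above $f$). By the proof of \Cref{lem:face_implies_tet}, when $f$ is a top face of $t^-$ the edge $\mathbf{b}_f$ is a side edge of $t^-$ whose veer \emph{matches} that of the top edge of $t^-$; on the other hand, by construction of $\Phi$ the outgoing $\Phi$-edges from $\mathbf{b}_{t^-}$ go to the top edge and the two side edges of \emph{opposite} veer. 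Consequently a direct $\Phi$-edge from $\mathbf{b}_{t^-}$ to $\mathbf{b}_f$ need not exist, and one must use a $\Phi$-path of length two or more. The plan is to exhibit such a path lying inside the union $U$ of tetrahedra adjacent to the edge $\mathbf{b}_f$. Since $U$ deformation retracts onto the $2$-cell of $B^s$ dual to $\mathbf{b}_f$, which is a disk, any two paths in $U$ sharing endpoints are homotopic rel endpoints in $M$, and the required homotopy between the $\Gamma$-edge and the $\Phi$-path is automatic.

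The main obstacle is the combinatorial verification that such a local $\Phi$-path exists in every case. This requires a small case analysis using the veering data, in particular \Cref{rmk:top_of_fan} to track which tetrahedra in the fan of $\mathbf{b}_f$ carry top edges of which veer, and thereby which $\Phi$-arrows in the neighborhood of $\mathbf{b}_f$ point toward $\mathbf{b}_f$. Once this local step is established, concatenating the local substitutions across the edges of any loop in $\Gamma$ converts it into a freely homotopic loop in $\Phi$, proving that $i_*\colon \pi_1(\Phi) \to \pi_1(M)$ is surjective.
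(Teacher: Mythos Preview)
Your overall strategy coincides with the paper's: reduce to $\pi_1(\Gamma)\twoheadrightarrow\pi_1(M)$, put $\Phi$ in dual position so the vertex sets agree, and then show each $\Gamma$-edge is homotopic rel endpoints in $M$ to a path in $\Phi$. That part is fine, and it is exactly how the paper proceeds.

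The gap is in your local step. You propose to find the $\Phi$-path inside the star $U$ of the $\tau$-edge $\mathbf{b}_f$, noting that $U$ retracts to the $B^s$-sector $A$ pierced by $\mathbf{b}_f$. But in this particular sector your $\Gamma$-edge is the \emph{topmost} edge of its side: its terminal vertex is the top of $A$ (the center of $t^+$, the tetrahedron with $\mathbf{b}_f$ as bottom edge), and hence $t^-$ is precisely the last turn of that side. By \Cref{lem:sectors_turns} that turn is anti-branching, and by \Cref{fact:flow_edge} the vertex $t^-$ has \emph{no} $\Phi$-edge in $A$ to the top. Since every $\Phi$-edge lying in $A$ terminates at the top, there is no $\Phi$-path in $A$ touching $t^-$ at all, so the ``small case analysis'' you defer cannot succeed inside this sector. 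Your own observation that $\mathbf{b}_f$ has the same veer as the top of $t^-$ is exactly what forces this obstruction.

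The paper avoids this by choosing a \emph{different} sector for each $\Gamma$-edge $e$: among the six sectors meeting the terminal vertex $v$, it takes the one $S_e$ whose boundary contains both $e$ and the next edge of the branch line through $v$. This forces $v$ to be a branching turn of a side of $S_e$, so by \Cref{lem:sectors_turns} neither endpoint of $e$ is the last turn, and then \Cref{fact:flow_edge} supplies $\Phi$-edges in $S_e$ from each endpoint of $e$ to the top of $S_e$. Thus $e$ cobounds a disk in $S_e$ with two $\Phi$-edges, and the homotopy is immediate. The fix to your argument is simply to replace $U$ by the star of the $\tau$-edge piercing $S_e$ rather than the star of $\mathbf{b}_f$.
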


Before proving \Cref{lem:surj_hom} we develop the combinatorics of $B^s$ and its branch locus $\Gamma$, and discuss their interactions with $\Phi$. We recall that $\Gamma$ is the directed graph dual to $\tau$.

A line segment, closed curve, or ray which is smoothly immersed in the branch locus of $B^s$ is called a \textbf{branch segment, loop}, or \textbf{ray} respectively. A branch loop will also be called a \textbf{branch cycle} when we wish to think of it as a directed cycle of $\Gamma$. Let $p=(e_1,e_2,e_3,\dots)$ be a directed path in $\Gamma$, and let $v_i$ be the terminal vertex of $e_i$. We say $v_i$ is a \textbf{branching turn} of $p$ if the concatenation $e_i*e_{i+1}$ is a branch segment, and an \textbf{anti-branching turn} otherwise. 

Anti-branching turns will play an important role in \Cref{sec:polys}, and they can be combinatorially distinguished from branching turns in the following way. 

\begin{lemma}[Veering characterization of branching and anti-branching turns]\label{lem:branchingturn}
Let $t$ be a tetrahedron containing a $\Gamma$-vertex $v$. Let $f_1$ be  a bottom face and let $f_2$ be a top face of $t$, and let $e_1, e_2$ be the corresponding $\Gamma$-edges. Let $\bf{s}$ be the unique $\tau$-edge with the property that $f_1$ and $f_2$ are incident to $\bf{s}$ on the same side, with $f_2$ lying immediately above $f_1$. Then $v$ is a branching turn of $(e_1,e_2)$ if and only if $\bf{s}$ and the top edge of $t$ have opposite veer. In this case, when $\Phi$ is in standard position, there is a $\Phi$-edge lying in $f_1$ from the bottom $\tau$-edge of $t$ to $\bf{s}$.
\end{lemma}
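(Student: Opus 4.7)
My plan is to verify the lemma directly from the model of $B^s_t$ given in \Cref{fig:stablebs}, together with the face combinatorics already worked out in the proof of \Cref{lem:face_implies_tet}.

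Following that proof, I would first label the side edges of $t$: let $\bf{s_1}, \bf{s_2}$ be the two side edges with veer opposite to the top edge $\bf{t}$, and $\bf{r_1}, \bf{r_2}$ the two side edges with the same veer as $\bf{t}$, indexed so that for each $i\in\{1,2\}$ the triple $\{\bf{b},\bf{s_i},\bf{r_i}\}$ spans a bottom face of $t$ and $\{\bf{t},\bf{r_i},\bf{s_{3-i}}\}$ spans a top face. Since a bottom face and a top face of $t$ share exactly one edge, and this edge is necessarily a side edge, each of the four side edges of $t$ arises as the shared edge $\bf{s}$ of exactly one pair $(f_1,f_2)$ as in the hypothesis. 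Under the above labeling, the edge $\bf{s_1}$ is shared by $(\{\bf{b},\bf{s_1},\bf{r_1}\},\{\bf{t},\bf{r_2},\bf{s_1}\})$, while $\bf{r_1}$ is shared by $(\{\bf{b},\bf{s_1},\bf{r_1}\},\{\bf{t},\bf{r_1},\bf{s_2}\})$, and symmetrically for the other two pairs.

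The branch locus of $B^s$ inside $t$ consists of the four $\Gamma$-edges meeting at $v$, and the smooth structure on $B^s_t$ pairs them into two smooth arcs, which are exactly the branching turns at $v$. To identify these pairs, I would inspect \Cref{fig:stablebs} directly; alternatively, \Cref{lem:comp_smooth} determines the train-track switch in each face $f$ of $t$ (the large branch of $f\cap B^s$ meets the bottom edge of the tetrahedron above $f$), and following the large branches through $v$ from bottom to top pins down the pairing. Either way, the outcome is that the two branching pairs at $v$ are precisely those for which the shared side edge has veer opposite to $\bf{t}$, namely the ones indexed by $\bf{s_1}$ and $\bf{s_2}$, proving the first assertion.

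For the second assertion, recall from \Cref{sec:flowgraph} that $\Phi$ has edges from $\bf{b}$ to $\bf{t}$, $\bf{s_1}$, and $\bf{s_2}$. In standard position (\Cref{fig:flow_posns}), the $\Phi$-edge from $\bf{b}$ to $\bf{s_i}$ is embedded in the unique bottom face of $t$ containing both $\bf{b}$ and $\bf{s_i}$, namely $\{\bf{b},\bf{s_i},\bf{r_i}\}$. When $\bf{s}$ has veer opposite to $\bf{t}$, it coincides with one of $\bf{s_1}$ or $\bf{s_2}$, and the bottom face $f_1$ in the hypothesis is then exactly the bottom face containing this $\Phi$-edge. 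The main obstacle is the middle step: correctly reading off the pairing at $v$ from \Cref{fig:stablebs}, or equivalently checking that the large branches in the four faces of $t$ match up through $v$ in the asserted ``crossing'' pattern. This is a finite verification, but one must carefully track how the four in-face smoothings combine at $v$; the veer of $\bf{t}$ is precisely what controls the outcome.
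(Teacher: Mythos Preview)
Your proposal is correct and follows essentially the same approach as the paper: the paper's proof is the single sentence ``follows from inspecting the two cases pictured in \Cref{fig:stablebs},'' and your argument is a written-out version of that inspection, organized via the edge labeling from the proof of \Cref{lem:face_implies_tet}. Your cross-check against the characterization of the anti-branching face pairing in \Cref{lem:face_implies_tet} (the shared edge has the \emph{same} veer as $\mathbf{t}$) is a nice sanity check, and the second assertion is handled exactly as the paper intends, by reading off the standard-position $\Phi$-edges from \Cref{fig:flow_posns}.
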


The proof of \Cref{lem:branchingturn} follows from inspecting the two cases pictured in \Cref{fig:stablebs}.

Each sector $A$ of $B^s$ is a topological disk pierced by a single $\tau$-edge, as in \Cref{fig:sectors}. The $\Gamma$-edges bounding $A$ are oriented so that exactly one vertex is a source, which we call the \textbf{bottom} of $A$, and one is a sink, which we call the \textbf{top} of $A$. The top and bottom divide the boundary of $A$ into two oriented $\Gamma$-paths called \textbf{sides}. Each side has at least two $\Gamma$-edges because the $\tau$-edge piercing $A$ has a nonempty fan on each side.
The following lemma says that if you remove the last edge in any side of any sector of $B^s$, the resulting path is a branch segment, and that the entire side is never a branch segment.

\begin{lemma}[Sectors and turns]\label{lem:sectors_turns}
Let $A$ be a sector of $B^s$ and let $p$ be a side of $A$ considered as a directed path in $\Gamma$ from the bottom to the top of $A$.  The last turn of $p$ is anti-branching, and all other  turns are branching.
\end{lemma}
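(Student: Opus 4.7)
The plan is to identify the boundary path $p$ of a sector $A$ with the sequence of $\Gamma$-vertices dual to the tetrahedra meeting the piercing $\tau$-edge $\mathbf{e}$ on one side of its fan (together with the two $\pi$-edge tetrahedra capping off the fan), and then to apply Lemma \ref{lem:branchingturn} together with Fact \ref{rmk:top_of_fan}.

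First I would unpack the combinatorics of $\partial A$. Since $A$ is dual to $\mathbf{e}$, its boundary $\Gamma$-vertices are dual to the tetrahedra containing $\mathbf{e}$, and the directions on the bounding $\Gamma$-edges are determined by the face coorientations. A $\Gamma$-vertex in a tetrahedron $t$ is a source of $\partial A$ exactly when both $\tau$-faces of $t$ on $\partial A$ are top faces, i.e., when $\mathbf{e}$ is the top ($\pi$-)edge of $t$; similarly the sink is dual to the tetrahedron in which $\mathbf{e}$ is the bottom edge. The intermediate tetrahedra along a single side of $\partial A$ are then exactly those in the corresponding side of the fan of $\mathbf{e}$, linearly ordered from bottom to top by the coorientation on faces. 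Write them $t_0, t_1, \ldots, t_k, t_{k+1}$, where $t_0$ is the source, $t_{k+1}$ is the sink, and $t_1, \ldots, t_k$ form the fan on the chosen side. Then $p = (e_1, \ldots, e_{k+1})$ with each $e_i$ directed from $t_{i-1}$ to $t_i$, and the hypothesis that each side has at least two $\Gamma$-edges guarantees $k \geq 1$.

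Next I would apply Lemma \ref{lem:branchingturn} at each interior vertex $t_i$ of $p$, for $1 \leq i \leq k$. The two $\tau$-faces $f_1, f_2$ of $t_i$ lying along $p$ both contain $\mathbf{e}$ and lie on the same side of $\mathbf{e}$ with $f_2$ immediately above $f_1$, so the special edge $\mathbf{s}$ of that lemma is $\mathbf{e}$ itself. Thus the turn at $t_i$ is branching if and only if the top edge of $t_i$ has the opposite veer from $\mathbf{e}$. By Fact \ref{rmk:top_of_fan}, the top edge of $t_i$ has the same veer as $\mathbf{e}$ precisely when $t_i$ is topmost in its side of the fan, namely when $i = k$. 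Hence the turns at $t_1, \ldots, t_{k-1}$ are all branching while the final turn at $t_k$ is anti-branching, which is exactly the claim.

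The only potentially sticky point, which really is minor bookkeeping, is verifying the dictionary in the first step: that the two $\pi$-edge tetrahedra of $\mathbf{e}$ genuinely give the source and sink of $\partial A$ and that the remaining vertices on one side of $\partial A$ biject (in order) with one side of the fan. Once this has been set up, the conclusion is a direct invocation of the two combinatorial lemmas already in hand.
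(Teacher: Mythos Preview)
Your proposal is correct and follows essentially the same approach as the paper: identify the side $p$ with one side of the fan of the piercing edge $\mathbf{e}$, observe that the special edge $\mathbf{s}$ of Lemma~\ref{lem:branchingturn} is $\mathbf{e}$ itself at each turn, and then combine Lemma~\ref{lem:branchingturn} with Fact~\ref{rmk:top_of_fan}. The paper's proof is terser---it leaves the source/sink identification and the $\mathbf{s}=\mathbf{e}$ verification implicit---but the underlying argument is identical.
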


\begin{proof}
Let $\bf e$ be the $\tau$-edge piercing $A$. The side $p$ corresponds to a side of a fan of $\bf e$, and each turn of $p$ lies interior to one of the corresponding tetrahedra. As we observed in \Cref{rmk:top_of_fan}, the top $\tau$-edge of one of these tetrahedra will have the same veer as $\bf{e}$ if and only if it is topmost in the side of the fan.
By \Cref{lem:branchingturn}, the lemma is proved.
\end{proof}

By isotoping $\Phi$ upward from standard position we can arrange for the vertex sets of $\Gamma$ and $\Phi$ to coincide, and for $\Phi$ to still lie in $B^s$ with the interior of each $\Phi$-edge smoothly embedded. This isotopy pushes a $\Phi$-vertex in a $\tau$-edge $e$ onto the $\Gamma$-vertex interior to the tetrahedron above $e$. See the righthand side of \Cref{fig:flow_posns}. 
We call this the \textbf{dual position} for $\Phi$. Note that in dual position, the interior of each $\Phi$-edge is disjoint from $\Gamma$ and positively transverse to $\tau^{(2)}$.  
\emph{From this point forward we assume $\Phi$ is in dual position unless otherwise stated}.

The following lemma describes the intersection of $\Phi$ with a sector of $B^s$.

\begin{lemma}[Sectors and $\Phi$]\label{fact:flow_edge}
Let $A$ and $p$ be as in \Cref{lem:sectors_turns}. Then if $e$ is a $\Gamma$-edge in $p$, then the bottom vertex of $e$ is connected by a $\Phi$-edge in $A$ to the top of $A$ if and only if $e$ is not topmost in $A$.
\end{lemma}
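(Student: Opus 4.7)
The plan is to reduce the lemma to a purely combinatorial statement about when a $\Phi$-edge targeting $\bf{e}$ exists, and then apply \Cref{rmk:top_of_fan}. To fix notation, let $t_{\bot}$ be the unique tetrahedron with $\bf{e}$ as its top edge and $t_{\top}$ the unique tetrahedron with $\bf{e}$ as its bottom edge (these are the two tetrahedra in which $\bf{e}$ is a $\pi$-edge), and let $t_1,\dots,t_k$ be the fan tetrahedra of $\bf{e}$ on the side corresponding to $p$, ordered from bottom to top by coorientation. Then the $\Gamma$-vertices traversed by $p$ are $v_0,v_1,\dots,v_{k+1}$, with $v_0$ (the bottom of $A$) interior to $t_{\bot}$, $v_i$ interior to $t_i$ for $1\le i\le k$, and $v_{k+1}$ (the top of $A$) interior to $t_{\top}$. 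The $\Gamma$-edges of $p$ are $e_i\colon v_i\to v_{i+1}$, so $e_k$ is the topmost. In dual position, the $\Phi$-vertex corresponding to any $\tau$-edge $\mathbf{x}$ coincides with the $\Gamma$-vertex interior to the tetrahedron having $\mathbf{x}$ as its bottom edge; so $v_0,v_1,\dots,v_k$ are identified with the $\Phi$-vertices for the bottom edges of $t_\bot,t_1,\dots,t_k$ respectively, while $v_{k+1}$ is identified with the $\Phi$-vertex for $\bf{e}$.

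With this dictionary, asking for a $\Phi$-edge from the bottom vertex of $e_i$ to the top of $A$ amounts to asking for a $\Phi$-edge whose source is the bottom edge of $t_\bot$ (when $i=0$) or of $t_i$ (when $1\le i\le k$), and whose target is $\bf{e}$. By the definition of $\Phi$, such an edge exists iff $\bf{e}$ is either the top edge of the corresponding tetrahedron or a side edge with veer opposite to that of the top. For $i=0$ this is immediate because $\bf{e}$ is the top edge of $t_\bot$. For $1\le i\le k$, the edge $\bf{e}$ is a side ($0$-)edge of $t_i$, so by \Cref{rmk:top_of_fan} its veer agrees with that of the top of $t_i$ exactly when $t_i$ is topmost in its side of the fan, i.e., when $i=k$. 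Consequently the sought $\Phi$-edge exists precisely for $0\le i\le k-1$, which is exactly the condition that $e_i$ is not topmost in $A$.

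It remains to verify that each of these $\Phi$-edges lies in $A$. For $i=0$ the standard-position $\Phi$-edge runs through $t_\bot$ from its bottom edge up to its top edge $\bf{e}$, and for $1\le i\le k-1$, \Cref{lem:branchingturn} places the standard-position $\Phi$-edge in the bottom face of $t_i$ incident to both $\bf{e}$ and the bottom of $t_i$. In either case the dual-position endpoints $v_i$ and $v_{k+1}$ already lie on $\partial A$, and I arrange the isotopy from standard to dual position so that the target endpoint slides from $\bf{e}$ up to $v_{k+1}$ along $A\cap t_\top$ (the dual $2$-disk to $\bf{e}$ in $t_\top$), keeping the entire curve inside the sector $A$. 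The main obstacle is precisely this last geometric routing argument — ensuring the dual-position $\Phi$-edge can be chosen to stay in $A$ rather than slipping into a neighboring sector of $B^s$; the rest of the argument is a direct unwinding of the definition of $\Phi$ together with \Cref{rmk:top_of_fan}.
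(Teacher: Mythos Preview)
Your proof is correct and takes essentially the same approach as the paper. The paper's argument is a three-sentence compression of what you wrote: it cites \Cref{lem:branchingturn} (which packages the definition of $\Phi$ together with the veer criterion) to say a $\Phi$-edge exists in $A$ at a turn of $p$ iff that turn is branching, and then invokes \Cref{lem:sectors_turns} to identify which turns those are. You bypass those two lemmas and appeal directly to \Cref{rmk:top_of_fan} plus the definition of $\Phi$, which is exactly what their proofs reduce to anyway. Your concern about the ``lies in $A$'' routing is legitimate---the paper treats it as implicit from the figures and the choice of dual position---so your extra paragraph is a reasonable addition rather than a gap.
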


\begin{proof}
It is clear that there is a $\Phi$-edge running from the bottom to the top of $A$. Further, \Cref{lem:branchingturn} tells us that there will be a $\Phi$-edge in $A$ from a turn of $p$ to the top of $A$ if and only if the turn is branching. Applying \Cref{lem:sectors_turns} finishes the proof.
\end{proof}

\begin{figure}[h]
\centering
\includegraphics[height=1.75in]{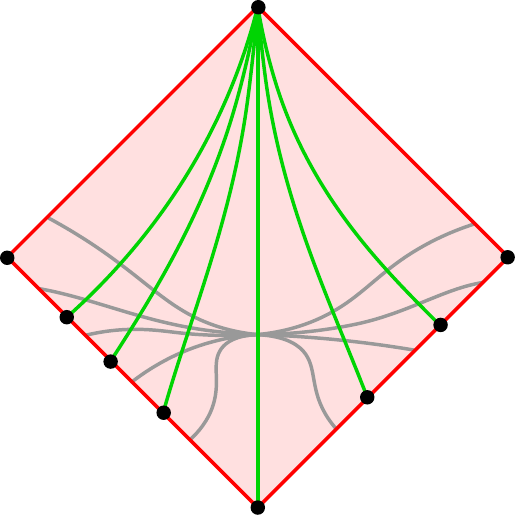}
\caption{A sector of $B^s$ with $\Phi$ in dual position. The intersection with $\tau^{(2)}$ is shown in gray and $\Phi$-edges are green.}
\label{fig:sectors}
\end{figure}

The picture described by \Cref{fact:flow_edge} is shown in \Cref{fig:sectors}. Note that some sectors may have sides with 2 edges. However, we see in the following argument that for every $\Gamma$-edge $e$, there is a sector $A$ of $B^s$ such that $e$ lies in a side of $A$ with at least 3 edges.

\begin{proof}[Proof of \Cref{lem:surj_hom}]
Let $e$ be a $\Gamma$-edge with terminal point $v$. We claim there is a sector $S_e$ and a side $p$ of $S_e$ such that $e$ lies in $p$ and
neither endpoint of $e$ is the last turn of $p$.
 This can be seen by considering the branching around $v$: of the (locally) 6 sectors meeting at $v$, the sector $S_e$ is determined uniquely by the property that its boundary contains both $e$ and the next edge in the branch cycle containing $e$ (the reader may find it helpful to consult \Cref{fig:stablebs}). Since $v$ is thus a branching turn for a side of $S_e$, we have proved our claim by \Cref{lem:sectors_turns}. 
It follows from  \Cref{fact:flow_edge} that each $\Gamma$-edge cobounds a disk in $B^s$ with two $\Phi$-edges. Since $\pi_1(\Gamma)$ surjects $\pi_1(M)$,
this proves the lemma.
\end{proof}

\begin{theorem} \label{th:veering_digraph}
Let $(M,\tau)$ be a veering triangulation and $\Phi_\tau$ its flow graph. Then 
\[
V_\tau =  i_*(P_{\Phi}) = 1 + \sum_{\sigma} (-1)^{|\sigma|} i_*(\sigma),
\]
where $P_{\Phi}$ is the Perron polynomial of $\Phi_\tau$.
\end{theorem}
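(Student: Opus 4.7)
The plan is to show that the matrix $L$ presenting the edge module $\e(\wt\tau)$ coincides, for compatible choices of lifts, with the matrix $L_{\Phi,i_*}$ presenting the $i_*$-labeled vertex module $V(\wt{\Phi}_{i_*})$. Once this identification is in place, \Cref{prop:image_poly} yields
\[
V_\tau \;=\; \det(L) \;=\; \det(L_{\Phi,i_*}) \;=\; P_{\Phi,i_*} \;=\; i_*(P_\Phi),
\]
and the displayed sum formula then follows by applying the clique polynomial expression \eqref{eq:cliquepoly} to $P_\Phi$ and pushing forward through $i_*$.

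To justify the identification $L = L_{\Phi,i_*}$, I would proceed as follows. By the definition of $\Phi_\tau$ in \Cref{sec:flowgraph}, the vertices of $\Phi$ are in canonical bijection with the edges of $\tau$, and for each tetrahedron $t$ with relation $\mathbf{b}=\mathbf{t}+\mathbf{s}_1+\mathbf{s}_2$ there are precisely three directed edges out of the vertex corresponding to $\mathbf{b}$, namely one edge each to the vertices corresponding to $\mathbf{t}$, $\mathbf{s}_1$, and $\mathbf{s}_2$. In particular, the out-neighbors of a $\Phi$-vertex record exactly the right-hand side of the associated tetrahedron relation, so the combinatorial structure of the two presentations agrees. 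By \Cref{lem:surj_hom}, $i_*\colon \pi_1(\Phi)\twoheadrightarrow \pi_1(M)$, so $i_*\colon H_1(\Phi)\to G$ is surjective and $V(\wt{\Phi}_{i_*})$ is defined as a $\Z[G]$-module.

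To match the $G$-coefficients, I would place $\Phi$ in standard position, so each $\Phi$-vertex $v$ lies on the interior of its corresponding $\tau$-edge $\mathbf{e}$. The covering map $\wt{\Phi}_{i_*}\to \Phi$ then lifts the inclusion $i$ to a map $\wt i \colon \wt{\Phi}_{i_*}\to \wt M$ that takes each lifted vertex into a lifted edge of $\wt\tau$. Choose a lift $\wt v$ for every $\Phi$-vertex $v$ as in \Cref{lem:basis_ind_2}, and take the lift of the corresponding $\tau$-edge to be the one containing $\wt i(\wt v)$. With these compatible choices, a directed $\Phi$-edge from $v$ to $w$ lifts to an edge in $\wt\Phi_{i_*}$ from $\wt v$ to $g\cdot \wt w$ for some $g\in G$, and the same element $g$ is exactly the deck-transformation coefficient appearing when the lifted tetrahedron relation $\wt{\mathbf{b}}=\sum_j g_j \wt{\mathbf{s}_j}$ is written out in the chosen basis of edges. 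Thus the matrices $L$ and $L_{\Phi,i_*}$ have the same off-diagonal entries (and both have $1$'s on the diagonal), so $L = L_{\Phi,i_*}$.

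The main subtlety is verifying this matching of $G$-coefficients: one has to be careful that the lift $\wt i$ of $i$ to the appropriate free abelian covers is well-defined and that the chosen lifts of $\Phi$-vertices and $\tau$-edges are genuinely compatible. Once this bookkeeping is done, however, the identification $L = L_{\Phi,i_*}$ is essentially forced by the definition of $\Phi_\tau$, which was engineered to encode the tetrahedron relations. Combining with \Cref{prop:image_poly} gives $V_\tau = i_*(P_\Phi)$, and substituting the clique-polynomial expansion \eqref{eq:cliquepoly} for $P_\Phi$ produces the displayed sum over cliques $\sigma$ of $\C(\Phi_\tau)$.
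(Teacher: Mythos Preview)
Your proposal is correct and follows essentially the same approach as the paper's own proof: both arguments use \Cref{lem:surj_hom} to identify $\wt\Phi_{i_*}$ with the preimage of $\Phi$ in $\aM$, choose compatible lifts so that the presentation matrix $L_{\Phi,i_*}$ for $V(\wt\Phi_{i_*})$ coincides with the matrix $L$ presenting $\e(\wt\tau)$, and then invoke \Cref{prop:image_poly}. Your write-up is somewhat more explicit about the bookkeeping (placing $\Phi$ in standard position and tracking the deck-transformation coefficients), but the underlying argument is the same.
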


In other words, the veering polynomial is obtained from the Perron polynomial of its flow graph by replacing its directed cycles with the corresponding homology classes in $M$. We warn the reader that some of these classes may be trivial, as is the case when $\tau$ represents a top dimensional face that is not fibered. This is discussed in \Cref{sec:conedimension}. 

\begin{proof}
Let $\aM\to M$ be the universal free abelian cover of $M$, which has deck group $G$, and let $\wt\Phi$ be the preimage of $\Phi$. Since $\Phi\to M$ is $\pi_1$-surjective by \Cref{lem:surj_hom}, the restriction $\wt \Phi\to \Phi$ is the covering map associated to 
$\ker(i_*\circ \text{ab}\colon \pi_1(\Phi)\to G)$.

Choose a lift of each $\Phi$-vertex to $\aM$, determining a presentation for $V(\wt\Phi_{i_*})$ and thus a matrix $L_{\Phi,i_*}$ as in \cref{eq:vertex_mod}. 
By \Cref{prop:image_poly}, $i_*(P_{\Phi})$ is equal to $P_{\Phi,i_*} = \det (L_{\Phi,i_*})$. 
By the correspondence between $\Phi$-vertices and $\tau$-edges, the chosen lifts of $\Phi$-vertices give a family of lifted $\tau$-edges and thus a presentation for $\mathcal E(\wt \tau)$ with an associated matrix $L$ as in \cref{eq:edge_mod}. For our choices of generators, the definition of $\Phi$ gives that these two presentation matrices are equal. Thus
\[
V_\tau =\det(L)=\det(L _{\Phi,i_*})=  i_*(P_{\Phi}),
\]
completing the proof.
\end{proof}

\section{Carried classes, homology directions, and the Thurston norm}
\label{sec:cones}

Fix a veering triangulation $\tau$ of $M$, let $\Gamma = \Gamma_\tau$ be its dual graph,
and let $\Phi = \Phi_\tau$ denote its flow graph. In this section, we first show that various cones in $H_1(M; \R)$ naturally associated to these directed graphs are equal (\Cref{th:cones_equal}). 
Then we show that the dual of these cones in $H^1(M; \R) = H_2(M, \partial M; \R)$ is precisely the cone over a face of the Thurston norm unit ball (\Cref{th:whole_face}) and describe a connection to the veering polynomial (\Cref{th:omni}).

\subsection{Homology directions and carried classes}\label{sec:conedefs}

Let $V$ be a finite dimensional real vector space and let $C\subset V$ be a \define{convex polyhedral cone}, which by definition is the nonnegative span of finitely many vectors in $V$. The \textbf{dimension} of $C$ is the dimension of the subspace generated by $C$. 
The \textbf{dual cone} to $C$ is 
\[
C^\vee=\{u\in V^*\mid u(v)\ge0 \text{ for all } v\in C\}\subset V^*.
\]
One sees that $(C^{\vee})^\vee=C$. Let $d(C)$ be the dimension of the largest linear subspace contained in $C$. Then the dimension of $C^\vee$ is $\dim(V)-d(C)$. If $d(C)=0$, we say $C$ is  \textbf{strongly convex}. Thus if $C$ is a top-dimensional strongly convex cone (e.g. a cone over a fibered face of $B_x(M)$), then $C^\vee$ is also a top-dimensional strongly convex cone. For a reference on convex polyhedral cones see \cite[Section 1.2]{Ful93}.

Let $D$ be a directed graph embedded in $M$. Define  
$\cone_1(D)\subset H_1(M ; \R)$ to be the nonnegative span of the images of directed cycles in $D$ under inclusion. We will write $\cone_1^\vee(D):=(\cone_1(D))^\vee$. For the dual graph $\Gamma$, we call $\cone_1(\Gamma)$ the \textbf{cone of homology directions} of $\tau$. Note that any directed cycle in $\Gamma$ gives a closed curve in $M$ which is positively transverse to $\tau^{(2)}$, and conversely any closed curve transverse to $\tau^{(2)}$ is homotopic to a directed cycle in $\Gamma$. Hence the cone of homology directions of $\tau$ is equal to the cone generated by all closed curves which are positively transverse to $\tau^{(2)}$ at each point of intersection.

We define the \textbf{cone of carried classes} of $\tau$, which we denote $\cone_2(\tau)\subset H_2(M,\partial M; \R)$, to be the cone of classes carried by the branched surface $\tau^{(2)}$.

The first goal of this section is to prove the following theorem relating the above cones.

\begin{theorem}[Cones]\label{th:cones_equal}
For any veering triangulation $\tau$ of $M$ with dual graph $\Gamma$ and flow graph $\Phi$, we have $\cone_1(\Gamma) = \cone_1(\Phi)$. Moreover,
\[
\cone_2(\tau) = \cone_1^\vee(\Gamma) = \cone_1^\vee(\Phi),
\]
where we have identified $H^1(M;\R) = H_2(M, \partial M; \R)$ via Lefschetz duality.
\end{theorem}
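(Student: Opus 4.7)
The plan is to establish the cycle of inclusions
\begin{enumerate}
\item[(I)] $\cone_1(\Phi) \subset \cone_1(\Gamma)$,
\item[(II)] $\cone_2(\tau) \subset \cone_1^\vee(\Gamma)$,
\item[(III)] $\cone_1^\vee(\Gamma) \subset \cone_1^\vee(\Phi)$, and
\item[(IV)] $\cone_1^\vee(\Phi) \subset \cone_2(\tau)$,
\end{enumerate}
from which both claims of the theorem follow: combining (II)--(IV) gives $\cone_2(\tau) = \cone_1^\vee(\Gamma) = \cone_1^\vee(\Phi)$, and dualizing the equality of the two dual cones in $H^1(M;\R)$ (using that $\cone_1(\Gamma)$ and $\cone_1(\Phi)$ are finitely generated polyhedral cones, hence closed and equal to their double duals) yields $\cone_1(\Gamma) = \cone_1(\Phi)$.

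Inclusions (I), (II), and (III) are essentially immediate from the geometric setup. For (I), each $\Phi$-edge in dual position is positively transverse to $\tau^{(2)}$ with endpoints on $\Gamma$-vertices, so any directed cycle in $\Phi$ is a closed positive transversal to $\tau^{(2)}$, homologous to the directed $\Gamma$-cycle obtained from the ordered sequence of faces that it crosses. For (II), a carried class $\alpha = [S]$ realized by a nonneg face weight $w$ via \Cref{lem:integral_classes} pairs with any directed $\Gamma$-cycle $\gamma$ by counting $w$-weighted transverse intersections (all positive by the coorientation), yielding a nonneg number. Inclusion (III) is the formal dual of (I).

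The main work is inclusion (IV), which is a combinatorial cousin of Fried's criterion for a flow to admit a cross section. My plan is to invoke the Farkas lemma for directed graphs (equivalently, the potential function theorem): given $\alpha \in \cone_1^\vee(\Phi)$, the restriction of $\alpha$ to a cocycle on $\Phi$ is nonneg on every directed cycle, hence is cohomologous to a nonneg cochain $w\colon E(\Phi) \to \R_{\ge 0}$, via some potential $p\colon V(\Phi) \to \R$. I will then translate $w$ into a nonneg face weight on $\tau$ satisfying the switch conditions and representing $\alpha$ in $H_2(M,\partial M;\R)$. The translation distributes, at each $\Phi$-vertex (a $\tau$-edge $\mathbf{b}$), the weights $w$ on its three outgoing $\Phi$-edges into face weights using the bottom/top face pairing from \Cref{lem:face_implies_tet} (which asserts that each bottom face of a tetrahedron is paired with a unique top face whose face relations sum to the tetrahedron relation), allowing the tetrahedron relation encoded by the flow graph to be refined back into genuine face relations. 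The Farkas potential $p$ supplies the degrees of freedom needed to globalize this assignment.

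The principal obstacle is this translation step in (IV). The asymmetry between the three $\Phi$-edges emanating at each $\Phi$-vertex and the two bottom faces of the associated tetrahedron, together with the nonlocal character of the switch conditions (which couple all faces in the fan around each $\tau$-edge), will demand careful combinatorial bookkeeping that combines the bottom/top pairing of \Cref{lem:face_implies_tet} with the sector analysis of $B^s$ developed in \Cref{sec:stable_brached}; I expect this to be where the main effort resides.
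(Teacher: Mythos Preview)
Your cycle of inclusions (I)--(IV) followed by double-dualization is a valid logical framework, and (I)--(III) are indeed routine.  But your plan for (IV) diverges from the paper's argument, and as written it contains a genuine gap.

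The paper does \emph{not} apply the cocycle-extension (Farkas) lemma to $\Phi$.  It applies it to the dual graph $\Gamma$ (\Cref{prop:dual_carried}), where the edges are already in bijection with $\tau$-faces.  A nonnegative cocycle on $E(\Gamma)$ is then literally a system of nonnegative face weights, and the switch condition at a $\tau$-edge $\mathbf e$ is precisely the cocycle condition on the null-homotopic loop $\ell\cdot\bar r$ bounding the $B^s$-sector pierced by $\mathbf e$.  No translation step is needed at all.  This yields $\cone_2(\tau)=\cone_1^\vee(\Gamma)$ directly.  The equality $\cone_1(\Gamma)=\cone_1(\Phi)$ is then proven \emph{separately} (\Cref{prop_samecone}) by an explicit geometric construction inside the tubes and bands of $B^s$: for every dual cycle $c$ one builds a flow ray that is eventually periodic and produces a flow cycle homotopic to $c$ or to $c^2$, with the parity controlled by \Cref{cor:push_parity}.

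Your step (IV) attempts to merge these two arguments into a single Farkas-on-$\Phi$ step, and the translation you propose is where it breaks down.  The three outgoing $\Phi$-edges at a vertex do not correspond to the two bottom faces of the tetrahedron, so there is no bijection $E(\Phi)\leftrightarrow F$; the bottom/top pairing of \Cref{lem:face_implies_tet} is an identity among module relations, not a rule for distributing nonnegative edge weights to faces while preserving switch conditions.  More to the point, since (II) and (IV) together force $\cone_1^\vee(\Phi)\subset\cone_1^\vee(\Gamma)$ and hence $\cone_1(\Gamma)\subset\cone_1(\Phi)$, any proof of (IV) must somewhere encode the hard direction of \Cref{prop_samecone}.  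Your local splitting plus the Farkas potential $p$ does not visibly capture the global band/tube combinatorics the paper uses for that inclusion; absent a concrete construction of the face weights, this is a gap rather than an alternative route.  The simpler fix is to run the cocycle-extension argument on $\Gamma$ instead, accept that this only gives $\cone_2(\tau)=\cone_1^\vee(\Gamma)$, and then confront $\cone_1(\Gamma)=\cone_1(\Phi)$ head-on.
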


We now turn to the combinatorial definitions and observations we need to prove \Cref{th:cones_equal}.

\subsubsection{Relating $\Gamma$, $\Phi$, and $B^s$}
\label{sec:relating}

An oriented path, ray, or cycle in the dual graph $\Gamma$ is called a \define{dual path}, \define{ray}, or \define{cycle}, respectively.  Similarly, we call an oriented path, ray, or cycle in the flow graph $\Phi$ a \define{flow path}, \define{ray}, or \define{cycle}.

As a first step of the proof of \Cref{th:cones_equal}, we will prove \Cref{prop_samecone} which asserts that the dual cycles and flow cycles generate the same cone in $H_1(M;\R)$, i.e. $\cone_1(\Gamma) = \cone_1(\Phi)$. While it is clear from the transversality of $\Phi$ and $\tau^{(2)}$ that each flow cycle is homotopic to a dual cycle, the converse is not necessarily true; it may be necessary to square a dual cycle before it is homotopic to a flow cycle.
 This is not apparent from the picture we have developed so far of the relationship between $\Gamma$ and $\Phi$ (\Cref{lem:sectors_turns} and \Cref{fact:flow_edge}) which has been purely local in nature.
Hence we continue our discussion from \Cref{sec:flowgraph} of the interplay between $B^s$, $\Gamma$, and $\Phi$, broadening our scope beyond a single sector. It will be convenient to work in the universal cover $\wt M$ of $M$. Let $\wt \Phi$, $\wt \Gamma$, and $\wt B^s$ denote the lifts to $\uM$ of $\Phi$, $\Gamma$, and $B^s$ respectively.

Since $B^s$ is topologically the dual 2-complex to $\tau$, each component $U$ of $M\ssm B^s$ is homeomorphic to $T_U\times [0,1)$, where $T_U$ is the component of $\partial M$ contained in $U$. We call $U$ a \define{tube} of $B^s$. Let $\wt U$ denote a single lift of $U$ to $\uM$, which we also call a \define{tube} of $\wt B^s$. There is a collection of branch lines which are entirely contained in $\partial \wt U$ which we call \textbf{the branch lines of $\wt U$}. 
Each connected component of $(\partial\wt U\cap \wt B^s)\ssm\{\text{branch lines of } \wt U\}$ is called a \define{band} of $\wt U$. 
The image of a band of $\wt U$ under the covering projection is an immersed annulus in $M$ which we call a \textbf{band} of $U$.

\begin{figure}[h]
\centering
\includegraphics[height=1.5in]{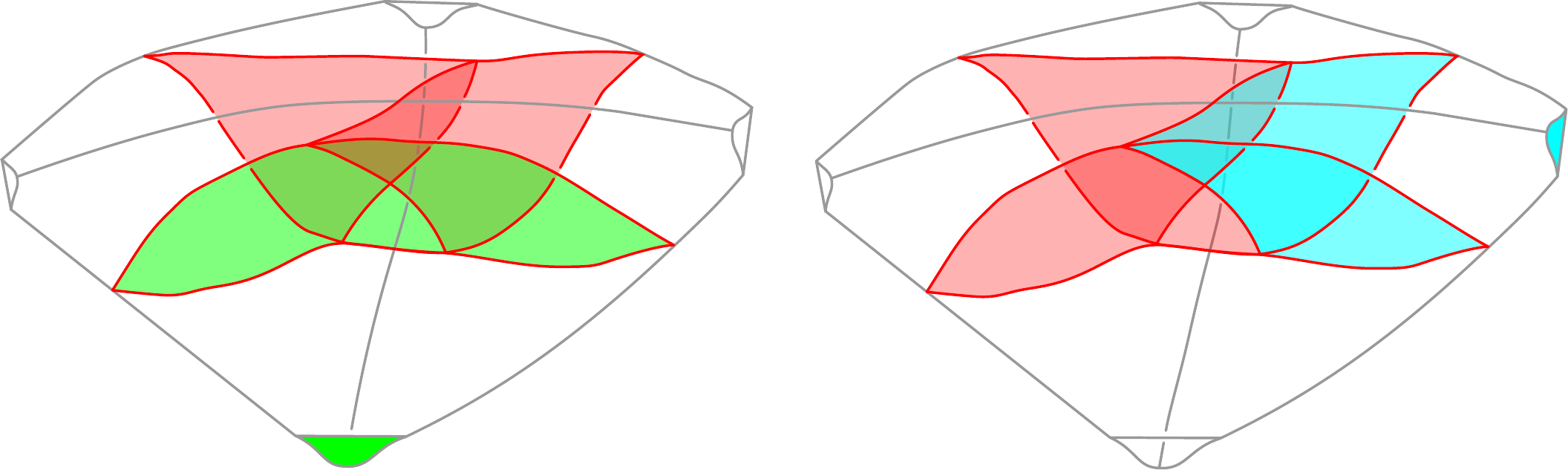}
\caption{Each tip of a tetrahedron is a flat triangle that determines a cell of the tesselation of the corresponding tube's boundary. We show the picture for a downward (left) and upward (right) flat triangles.}
\label{fig_tessident}
\end{figure}

\begin{lemma}[$\tau^{(2)}$ inside a tube]\label{lem:2skeltube}
 Let $\wt U$ be a tube of $\wt B^s$ covering a tube $U$ of $B^s$. Let $T$ be the boundary component of $M$ contained in $U$, and let $\wt T$ be the lift of $T$ contained in $\wt U$.
The intersection $\wt\tau^{(2)}\cap \closure (\wt U)$ can be identified with $(\wt\tau ^{(2)}\cap \wt T)\times[0,1]$ under the homeomorphism $\closure (\wt U )\cong \wt T\times[0,1]$. This gives an identification of the tesselation $\tau^{(2)}\cap\partial \wt U$ with the boundary tesselation of $\wt T$.
\end{lemma}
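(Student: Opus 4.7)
The plan is to refine the homeomorphism $\closure(\wt U)\cong \wt T\times[0,1]$ noted in the preceding paragraph so that, under it, $\wt\tau^{(2)}\cap \closure(\wt U)$ corresponds precisely to $(\wt\tau^{(2)}\cap \wt T)\times[0,1]$. I would do this by constructing a vector field $X$ on $\closure(\wt U)$ that is transverse to $\wt T$, tangent to every face of $\wt\tau$ meeting $\closure(\wt U)$, and has integral curves flowing from $\wt T$ to the $\wt B^s$-part of $\partial \wt U$. The flow of $X$ then realizes the refined homeomorphism, while tangency to $\tau$-faces forces the flow to sweep $\wt T\cap \wt\tau^{(2)}$ onto exactly $\wt\tau^{(2)}\cap \closure(\wt U)$.

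The construction proceeds tetrahedron by tetrahedron. For each truncated tetrahedron $t$ of $\wt\tau$ with an ideal vertex $v$ on $\wt T$, consider the \emph{corner region} $R_v^t$: the closure of the component of $t\ssm \wt B^s$ adjacent to the flat triangle $T_v^t\subset \wt T$. A direct combinatorial analysis of a truncated tetrahedron cut by the dual 2-complex shows that $R_v^t$ is a polyhedral 3-ball whose boundary decomposes into $T_v^t$ on the bottom, three 2-disk pieces of $\tau$-faces incident to $v$ on the sides, and a disk in $\wt B^s$ on top, formed from the three 2-cells of $\wt B^s$ dual to the three $\tau$-edges of $t$ at $v$ and all meeting at the $\Gamma$-vertex interior to $t$. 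Such a 3-ball admits an explicit product identification $R_v^t\cong T_v^t\times[0,1]$ with $T_v^t = T_v^t\times\{0\}$, the $\wt B^s$ cap equal to $T_v^t\times\{1\}$, and each side $\tau$-face piece equal to an edge of $T_v^t$ crossed with $[0,1]$. Let $X_t = \partial/\partial s$ in this parametrization; it has the desired transversality and tangency properties locally.

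The local vector fields then need to be glued along shared $\tau$-face pieces. Whenever two tetrahedra $t_1,t_2$ share a $\tau$-face $f$ containing $v$, both restrictions $X_{t_i}|_{f\cap R_v^{t_i}}$ are tangent to $f$ and flow from the short edge of $f$ on $\wt T$ toward the branch locus $f\cap \wt B^s$, so by reparametrizing the local $[0,1]$-coordinates they can be arranged to agree on the shared piece. Propagating this compatibility around the fans of tetrahedra at each $\tau$-edge with endpoint on $\wt T$ produces a globally defined $X$ on $\closure(\wt U)$, whose flow furnishes a homeomorphism $\Psi\colon \wt T\times[0,1]\to \closure(\wt U)$. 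By tangency, $\Psi$ carries $(\wt\tau^{(2)}\cap \wt T)\times[0,1]$ onto $\wt\tau^{(2)}\cap\closure(\wt U)$, which is the main assertion; restricting $\Psi$ to $\wt T\times\{1\}$ then identifies the induced tesselation on $\partial\wt U\cap \wt B^s$ with the boundary tesselation of $\wt T$.

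The main obstacle I anticipate is verifying that the local product parametrizations can be matched consistently, especially around each 0-cell of $\wt T$ where a cyclic fan of corner regions meets the $\tau$-edge based at that cusp. Since the matching is required only on 2-dimensional shared face pieces and $\wt U$ is simply connected, no monodromy obstruction arises; one can arrange the identifications inductively along a spanning tree of the gluing graph and close up the remaining edges by a final reparametrization of the $[0,1]$ factor on each piece.
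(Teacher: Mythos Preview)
Your proposal is correct and considerably more detailed than what the paper provides. The paper's own proof is essentially a single sentence: it observes that the statement is a direct consequence of $B^s$ being the dual $2$-complex to $\tau$, and refers to \Cref{fig_tessident} to illustrate how each truncated tetrahedron tip corresponds to a flat triangle in the boundary tesselation of the tube. In other words, the paper is content to let the reader see that each corner region of a truncated tetrahedron is a cone on its flat triangle with apex on $B^s$, and that these cones assemble to the product structure.

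Your approach, by contrast, actually constructs the product homeomorphism by building a transverse vector field cell by cell. This is a genuinely different (and more rigorous) route: you analyze the corner regions $R_v^t$ explicitly, verify their product structure, and glue. The payoff is an honest proof rather than an appeal to intuition. One small comment: the monodromy concern you raise at the end is not really an issue. You can first fix the $[0,1]$-parametrization on each truncated $\tau$-edge segment (an arc from the cusp truncation to its intersection with $\wt B^s$), then extend over the face quadrilaterals, and finally over the corner $3$-balls. Since adjacent face pieces around a $\tau$-edge already agree on that edge, no cyclic inconsistency can arise, and the spanning-tree argument is unnecessary.
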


\begin{proof}
This is a consequence of the duality of $\tau^{(2)}$ and $B^s$. The identification of the two tesselations can be visualized as in \Cref{fig_tessident}.
\end{proof}

We will now use the structure of this tesselation on $\partial \wt U$ to reconstruct the intersections of $\wt \Gamma$ and $\wt\Phi$ with $\partial \wt U$. In summary, this can be done as follows: $\wt\Gamma\cap \partial \wt U$ is the dual graph to the tesselation $\wt\tau^{(2)}\cap \partial\wt U$, and each complementary component of $\wt\Gamma\cap \partial \wt U$ is a sector of $\wt B^s$. Thus the data of which $\Gamma$ turns are branching/anti-branching is determined by \Cref{lem:sectors_turns},
and the combinatorics of $\wt \Phi\cap \partial \wt U$ are determined by \Cref{fact:flow_edge}. 
The complete picture is shown in \Cref{tessandphi}; we emphasize that each individual sector in the picture is exactly as in \Cref{fig:sectors}. We justify this picture and add more detail in the lemmas to follow.

 \begin{figure}[h]
\includegraphics[height = 2.95in]{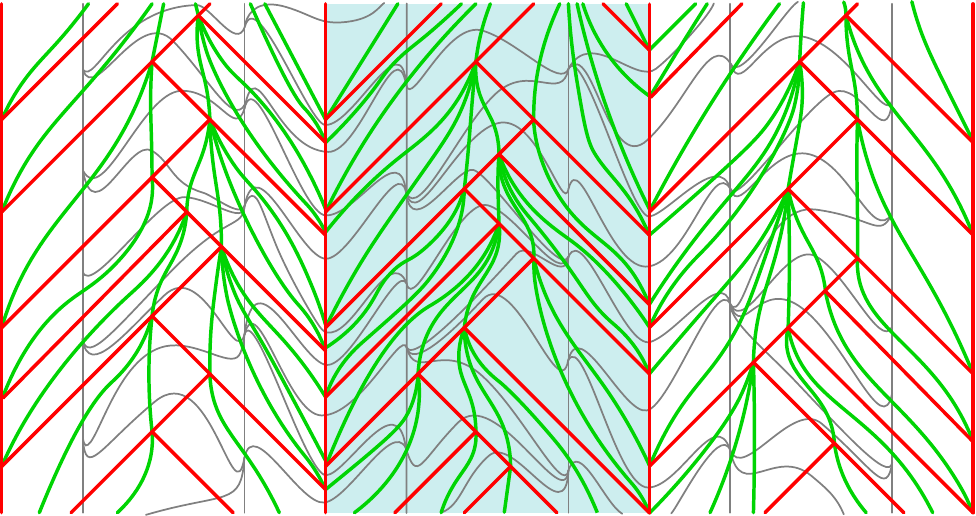}
\caption{Part of the boundary of a tube in $\uM$ and its intersection with $\partial \wt\tau^{(2)}$ (gray), $\wt \Gamma$ (red), and $\wt \Phi$ (green). One band has been shaded light blue.}
\label{tessandphi}
\end{figure}

Taking advantage of the identification between the two tesselations as in \Cref{lem:2skeltube}, we will speak of \textbf{ladders}, \textbf{ladderpoles}, and \textbf{flat triangles} on the boundaries of tubes in $\wt M$.

\begin{lemma}[$\Gamma$ on boundary of tube]\label{lem:gammabandcombinatorics}
 Let $\wt U, U, \wt T, T$ be as above. Then
\begin{enumerate}[label=(\roman*)]  
\item $\wt \Gamma\cap \partial\wt U$ is topologically the dual graph to $\wt\tau^{(2)}\cap \partial \wt U$, and the edge orientations of the former agree with the coorientations of the latter,
\item each component of $\partial\wt U- \wt \Gamma$ is a sector of $\wt B^s$,
\item the top of each of these sectors lies in a downward triangle, the bottom of each sector lies in an upward triangle, and 
\item each branch line of $\wt U$ bisects an upward ladder, and each upward ladder is bisected by a single branch line.  
\end{enumerate}
\end{lemma}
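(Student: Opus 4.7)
My plan is to reduce each of the four statements to a local calculation inside a single flat triangle, leveraging Lemma \ref{lem:2skeltube} together with the duality between $\wt B^s$ and $\wt\tau$.

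For (i) and (ii), Lemma \ref{lem:2skeltube} identifies $\wt \tau^{(2)} \cap \partial \wt U$ with the flat-triangle tesselation of $\wt T$. Since $\Gamma$ is the $1$-skeleton of the dual $2$-complex $B^s$, its restriction $\wt \Gamma \cap \partial \wt U$ is combinatorially the graph dual to this tesselation: one vertex per flat triangle (a tetrahedron tip along $\wt T$) and one edge per branch of $\partial \tau$ (a $\tau$-face meeting $\wt T$). Each $\wt \Gamma$-edge is oriented by the coorientation of its dual $\tau$-face, which restricts on $\partial M$ to the coorientation of the corresponding branch of $\partial \tau$; this gives (i). For (ii), the complementary regions of $\wt \Gamma \cap \partial \wt U$ in $\partial \wt U$ are the $2$-cells of this dual complex, one per vertex of the tesselation. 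Each such vertex is the endpoint on $\wt T$ of a unique lifted $\wt\tau$-edge $\wt e$, and by duality the corresponding $2$-cell is the intersection with $\partial \wt U$ of the sector of $\wt B^s$ dual to $\wt e$; the product structure $\closure(\wt U) \cong \wt T \times [0,1]$ then shows that this intersection recovers the entire sector.

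For (iii), let $A$ be a sector on $\partial \wt U$ dual to a $\wt\tau$-edge $\wt e$. From the analysis preceding \Cref{fact:flow_edge}, the top of $A$ (its unique sink) is the $\Gamma$-vertex inside the tetrahedron $t$ having $\wt e$ as bottom edge, and on $\partial \wt U$ it lies inside the flat triangle $F$ at $t$'s tip along the endpoint of $\wt e$ on $\wt T$. The smooth vertex of $F$ sits on the $\pi$-edge $\wt e$, and by the local picture of a taut tetrahedron (cf.~Figure \ref{fig:taut}) the coorientations of the two bottom faces of $t$ adjacent to $\wt e$ induce coorientations at the smooth vertex that point into $F$, so $F$ is a downward triangle by definition. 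The argument at the source of $A$ is symmetric and gives the upward-triangle claim.

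For (iv), I would proceed by a local analysis at each $\Gamma$-vertex $v$ on $\partial \wt U$ in a flat triangle $F$. By the smoothing in Figure \ref{fig:stablebs}, the four $\wt \Gamma$-edges at $v$ in the ambient tetrahedron $t$ split into two branching pairs, and exactly one of these pairs is entirely visible on $\partial \wt U$ (the other involves the $\Gamma$-edge dual to the face of $t$ opposite the vertex corresponding to $F$). By \Cref{lem:branchingturn}, this visible pair is determined by the veer of the side edges of $t$ at the vertex of $t$ corresponding to $F$. A case analysis shows: when $F$ is upward, the branching pair at $v$ connects two sides of $F$ whose opposite triangles both lie in the upward ladder containing $F$, so successive branch segments chain into a single smooth curve traversing the ladder; when $F$ is downward, the branching pair at $v$ routes across a ladderpole, so no branch segment remains within a downward ladder. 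Concatenating, each upward ladder is bisected by exactly one branch line, and every branch line of $\wt U$ arises in this way. The main obstacle is this case check: one must correctly identify the unique visible branching pair at each $\Gamma$-vertex from the veer data of the ambient tetrahedron---distinguishing whether $v$ corresponds to the bottom or top $\pi$-edge of $t$ and whether the side edge in question has veer opposite that of the top---and then verify that the local choices concatenate coherently across an entire upward ladder. Figures \ref{fig:veer}, \ref{fig:stablebs}, and \ref{fig:ladders} are essential, and the proof should include a diagram tracking the branching pair through two consecutive upward triangles to confirm continuity.
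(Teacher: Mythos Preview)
Your arguments for (i)--(iii) match the paper's, with a bit more detail filled in; these are essentially the same.

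For (iv), your route differs from the paper's. You propose a direct local case analysis at each $\Gamma$-vertex using the veer characterization of branching turns (\Cref{lem:branchingturn}), and you correctly flag that the main obstacle is verifying that the visible branching pair at $v$ behaves as claimed in the upward and downward cases and then chains coherently along a ladder. This can be made to work, but as written it is a sketch: the claim that ``exactly one branching pair is entirely visible'' already requires knowing how the two smooth directions through $v$ distribute over the three faces of $t$ adjacent to the given tip, and the subsequent upward/downward dichotomy hinges on tracking veers through Figures~\ref{fig:veer} and~\ref{fig:stablebs} carefully.

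The paper avoids this case check by bootstrapping from (iii) together with \Cref{lem:sectors_turns}. For the upward-ladder direction: any two consecutive flat triangles in an upward ladder share a vertex of the tesselation, so the two dual $\wt\Gamma$-edges lie on the boundary of a single sector $s$; by (iii) the top of $s$ sits in a \emph{downward} triangle, hence neither of these two edges is topmost in its side of $s$, and \Cref{lem:sectors_turns} forces the turn to be branching. For the downward-ladder direction: one locates a flat triangle in the ladder that is topmost in the fan of one of its cusp vertices, and \Cref{lem:sectors_turns} then gives an anti-branching turn there. This argument is shorter and sidesteps the veer bookkeeping entirely; you might consider replacing your case analysis with it.
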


\begin{proof}
Statement $(i)$ is a direct consequence of $B^s$ being the 2-complex dual to $\tau^{(2)}$ and $\Gamma$ being its 1-skeleton. Statement $(ii)$ follows immediately from the fact that $\wt\Gamma$ is the branching locus of $\wt B^s$. Each sector contains exactly one vertex of the tesselation and the top of each sector lies in the flat triangle lying atop that vertex. This flat triangle must be downward, giving $(iii)$.

Let $\lambda$ be an upward ladder in $\partial \wt U$. By $(i)$, there is a unique $\wt\Gamma$-path $\gamma$ lying in $\lambda$. Because any two consecutive flat triangles visited by $\gamma$ share a vertex in the tesselation of $\partial \wt U$, any two consecutive edges $e,f$ of $\gamma$ lie in the boundary of some $\wt B^s$-sector $s$. By $(iii)$, neither $e$ nor $f$ is topmost in a side of $s$, so they define a branching turn by \Cref{lem:sectors_turns}. Therefore each upward ladder contains a branch line of $\wt U$. 

It remains to show that no other biinfinite $\wt\Gamma$-path is a branch line. Since any two branch lines must be disjoint, it suffices to show that a downward ladder $\lambda'$ in $\partial\wt U$ does not define a branch line. Let $t$ be a flat triangle in $\lambda'$ and let $v$ be the cusp of $t$ such that the two sides of $t$ meeting $v$ are rungs of $\lambda'$.  Further suppose that $t$ is the last triangle in
in the fan of $v$ that is contained in $\lambda'$. Then the triangle in $\lambda'$ directly above $t$ is a downward triangle having $v$ at its bottom. Hence, $t$ is the topmost flat triangle in its side of the fan for $v$. If $\gamma'$ is the $\wt \Gamma$-path determined by $\lambda'$, then we note that $\gamma'$ crosses every rung of $\lambda'$. By \Cref{lem:sectors_turns}, the turn of $\gamma'$ at the vertex in $t$ is anti-branching. This completes the proof.
\end{proof}

Thus each band $b$ of $\wt U$ contains two halves of upward ladders meeting $\partial b$, and one downward ladder interior to $b$.

Since $\Phi$ is in dual position, the vertex sets of $\wt \Phi$ and $\wt \Gamma$ are the same. The intersection of $\wt\Phi$  with $\partial \wt U$ is therefore a directed graph whose vertices correspond to the flat triangles of the tessellation. We describe its edges in the following lemma.

\begin{lemma}[$\Phi$ on boundary of tube]\label{lem:bandflowedges}
Fix $U, \wt U, T, \wt T$ as above. Identifying the vertices of $\wt \Phi\cap \partial \wt U$ with the flat triangles in the tessellation of $\partial \wt U$, the edges of $\wt \Phi\cap \partial \wt U$ have the following properties:
\begin{enumerate}[label=(\roman*)]
\item Each downward triangle $t$ has a single outgoing edge to a triangle $t'$ which is downward, lies in the same ladder as $t$, and is the endpoint of a $\wt \Gamma$-path starting at $t$.
\item Each upward triangle has two outgoing edges, which have endpoints in different downward ladders. 
\end{enumerate}
\end{lemma}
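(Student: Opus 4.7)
The plan is to analyze how each $\Phi$-edge in dual position interacts with $\partial\wt U$, using the sector/ladder decompositions from \Cref{lem:gammabandcombinatorics} together with the local structure given by \Cref{fact:flow_edge}. The main idea is to identify each flat triangle in the tessellation of $\partial\wt U$ with a $\Phi$-vertex coming from the relevant tetrahedron, and then to track exactly which of the three outgoing $\Phi$-edges (from the tetrahedron relation $\mathbf{b} = \mathbf{t} + \mathbf{s}_1 + \mathbf{s}_2$) are visible at each tip. This should give the asserted outgoing-edge counts: one at each downward tip and two at each upward tip.

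For (i), I would start with a downward triangle $t$ and observe that by \Cref{lem:gammabandcombinatorics}(iii) it lies at the top of some sector of $\wt B^s$. The unique outgoing $\wt\Phi\cap\partial\wt U$-edge at $t$ corresponds to the $\Phi$-edge pointing ``up'' through the top face of the associated tetrahedron, which in the tetrahedron relation is the $\Phi$-edge from $\mathbf{b}$ to $\mathbf{t}$. Using the structure of the downward ladder containing $t$ (see \Cref{sec:veeringonboundary}) and the coorientation on the ladderpole, I would show that the trace of this edge on $\partial\wt U$ lands at the next downward triangle $t'$ in the same downward ladder. The required $\wt\Gamma$-path from $t$ to $t'$ is then obtained by traversing the ladderpole $\wt\Gamma$-edges between them, invoking \Cref{lem:gammabandcombinatorics}(i) to identify these as genuine $\wt\Gamma$-edges.

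For (ii), an upward triangle $t$ lies at the bottom of sectors of $\wt B^s$ by \Cref{lem:gammabandcombinatorics}(iii), and by \Cref{lem:gammabandcombinatorics}(iv) its upward ladder is bisected by a single branch line of $\wt U$. The two outgoing $\wt\Phi\cap\partial\wt U$-edges at $t$ correspond to the two $\Phi$-edges from $\mathbf{b}$ to the side edges $\mathbf{s}_1, \mathbf{s}_2$ (whose veer is opposite that of $\mathbf{t}$). Since $\mathbf{s}_1$ and $\mathbf{s}_2$ occupy opposite sides of $\mathbf{b}$ in the local picture of the tetrahedron, their tip images on $\partial\wt U$ lie in the two distinct downward ladders flanking the upward ladder containing $t$.

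The main obstacle will be the bookkeeping involved in identifying which of the three outgoing $\Phi$-edges are visible at each of the four tips of a given tetrahedron. Specifically, each $\Phi$-vertex (a $\tau$-edge $\mathbf{e}$) is naturally associated with multiple flat triangles, and to distribute the three outgoing edges correctly -- one at each downward tip and two at each upward tip, yielding the right six incidences per tetrahedron -- requires a careful local analysis using the standard-to-dual-position isotopy of $\Phi$, together with \Cref{lem:branchingturn} to determine how each $\Phi$-edge routes through the tetrahedron and near the cusp, and the ladderpole combinatorics of \Cref{sec:veeringonboundary} to assign the correct ladder labels to the targets.
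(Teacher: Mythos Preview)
Your overall strategy is right—the lemma really does reduce to bookkeeping which of the three outgoing $\Phi$-edges $\mathbf b\to\mathbf t$, $\mathbf b\to\mathbf s_1$, $\mathbf b\to\mathbf s_2$ from a tetrahedron $T$ are visible on each of the four tips of $T$—but your specific identification of those edges is backwards, and this is not just a labeling issue.

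The key observation you are missing is that, in dual position, the $\Phi$-edge $\mathbf b\to e$ lies in the sector of $\wt B^s$ pierced by the $\tau$-edge $e$ (this is exactly the content of \Cref{fact:flow_edge}: $v_T$ is on the boundary of the $e$-sector and, since $T$ is not topmost in the fan of $e$ when $e\in\{\mathbf t,\mathbf s_1,\mathbf s_2\}$, there is a $\Phi$-edge from $v_T$ to the top of that sector). That sector appears on $\partial\wt U$ precisely at the two endpoints of $e$. Now the endpoints of $\mathbf t$ are the two \emph{upward} tips of $T$ (the tips at endpoints of the top edge), while each $\mathbf s_i$ has one endpoint on $\mathbf b$ (a downward tip) and one on $\mathbf t$ (an upward tip). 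Consequently a downward tip sees exactly one outgoing edge, namely $\mathbf b\to\mathbf s_i$ for the unique $\mathbf s_i$ incident to that tip, and an upward tip sees two: $\mathbf b\to\mathbf t$ together with the $\mathbf b\to\mathbf s_i$ for the incident side edge.

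Your paragraph for (i) asserts that the outgoing edge at a downward tip is $\mathbf b\to\mathbf t$. But $\mathbf t$ is not incident to any endpoint of $\mathbf b$, so the $\mathbf t$-sector does not even touch $\partial\wt U$ at a downward tip; that $\Phi$-edge is invisible there. (Your justification—that the downward triangle ``lies at the top of some sector'' by \Cref{lem:gammabandcombinatorics}(iii)—is correct, but being at the \emph{top} of a sector yields \emph{incoming} $\Phi$-edges, not outgoing ones.) Likewise for (ii): the two edges visible at an upward tip are $\mathbf b\to\mathbf t$ and a single $\mathbf b\to\mathbf s_i$, not both $\mathbf b\to\mathbf s_1$ and $\mathbf b\to\mathbf s_2$; the side edges $\mathbf s_1,\mathbf s_2$ are opposite in $T$ and never share an ideal vertex. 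With the correct identification the target of the single edge at a downward tip is the top of the $\mathbf s_i$-sector, i.e.\ the tip of the tetrahedron above $\mathbf s_i$, which is downward (its $\pi$-vertex is $\mathbf s_i$) and reached by the $\wt\Gamma$-path along a side of that sector—this is the argument the paper gives, phrased in terms of the fan/topmost language of \Cref{rmk:top_of_fan}.
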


\begin{proof}
Let $t$ be a downward flat triangle in $\partial \wt U$. Then $t$ is topmost in the fan corresponding to exactly one of its $0$-vertices (see \Cref{rmk:top_of_fan} or consider e.g. \Cref{fig_tessident}).
 Let $v$ be the 0-vertex of $t$ such that $t$ is \emph{not} topmost in the fan corresponding to $v$, and let $t'$ be the downward flat triangle having $v$ as a 0-vertex. \Cref{fact:flow_edge} and the picture we have developed in \Cref{lem:2skeltube} and \Cref{lem:gammabandcombinatorics} gives that $t$ has a single outgoing edge, and its endpoint is in $t'$. 

Now let $t$ be an upward flat triangle in $\partial \wt U$. It again follows from \Cref{fact:flow_edge}, \Cref{lem:2skeltube}, and \Cref{lem:gammabandcombinatorics} that $t$ has 2 outgoing edges and that they connect to the downward triangle sharing $\pi$-vertex with with $t$, and to the downward triangle whose $\pi$-vertex is the 0-vertex of $t$ for which $t$ is not topmost in the corresponding fan. One sees that these two downward flat triangles lie in separate upward ladders.
\end{proof}

For a visual summary of \Cref{lem:gammabandcombinatorics} and \Cref{lem:bandflowedges}, see \Cref{tessandphi}. As previously remarked, one can recover the picture from only the tesselation on the boundary of a tube by drawing the directed graph dual to the tesselation and applying \Cref{lem:sectors_turns} and \Cref{fact:flow_edge}.

If $\ell$ is a branch line containing points $p$ and $q$, then we say $q$ \define{lies below} $p$ if there is an oriented branch segment in $\ell$ from $q$ to $p$, where we include the empty segment (so $p$ lies below itself).

\begin{lemma}[$\Phi$ in a band] \label{lem:bandbehavior}
The restriction of $\wt \Phi$ to a band $b$ in $\wt B^s$ has the following properties.
\begin{enumerate}[label=(\roman*)]
\item For every $\wt\Phi$-vertex $q$ lying in $b$, there is a unique $\wt\Phi$-ray $\rho_b(q)$ in $\wt\Phi|_b$ with $q$ as its initial vertex. 

\item For each $\wt\Gamma$-vertex $p$ interior to $b$, the union of the portions of the 2 branch lines through $p$ lying below $p$ and in $b$ divide $b$ into 2 components, $b_p^+$ and $b_p^-$.

\item If $q$ lies in $b_p^-$ on a branch line of $\partial b$, the ray $\rho_b(q)$ intersects a branch segment in $b$ containing $p$ at a point below or equal to $p$.
\end{enumerate}
\end{lemma}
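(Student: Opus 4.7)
The plan is to prove parts (i)--(iii) in order, building on the combinatorial pictures of $\wt \Gamma$ and $\wt \Phi$ on the boundary of a tube developed in \Cref{lem:gammabandcombinatorics} and \Cref{lem:bandflowedges}.

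For (i), at each $\wt \Phi$-vertex $q$ in $b$ I would verify, using \Cref{lem:bandflowedges}, that exactly one outgoing $\wt \Phi$-edge stays in $b$. If $q$ lies in the downward ladder interior to $b$, then $q$ sits in a downward triangle and has a unique outgoing edge to another downward triangle in the same ladder, which remains interior to $b$. If instead $q$ lies on a branch line of $\partial b$, then $q$ sits in an upward triangle and has two outgoing edges to downward triangles in distinct downward ladders; exactly one of these target ladders is the downward ladder interior to $b$, the other lying in an adjacent band. Iterating this local dichotomy produces a unique forward $\wt\Phi$-ray $\rho_b(q)$ in $b$.

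For (ii), the key point is that since $p$ is interior to $b$, the two branch lines of $\wt B^s$ through $p$ cannot be branch lines of $\wt U$. By \Cref{lem:gammabandcombinatorics}(iv) applied to the tube $\wt U'$ adjacent to $\wt U$ across $b$, these are branch lines of $\wt U'$ and each bisects an upward ladder of $\partial\wt U'$. Their portions lying below $p$ and in $b$ are two rays emanating from $p$ in locally distinct directions. Using that $b$ is homeomorphic to a closed strip, bounded by the two branch lines of $\partial b$ and extending along the ladder direction, I would conclude that these two rays partition $b$ into exactly two components $b_p^+$ and $b_p^-$.

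For (iii), which is the main obstacle, I would analyze $\rho_b(q)$ edge by edge and argue that successive $\wt\Phi$-edges cannot skip over $p$ along either branch line through $p$. By \Cref{fact:flow_edge}, each $\wt\Phi$-edge in $b$ runs from the bottom of some sector of $\wt B^s$ to its top, and its endpoints sit on the sides of that sector -- which are short branch segments by \Cref{lem:sectors_turns}. Consequently, no single $\wt\Phi$-edge can cross a branch line at a point more than one branching turn above the previous $\wt\Gamma$-vertex it visited along that line. Combining this ``step-one'' monotonicity with the fact that $\rho_b(q)$ begins in $b_p^-$ and, to leave $b_p^-$, must meet one of the two separating branch rays, I would conclude that the first such crossing occurs at or below $p$ on one of the two branch lines through $p$, yielding the desired branch segment in $b$ containing $p$.
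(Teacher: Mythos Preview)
Your treatment of (i) is correct and essentially the same as the paper's, with welcome extra detail from \Cref{lem:bandflowedges}.

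Your argument for (ii) has a genuine gap. There is no single tube $\wt U'$ ``adjacent to $\wt U$ across $b$'': the band $b$ is tiled by many sectors of $\wt B^s$, and different sectors can border different tubes on the side opposite $\wt U$. So you cannot invoke \Cref{lem:gammabandcombinatorics}(iv) for one fixed $\wt U'$ to identify the two branch lines through $p$ as branch lines of that tube. The paper takes a different route: it reads the branch-segment structure inside $b$ directly from \Cref{lem:gammabandcombinatorics} together with \Cref{lem:sectors_turns}. The sectors of $\wt B^s$ in $\partial\wt U$ tile $b$ in the pattern of \Cref{tessandphi}, and their sides (which are branch segments by \Cref{lem:sectors_turns}) assemble to give exactly the two descending rays from $p$ to the two components of $\partial b$.

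For (iii), your concluding step is the same as the paper's: $\rho_b(q)$ starts in $b_p^-$ and must at some point cross the separating set from (ii), which by definition consists of points at or below $p$ on the branch lines through $p$; hence the crossing point is as claimed. Your ``step-one monotonicity'' discussion is both unnecessary and not quite accurate (a $\wt\Phi$-edge in a sector runs to the \emph{top} of that sector, not one branching turn up a side), and it obscures this simple topological crossing argument. The one ingredient left implicit in both your sketch and the paper is why $\rho_b(q)$ must leave $b_p^-$ at all; this follows from your own analysis in (i), since after at most one edge $\rho_b(q)$ lies in the downward ladder interior to $b$ and then, by \Cref{lem:bandflowedges}(i), proceeds monotonically upward there and so eventually passes above $p$.
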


A diagram of the situation in \Cref{lem:bandbehavior} is shown in \Cref{fig_bandray}.

\begin{figure}[h]
\includegraphics[height = 2in]{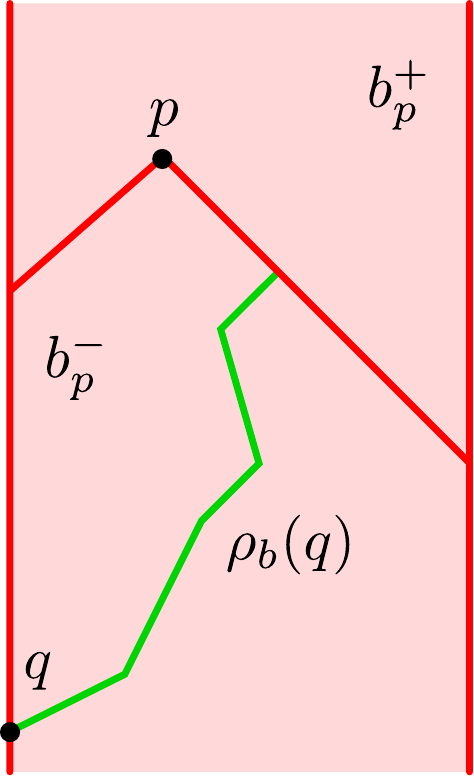}
\caption{An illustration of \Cref{lem:bandbehavior}.}
\label{fig_bandray}
\end{figure}

\begin{proof}
Statement $(i)$ follows from \Cref{lem:bandflowedges}.
Statement $(ii)$ follows from \Cref{lem:gammabandcombinatorics} and our understanding of which turns in the boundary of a sector are branching from \Cref{lem:sectors_turns}.
Statement $(iii)$ follows from $(i)$ and $(ii)$: the ray $\rho_b(q)$ eventually must cross from  $b_p^-$ to $b_p^+$, and it must do so at a $\wt\Gamma$-vertex lying below $p$.
\end{proof}

\subsubsection{Dual cycles, flow cycles, and carried classes}

Any dual cycle $c$ in $\Gamma$ has a decomposition into branch segments meeting at anti-branching ($AB$-) turns. The following lemma shows that the parity of the number of anti-branching segments composing the cycle is an invariant of its homotopy class.

\begin{lemma}\label{cor:push_parity}
If dual cycles $c_1$ and $c_2$ in $\Gamma$ are homotopic in $M$, then the number of $AB$-turns in $c_1$ has the same parity as the number of $AB$-turns in $c_2$. 
\end{lemma}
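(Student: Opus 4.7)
My plan is to construct a $\mathbb{Z}/2$-cohomology class on $M$ whose evaluation on a dual cycle $c$ gives the parity of $\#\mathrm{AB}(c)$, the number of AB-turns of $c$. Since homotopic loops are homologous, this will immediately imply the lemma.

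At each vertex $v$ of $\Gamma$, the smoothing of $B^s$ (described by \Cref{lem:branchingturn}) pairs each of the two incoming $\Gamma$-edges with one of the two outgoing $\Gamma$-edges as its \emph{branching partner}. Label the two resulting pairs at $v$ by $0$ and $1$ arbitrarily. For a $\Gamma$-edge $e$ from $u$ to $v$, define $\psi(e) \in \mathbb{Z}/2$ to be the label of $e$ at $u$ (as an outgoing edge) and $\phi(e) \in \mathbb{Z}/2$ its label at $v$ (as an incoming edge), and set $\chi = \phi + \psi \in C^1(\Gamma; \mathbb{Z}/2)$. A turn at $v$ between incoming $e$ and outgoing $e'$ is then AB if and only if $\phi(e) \neq \psi(e')$. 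Consequently, for any dual cycle $c = (e_1,\ldots,e_n)$, cyclically reindexing the sum $\sum_k \psi(e_k)$ gives
\[
\chi(c) = \sum_k \bigl(\phi(e_k) + \psi(e_k)\bigr) \equiv \sum_k \bigl(\phi(e_k) + \psi(e_{k+1})\bigr) \equiv \#\mathrm{AB}(c) \pmod{2}.
\]

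I will then show $\chi$ is a cellular $\mathbb{Z}/2$-cocycle on $B^s$, using its CW structure with the sectors as 2-cells. Fix a sector $A$ with sides $p_1, p_2$ running from the bottom $b$ to the top $t$, so that $\partial A = p_1 - p_2$. For any directed path $p = (e_1,\ldots,e_n)$, the analogous reorganization yields
\[
\chi(p) \equiv \psi(e_1) + \phi(e_n) + \#\mathrm{AB}(p) \pmod{2}.
\]
By \Cref{lem:sectors_turns}, $\#\mathrm{AB}(p_i) = 1$ for $i=1,2$, so these contributions cancel in $\chi(p_1) + \chi(p_2)$. Because $b$ is a local source of $\partial A$, both boundary edges of $A$ at $b$ are outgoing from $b$, and since $b$ has exactly two outgoing $\Gamma$-edges, these are precisely the initial edges of the two sides; hence $\{\psi(e_1^{(1)}), \psi(e_1^{(2)})\} = \{0,1\}$. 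Symmetrically at $t$, $\{\phi(e_n^{(1)}), \phi(e_n^{(2)})\} = \{0,1\}$. Summing, $\chi(\partial A) \equiv 1 + 1 + 0 \equiv 0 \pmod 2$.

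The main content of the argument is this cocycle verification, where the single AB-turn on each sector side (from \Cref{lem:sectors_turns}) cancels precisely against the label contributions at the corner vertices $b$ and $t$, which are forced to be distinct because each extreme vertex of a sector uses both of its outgoing (resp.\ incoming) $\Gamma$-edges. Once $\chi$ is a cocycle, it represents a class in $H^1(B^s; \mathbb{Z}/2) = H^1(M; \mathbb{Z}/2)$ since $B^s$ deformation retracts to $M$. Thus $\chi(c)$ depends only on the homology class of $c$, and the parity of $\#\mathrm{AB}$ is in particular a homotopy invariant of dual cycles, completing the proof.
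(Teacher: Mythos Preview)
Your proof is correct and takes a genuinely different route from the paper's. The paper constructs the homomorphism $\pi_1(B^s)\to\ZZ/2$ as the first Stiefel--Whitney class of the tangent plane bundle $E\to B^s$, and then checks geometrically (via the strips in \Cref{fig:parity}) that an anti-branching turn flips the local orientation of $E$ while a branching turn preserves it. You instead build an explicit cellular $1$-cochain $\chi$ from an arbitrary labeling of the branching pairs at each vertex, verify $\chi(c)\equiv\#\mathrm{AB}(c)$ by a telescoping sum, and then check $\delta\chi=0$ on sectors using \Cref{lem:sectors_turns} together with the observation that the two initial (resp.\ terminal) edges of a sector's sides exhaust the outgoing (resp.\ incoming) edges at the bottom (resp.\ top) vertex, forcing their labels to be $\{0,1\}$.

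Your approach is more elementary---no bundles or characteristic classes---at the cost of relying on the precise combinatorics of sectors already established in \Cref{lem:sectors_turns}. The paper's approach is more conceptual: it identifies the invariant as an orientability obstruction, which explains \emph{why} such a parity exists without needing the sector analysis. Both produce the same class in $H^1(M;\ZZ/2)$. One small slip: you wrote ``$B^s$ deformation retracts to $M$'' where you mean that $M$ deformation retracts to $B^s$.
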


\begin{proof}
Since $M$ deformation retracts to $B^s$ and $\Gamma$ is the $1$-skeleton of $B^s$, it suffices to build a homomorphism $\mc Z \colon \pi_1(B^s) \to \ZZ/2$ such that if $c$ is dual cycle, then $\mc Z(c)$ is the number of $AB$ turns mod $2$. 

The branched surface $B^s$ is the base space of a vector bundle $E\to B^s$ defined by taking the tangent plane at each point of $B^s$. For any loop $\gamma$ in $B^s$, this pulls back to a plane bundle over a circle. If the pullback bundle is orientable set $\mc Z(c) =0$, and set $\mc Z(c)=1$ otherwise. It is a standard fact that $\mc Z$ defines a homomorphism. In fact $\mc Z$ is equal to the map $\pi_1 (B^s)\to \ZZ/2$ induced by the the first Stiefel-Whitney class of $E\to B^s$, see e.g. \cite[Chapter 3]{HatchVB}.

Now let $c$ be a dual cycle. We will show that 
\[
\mc Z(c) \equiv \# \{AB \text{-turns of }c \}\pmod 2.
\]

\begin{figure}[htbp]
\begin{center}
\includegraphics[height=2in]{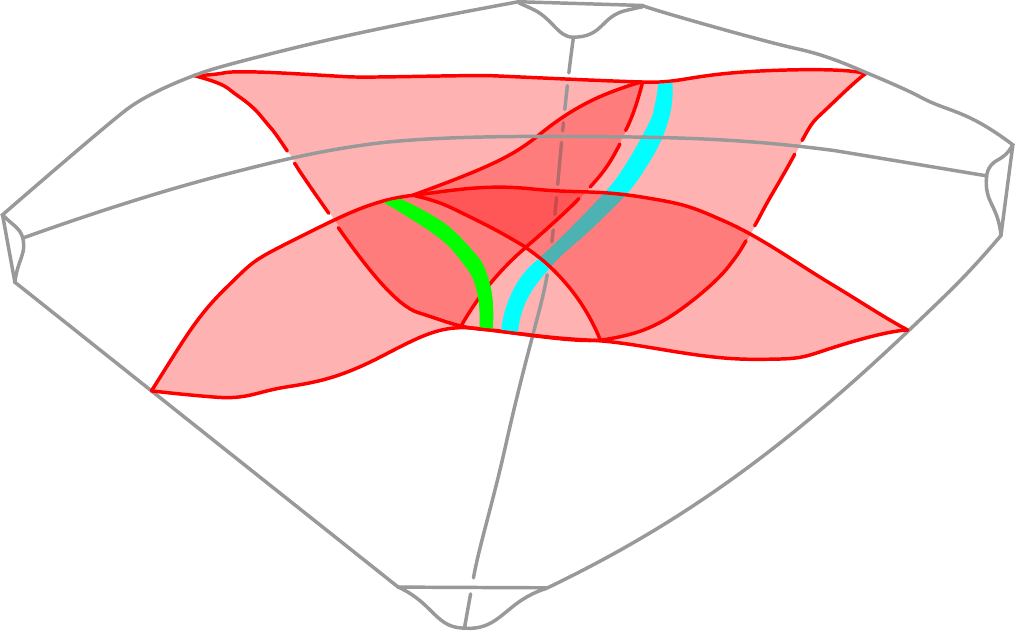}
\caption{When a dual cycle makes an anti-branching turn, we can homotope it to lie in a strip like the green one shown above. The blue strip corresponds to a branching turn. }
\label{fig:parity}
\end{center}
\end{figure}

For this, observe that each time $c$ passes through a tetrahedron it makes either a branching or anti-branching turn, and this portion of $c$ can be homotoped to lie in one of the two strips shown in \Cref{fig:parity}. From the picture we see that these strips glue together to give an annulus if and only if the number of anti-branching turns is even. 
Indeed, at any point along the path in the branch locus, a local orientation of the plane field is obtained from the direction of the path and the rule that the branching is ``on the left.'' Now the $AB$ path in \Cref{fig:parity} enters the tetrahedron where the branching is on one side and exits the tetrahedron where the branching is on the other side. Hence the orientation of the plane flips, which is to say it disagrees with the orientation carried continuously from the entry point. Since the opposite is true for the branching turn, this gives the desired parity property for the loop. 
\end{proof}

The next proposition establishes half of \Cref{th:cones_equal}. Although our emphasis here is on the equality of the cones, the stronger fact that for any positive transversal of $\tau^{(2)}$ either it or its square is homotopic to a flow cycle will be central in \cite{veeringpoly2}.

\begin{proposition}\label{prop_samecone}
For each dual cycle $c$ of $\Gamma$, there is a flow cycle $f$ of $\Phi$ such that $f$ is homotopic to $c^i$ for $i=1,2$. Moreover, if $c$ is composed of an even number of branch segments, then $f$ is homotopic to $c$. 

Hence the flow graph and the dual graph determine the same cones in $H_1(M,\R)$; that is
\[
\cone_1(\Gamma) = \cone_1(\Phi).
\]
\end{proposition}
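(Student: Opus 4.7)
The plan is to construct, for each dual cycle $c$ with an even number of branch segments, a flow cycle $f$ freely homotopic to $c$. Since \Cref{cor:push_parity} guarantees that $c^2$ always has an even number of branch segments, applying the same construction to $c^2$ will yield a flow cycle homotopic to $c^2$ when $c$ itself has odd parity.

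The main local move will use \Cref{fact:flow_edge}: within any sector $A$ of $B^s$, any non-topmost edge $e$ in a side of $A$ is connected by a $\Phi$-edge in $A$ from its tail to the top of $A$. Combined with \Cref{lem:sectors_turns} --- which says a sector's side consists of branching turns until a terminal AB-turn --- this matches turn-for-turn the structure of a branch segment $\sigma_i$ together with the first $\Gamma$-edge of $\sigma_{i+1}$. So, decomposing $c$ into branch segments $\sigma_1, \dots, \sigma_n$ joined at AB-turns and assuming each $\sigma_i$ followed by the first edge of $\sigma_{i+1}$ is a side of some sector $A_i$, I will obtain a $\Phi$-edge $\phi_i \subset A_i$ running from the head of the first edge of $\sigma_i$ to the head of the first edge of $\sigma_{i+1}$. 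Cyclically concatenating $\phi_1, \dots, \phi_n$ gives a flow cycle $f$, and the free homotopy from $c$ to $f$ is assembled from the disk-level homotopies inside each $A_i$.

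The hard part will be verifying that each $\sigma_i$ extended by one edge really does lie in a single sector's side --- when a branch segment crosses between several sectors, the scheme will need to be refined using shorter $\Phi$-shortcuts within each traversed sector and checking that the resulting concatenation still closes up. A sharper obstacle appears at small $n$ (e.g.\ $n=1$): the scheme would force a single $\Gamma$-edge to play two incompatible roles as both the initial edge of $\sigma_1$ and the topmost edge of a sector side containing $\sigma_1$. This combinatorial inconsistency is precisely the parity obstruction: for even $n$ the two roles partition consistently around the cycle and $f \simeq c$, while for odd $n$ we pass to $c^2$ (which has $2n$ branch segments) to restore consistency, yielding $f \simeq c^2$. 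By the parity invariance of \Cref{cor:push_parity} this cannot be improved.

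Finally, the equality of cones will follow immediately. The inclusion $\cone_1(\Phi) \subseteq \cone_1(\Gamma)$ is automatic from $\Phi$ being in dual position: each $\Phi$-edge is positively transverse to $\tau^{(2)}$, so every flow cycle is freely homotopic to a dual cycle. Conversely, for $\cone_1(\Gamma) \subseteq \cone_1(\Phi)$, the main claim yields either $[c] \in \cone_1(\Phi)$ or $[c^2] = 2[c] \in \cone_1(\Phi)$ for each dual cycle $c$; in the latter case $[c] = \tfrac{1}{2}[c^2] \in \cone_1(\Phi)$ since convex polyhedral cones are closed under nonnegative scalar multiplication.
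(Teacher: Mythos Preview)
Your proposal has a genuine gap. The core assumption---that each branch segment $\sigma_i$ together with the first edge of $\sigma_{i+1}$ constitutes an entire side of some sector $A_i$---is false in general, as you yourself note. A branch segment can traverse arbitrarily many sectors along a branch line (compare the description of branch lines in tubes, \Cref{lem:gammabandcombinatorics}). You propose to ``refine using shorter $\Phi$-shortcuts within each traversed sector,'' but this is precisely where the difficulty lies, and you do not carry it out. The $\Phi$-edges in a sector all terminate at that sector's \emph{top} vertex, so chaining such shortcuts across consecutive sectors along a branch line requires understanding how the top of one sector relates to the side of the next; this is band-level, not sector-level, information. Even granting a construction, your assertion that the resulting flow path closes up after one or two periods (``for even $n$ the two roles partition consistently'') is only stated, not argued. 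Finally, the case where $c$ is a branch cycle (no anti-branching turns at all) is not covered by your scheme, since there are no $\phi_i$ to concatenate.

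The paper's proof takes a substantially different route. It works in the universal cover and uses the \emph{band} structure of tubes (\Cref{lem:bandbehavior}) rather than individual sectors: starting from a point $q_0$ on a branch line below a vertex of the lifted cycle $\wt c$, it propagates a $\wt\Phi$-ray band by band, producing an eventually periodic ray under the deck transformation $g$ stabilizing $\wt c$. To bound the period, it introduces an $L/R$ labeling on the branch lines through $\wt c$ and a ``to the left of'' total order on points below each vertex; when $c$ has an even number of anti-branching turns, $g$ preserves this order, and a monotonicity argument forces $g(\wt f)=\wt f$. Branch cycles are handled separately at the outset. None of this apparatus appears in your outline, and the sector-level tools you invoke (\Cref{lem:sectors_turns}, \Cref{fact:flow_edge}) are not by themselves sufficient.
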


\begin{proof}
We first explain that every branch curve is homotopic to a flow cycle. Let $\gamma$ be a branch curve in $B^s$. Then $\gamma$ determines a tube $T_\gamma$ of $M-B^s$. Let $b$ be a band of $T_\gamma$. The core curve of $b$ is homotopic to $\gamma$. 
By \Cref{lem:bandbehavior}, there is a $\Phi$-ray lying exclusively in $b$, and it must be eventually periodic. Hence there is a $\Phi$-cycle $c$ in $b$, which must then be homotopic to $\gamma^i$ for $i\ge1$. The fact that $i=1$ is a consequence of the uniqueness part of \Cref{lem:bandbehavior}, item (i).

Next, let $c$ be a dual cycle which is not a branch curve, and let $\wt c$ be a lift to $\uM$. 
Let $g \in \pi_1 (M)$ be the deck transformation that generates the cyclic subgroup stabilizing $\wt c$. Further suppose that $g$ translates $\wt c$ in its positive direction.
Express $\wt c$ as a concatenation of $\wt \Gamma$-edges $(\dots,e_{-2}, e_{-1}, e_0, e_1,e_2,\dots)$ and let $p_i$ be the terminal point of $e_i$.

We build a $\wt\Phi$-ray inductively. 
There are exactly two branch lines through $p_0$. Let $q_0$ be any point on either of these branch lines below $p_0$. Suppose $q_i$ is on a branch line below $p_i$. If $p_{i+1}$ lies on the same branch line containing $q_i$ and $p_i$ (including the case where $q_i = p_i$), let $q_{i+1}=q_i$. If $p_{i+1}$ does not lie in the same branch line as $p_i$ and $q_{i}$, let $b$ be the band 
containing the branch line through $p_i$ and $q_{i}$ in its boundary and with $p_{i+1}$ in its interior.
Then  $q_{i}\in b_{p_{i+1}}^-$, so as in \Cref{lem:bandbehavior} the (unique) ray $\rho_b(q_i)$ intersects a branch line containing $p_{i+1}$ at a point $q_{i+1}$ below or equal to $p_{i+1}$. Let $f_i$ be the $\wt \Phi$-segment from $q_i$ to $q_{i+1}$ traversed by $\rho_b(q_i)$. 
(These two cases are depicted in the first two images in \Cref{fig:to_the_left} where the points $q_i', q_{i+1}'$ can be ignored for now.)
We will show that the concatenation $\wt f = (f_1,f_2,f_3, \ldots)$ of these segments form a preperiodic ray in $\wt \Phi$.

Let $\ell_i$ be the branch line containing $e_i$, and let $b_i$ be the band of $\wt B^s$ determined by the property that it contains $\ell_i$ in its boundary and contains the first edge of $\wt c$ after $e_i$ not lying in $\ell_i$. As a comprehension check, we note that if $e_i$ and $e_j$ are contained in the same branch line, then $b_i=b_j$.

Fix some $j \ge 0$ such that $\ell_j\ne \ell_{j+1}$. Consider $p_j$, which as a reminder is the terminal point of $e_j$ and the initial point of $e_{j+1}$. By construction, $q_j$ lies below $p_j$ on $\ell_j$ or $\ell_{j+1}$. 

\begin{itemize}
\item First, suppose that $q_j$ lies below $p_j$ on $\ell_{j+1}$ (this includes the case where $q_j=p_j$). Let $e_k$ be the last edge of $\wt c$ contained in $\ell_j$. Then $q_j=q_{j+1}=\cdots=q_k$, and $q_{k+1}$ is equal to either $p_{k+1}$ or one of finitely many points below $p_{k+1}$ on a branch segment in $b_k$.
\item Next, suppose first that $q_j$ lies below $p_j$ on $\ell_j$. Then $q_{j+1}$ lies in $b_j$ and is equal either to $p_{j+1}$ or one of finitely many points below $p_{j+1}$ on a branch segment in $b_j$. 
\end{itemize}

Recall that the deck transformation $g$ stabilizes $\wt c$. The discussion above shows that there are finitely many vertices $r_1,\dots, r_m$ such that, for each $i>0$, $\wt f$ must pass through one of $g^i(r_1),\dots, g^i(r_m)$. Moreover, since the construction shows that each subray of $\wt f$ is determined by any initial vertex, it follows that the projection $f$ of $\wt f$ to $M$ is eventually periodic, giving a $\Phi$-cycle that is homotopic to a positive integer multiple of $c$.

By truncating an initial segment of $\wt f$ we can assume $\wt f$ is periodic under some power of $g$. More precisely, there exist $k,\ell$ such that $g^k(q_0)=q_\ell$. By construction, $\wt f$ is determined by any of its vertices and so for any $i \in \ZZ$, either $\wt f$ has no common vertices with  $g^i(\wt f)$ or the two are equal. To bound the period of $\wt f$ we first need to discuss some extra structure associated to the branch lines through $\wt c$.

First, label each $\ell_i$ with either an $L$ or $R$ such that if $\ell_j$ and $\ell_{j+1}$ are adjacent (i.e. intersecting and not equal) branch lines along $c$, as in the above construction, then they receive opposite labels. Such a labeling is uniquely determined by giving $\ell_1$, the branch line containing the edge $e_1$, the label $L$. 
Note that this labeling is well-defined: if $\wt c$ leaves a branch line and then returns to it, it does so only after an even number of anti-branching turns by \Cref{cor:push_parity}.

If $p$ is a vertex of $\wt c$ and $\ell^L$ and $\ell^R$ are the branch lines through $p$ with indicated labels, we define an order on the vertices lying below $p$ on $\ell^R$ and $\ell^L$. If $q_1,q_2$ are vertices on these lines which lie below $p$, then we say that $q_1$ is \define{to the left of} $q_2$ (at $p$) if any of the following hold:
\begin{itemize}
\item $q_1$ lies in $\ell^L$ and $q_2$ lies in $\ell^R$,
\item $q_1$ and $q_2$ lie in $\ell^L$ where $q_1$ lies below $q_2$, or
\item $q_1$ and $q_2$ lie in $\ell^R$ where $q_2$ lies below $q_1$.
\end{itemize}

Now if $c$ is composed an an even number of branch segments (i.e. $c$ has an even number of anti-branching turns), then $g$ preserves the $L/R$ labeling on the branch lines through $\wt c$, and therefore preserve the order relation defined above.  For the remainder of the argument, we assume this to be the case. Otherwise, after replacing $c$ (and $g$) by its square, the same argument applies.

Let $p$ be any vertex of $\wt c$ and let $q$ be a vertex of $\wt f$ such that $q$ lies below $p$ in a branch line $\ell$, as in the construction of $\wt f$. (If $q = p$, then take $\ell$ to be the branch line containing the edge of $\wt c$ with initial vertex $p$. Suppose that $\ell$ is labeled by $L$. Set $\wt f' = g^{-1}(\wt f)$. If $\wt f = \wt f'$, then the proof is complete. So assume that $\wt f \ne \wt f'$ and so these flow lines have no vertices in common.

By construction, there is also a vertex $q'$ of $\wt f'$ such that $q'$ lies below $p$ on some branch line. Since $q$ and $q'$ are not equal, one lies strictly to the left of the other. For the sake of argument, suppose $q$ lies strictly to the left of $q'$. We claim that this implies that for any other vertex $x$ of $\wt c$, the vertex of $\wt f$ below $x$ on some branch lines lies strictly to the left of the vertex of $\wt f$ below $x$ on some branch line. (As shorthand, we say that $\wt f$ lies to the left of $\wt f'$ at $x$.)
It suffices to prove the claim for vertices after $p$ along $\wt c$ and we do so by induction.

\begin{figure} 
\centering
\includegraphics[width=4.5in]{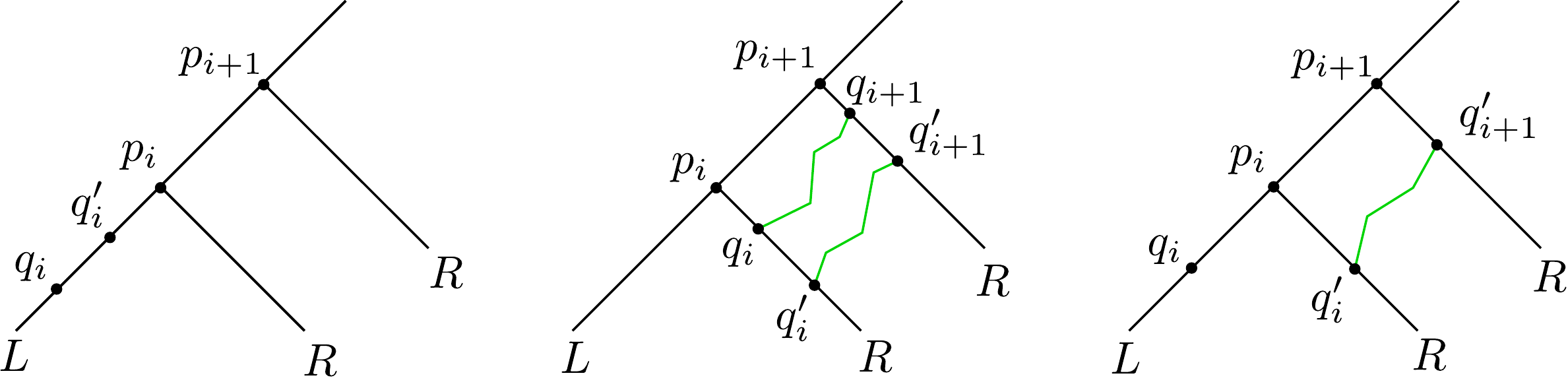}
\caption{Cases (i)-(iii) (ordered left to right) proving that $\wt f$ is to the left of $\wt f'$. In the first case, it is possible that $q_i' = p_i$. In the second and third cases, it is possible that $q_{i+1} = p_{i+1}$.  In the third case, it is possible that $q_i = p_i$. The $\Phi$-segments $f_i$ and $f_i'$ are drawn in green.}
\label{fig:to_the_left}
\end{figure}

Suppose that $\wt f$ is to the left of $\wt f$ at the vertex $p_i$ along $\wt c$. Let $\ell_i^L$ and $\ell_i^R$ be the left and right branch lines through $p_i$, respectively, and let $q_i, q_i'$ be the vertices of $\wt f, \wt f'$, respectively, that lie below $p_i$ on either $\ell_i^L$ or $\ell_i^R$. Let us suppose that $p_{i+1}$ (the next vertex along $\wt c$) also lies on $\ell_i^L$; the proof in the opposite case is symmetric. We have the following cases (see \Cref{fig:to_the_left}):
\begin{enumerate}[label=(\roman*)]
\item If $q_i$ and $q_i'$ are on $\ell_i^L$, then $q_{i+1} = q_i$ and $q_{i+1}' = q_i'$. Hence, $q_{i+1}$ is to the left of $q_{i+1}'$ at $p_{i+1}$. 
\item If $q_i$ and $q_i'$ are on $\ell_i^R$ (and $q_i \neq p_i$), then both $f_i$ and $f_i'$ (the segments of $\wt f, \wt f'$ joining $q_i,q_i'$ to $q_{i+1},q_{i+1}'$, respectively) are contained in the same band $b$. Since $\ell_i^L = \ell_{i+1}^L$, we have that $q_{i+1},q_{i+1}'$ are both contained in $\ell_{i+1}^R$.
As $\wt f$ and $\wt f'$ are assumed to be distinct, the segments $f_i,f_i'$ do not intersect in $b$ and so their terminal vertices are orders along $\ell_{i+1}^R$ in the same manner as they were ordered along $\ell_i^R$. Hence, $q_{i+1}$ is to the left of $q_{i+1}'$ at $p_{i+1}$. 
\item If $q_i$ and $q_i'$ are on different branch lines, then it must be that $q_i$ is on $\ell^L_i$ and $q_i'$ is on $\ell^R_i$. In this case, $q_{i+1} = q_i$ and $q_{i+1}'$ lies in $\ell_{i+1}^R$ (where $q_{i+1}' = p_{i+1}$ is possible).  Hence, $q_{i+1}$ is to the left of $q_{i+1}'$ at $p_{i+1}$. 
\end{enumerate}

We have shown that $\wt f$ lies to the left of $\wt f' = g^{-1}(\wt f)$ at $g^i(p)$ for each $i \in \ZZ$. By $\langle g \rangle$-equivariance, this implies that $g^{i+1}(\wt f)$ lies strictly to the left of $g^{i}(\wt f)$ at $p$ for every $i>0$. This, however, contradicts that $\wt f$ is periodic under some power of $g$. Hence, we must have that $\wt f = g(\wt f)$ and so $f$ is homotopic in $M$ to $c$.
\end{proof}

For the other half of \Cref{th:cones_equal}, we need two more lemmas.

Recall that a directed graph is \define{strongly connected} if for any two of its vertices $u$ and $v$, there is a directed path from $u$ to $v$.

\begin{lemma}\label{lem:sc_dual}
The dual graph $\Gamma$ is strongly connected. 
Moreover, any cycle in $\Gamma$, not necessarily directed, is homologous to a linear combination of dual cycles. 

Hence, the dual cycles generate $H_1(M;\Z)$.
\end{lemma}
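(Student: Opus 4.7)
The plan is to first establish the strong connectivity of $\Gamma$ by contradiction using a source-SCC argument together with Lackenby's tautness of $\hbs$, then to deduce the cycle-decomposition statement by a standard flow-decomposition argument, and finally to transfer the conclusion from $H_1(\Gamma)$ to $H_1(M)$ using that $\Gamma$ is the $1$-skeleton of the spine $B^s$.

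For strong connectivity, suppose for contradiction that $\Gamma$ has more than one strongly connected component. The associated SCC DAG is then non-trivial, so I can choose a source component $A \subsetneq V(\Gamma)$, meaning no $\Gamma$-edge enters $A$ from outside. Equivalently, every $\tau$-face $f$ with one incident tetrahedron in $A$ and the other in $A^c$ has its coorientation pointing out of $A$. The relative $2$-chain
\[
\sigma := \partial\Bigl(\sum_{t \in A}[t]\Bigr)
\]
is therefore non-negative and non-zero: interior faces cancel, while each boundary face contributes $+1$ with its coorientation-induced orientation. Since $\sigma$ is a boundary, $\partial \sigma = 0$, i.e., $\sigma$ satisfies the switch conditions at every edge of $\tau$, and visibly $[\sigma] = 0 \in H_2(M,\partial M; \ZZ)$. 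By \Cref{lem:integral_classes}, $\sigma$ is realized by a non-empty embedded surface $S$ carried by $\hbs$. Lackenby's theorem (see \Cref{sec:normbkgd}) asserts $S$ is taut, so no component of $S$ is null-homologous and $-\chi(S) = x([S]) = 0$. But in the finite-volume hyperbolic manifold $M$, essential tori, Klein bottles, and annuli do not exist, while boundary-parallel closed tori and annuli are null-homologous in $H_2(M,\partial M)$; hence any carried component with $\chi \ge 0$ would be either null-homologous (contradicting tautness) or essential (contradicting hyperbolicity). Therefore every component of $S$ has $-\chi > 0$, so $-\chi(S) > 0 = x([S])$, contradicting that $S$ is norm-minimizing. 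We conclude $\Gamma$ is strongly connected.

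Given strong connectivity, the second assertion follows by a flow-decomposition argument. Let $c = \sum_e c_e \cdot e$ be any integral $1$-cycle in $\Gamma$; for each edge $e$ with $c_e < 0$ choose a directed cycle $\gamma_e$ through $e$ (possible by strong connectivity), and set
\[
c' := c + \sum_{e\colon c_e < 0} |c_e|\cdot \gamma_e.
\]
All coefficients of $c'$ are non-negative, $\partial c' = 0$, and $c' - c$ is manifestly a non-negative $\ZZ$-linear combination of directed cycles. By the classical flow-decomposition theorem, the non-negative integral $1$-cycle $c'$ is itself a non-negative integer combination of directed cycles, so $c = c' - (c'-c)$ is a $\ZZ$-linear combination of directed cycles, as claimed. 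For the final assertion, $B^s$ is a spine of $M$ (as noted after \Cref{lem:comp_smooth}) and $\Gamma$ is its $1$-skeleton, so $\pi_1(\Gamma) \twoheadrightarrow \pi_1(B^s) \cong \pi_1(M)$, yielding a surjection $H_1(\Gamma;\ZZ) \twoheadrightarrow H_1(M;\ZZ)$. Combined with the preceding paragraph, the images of directed cycles of $\Gamma$ under inclusion generate $H_1(M;\ZZ)$.

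The main obstacle is the strong-connectivity step: excluding Euler-characteristic-zero components in the carried surface $S$. This requires both Lackenby's tautness in the paper's strengthened sense (no null-homologous components) and hyperbolic rigidity (no essential tori, Klein bottles, or annuli), with the auxiliary observation that boundary-parallel closed tori and annuli are null-homologous in $H_2(M,\partial M)$ playing the role of closing the dichotomy.
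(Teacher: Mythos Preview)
Your proof is correct, but the strong-connectivity step takes a substantially heavier route than the paper's. The paper observes that for any tetrahedron $T$, the set $S(T)$ of tetrahedra reachable from $T$ by directed $\Gamma$-paths has no top faces on its boundary; since each tetrahedron has two top faces and two bottom faces, a simple count shows $S(T)$ also has no bottom faces on its boundary, hence $S(T)=M$. This is purely combinatorial, uses only the taut structure, and in particular shows the result holds for any taut ideal triangulation of any $3$-manifold. By contrast, your argument constructs a null-homologous carried surface from a source SCC and invokes Lackenby's tautness theorem together with the hyperbolicity of $M$ (to exclude essential tori and annuli). This is valid, but it obscures the elementary nature of the statement and imports hypotheses that are not actually needed. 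A minor point: your appeal to \Cref{lem:integral_classes} is unnecessary, since $\sigma$ is already a non-negative integral cycle and directly yields the carried surface.

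For the second assertion, your flow-decomposition argument (add directed cycles to kill negative coefficients, then decompose the resulting non-negative cycle) is a clean alternative to the paper's explicit construction, which breaks a cycle into maximal forward and backward segments and splices in directed paths furnished by strong connectivity. Both are standard; yours is arguably tidier. The final step, passing from $H_1(\Gamma)$ to $H_1(M)$ via the spine $B^s$, is the same in both.
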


\begin{proof}
We first show that $\Gamma$ is strongly connected.
Consider a tetrahedron $T$, and let $S(T)$ be the union of all tetrahedra accessible from $T$ via a directed path in $\Gamma$. It is immediate that $S(T)$ has no top faces on its boundary. Since the tetrahedra composing $S(T)$ have an equal number of top and bottom faces, and each top face is glued to a bottom face, $S(T)$ has no bottom faces in its boundary. Hence $S(T)=M$, proving strong connectivity.

Now, let $x$ be a cycle in $\Gamma$.  We can break $x$ into a concatenation of segments which are maximal with respect to the property that they either completely agree or disagree with the edge orientations of $\Gamma$. We write $x=(f_1, b_1,\dots, f_n, b_n)$ where $f$ and $b$ are chosen to evoke ``forward" and ``backward." Denote the initial vertex of $f_i$ by $p_i$, the terminal vertex by $q_i$, and observe that this dictates that $b_i$ travels from $q_i$ to $p_{i+1}$ where indices are taken modulo $n$.

Let $\alpha_i$ be a directed path from $q_i$ to $p_i$, which exists by the strong connectivity of $\Gamma$. Let $A_i$ be the directed cycle $(f_i,\alpha_i)$, and let $B$ be the directed cycle $(b_n^{-1}, \alpha_n,\dots, b_1^{-1},\alpha_1)$. Then $x$ is homologous to $A_1+\cdots+A_n-B$. Since $\pi_1(\Gamma)\to\pi_1(M)$ is surjective, this completes the proof.
\end{proof}

Recall that if $A$ is a convex set in $\R^d$, a \emph{supporting hyperplane} for $A$ is a hyperplane $H$ such that $H\cap \partial A\ne \varnothing$ and $A$ is contained in one of the two closed half spaces determined by $H$. We will use the following finite-dimensional version of the Hahn-Banach Theorem:

\begin{lemma}[Hahn-Banach]\label{GHB}
Let $C$ be a strongly convex polyhedral cone in a finite dimensional vector space $V$. If $K$ is a subspace of $V$ with $K\cap \intr(C)=\varnothing$, then $K$ can be extended to a supporting hyperplane $H$ for $C$. If $K\cap C=\{0\}$, we can choose $H$ such that $H\cap C=\{0\}$.
\end{lemma}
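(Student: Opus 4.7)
The plan is to treat the two claims separately, reducing each to a standard separation result for convex sets.

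For the first claim, I would apply the classical finite-dimensional Hahn--Banach separation theorem directly to the subspace $K$ and the open convex set $\intr(C)$. This yields a linear functional $\phi \in V^*$ separating them; since $K$ is a linear subspace through the origin and $0 \in C$, one can arrange $\phi|_K \equiv 0$ and $\phi \ge 0$ on $\intr(C)$, hence on its closure $C$. The hyperplane $H = \ker \phi$ then contains $K$, has $C$ in one of its closed half-spaces, and meets $\partial C$ (at $0$), so $H$ is a supporting hyperplane. The degenerate case $\intr(C) = \varnothing$ is handled separately: then $\mathrm{span}(C)$ is a proper subspace of $V$ and can be extended to a hyperplane containing $K$, which trivially supports $C$.

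For the second claim, the idea is to pass to the quotient $V/K$, where a sharper separation becomes available. First, let $\pi\colon V \to V/K$ denote the projection and set $\bar C = \pi(C)$; this is again a polyhedral cone because it is the image of a finitely generated cone under a linear map. Next, I would verify that $\bar C$ is strongly convex: if $\bar C$ contained a line through $0$, then there would exist $c_1, c_2 \in C$ with $\pi(c_1) = -\pi(c_2) \neq 0$, so $c_1 + c_2 \in K \cap C = \{0\}$, forcing $c_2 = -c_1$; but then $\pm c_1 \in C$ together with strong convexity of $C$ gives $c_1 = 0$, a contradiction.

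Once $\bar C$ is known to be strongly convex, I would invoke the standard fact that a strongly convex polyhedral cone admits a linear functional $\phi \in (V/K)^*$ strictly positive on $\bar C \setminus \{0\}$. This reduces to the observation that $0$ does not lie in the convex hull of the finitely many extreme-ray generators of $\bar C$ (else $\bar C$ would contain a line), so a separating hyperplane for $\{0\}$ and that finite set provides the desired functional. Lifting $\phi$ to $\Phi = \phi \circ \pi$ on $V$, the hyperplane $H = \ker \Phi$ contains $K$; and if $c \in H \cap C$, then $\pi(c) \in \ker \phi \cap \bar C = \{0\}$, so $c \in K \cap C = \{0\}$. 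Hence $H \cap C = \{0\}$, as required.

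I expect the main obstacle to be the strong convexity of the quotient $\bar C$: the separation steps themselves are routine, but the passage from $K \cap C = \{0\}$ in $V$ to the absence of lines in $\pi(C)$ is the place where both hypotheses on $C$ and on $K$ must be used simultaneously and carefully combined.
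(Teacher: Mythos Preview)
Your argument is correct. For the first claim your approach coincides with the paper's: both simply invoke the separating hyperplane theorem for the disjoint convex sets $K$ and $\intr(C)$. (One minor caveat: your degenerate-case patch when $\intr(C)=\varnothing$ is not quite right as written, since there need not be a hyperplane containing both $K$ and $\mathrm{span}(C)$; but neither the paper nor its application---where $C$ is a full-dimensional orthant---requires this case.)

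For the second claim you take a genuinely different route. The paper argues geometrically: it slightly enlarges $C$ to a strongly convex cone $C'$ with $C\setminus\{0\}\subset\intr(C')$ and $K\cap\intr(C')=\varnothing$, then reapplies the first statement to $C'$; any supporting hyperplane for $C'$ then meets $C$ only at the origin. You instead quotient by $K$, check that $\pi(C)$ remains a strongly convex polyhedral cone, find a strictly positive functional there, and pull it back. Your approach is self-contained and avoids having to justify the existence of the enlarged cone $C'$ (which the paper leaves implicit), at the cost of the strong-convexity verification for $\pi(C)$---exactly the step you identified as the crux, and which you carry out correctly using both the hypothesis $K\cap C=\{0\}$ and the strong convexity of $C$.
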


\begin{proof}
Apply the separating hyperplane theorem (see, for example, \cite[Section 2.5]{boyd2004convex}),  
to $K$ and $\intr(C)$ for the first statement. For the second, slightly enlarge $C$ to $C'$ so that $C'$ is still strongly convex with $K\cap \intr(C')=\varnothing$ and apply the same theorem to $K$ and $\intr(C')$.
\end{proof}

The following is a strengthening of \cite[Proposition 2.12]{mosher1991surfaces}. The appeal to Hahn-Banach comes from \cite[Theorem I.7]{sullivan1976cycles} and is made explicit in \cite[Theorem 5.1]{mcmullen2015entropy}, where the second statement of the lemma is proven.

\begin{lemma} \label{lem:ext}
Suppose that $D$ is a finite directed graph and $\eta \in H^1(D; \R)$ is nonnegative on all directed cycles. Then there is a nonnegative cocycle $m \colon E(D) \to \R_{\ge 0}$ 
representing $\eta$. 

If $\eta$ is positive on all directed cycles, then $m$ can be taken to be positive on all directed edges of $D$. 
\end{lemma}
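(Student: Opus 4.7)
The plan is to establish the nonnegative case by a Hahn--Banach separation argument and then reduce the strictly positive case to it by a perturbation.

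Fix any cochain $m_0 \in C^1(D;\R) = \R^{E(D)}$ representing $\eta$, and let $B^1 \subset C^1$ be the subspace of coboundaries. Representatives of $\eta$ form the affine subspace $A_\eta := m_0 + B^1$, and (since $D$ is $1$-dimensional) the annihilator of $B^1$ in $C_1(D;\R)$ is precisely the $1$-cycle space $Z_1$. The nonneg conclusion is exactly that $A_\eta \cap \R^{E(D)}_{\ge 0} \ne \varnothing$. Suppose for contradiction the intersection is empty. Applying a standard separating hyperplane theorem (the closed-convex variant of \Cref{GHB}) to these two disjoint, nonempty, closed convex sets gives $z \in \R^{E(D)}$ with $\langle z, v\rangle \ge 0$ for all $v \ge 0$ and $\langle z,v\rangle < 0$ for all $v \in A_\eta$. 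Testing on edge indicators forces $z(e) \ge 0$ on every edge. Since $\langle z, \cdot\rangle$ is uniformly negative on the coset $m_0+B^1$ of the subspace $B^1$, it must annihilate $B^1$, so $z \in Z_1$ is a nonneg $1$-cycle and $\langle [z], \eta\rangle = \langle z, m_0\rangle < 0$.

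The next step is the classical flow-decomposition fact: any nonneg real $1$-cycle $z$ in a finite directed graph is a nonneg linear combination $\sum \lambda_i c_i$ of simple directed cycles. One proves this by induction on $|\supp(z)|$: starting at any edge in $\supp(z)$ and moving forward, the vanishing of $\partial z$ means the out-weight equals the in-weight at every vertex of $\supp(z)$, so we must eventually revisit a vertex, producing a simple directed cycle $c$; subtracting the largest $\lambda c$ for which $z - \lambda c \ge 0$ strictly shrinks the support, and we iterate. Consequently $\langle [z],\eta\rangle = \sum \lambda_i \eta(c_i) \ge 0$ by hypothesis, contradicting strict negativity.

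For the strictly positive case, let $\mathbf{1}$ denote the all-ones cochain, so that $\langle c,[\mathbf{1}]\rangle = |c|$ is the edge-length for any directed cycle $c$. Since $D$ has only finitely many simple directed cycles and $\eta$ is strictly positive on each, $\epsilon := \min_c \eta(c)/|c| > 0$ (if $D$ has no directed cycles the conclusion is vacuous with $m = 0$). The class $\eta - \epsilon [\mathbf{1}]$ is then nonneg on every simple, hence every, directed cycle, so the nonneg case supplies a representative $m' \ge 0$. Then $m := m' + \epsilon \mathbf{1}$ represents $\eta$ and satisfies $m(e) \ge \epsilon > 0$ on every edge. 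The main technical step I anticipate is the flow decomposition of a nonneg real $1$-cycle; the remainder is routine finite-dimensional convex analysis once the separating hyperplane theorem is in hand.
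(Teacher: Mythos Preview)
Your argument is correct in spirit and takes a genuinely different route from the paper. The paper builds the cocycle $m$ \emph{constructively}: first for strongly connected $D$ it extends $\ker(\eta)\subset Z_1$ to a supporting hyperplane $H$ of $\R^E_{\ge 0}$ (this is exactly the form of \Cref{GHB}), writes $\R^E = V\oplus \ker(\eta)\oplus W$ with $V$ spanned by a positive cycle and $H=\ker(\eta)\oplus W$, and defines $m$ by $m|_{Z_1}=\eta$, $m|_W=0$. For general $D$ it reduces to the recurrent part and then corrects on the transient edges by a topological sort of the condensation. Your approach instead proves existence by contradiction: separate $A_\eta$ from the orthant, identify the separating functional as a nonnegative $1$-cycle, and decompose it into directed cycles to contradict the hypothesis. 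This is the Farkas/LP-duality perspective and is arguably cleaner since it treats all $D$ uniformly; the paper's approach has the advantage that it invokes precisely the Hahn--Banach lemma already recorded (\Cref{GHB}) rather than the somewhat stronger ``disjoint polyhedra can be strictly separated'' (which is what you actually need to get $\langle z,\,\cdot\,\rangle<0$ on all of $A_\eta$; weak separation of closed convex sets does not give strictness).

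There is one genuine gap. In the strictly positive case your parenthetical ``if $D$ has no directed cycles the conclusion is vacuous with $m=0$'' is wrong: the hypothesis becomes vacuous, but the conclusion does not. A directed graph can have $H^1(D;\R)\ne 0$ yet no directed cycles (e.g.\ two parallel edges $a\to b$), and then $m=0$ represents only the zero class. The fix is easy and in fact is the same trick the paper uses for its transient part: when $D$ is acyclic choose a function $f$ on vertices strictly increasing along every edge; then $\delta f>0$, and $m_0+N\delta f$ is a positive representative of $\eta$ for $N$ large. With that edge case patched (or by simply noting that in every application $D=\Gamma$ is strongly connected), your proof is complete.
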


\begin{proof}
We assume $\eta$ is not identically 0, since in that case the statement is clearly true.
First suppose that $D$ is strongly connected. Let $E=E(D)$, let $Z_1 \subset \R^E$ be the subspace of cycles in $D$, and let $\R_+^E$ and $\R_{\ge 0}^E$ be the open positive orthant and its closure, respectively. 

Note that $Z_1\cap \R_+^E$ is nonempty by strong connectivity, and that $\eta$ is strictly positive on this set since $\eta\ne 0$. By \Cref{GHB}, $\ker(\eta)$ can be extended to a supporting hyperplane $H$ for $\R_{\ge0}^E$. Let $V$ be the span of a vector in $Z_1\cap \R_+^E$, and choose a decomposition $H=\ker(\eta)\oplus W$ for some subspace $W$. Then $\R^E=V\oplus \ker(\eta)\oplus W$, because neither $\ker(\eta)$ nor $W$ intersect $\R_+^E$. Thus we can define a linear functional $m\colon \R^E\to \R$ which extends $\eta$ by requiring $m|_W=0$. Note that $m$ is a nonnegative cocycle representing $\eta$.
If $\eta$ is strictly positive on cycles in $D$, then we can choose $H$ so that $H\cap \R_{\ge0}=\{0\}$, guaranteeing that $m$ is a positive cocycle. The same argument shows that the result holds when $D$ is a union of strongly connected components.

Now suppose that $D$ is arbitrary. Let $R$ be the union of recurrent components of $D$, i.e. the union of all maximal strongly connected subgraphs. Let $A$ be the complementary subgraph of $R$ in $D$. Let $c$ be a cocycle representing $\eta$, and write $c=c_R+c_A$, where $c_R$ and $c_A$ are supported on $R$ and $A$. By the case above, $c_R$ can be written as $c_++\delta h$ for some $0$-cochain $h$, where $c_+$ is either positive or nonnegative depending on whether $\eta$ is positive or nonnegative on directed cycles.

Consider the quotient $D_R$ obtained by collapsing each component of $R$ to a vertex. Note that $D_R$ contains no directed cycles, so the edge orientations induce a partial order on its vertices and there is a function $f \colon V(D_R)\to \R$ compatible with this partial order. We may lift $f$ to a function $F$ on $D$ which is constant on each component of $R$. Then $\delta F$ is a coboundary which is $0$ on $E(R)$ and positive on $E(A)$. By replacing $F$ with a sufficiently large multiple of itself, we can guarantee that for each edge $a$ of $A$, $F(a)>|(c_A+\delta h)(a)|$.
Then $c_R + c_A + \delta F$ is the desired cocycle.
\end{proof}

\begin{proposition}\label{prop:dual_carried}
There is an equality of cones
\[
\cone_1^\vee(\Gamma) = \cone_2(\tau).
\]
That is, the classes carried by $\tau^{(2)}$ are precisely those whose algebraic intersection with any closed positive transversal to $\tau^{(2)}$ is nonnegative.

Moreover, any integral class $\alpha \in \cone_1^\vee(\Gamma) \cap H^1(M; \ZZ)$ is represented by a surface $S$ carried by $\tau^{(2)}$.
\end{proposition}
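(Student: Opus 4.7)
The plan is to prove the two inclusions $\cone_2(\tau)\subseteq \cone_1^\vee(\Gamma)$ and $\cone_1^\vee(\Gamma)\subseteq \cone_2(\tau)$ separately, and deduce the integral statement from \Cref{lem:integral_classes}. The easy inclusion is essentially immediate: any directed cycle $c$ in $\Gamma$ gives a closed positive transversal to $\tau^{(2)}$, and if $\alpha\in\cone_2(\tau)$ is represented by nonnegative weights $w$ on faces of $\tau$ satisfying the switch conditions, then under the identification of faces of $\tau$ with edges of $\Gamma$ we have $\langle[c],\alpha\rangle = \sum_{e\in c}w(e)\ge 0$, so $\alpha\in\cone_1^\vee(\Gamma)$.

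The main preparatory step for the reverse inclusion is to identify the $\R$-vector space $W$ of switch-condition-satisfying weights on faces of $\tau$ with the cocycle space $Z^1(B^s;\R)$. The $2$-cells of $B^s$ are its sectors (each pierced by a single $\tau$-edge $\mathbf e$), and by \Cref{lem:sectors_turns} the boundary of such a sector $A$ decomposes into two oriented paths running along the two sides of the fan of $\mathbf e$. Thus the cocycle relation $\langle w,\partial A\rangle = 0$ translates precisely to the switch condition at $\mathbf e$, and $\cone_2(\tau)$ is the image of $W\cap \R_{\ge 0}^{E(\Gamma)}$ under the quotient $W\twoheadrightarrow W/B^1(B^s;\R)=H^1(B^s;\R)=H^1(M;\R)$ (using that $M$ deformation retracts onto $B^s$).

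With this in place, I would fix $\alpha\in\cone_1^\vee(\Gamma)$ and any cocycle representative $c\in W$, and argue by contradiction. If $\alpha\notin\cone_2(\tau)$, then $c$ lies outside the closed convex polyhedral set $\R_{\ge 0}^{E(\Gamma)}+B^1(B^s;\R)$, so the finite-dimensional separating hyperplane theorem produces a functional $\ell\in(\R^{E(\Gamma)})^*$ with $\ell(c)<0$, $\ell\ge 0$ on $\R_{\ge 0}^{E(\Gamma)}$, and $\ell\equiv 0$ on $B^1(B^s;\R)$. The first two conditions identify $\ell$ with a nonnegative $1$-chain on $\Gamma$, and the last forces $\ell$ to be a $1$-cycle. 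The standard positive flow decomposition then expresses $\ell = \sum_i\lambda_i c_i$ with $\lambda_i\ge 0$ and $c_i$ simple directed cycles of $\Gamma$, so $\ell(c) = \sum_i\lambda_i\langle[c_i],\alpha\rangle\ge 0$, contradicting $\ell(c)<0$. The integral statement then follows: any $\alpha\in H^1(M;\Z)\cap\cone_1^\vee(\Gamma)$ lies in $\cone_2(\tau)$, hence is realized by a nonnegative real cycle on $\tau^{(2)}$, which \Cref{lem:integral_classes} promotes to an embedded carried surface.

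The step I expect to require the most care is the identification of $Z^1(B^s;\R)$ with $W$. This relies on carefully matching the cellular orientations of the $\Gamma$-edges in $\partial A$ with the partition of $\mathbf e$'s fan into two sides so that the resulting linear relation is exactly the switch condition at $\mathbf e$, which in turn uses the smooth structure of \Cref{lem:comp_smooth} and the combinatorial description of sectors in \Cref{lem:sectors_turns}. Once this identification is established, the Hahn--Banach argument and the positive flow decomposition are both standard.
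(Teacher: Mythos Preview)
Your proof is correct. Both your argument and the paper's use a finite-dimensional Hahn--Banach/separation step, but they are organized dually. The paper pulls $\eta\in\cone_1^\vee(\Gamma)$ back to $H^1(\Gamma)$, invokes \Cref{lem:ext} to find a nonnegative cocycle representative $m$ on the \emph{graph} $\Gamma$, and then verifies the switch conditions a posteriori by observing that each sector boundary $\ell\cdot\bar r$ is nullhomotopic in $M$, so $m(\ell)=m(r)$. You instead build the switch conditions in from the start by identifying the weight space with $Z^1(B^s;\R)$, and then separate dually: rather than extending to a nonnegative cocycle, you produce a nonnegative $1$-cycle witnessing failure, which decomposes into directed cycles of $\Gamma$. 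Your route is slightly more self-contained for this proposition, since it avoids the auxiliary \Cref{lem:ext}; the paper's route has the advantage that \Cref{lem:ext} (including its strictly-positive variant) is reused later in the proof of \Cref{th:omni}. The point you flag as delicate---matching the cocycle relation on a sector of $B^s$ to the switch condition at the $\tau$-edge it pierces---is exactly the content of the paper's verification that $m(\ell)=m(r)$, just phrased on the cochain side.
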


\begin{proof}
Since elements of $\cone_1(\Gamma)$ are nonnegative combinations of directed cycles which intersect the faces of $\tau$ positively, it's clear that $\cone_2(\tau) \subset \cone_1^\vee(\Gamma)$.

Now let $0\neq \eta \in \cone_1^\vee(\Gamma)$ and let $i^*\eta \in H^1(\Gamma)$ be its pullback to $\Gamma$. By \Cref{lem:ext} we can represent $i^*\eta$ by a nonnegative cocycle $m \colon E(\Gamma) \to \R_{\ge 0}$ on the edges of $\Gamma$. 
Using the identification between directed edges of $\Gamma$ and cooriented faces of $\tau$, the cocycle $m$ gives a 2-chain $m_2\colon \{\text{$\tau$-faces}\}\to \R$. 

From the duality between $\Gamma$ and $\tau$, it follows that $m_2$ is a cycle rel boundary and thus defines a class in $H_2(M,\partial M)$. More explicitly, for a $\tau$-edge $e$, let $\sigma_e$ denote the $B^s$-sector pierced by $e$. Let $\ell$ and $r$ be the two sides of $\sigma_e$. To check that $m_2$ is indeed a relative cycle, we need only check that $m(\ell)=m(r)$. Since the loop $i(l \cdot \bar r)$ is trivial in $M$, 
\[
m(l \cdot \bar r) = i^*\eta(\ell \cdot \bar r) = \eta(i(\ell \cdot \bar r)) = 0,
\]
and $m(\ell) =m(r)$ as required. Hence $m_2$ determines a carried class $h\in \cone_2(\tau)$ whose algebraic intersection with any dual cycle $c$ is equal to $\eta(c)$. Since the dual cycles generate $H_1(M)$, $h$ is the Lefschetz dual of $\eta$.

The moreover statement now follows from \Cref{lem:integral_classes}.
\end{proof}

\subsection{Faces of the Thurston norm ball} \label{sec:faces_thurston_norm}
Here we show that the veering triangulation always determines an \emph{entire} face of the Thurston norm ball, generalizing what was known for the layered case. We use this to give a criterion (\Cref{th:omni}) to detected fiberedness, which can be thought of as a combinatorial version of Fried's condition for a flow to be circular  \cite[Theorem D]{fried1982geometry}.

From its definition, we see that the dual graph $\Gamma = \Gamma_\tau$ is a cycle in $M$ and so represents a homology class $[\Gamma] \in H_1(M)$. 
Further, for any surface $\Sigma$ carried by $\tau^{(2)}$, we see that $\chi(\Sigma) = - \frac{1}{2}\langle [\Gamma], [\Sigma] \rangle$, where $\langle \cdot, \cdot \rangle$ is algebraic intersection. This is because the intersection of the carried surface with $[\Gamma]$ equals the number of ideal triangles in an ideal triangulation of the surface.

With this in mind, we define the combinatorial \define{Euler class} of $\tau$ to be
\[
e_\tau=-\frac{1}{2}\langle [\Gamma],\cdot\rangle.
\]
We have $e_\tau\in H^2(M,\partial M)$. Recall from \Cref{sec:normbkgd} that $\hbs$ is a taut branched surface. Hence, if $\Sigma$ is carried by $\tau^{(2)}$ then $-e_\tau([\Sigma]) = x([\Sigma])$, where $x(\cdot)$ is the Thurston norm.

\begin{theorem}[The whole face and nothing but the face] \label{th:whole_face}
The subset of $H_2(M,\partial M)$ on which the Thurston norm $x$ is equal to $-e_\tau$ is exactly $\cone_2(\tau)$. Hence, $\cone_2(\tau)$ is equal to the cone over a face $\bf F_\tau$ of $B_x(M)$.
\end{theorem}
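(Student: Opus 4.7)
The easy direction $\cone_2(\tau)\subseteq\{x=-e_\tau\}$ is essentially contained in the discussion preceding the theorem: for an integral $\alpha\in \cone_2(\tau)$, \Cref{lem:integral_classes} gives a carried representative $\Sigma$, which is norm-minimizing by tautness of $\hbs$ \cite[Theorem 3]{lackenby2000taut}, while the ideal-triangulation count recorded there gives $\chi(\Sigma)=-\tfrac12\langle[\Gamma],[\Sigma]\rangle$. Thus $x(\alpha)=-\chi(\Sigma)=-e_\tau(\alpha)$, and linearity and density propagate the equality to all of $\cone_2(\tau)$.

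My plan is therefore to prove the reverse inclusion $\{x=-e_\tau\}\subseteq \cone_2(\tau)$ along with the universal bound $-e_\tau\leq x$ on $H_2(M,\partial M;\R)$. Together these prove the theorem: the universal bound makes $-e_\tau$ a support functional for $B_x(M)$, so $\{x=-e_\tau\}$ is automatically the cone over the supported face $\bf F_\tau$. Both statements follow from the combinatorial inequality
\[
\tfrac12\langle[\Gamma],[\Sigma]\rangle \leq -\chi(\Sigma),
\]
valid for every properly embedded surface $\Sigma$ in $M$, with equality if and only if $\Sigma$ is isotopic to a surface carried by $\hbs$. To see this, for an integral $\alpha$ with norm-minimizing representative $\Sigma$ we get $-e_\tau(\alpha) = \tfrac12\langle[\Gamma],[\Sigma]\rangle \leq -\chi(\Sigma) = x(\alpha)$, establishing the universal bound on integral classes; the equality case $\alpha\in\{x=-e_\tau\}$ forces $\Sigma$ carried, so $\alpha\in \cone_2(\tau)$. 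Rational classes follow by scaling and real classes by continuity and rationality of $\cone_2(\tau)$.

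To prove the combinatorial inequality, I would put $\Sigma$ in normal position with respect to $\tau$ using standard innermost-disk and compression arguments, which do not increase $-\chi$ (exploiting tautness of $\hbs$ and that norm-minimizing surfaces may be taken incompressible). In normal position, $\Sigma\cap t$ is a disjoint union of normal triangles and quadrilaterals for each tetrahedron $t$, and the veering structure on $t$ (\Cref{fig:veer}) distinguishes one of the three quadrilateral types as the unique \emph{carried} quadrilateral; normal triangles together with carried quadrilaterals are exactly the normal disks appearing in surfaces carried by $\hbs$. The main local claim is that in each tetrahedron, the total contribution of normal disks to $-\chi(\Sigma)$ is at least their contribution to $\tfrac12\langle[\Gamma],[\Sigma]\rangle$, with equality precisely when no non-carried quadrilaterals appear; summing yields the global inequality.

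The main obstacle is this local disk-by-disk comparison: one inspects each of the three quadrilateral types in a veering tetrahedron, using the asymmetry between $\pi$-edges (top/bottom) and $0$-edges (sides) visible in \Cref{fig:taut} and \Cref{fig:veer}, and verifies that non-carried quadrilaterals strictly exceed carried ones in their $-\chi$-contribution relative to their $\tfrac12\langle[\Gamma],\cdot\rangle$-contribution. This ultimately reduces to a small finite combinatorial check driven by the fact that a carried quadrilateral sits ``perpendicular'' to the top/bottom $\pi$-edges in the sense of \Cref{fig:veer}, whereas a non-carried quadrilateral links a pair of side edges and therefore crosses the branching of $\hbs$ in a way that costs extra Euler characteristic.
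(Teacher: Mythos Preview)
Your easy direction and overall framing are fine and match the paper. The difficulty is entirely in the reverse inclusion, and there your proposed argument has a genuine gap.

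The core problem is the attempted ``local disk-by-disk'' comparison. First, surfaces carried by $\hbs$ are not normal surfaces: a carried surface lives in a fibered neighborhood of the $2$-skeleton and does not cross the interior of any tetrahedron, whereas normal disks do. One can isotope a carried surface to a normal surface built from the quad type separating the $\pi$-edges, but then normal triangles are \emph{not} pieces of carried surfaces---they link cusps---so your claim that ``normal triangles together with carried quadrilaterals are exactly the normal disks appearing in surfaces carried by $\hbs$'' is false as stated, and the equality case would not yield a carried surface without further argument. Second, and more seriously, neither $-\chi(\Sigma)$ nor $\tfrac12\langle[\Gamma],[\Sigma]\rangle$ decomposes into well-defined per-disk contributions in the way you suggest: the intersection pairing is \emph{signed}, so a non-carried quad can contribute negatively to $\langle[\Gamma],[\Sigma]\rangle$, and there is no local Euler-characteristic count on normal disks that produces the inequality you assert. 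The promised ``small finite combinatorial check'' is not specified, and I do not see how to make one work.

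The paper's proof is quite different and does not use normal surface theory at all. Instead it introduces the stable and unstable branched surfaces $B^s,B^u$ dual to $\tau$, takes a taut representative $S$ of $\alpha\in W$, and studies the train tracks $t^{s}=S\cap B^s$ and $t^{u}=S\cap B^u$. Using the index formula $2\chi(S)=\sum_C \ind(C)$ for patches together with the hypothesis $2\chi(S)=-\langle[S],[\Gamma]\rangle$ (which expresses the signed cusp count), one shows every negative cusp lies in a bigon patch. This is exactly the hypothesis of a separate carrying criterion (\Cref{th:landry}, proved in \cite{Landry_norm}), which then forces $S$ to be carried by $\hbs$. That external theorem is doing the heavy lifting your sketch tries to replace with a local check; absent a substitute for it, the argument does not go through.
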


We say that $\tau$ \emph{determines} the face $\bf F_\tau$. Here we note that, 
as in work of Mosher \cite{mosher1992dynamical},
the empty face is allowed. 
The proof of \Cref{th:whole_face} will require additional terminology that we now turn to explain.

\subsubsection{The unstable branched surface $B^u$}
Recall that in \Cref{sec:stable_brached} we introduced the stable branched surface $B^s$ as a particular smoothing of the $2$-complex of $M$ dual to $\tau$. According to \Cref{lem:comp_smooth}, this smoothing is characterized by the condition that for each face $f$ of $\tau$, $f \cap B^s$ is the train track whose large branch meets the bottom edge of $f$.

In an analogous way, we define the \define{unstable branched surface} $B^u$ to be the branched surface with the same underlying $2$-complex such that for each face $f$ of $\tau$, $f \cap B^u$ is the train track whose large branch meets the top edge of $f$. Here, the top edge of a face $f$ is the unique top edge of the tetrahedron $t$ that contains $f$ as a top face. The intersection of $B^u$ with a tetrahedron is show in \Cref{fig:unstable_branched}. Just as with the stable branched surface, the branch locus of $B^u$ can be naturally identified with the dual graph $\Gamma$. We fix the position of both $B^s$ and $B^u$ in $M$, but make no assumption about how they sit with respect to each other.

\begin{figure}[htbp]
\begin{center}
\includegraphics[height=1.5in]{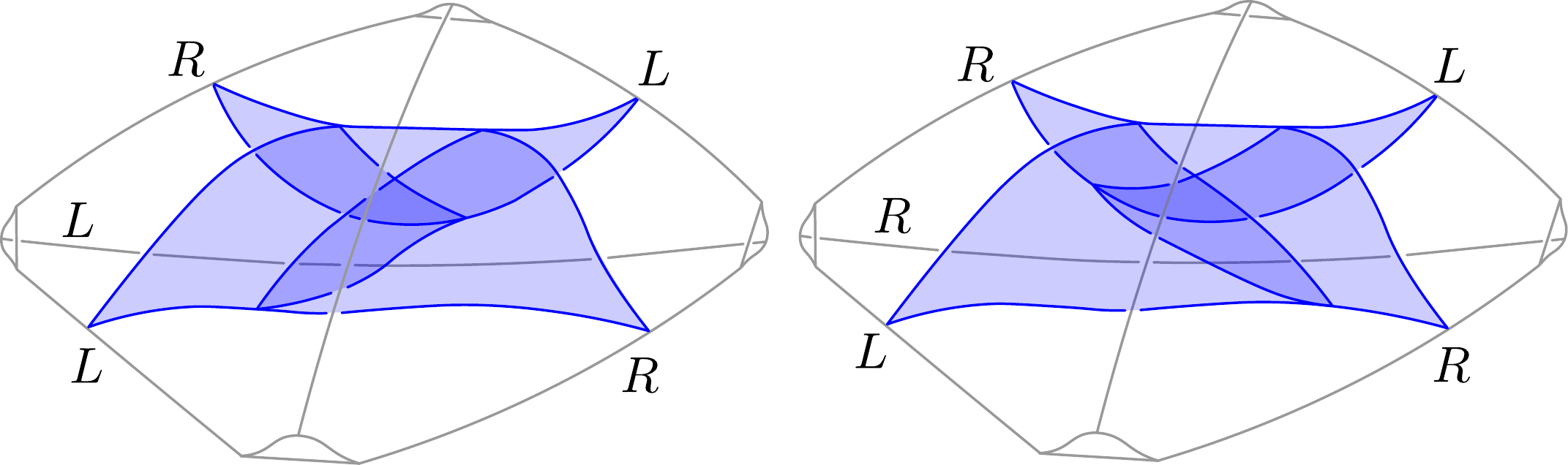}
\caption{The unstable branched surface $B^u$ in a tetrahedron $t$ depending on whether the bottom edge of $t$ is left or right veering.}
\label{fig:unstable_branched}
\end{center}
\end{figure}

Exactly as in \Cref{sec:flowgraph}, we can define \define{unstable branch loops} as smoothly immersed closed curves in the branch locus of $B^u$.

We note that if $S$ is an embedded surface in $M$ transverse to $B^u$ and $B^s$, then the intersections $S \cap B^u$ and $S \cap B^s$ are train tracks on the surface $S$ and we refer to the complementary regions as \define{patches}. (Here, we make no assumption on the Euler characteristic of a patch.) If $S$ is oriented, then each intersection with $\Gamma$ has a sign, and since these intersections are in bijection with the cusps of $S \cap B^s$ (and $S \cap B^u$), each cusp of these tracks is either \textbf{positive} or \textbf{negative} accordingly.

\subsubsection{Partial branched surfaces and a result of Landry}
To apply results from \cite{Landry_norm} we replace $M$ with the corresponding compact manifold as follows: First, let $\mr M$ be the manifold obtained by truncating the cusps of the tetrahedra of $\tau$, as in \Cref{sec:veering_basics}. We continue to use $\tau^{(2)}$ to denote the corresponding branched surface with boundary in $\mr M$. Now let $M$ be the homeomorphic manifold obtained by attaching a thickened torus $T^2 \times [0,1]$ to each boundary component of $\mr M$. 

In the terminology of \cite{Landry_norm}, $\tau^{(2)}$ is a \define{partial branched surface} of $M$ with respect to $U = M \ssm \mathrm{int} (\mr M)$. A properly embedded surface $S\subset M$ is \textbf{carried} by the partial branched surface $\tau^{(2)} \subset \mr M$ if $S$ has no components completely contained in $U$, $S \ssm \intr(U)\subset \mr M$ is carried by $\tau^{(2)}$, and each component of $S\cap U$ is a properly embedded $\pi_1$-injective annulus in $U$ with either one or both boundary components on $\partial \mr M$.

The following theorem \cite[Theorem 8.1]{Landry_norm} is a key ingredient for \Cref{th:whole_face}:

\begin{theorem}[Landry] \label{th:landry}
Let $S$ be an incompressible, boundary incompressible surface in $M$. Further suppose that $S$ has the property that for
any surface $S'$ isotopic to $S$ that is transverse to $B^u$ and $B^s$, either 
\begin{itemize}
\item one of $S' \cap B^u$ or $S' \cap B^s$ has a nullgon or monogon patch, or
\item for both tracks $S' \cap B^u$ and $S' \cap B^s$, each negative cusp belongs to a bigon patch.
\end{itemize}
Then $S$ is isotopic to a surface carried by the partial branched surface $\tau^{(2)} \cap \mr M$.
\end{theorem}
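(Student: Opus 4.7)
The plan is to prove Landry's carrying theorem by a Haken-style iterative simplification of $S$ with respect to the combined structure of $B^s$, $B^u$, and $\tau^{(2)}$. The key observation is that $\tau^{(2)}$, $B^s$, and $B^u$ share a common underlying $2$-complex (each being a distinct smoothing of it), so that a surface is carried by $\tau^{(2)}$ precisely when its intersection with each tetrahedron is a union of standard ``carried disks'' that meet $B^s$ and $B^u$ only in their boundary train-track arcs. Under this dictionary, the hypothesis about bigons and negative cusps should translate exactly into the local combinatorial pattern of a carried intersection.

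First, I would isotope $S$ into transverse position with respect to both $B^s$ and $B^u$, and study simultaneously the two cooriented train tracks $\sigma^s = S \cap B^s$ and $\sigma^u = S \cap B^u$ on $S$. Using the tetrahedral pictures (\Cref{fig:stablebs} and \Cref{fig:unstable_branched}) together with the boundary-tube analysis of \Cref{lem:gammabandcombinatorics} and \Cref{lem:bandflowedges}, I would enumerate the finitely many possible patch types of each $\sigma^{\ast}$ in terms of how $S$ threads a single tetrahedron. This classification should show that (i) nullgon and monogon patches signal a compressing or boundary-compressing disk, (ii) bigon patches containing a negative cusp correspond precisely to the carried ``model disks'' crossing from a face to an opposite face, and (iii) any other patch type forces an excess strand of $S$ that obstructs carrying.

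Next, I would drive an isotopy of $S$ by iterative reduction of a complexity, for instance the weight $w(S) = |S \cap \tau^{(1)}|$ together with the number of components of $S \cap (B^s \cup B^u)$. Since $S$ is incompressible and boundary-incompressible, any nullgon or monogon patch bounds a disk across which we may isotope $S$, strictly decreasing the complexity. The hypothesis of the theorem guarantees that so long as we are \emph{not} yet in the carried state, at least one such simplification is still available (either via case one, which gives a reducing disk, or via case two, which provides a bigon face to push $S$ across in $U = M \setminus \intr(\mr M)$ so that the $U$-components of $S$ become the required $\pi_1$-injective annuli). Termination then forces us into a configuration where each $\sigma^{\ast}$ has no nullgon or monogon patches and every negative cusp lies in a bigon, which by the patch classification means $S$ is carried by the partial branched surface $\tau^{(2)} \cap \mr M$.

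The main obstacle will be establishing, by the local tetrahedral analysis, that ``no nullgons/monogons, negative cusps only in bigons'' on \emph{both} $\sigma^s$ and $\sigma^u$ is a combinatorial condition strong enough to force carriage by $\tau^{(2)}$, rather than only by $B^s$ or $B^u$ separately; the two smoothings must cooperate to pin down each intersection disk of $S$ with a tetrahedron to a model carried disk. A secondary difficulty is arranging the isotopies so that the hypothesis is preserved at each step and the complexity measure decreases monotonically, including the care needed near $\partial M$ to keep the boundary annuli in $U$ properly embedded and $\pi_1$-injective, as required by the definition of being carried by a partial branched surface.
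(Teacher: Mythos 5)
First, a point of orientation: the paper does not prove this statement at all — it is quoted verbatim as an external result, \cite[Theorem 8.1]{Landry_norm}, and used as a black box in the proof of \Cref{th:whole_face}. So there is no internal proof to compare against; what you have written must stand on its own as a reconstruction of Landry's argument. As an outline it is in the right spirit (transverse position, local tetrahedron-by-tetrahedron analysis of the induced train tracks, and a complexity-reducing isotopy), but as written it has genuine gaps and at least two incorrect entries in the ``dictionary'' on which the whole argument rests.

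The decisive gap is the terminal step. You assert that once no nullgon or monogon patches remain and every negative cusp lies in a bigon, ``the patch classification means $S$ is carried.'' This is precisely the hard content of the theorem and cannot be waved through. Negative cusps of $S' \cap B^s$ correspond to negative intersections of $S'$ with the dual graph $\Gamma$; eliminating them (by pinching bigons, which reduces $|S' \cap \Gamma|$ by two and hence terminates) only gets you a surface meeting $\Gamma$ positively. A surface positively transverse to $\Gamma$ is very far from being positively transverse to the $I$-fibers of $N(\tau^{(2)})$: one must still show that each component of intersection with each tetrahedron is a model carried disk, and this is where the simultaneous use of \emph{both} smoothings $B^s$ and $B^u$, and the detailed case analysis, actually enters. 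You flag this as ``the main obstacle'' but offer no mechanism for overcoming it. Relatedly, two of your local claims are wrong: a nullgon patch is a disk \emph{in $S'$} bounded by a smooth loop of the track, not a compressing disk for $S'$ (incompressibility is used differently, to remove inessential circles of intersection); and a bigon patch containing a negative cusp is a cancellable artifact to be isotoped away, not a ``model carried disk'' — in carried position all cusps are positive and no such bigons occur. Finally, the passage near $\partial M$ (producing the $\pi_1$-injective annuli in $U$ required by the definition of carrying for a partial branched surface) is named but not addressed. In sum, the proposal is a reasonable table of contents for Landry's proof, not a proof.
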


We remark that it is observed in \cite{Landry_norm} that the tracks $S \cap B^{u/s}$ never have monogon patches.

\subsubsection{The proof of \Cref{th:whole_face}}
Before beginning the proof, we recall a formula for computing Euler characteristic of surfaces using train tracks (see e.g.  \cite{casson-bleiler}). For a surface $C$ with finitely many punctures and cusped boundary, we can define 
\[
\ind(C)=2\chi(C)-\text{(no. of boundary cusps)}.
\]
If $t$ is a train track on a surface $S$ of finite genus with finitely many punctures, then we have 
\[
2\chi(S)=\sum_C \ind(C)
\]
where the sum is taken over all patches of $S$ with respect to $t$. Note that if $\chi(S) \le 0$, then the only patches with positive index are nullgons and monogons.

\begin{proof}[Proof of \Cref{th:whole_face}]
Let $W\subset H_2(M,\partial M)$ be the cone on which $-e_\tau=x$. Any class contained in $\cone_2(\tau)$ is represented by a surface carried by $\tau^{(2)}$ by \Cref{lem:integral_classes}. As discussed at the beginning of this section, such a surface $\Sigma$ is taut and has $-e_\tau([\Sigma]) = x([\Sigma])$. This yields the containment $\cone_2(\tau)\subset W$.

To show the reverse containment it suffices to produce for any integral class $\alpha\in W$, a surface representing $\alpha$ and carried by $\tau^{(2)}$.

Let $S_\alpha$ be any taut surface representing $\alpha$. We will use \Cref{th:landry} to prove that $S_\alpha$ is isotopic to a surface carried by the partial branched surface $\tau^{(2)} \cap \mr M$. For this, suppose that $S$ is a surface transverse to the branched surface $B^u$ and $B^s$ and isotopic to $S_\alpha$. Let $t^s = S \cap B^s$ and $t^u = S \cap B^u$ be the corresponding train tracks on $S$. By \Cref{th:landry} it suffices to assume that neither of these tracks has a nullgon or monogon patch and to show that every negative cusp belongs to a bigon patch. The arguments for $t^s$ and $t^u$ are identical, so we work with $t = t^u$.

Recall that there is a bijection between intersections $S \cap \Gamma$ and cusps of $t$, and that each cusp of $t$ is the cusp of exactly one patch $D \subset S$. (Since $S$ misses the vertices of $\Gamma$, the track $t$ is generic.) Also recall that the sign of a cusp to be the sign of the corresponding intersection point of $\Gamma$ and $S$.

It is clear that the smooth annular patches (i.e. topological annuli without cusps) do not contribute to the signed number of cusps, and we claim the same is true for the patches that are topological disks (i.e. unpunctured $n$-gons). Indeed, suppose that $D$ is a patch which is a topological disk contained in a tube $V$ of $B^u$, defined as in \Cref{sec:relating}. 
Since $D$ is a disk in $V$ which is unpunctured, $\partial D$ is a trivial curve in $\partial V$. Hence, $\partial D$ meets each unstable branch curve of $\partial V$ (i.e. the intersection of the cusps of $B^u$ with $V$) with algebraic intersection number $0$. Since this intersection number is also the signed number of cusps in $D$, we see that the signed number of cusps for any disk is also $0$.

Partition the set of patches on $S$ into the set of topological disks and smooth annuli $\mc DA$ and the rest $n\mc DA$. Note that each patch in $n \mc DA$ has both negative index and nonpositive Euler characteristic. 

The quantity $\langle [S],[\Gamma]\rangle$ can be computed as the signed number of intersections between $S$ and $\Gamma$, or alternatively as the signed count of cusps over all patches of $S$ with respect to $t = S\cap B^u$.
Since the signed number of cusps of annuli and disks is $0$, we have
\[
2\chi(S)=-\langle [S],[\Gamma]\rangle=\sum_{n\mc{DA}} -\algcusps(D) 
\]
where $\algcusps(D)$ denotes the signed number of cusps in $\partial D$ and the sum is over patches that are not topological disks or smooth annuli. Subtracting this expression for $2\chi(S)$ from the expression in terms of the indices of patches provided above gives
\begin{align*}
0&=\sum_{\mc {DA}}\ind(C)+\sum_{n\mc DA}
\left( \ind(D)+\algcusps (D) \right ).
\end{align*}

We claim that each term of the above sum is nonpositive. First, each term of the first sum is nonpositive, since each patch has nonpositive index following our assumption that there are no patches which are nullgons or monogons.
For the second sum, the term for $D$ in $n \mc DA$ is equal to $2\chi(D) + (\algcusps(D) - \mathrm{cusps}(D))$, which is nonpositive since  $\chi(D)\le 0$ and $\algcusps(D) \le \mathrm{cusps}(D)$. Hence, each term of the sums is nonpositive and so it must be that each term is actually $0$. We conclude that each patch in $\mc DA$ is an annulus or a bigon with cusps whose signs cancel, and that each patch in $n \mc DA$ is a once-punctured disk with only positive cusps. 

The same argument applies to $t^s = S \cap B^s$, and so by \Cref{th:landry} we conclude that, after an isotopy, $S$ is carried by the partial branched surface $\tau^{(2)} \cap \mr M$. In particular,  $S \cap \mr M$ is carried by $\tau^{(2)}$ and under the canonical isomorphism $H_2(\mr M, \partial \mr M) = H_2(M,\partial M)$, it represents the class $\alpha$.  Hence, $\alpha \in \cone_2(\tau)$ and the proof is complete.
\end{proof}

\subsection{Dimension and fiberedness of cones via the veering polynomial}
\label{sec:conedimension}

We begin with the following lemma showing that $P_\Phi$ determines the cone of homology directions of $\tau$. 

\begin{lemma} \label{lem:support_gen}
For a veering triangulation $\tau$, the cone of homology directions $\cone_1(\Gamma)$ is generated by  $i_*(\supp(P_{\Phi}))$, where $i_*\colon H_1(\Phi; \R) \to H_1(M; \R)$ is induced by the inclusion of the flow graph into $M$.
\end{lemma}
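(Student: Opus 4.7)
The plan is to combine \Cref{prop_samecone} with the clique-polynomial description of $P_\Phi$ in \eqref{eq:cliquepoly}. Since \Cref{prop_samecone} yields $\cone_1(\Gamma) = \cone_1(\Phi)$, it suffices to show that $\cone_1(\Phi)$ is the nonnegative span of $i_*(\supp(P_\Phi))$.

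One inclusion is immediate from the clique description: each nontrivial $g \in \supp(P_\Phi) \subset H_1(\Phi)$ is $[\sigma_1] + \cdots + [\sigma_k]$ for the vertex-disjoint simple directed cycles $\sigma_1, \ldots, \sigma_k$ making up a single clique of $\C(\Phi)$, so $i_*(g) = \sum_j i_*[\sigma_j]$ is a nonnegative sum of classes of directed cycles in $\Phi$ and thus lies in $\cone_1(\Phi)$. The trivial element $0 \in \supp(P_\Phi)$, coming from the empty clique, maps to $0$.

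For the reverse inclusion, I would use the standard observation that every directed cycle $\gamma$ in $\Phi$ decomposes as a concatenation of simple directed cycles $\sigma_1, \ldots, \sigma_k$, so $i_*[\gamma] = \sum_j i_*[\sigma_j]$. It therefore suffices to check that $[\sigma] \in \supp(P_\Phi)$ for every simple directed cycle $\sigma$. A single such $\sigma$ is a $1$-vertex clique of $\C(\Phi)$ and so contributes the monomial $-[\sigma]$ to $P_\Phi$; the remaining point is to verify this contribution does not cancel against other clique terms. For this I would record a short preliminary observation: distinct cliques of $\C(\Phi)$ produce distinct $1$-chains in $\Phi$, since a vertex-disjoint collection of simple directed cycles can be recovered from its underlying $1$-chain by taking connected components of the support. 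Because $H_1(\Phi) = Z_1(\Phi) \subset C_1(\Phi)$, distinct cliques then produce distinct homology classes in $H_1(\Phi)$, and no cancellation occurs in \eqref{eq:cliquepoly}. In particular $[\sigma] \in \supp(P_\Phi)$ for each simple directed cycle $\sigma$, so $i_*[\gamma]$ lies in the cone generated by $i_*(\supp(P_\Phi))$, completing the argument.

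I do not anticipate any serious obstacle: the geometric substance is already packaged in \Cref{prop_samecone}, after which the lemma reduces to unpacking the definition of $P_\Phi$ as a clique polynomial. The only additional input is the no-cancellation observation about $\C(\Phi)$, which is purely combinatorial and ensures $\supp(P_\Phi)$ contains the class of every simple directed cycle, so that its image under $i_*$ is rich enough to generate $\cone_1(\Phi)$.
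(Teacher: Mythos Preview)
Your proof is correct and follows essentially the same route as the paper: reduce to $\cone_1(\Phi)$ via \Cref{prop_samecone} (the paper cites the equivalent \Cref{th:cones_equal}), decompose an arbitrary directed cycle into simple ones, and note that each simple cycle lies in $\supp(P_\Phi)$ by the clique formula \eqref{eq:cliquepoly}. The paper asserts this last point without comment, whereas you supply the no-cancellation argument (distinct cliques give distinct elements of $H_1(\Phi)=Z_1(\Phi)$), which is a welcome extra detail.
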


\begin{proof}
By \Cref{th:cones_equal}, $\cone_1(\Gamma) = \cone_1(\Phi)$. Since an arbitrary directed cycles in $\Phi$ is a sum in $H_1$ of simple directed cycles, it suffices to know that every simple directed cycles of $\Phi$ appears in the support of the Perron polynomial $P_\Phi$. But this follows immediately from \Cref{eq:cliquepoly}.
\end{proof}

The following theorem summarizes the main results of this section and gives a characterization of fibered faces in the spirit of Fried's criterion.

\begin{theorem}
\label{th:omni}
Let $\tau$ be a veering triangulation. Then $\cone_2(\tau)$ is equal to the subset of $H_2(M,\partial M)$ on which $x=-e_\tau$, which is the cone over a face $\bf F_\tau$ of $B_x(M)$. The codimension of $\RR_+\bf{F}_\tau$ is equal to the dimension of the largest linear subspace contained in $\cone_1(\Gamma)$. 

Furthermore, the following are equivalent:
\begin{enumerate}[label=(\roman*)]
\item $i_*(\supp(P_{\Phi_\tau}))$ lies in an open half-space of $H_1(M;\R)$,
\item there exists $\eta\in H^1(M)$ with $\eta([\gamma])>0$ for each closed $\tau$-transversal $\gamma$,
\item $\tau$ is layered, and 
\item $\bf F_\tau$ is a fibered face.
\end{enumerate}
\end{theorem}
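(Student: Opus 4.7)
The first sentence of \Cref{th:omni} is \Cref{th:whole_face}. For the codimension claim, combining \Cref{th:cones_equal} with \Cref{prop:dual_carried} gives $\cone_2(\tau) = \cone_1^\vee(\Gamma)$ under Lefschetz duality, and the standard polyhedral identity $\dim(C) + d(C^\vee) = \dim V$ then yields $\mathrm{codim}(\R_+\bf F_\tau) = d(\cone_1(\Gamma))$. The equivalence (i) $\Leftrightarrow$ (ii) is immediate from \Cref{lem:support_gen}: both $i_*(\supp(P_\Phi))$ and the classes of closed $\tau$-transversals generate $\cone_1(\Gamma)$, so a linear functional on $H_1(M;\R)$ is strictly positive on one iff on the other, iff the respective set lies in an open half-space.

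I would complete the equivalences via (iv) $\Rightarrow$ (ii) and (ii) $\Rightarrow$ (iii) $\Rightarrow$ (iv). For (iv) $\Rightarrow$ (ii): if $\bf F_\tau$ is fibered then it is top-dimensional, so $\cone_1(\Gamma)$ is strongly convex; any primitive integer class $\alpha\in\intr(\R_+\bf F_\tau)$ is represented by a fiber $S$, and its Lefschetz dual $\eta\in H^1(M)$ lies in $\intr(\cone_1^\vee(\Gamma))$, hence is strictly positive on $\cone_1(\Gamma)\setminus\{0\}$. Since every closed $\tau$-transversal $\gamma$ intersects $S$ positively, $[\gamma]$ pairs nontrivially with $[S]$, so $[\gamma]\neq 0$ and therefore $\eta([\gamma])>0$. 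For (iii) $\Rightarrow$ (iv): a layered $\tau$ is by definition built by stacking tetrahedra on a triangulated surface $S$ and gluing by a homeomorphism, so $S$ is a $\tau$-carried fiber of $M \to S^1$. Then $[S]$ lies in the interior of some fibered face $\bf F$ of $B_x(M)$ and also in $\cone_2(\tau) = \R_+\bf F_\tau$, forcing $\bf F\subset \bf F_\tau$; since $\bf F$ is top-dimensional, $\bf F = \bf F_\tau$ is fibered.

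The substantive step is (ii) $\Rightarrow$ (iii). Given $\eta$ positive on closed transversals, the pullback $i^*\eta\in H^1(\Gamma;\R)$ is positive on every directed cycle of $\Gamma$, which is strongly connected by \Cref{lem:sc_dual}. Applying \Cref{lem:ext} produces a strictly positive cocycle $m$ on $E(\Gamma)$, equivalently on the faces of $\tau$, representing $i^*\eta$. After scaling $\eta$ to be primitive integral, I would extend $m$ to a closed, nowhere-vanishing $1$-form $\omega$ on $M$ representing $\eta$ that is strictly positive along the coorientation direction of each $\tau$-face. The map $f\colon M\to S^1$ obtained by integrating $\omega$ is then a fibration (in the spirit of Tischler's theorem), and any regular fiber $S$ is automatically carried by $\tau^{(2)}$. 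Cutting $M$ along $S$ produces $S\times[0,1]$ in which the tetrahedra of $\tau$ are totally ordered by $f$-value and can be recovered by successive stackings onto $S$, exhibiting $\tau$ as layered. The main obstacle is precisely this construction: extending the positive cocycle $m$ on $\Gamma$ to a nowhere-vanishing closed $1$-form on all of $M$ that remains strictly positive along every $\tau^{(2)}$-transversal direction inside each tetrahedron, and then identifying the resulting stacking structure on $M|S$ with the layered structure of \Cref{sec:veering_basics}.
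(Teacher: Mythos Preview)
Your treatment of the first sentence, the codimension statement, (i)$\Leftrightarrow$(ii), and (iii)$\Rightarrow$(iv) is correct and matches the paper. Your (ii)$\Rightarrow$(iii) via a Tischler-type argument is more circuitous than necessary: the paper simply perturbs the positive cocycle $m$ to be rational (the positivity condition is open), scales to get an integral cocycle, and reads this as a \emph{fully carried} surface $S$ in $\tau^{(2)}$; then each component of $M\ssm S$ is an $I$-bundle and $\tau$ is layered on $S$. No $1$-form extension is needed.

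The real gap is in your (iv)$\Rightarrow$(ii). You assert that ``every closed $\tau$-transversal $\gamma$ intersects $S$ positively'' for a fiber $S$ carried by $\tau$, but this is exactly what is at issue. The intersection $\langle[\gamma],[S]\rangle$ equals the sum of $S$-weights on the faces crossed by the dual cycle representing $\gamma$; this is nonnegative, but it vanishes whenever the dual cycle stays in the region of faces with weight zero. So your claim is equivalent to $S$ being \emph{fully carried}, which by the argument for (ii)$\Rightarrow$(iii) is equivalent to $\tau$ being layered. In other words, your (iv)$\Rightarrow$(ii) quietly assumes (iii). Indeed, the paper's own analysis (Claim~\ref{cl:triv_im} in the proof of \Cref{prop:layered}) shows that if $\tau$ is not layered yet carries a fiber, then there \emph{are} dual cycles in the ``guts'' of $S$ that are null-homologous in $M$, so no $\eta$ as in (ii) can exist.

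The paper does not avoid this difficulty; it confronts it directly by proving (iv)$\Rightarrow$(iii) as a separate proposition (\Cref{prop:layered}). That argument is substantial: it uses that $\pi_1(M)$ is virtually RFRS (via Wise/Agol), passes to a tower of finite covers, and shows that if $\tau$ were not layered the guts would lift trivially through the entire tower, contradicting that they contain essential dual cycles. This is the genuine content you are missing; there is no shortcut through (iv)$\Rightarrow$(ii) that bypasses it.
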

\begin{proof}
\Cref{th:whole_face} gives that $\cone_2(\tau)$ equals the cone over a face $\bf F_\tau$ of $B_x(M)$. The statement about its codimension follows from the discussion at the beginning of \Cref{sec:conedefs} (or see \cite[Section 1.2, Fact 10]{Ful93}) after we recall that the dual cone of $\cone_2(\tau)$ equals $\cone_1(\Gamma)$ by \Cref{prop:dual_carried}.

It remains to show the equivalence of the conditions. 

For $(i) \iff (ii)$, since $i_*(\supp(P_{\Phi_\tau}))$ lies in an open half-space of $H_1(M;\R)$, there is an $\eta \in H^1(M)$ that is positive on all directed cycles in $i_*(\supp(P_{\Phi_\tau})$. In particular, no such directed cycle is $0$ in $H_1(M; \R)$. Hence, if $\gamma$ is a closed $\tau$-transversal, then $[\gamma] \in H_1(M; \R)$ is in the positive span of directed cycles in $i_*(\supp(P_{\Phi_\tau})$ by \Cref{lem:support_gen} and so $\eta([\gamma])>0$. For the converse, if $\eta$ is positive on closed transversals, then it is positive on $i_*(\supp(P_{\Phi_\tau})$. This implies that $i_*(\supp(P_{\Phi_\tau})$ lies in an open half-space.

The implication $(iii) \implies (ii)$ is immediate because a fiber of a layered veering triangulation positively intersects every closed transversal. 

Next we turn to $(ii) \implies (iii)$. Let $\eta \in H^1(M)$ be positive on closed transversals of $\tau$. Then its pullback to the dual graph $\Gamma$ is positive on directed cycles and so by 
\Cref{lem:ext} it is represented by a positive cocycle $m$ on the edges of $\Gamma$. After perturbing slightly, we can assume that $m \colon E(\Gamma) \to \mathbb Q_+$ and so for some $n \ge 0$, $n\cdot m$ assigns a positive integer to each edge of $\Gamma$. As in the proof of \Cref{prop:dual_carried}, this implies that $n \cdot \eta$ is presented by a surface $S$ \emph{fully carried} by $\tau^{(2)}$, meaning that $S$ traverses each face of $\tau$. From this, it follows that the components of $M \ssm S$ are $I$-bundles and that $\tau$ is a layered triangulation on any component of $S$.

Finally, $(iii) \implies (iv)$ is clear because a `layer' of a layered triangulation is a fiber of $M$. The reverse implication is more difficult and reserved for \Cref{prop:layered}.
\end{proof}

The following proposition completes the proof of \Cref{th:omni}. In short, it states that only layered veering triangulations can carry fibers.

\begin{proposition} \label{prop:layered}
Let $\tau$ be a veering triangulation of $M$ whose associated face ${\bf F_\tau} \subset B_x(M)$ is fibered. Then $\tau$ is layered.
\end{proposition}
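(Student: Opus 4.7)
The plan is to prove $(iv) \Rightarrow (ii)$ of \Cref{th:omni}, since the implication $(ii) \Rightarrow (iii)$ is already established there. Given that $\bf F_\tau$ is fibered, I will construct a surface $S$ carried by $\tau^{(2)}$ with strictly positive weight on \emph{every} face of $\tau$ and representing a (possibly disconnected) fiber class. Its Lefschetz dual $\eta := [S]^* \in H^1(M)$ will then satisfy $\eta([c]) = c \cdot S > 0$ for every directed cycle $c$ in $\Gamma$, since each edge of $c$ crosses a face with positive weight; this is exactly condition $(ii)$.

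To build this fully carried $S$, I would first observe that the constant weight $\mathbf{1}(f) = 1$ on every face satisfies the matching conditions. At each edge $e$, both sides of the branching have the same number of incident faces, namely $2 + |\mathrm{fan}(e)|$: the tetrahedron $t_{\text{top}}$ contributes its two top faces to one side, $t_{\text{bot}}$ contributes its two bot faces to the other, and each fan tetrahedron contributes one face to each side. Letting $\beta \in \cone_2(\tau)$ denote the class of this relative $2$-cycle, I then pick a primitive integer class $\alpha$ in the interior of $\cone_2(\tau) = \RR_+\bf F_\tau$ representing a fiber; such $\alpha$ exists because fibered faces are top-dimensional. Interiority of $\alpha$ yields $\epsilon > 0$ with $\alpha - \epsilon \beta \in \cone_2(\tau)$, so $\alpha - \epsilon \beta$ is represented by some nonnegative switch-satisfying $w_0$. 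Setting $w := w_0 + \epsilon \mathbf{1}$ produces a strictly positive switch-satisfying weight representing $\alpha$. Following the rationality argument in the proof of \Cref{lem:integral_classes}, some integer multiple $Nw$ has integer entries representing $N\alpha$, and the corresponding embedded surface $S$ is fully carried by $\tau^{(2)}$.

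With $S$ in hand, condition $(ii)$ follows as in the opening paragraph, and the already-proven implication $(ii) \Rightarrow (iii)$ of \Cref{th:omni} yields that $\tau$ is layered. The main obstacle I expect is the combinatorial verification that $\mathbf{1}$ satisfies the matching conditions, which requires carefully identifying how each taut tetrahedron contributes faces to the two sides of the branching at each edge; once that is in place, the remainder is a straightforward perturbation argument combined with earlier results.
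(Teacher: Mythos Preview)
Your argument has a genuine gap at the combinatorial step. The constant weight $\mathbf{1}$ does \emph{not} in general satisfy the matching conditions of $\tau^{(2)}$. At an edge $e$, the two ``sides'' of the switch are the two arcs of faces around $e$ separated by the pair of $\pi$-angles; if the two sides of the fan of $e$ contain $k_1$ and $k_2$ tetrahedra respectively, then these arcs contain $k_1+1$ and $k_2+1$ faces. The matching condition for $\mathbf{1}$ would force $k_1=k_2$, but veering triangulations routinely have edges with asymmetric fans (the paper even notes that some sectors of $B^s$ have sides of length $2$ while others are longer). Your description of how $t_{\text{top}}$ and $t_{\text{bot}}$ contribute is also incorrect: at the $\pi$-angle in $t_{\text{top}}$ the two top faces are cooriented \emph{away} from each other in the transverse disk, so they lie on \emph{opposite} sides of the switch, not the same side.

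More seriously, the gap cannot be patched by choosing a different strictly positive $\beta$: the existence of a fully carried class is essentially what you are trying to prove. The cone of nonnegative switch-satisfying weights can map onto a full-dimensional cone in $H_2(M,\partial M)$ while still lying in a proper face of the positive orthant in weight space, so ``$\mathbf{F}_\tau$ is top-dimensional'' does not by itself yield a fully carried surface. This is precisely why the paper's proof takes a different route: it assumes $\tau$ is not layered, passes to a RFRS tower of finite covers (via Wise and Agol), and derives a contradiction by showing the ``guts'' of a maximal carried fiber would have to be simply connected yet contain essential dual cycles. That argument is substantially deeper than a perturbation of carried weights.
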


The proof is inspired by Agol's proof of virtual fibering \cite{agol2008criteria}. 
Recall that $G = \pi_1(M)$ is virtually special by Wise \cite[Theorem 17.14]{wisestructure} 
(see also Cooper--Futer \cite[Theorem 1.4]{cooper2019ubiquitous} and Groves--Manning \cite[Theorem A]{QuasiGM})
and so it is virtually RFRS by Agol \cite[Corollary 2.3]{agol2008criteria}. 
This means that there exists a finite index subgroup $G_0 \le G$ and subgroups $G_0 \supset G_1 \supset G_2 \cdots$ such that
\begin{enumerate}
\item $\bigcap_i G_i = 1$,
\item $G_i$ is a normal, finite index subgroup of $G_0$, and
\item for each $i$, the map $G_i \to G_i / G_{i+1}$ factors through $G_i \to  H_1(G_i; \Z)/\text{torsion}$.
\end{enumerate}
Such a chain of subgroups is called a \emph{RFRS tower}.

\begin{proof}[Proof of \Cref{prop:layered}]
Let $G = \pi_1(M)$ and let $G \ge G_0 \supset G_1 \supset G_2 \cdots$
 be a RFRS tower for the finite index subgroup $G_0$.
For each $i$, let $\wt M_i$ be the corresponding cover of $M$. 

Since fibers are the unique taut surfaces in their homology class, $\tau$ carries all the fibers in the cone $\R_+{\bf F_\tau}$ by \Cref{th:whole_face}. Let $\wt \tau_i$ be the lifted veering triangulation of $\wt M_i$. Since $\wt \tau_i$ carries the lifted fibers from $\tau$, we have that each cone $\R_+ {\bf F}_i : =\R_+{\bf{F}}_{\wt \tau_i}$ is fibered and hence top dimensional. 

Suppose that $\tau$ is not layered; hence, no surface is fully carried by $\tau^{(2)}$. 
We choose a (multiple of a) fiber $S$ that is maximal with resect to the weights placed on faces of $\tau$. That is, $S$ is carried by $\tau$ and traverses every faces that is traversed by any carried surface. Such a surface can be constructed by summing weights of finitely many fibers that traverse the faces traversed by some carried surface. Of course, the complete preimage $\wt S_i$ of $S$ in $\wt M_i$ is a (multiple of a) fiber carried by $\wt \tau_i$ whose weights on faces are lifted from those of $S$. We claim that $\wt S_i$ also has the weight maximality property for $\wt \tau_i$:

\begin{claim} \label{cl:maximal}
The carried surface $\wt S_i$ traverses every face of $\wt \tau_i$ that is traversed by any carried surface. In particular, each $\wt \tau_i$ is also not layered for $i \ge 0$. 
\end{claim}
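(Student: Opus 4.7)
My plan is to prove both parts by a push-down argument along the finite covering map $p_i \colon \wt M_i \to M$, which has finite degree since $G_0$ has finite index in $G$ and each $G_j$ has finite index in $G_0$.

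For the main statement I will argue by contradiction. Suppose some carried surface $\wt T \subset \wt M_i$ traverses a face $\wt f$ of $\wt \tau_i$ that $\wt S_i$ misses. The weight vector of $\wt T$ on faces of $\wt \tau_i$ is a tuple of nonnegative integers satisfying the matching conditions, and both properties are preserved under push-forward to $\tau$ by summing over each (finite) fiber of $p_i$. This yields an embedded surface $T$ in $M$ carried by $\tau^{(2)}$ (via the correspondence between integral matching solutions and carried surfaces recalled in \Cref{sec:veering_basics}), and the weight of $T$ on $f := p_i(\wt f)$ is at least the weight of $\wt T$ on $\wt f$, hence positive. By the weight-maximality of $S$, then, $S$ also traverses $f$. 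Since $\wt S_i = p_i^{-1}(S)$, the preimage $\wt S_i$ traverses every lift of $f$---in particular $\wt f$---contradicting our assumption.

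For the ``in particular'' clause I will use that a veering triangulation is layered iff some carried surface traverses every face, as established in the course of proving $(ii)\Rightarrow(iii)$ in \Cref{th:omni}. If $\wt \tau_i$ were layered, the maximality just proved would force $\wt S_i$ to traverse every face of $\wt \tau_i$; pushing down along $p_i$, this would imply that $S$ traverses every face of $\tau$, making $\tau$ layered and contradicting the standing hypothesis that it is not.

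The only real subtlety is verifying that the push-down of a carried surface across a finite cover is itself a carried surface downstairs, but this reduces to checking that fiber-summation preserves integrality, nonnegativity, and the matching conditions at each edge, which is immediate from linearity of the conditions and the finiteness of the cover. Everything else follows formally from the defining maximality of $S$ and from the fact that $\wt S_i$ is by construction the full preimage of $S$.
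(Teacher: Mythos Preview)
Your argument is correct and is essentially the same push-down (transfer) argument the paper gives: sum the weights of a carried surface in $\wt M_i$ over fibers of the finite cover to produce a carried surface in $M$, and use that the weights of $\wt S_i$ are lifted from $S$ to derive the contradiction. Your handling of the ``in particular'' clause via the characterization of layeredness from the proof of $(ii)\Rightarrow(iii)$ in \Cref{th:omni} is also the intended one.
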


\begin{proof}[Proof of Claim 1]
Suppose that $Z$ is a carried surface that traverses a face not traversed by $\wt S_i$. Then the system of weights on faces of $\wt \tau_i$ corresponding to $Z$
pushes down to $\tau$ to gives a system of weights (satisfying the matching conditions) and determines a surface carried by $\tau$ that traverses faces not traversed by $S$, a contradiction.
\end{proof}

Since $\wt \tau_i$ is not layered, it has dual cycles that do not intersect $\wt S_i$; see $(ii) \implies (iii)$ in the proof of \Cref{th:omni}. (Here, we set $\wt M_{-1} = M$, $\wt \tau_{-1} = \tau$, and $\wt S_{-1} = S$.)
These dual cycles lie in the `guts' of $\wt S_i$ defined as follows: Let $\mc G(\wt S_i)$ be the open region obtained by taking the open tetrahedra of $\wt \tau_i $ together with the open faces of $\wt \tau^{(2)}_i$ that are \emph{not} traversed by $\wt S_i$. 
Any dual cycle not crossing $\wt S_i$ lives in $\mc G(\wt S_i)$ by construction. 

\begin{claim} \label{cl:triv_im}
The image of $H^1(\wt M_i) \to H^1(\mc G(\wt S_i))$ is trivial for $i \ge -1$. 
\end{claim}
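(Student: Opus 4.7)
The plan is to represent every class $\alpha \in H^1(\wt M_i; \Z)$ by a $2$-cocycle on the faces of $\wt \tau_i$ supported on those traversed by $\wt S_i$, so that its pairing with any cycle in $\mc G(\wt S_i)$ vanishes. For this, I would first observe that $\mc G(\wt S_i)$ deformation retracts onto the subgraph $\Gamma_{\wt S_i}$ of the dual graph of $\wt \tau_i$ consisting of all dual vertices together with the dual edges of the non-traversed faces; hence every class in $H_1(\mc G(\wt S_i))$ is represented by a cycle supported on such non-crossed dual edges.

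To produce the representative, I would identify $H^1(\wt M_i; \Q) \cong H_2(\wt M_i, \partial \wt M_i; \Q)$ via Poincar\'e-Lefschetz duality and represent classes by rational $2$-cochains on faces of $\wt \tau_i$ satisfying the matching conditions. Since ${\bf F}_{\wt \tau_i}$ is fibered and hence top-dimensional, the cone $\R_+ {\bf F}_{\wt \tau_i}$ linearly spans $H^1(\wt M_i; \R)$, so any rational $\alpha$ decomposes as $\alpha_+ - \alpha_-$ with $\alpha_\pm$ rational classes in $\R_+ {\bf F}_{\wt \tau_i} = \cone_2(\wt \tau_i)$ (by \Cref{th:whole_face}). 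Each $\alpha_\pm$ is then represented by a nonnegative rational weight system $w_\pm$ on faces. By the weight-maximality of $\wt S_i$ (Claim 1), a face not traversed by $\wt S_i$ is traversed by no carried surface, so $w_\pm$ must vanish there; otherwise, clearing denominators and invoking \Cref{lem:integral_classes} would yield an embedded carried surface traversing such a face, contradicting maximality. Consequently, the difference $w := w_+ - w_-$ represents $\alpha$ and is supported only on crossed faces.

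To conclude, for any integral $\alpha$ and any $\gamma \in H_1(\mc G(\wt S_i); \Z)$ represented by a cycle in $\Gamma_{\wt S_i}$, the pairing $\langle \alpha, \iota_* \gamma \rangle$ equals the signed evaluation $w(\gamma)$, which is a sum over dual edges of $\gamma$ corresponding to non-crossed faces where $w$ vanishes; hence the pairing is $0$. Since this holds for every such pair, the image of $H^1(\wt M_i; \Z) \to H^1(\mc G(\wt S_i); \Z)$ is trivial. The subtle point to verify carefully is the weight-maximality step: this is what forces any nonnegative carried representative to vanish on non-traversed faces, and is the key property that makes the argument apply uniformly for all $i \ge -1$, with Claim 1 providing it at each level of the RFRS tower.
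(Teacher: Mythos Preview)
Your proposal is correct and follows essentially the same argument as the paper. Both proofs use that the fibered cone is top-dimensional to express an arbitrary class via carried classes, then invoke Claim~1 (weight maximality of $\wt S_i$) to conclude that carried representatives vanish on non-traversed faces, forcing zero pairing with loops in $\mc G(\wt S_i)$; the paper phrases this by contradiction (shifting $\alpha$ by some $\beta$ in the cone so that $\alpha+\beta$ lies in the cone), while you argue directly by writing $\alpha = \alpha_+ - \alpha_-$, but these are the same maneuver.
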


\begin{proof}[Proof of Claim 2]
Suppose that some nontrivial $\alpha$ is in the image of the map. Since $\R_+ {\bf F}_i$ is top dimensional in $H^1(\wt M_i) = H_2(\wt M_i, \partial \wt M_i)$, i.e. it has nonempty interior, there is a $\beta \in \R_+ {\bf F}_i$  such that $\alpha + \beta \in \R_+ {\bf F}_i$. But then $\alpha + \beta$ is represented by a surface $Z$ carried by $\wt \tau_i$ that has nonzero intersection number with an oriented loop in $\mc G(\wt S_i)$. This contradicts \Cref{cl:maximal} which implies that $Z$ is disjoint from $\mc G(\wt S_i)$.
\end{proof}

Now by \Cref{cl:triv_im}, each component of $\mc G(\wt S_0)$ lifts homeomorphically to $\wt M_1$. Indeed, this amounts to the claim that any loop in $\mc G(\wt S_0)$ is trivial under the homomorphism $G_0 \to H_1(\wt M_0)/\text{torsion}$.
By definition of $\wt S_i$,
all the lifts of $\mc G(\wt S_0)$ to $\wt M_1$ live in $G(\wt S_1)$. 
Then again applying \Cref{cl:triv_im}, we see that $\mc G(\wt S_0)$ lifts to $\wt M_2$. Continuing in this way, we should have that $\mc G(\wt S_0)$ lifts to each $\wt M_i$ and hence for each component $C$ of $\mc G(\wt S_0)$, $\pi_1(C) \subset \bigcap_i G_i$. Hence, each component of $\mc G(\wt S_0)$ has trivial fundamental group. This however contradicts the fact that if $\tau$ (and hence $\wt \tau_0$) is not layered, then $\mc G(\wt S_0)$ contains dual cycles, which are homotopically essential by a combinatorial version of a theorem of Novikov; see \cite[Theorem 3.2]{schleimer2020essential} or the references found in therein.
The proof of \Cref{prop:layered} is complete.
\end{proof}

\section{Relating the polynomials $V_\tau$ and $\Theta_\tau$} \label{sec:polys}
In this section, we establish a precise version of the identity 
\[
V_\tau =  \Theta_\tau \cdot \prod(1 \pm g_i),
\]
where the $g_i \in G$ are represented by certain directed cycles in the dual graph $\Gamma = \Gamma_\tau$. We will see in \Cref{sec:layered} that this is a generalization of McMullen's Determinant Formula from the fibered setting.

After some definitions we will state \Cref{th:factorization!} which is the main
result of this section.

\subsection{The AB polynomial}
As before, we let $\aM$ denote the universal free abelian cover of $M$.

Recall the definitions of $L \colon \Z[G]^E\to\Z[G]^E$ and $L^\triangle \colon \Z[G]^F\to\Z[G]^E$ and
their respective cokernels, $\e(\wt\tau)$ and $\e^\triangle(\wt\tau)$ (\Cref{eq:edge_mod}
and \Cref{eq:face_mod}). 
To relate the two we recall from \Cref{lem:face_implies_tet}
the fact that, if $f$ is a bottom face
of a tetrahedron $t$ and $e$ its bottom edge, then there is a unique top face $f'$ of $t$
such that $L^\triangle(f)+L^\triangle(f') = L(e)$. This face $f'$ is characterized by the fact that the turn in the dual graph $\Gamma$ at $t$ given by the pair $(f,f')$ is \emph{anti-branching} 
(see \Cref{lem:branchingturn}).

The correspondence $f\mapsto f'$ defines a map $A \colon F\to F$, which after lifting to $\aM$
induces a $\Z[G]$-module homomorphism $A \colon \Z[G]^F\to\Z[G]^F$.   The correspondence $f
\mapsto e$ sending a face to its bottom edge gives a $2-1$ map $\varepsilon\colon  F\to E$, which also extends to
$\varepsilon\colon \Z[G]^F \to \Z[G]^E$. 
Defining $L^\ab = I+A$, we can express the above identity as 
\begin{equation}\label{eq:Lab L}
  L^\triangle \circ L^\ab = L\circ \varepsilon.
\end{equation}
Now define the $AB$-module $\ab(\wt\tau)$ as the cokernel of $L^\ab$, that is via
\begin{align}
 \Z[G]^F \overset{L^\ab}{\longrightarrow} \Z[G]^F \longrightarrow \ab(\widetilde{\tau})
 \to 0. 
\end{align}
As $L^\ab$ is a square matrix, its fitting ideal is principally generated by $\det(L^\ab)$ and we define the \define{$\bs{AB}$ polynomial} of $\tau$ by
\[
V^\ab = V^\ab_\tau = \det(L^\ab) \in \Z[G].
\]
We will see in \Cref{lem:computing_correction} that $V^\ab$ has the form $\prod(1\pm g_i)$ over certain $g_i\in
G$. Our main result will then be: 

\begin{theorem} \label{th:factorization!}
Suppose that $\mathrm{rank}(H_1(M)) >1$. Then
\[
V_\tau = V^\ab \cdot \Theta_\tau
\]
up to multiplication by a unit in $\Z[G]$.
\end{theorem}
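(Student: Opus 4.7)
The plan is to leverage the matrix identity $L\varepsilon = L^\triangle L^\ab$ from \eqref{eq:Lab L} and extract the factorization through minor-ideal computations.

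First I will unpack the structure of $L^\ab$. The map $A\colon F \to F$ defined by $f\mapsto f'$ is in fact a bijection: every face $g$ is uniquely a top face of some tetrahedron $t$, and \Cref{lem:face_implies_tet} applied inside $t$ identifies a unique bottom face $f$ of $t$ with $A(f) = g$. So $A$ is a permutation of $F$ whose orbits are precisely the $AB$-cycles $c_1, \ldots, c_n$ with lengths $n_i$ and $G$-monodromy classes $g_i$. With respect to the decomposition $F = \bigsqcup_i O_i$ the matrix $L^\ab = I + A$ is block-diagonal, and a direct computation on each cyclic block gives $\det(L^\ab|_{O_i}) = 1 + (-1)^{n_i+1} g_i$. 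Multiplying across blocks yields
\[
V^\ab = \prod_{i=1}^{n} \bigl(1 + (-1)^{n_i+1} g_i\bigr),
\]
which pins down the signs in the factorization.

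Next, I promote the relation $L^\triangle = L\varepsilon\cdot(L^\ab)^{-1}$, valid over $\mathrm{Frac}(\Z[G])$ once $V^\ab \ne 0$ (which the rank hypothesis on $H_1(M)$ guarantees), to the $\Z[G]$-valued identity
\[
V^\ab\cdot L^\triangle = L\cdot R, \qquad R := \varepsilon\cdot\mathrm{adj}(L^\ab)\in M_{|E|\times|F|}(\Z[G]).
\]
For any $J \subset F$ with $|J| = |E|$, taking $|E|\times|E|$ determinants of both sides gives $(V^\ab)^{|E|}\cdot M^\triangle_J = V_\tau\cdot\det(R_{*,J})$. The block-diagonal structure of $\mathrm{adj}(L^\ab)$—whose $i$-th block equals $\bigl(\prod_{j\ne i} V^\ab_j\bigr)\mathrm{adj}(L^\ab|_{O_i})$—lets me pull out $V^\ab_j$ factors column-by-column, giving
\[
\prod_i \bigl(V^\ab_i\bigr)^{|J\cap O_i|} \cdot M^\triangle_J = V_\tau\cdot\det(S_{*,J}),
\]
where $S$ is the matrix assembled from the blocks $\varepsilon_i\cdot\mathrm{adj}(L^\ab|_{O_i})$ and $V^\ab_i = 1+(-1)^{n_i+1}g_i$.

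Taking GCDs of the identities $(V^\ab)^{|E|}M^\triangle_J = V_\tau\det(R_{*,J})$ over $J$ yields $V_\tau \mid (V^\ab)^{|E|}\Theta_\tau$, and combined with $\Theta_\tau \mid V_\tau$ from \Cref{cor:module_surject}, we obtain $V_\tau = \Theta_\tau \cdot q$ with $q$ dividing $(V^\ab)^{|E|}$ in the UFD $\Z[G]$. The main obstacle is the multiplicity statement: each irreducible factor $V^\ab_i$ must appear in $q$ with multiplicity exactly $1$. The natural approach is a localization argument at each prime $(V^\ab_i) \subset \Z[G]$: modulo $V^\ab_i$ the block $L^\ab|_{O_i}$ acquires corank exactly one, with an explicit null vector built from the edge weights around the $AB$-cycle $c_i$, while the other blocks stay invertible. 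A rank comparison of $L\varepsilon$ and $L^\triangle L^\ab$ over $\Z[G]/(V^\ab_i)$—using surjectivity of $\varepsilon$ and the fact that relations among columns of $L\varepsilon$ in this quotient are controlled by $c_i$—should force $v_{V^\ab_i}(q) = 1$. Tracking the unit ambiguities ($\pm g \in \Z[G]$) through this argument yields the claimed equality $V_\tau = V^\ab\cdot\Theta_\tau$ up to a unit.
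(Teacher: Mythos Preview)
Your argument establishes the easy direction $V_\tau \mid (V^\ab)^{|E|}\Theta_\tau$ (the paper gets the sharper $V_\tau \mid V^\ab\Theta_\tau$ in \Cref{prop:V_divides}) and, combined with $\Theta_\tau \mid V_\tau$, writes $V_\tau = \Theta_\tau\cdot q$ with $q$ dividing a power of $V^\ab$. But the proof then stops at precisely the hard direction: you must show $V^\ab \mid q$, i.e.\ $V^\ab\Theta_\tau \mid V_\tau$. The localization sketch does not do this. First, $(V^\ab_i)$ is generally neither prime nor coprime to the other $(V^\ab_j)$ (e.g.\ $1-g^2=(1-g)(1+g)$, and distinct $AB$-cycles can have equal or commensurable $G$-images), so ``localize at $(V^\ab_i)$'' is not well posed. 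Second, even at a genuine prime $\mathfrak p$ dividing $V^\ab_i$, a rank comparison of $L\varepsilon$ and $L^\triangle L^\ab$ over $\Z[G]/\mathfrak p$ only tells you something about the images of these maps, not about $v_{\mathfrak p}(V_\tau)-v_{\mathfrak p}(\Theta_\tau)$; you have given no mechanism to produce a single minor $M^\triangle_J$ with $v_{\mathfrak p}(M^\triangle_J)<v_{\mathfrak p}(V_\tau)$. Third, your parenthetical claim that $\mathrm{rank}(H_1(M))>1$ forces $V^\ab\ne 0$ is false: $AB$-cycles can be nullhomologous when $\tau$ is not layered (see the remark after \Cref{th:factor_intro}), and if $g_i=1$ with $k_i$ even then $V^\ab_i=0$.

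The paper's route to $V^\ab\Theta_\tau \mid V_\tau$ is entirely different and uses the rank hypothesis in a concrete place. It first derives the $AB$-cycle equation (\Cref{lem:cycle eq}) and uses it to perform explicit column operations on $L$, yielding $\bigl(\Theta_\tau\prod_{\C}(1\pm g_i)\bigr)\mid V_\tau$ for any \emph{reducing family} $\C$ of $AB$-cycles (\Cref{cor:factoring_families}). A parallel computation with \emph{$AB$-chains} gives $\bigl(V^\ab\Theta_\tau\bigr)\mid (1\pm z)V_\tau$ for each $AB$-chain class $z$ (\Cref{cor:factor_AB_chain}). The paper then proves that $AB$-cycles and $AB$-chains together generate $H_1(\Gamma)$ (\Cref{lem:AB_gen}), so under $\mathrm{rank}(H_1(M))>1$ one can find two such classes spanning a rank-$2$ subgroup; \Cref{lem:rel_prime} makes the corresponding $(1\pm z)$ factors coprime, so their gcd is a unit and $V^\ab\Theta_\tau \mid V_\tau$ follows. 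None of this combinatorial work---reducing families, $AB$-chains, the generation statement---is replaced by your localization sketch, and the rank hypothesis plays no role in your argument as written.
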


When the rank of homology is $1$, the conclusion holds up to multiplying by $(1\pm t)$, where $t$ generates $G$. See \Cref{rmk:rank_1}.

Let us give a short sketch of the proof.

We first introduce $AB$-cycles, which are just the cycles of
the permutation $A$, and give in \Cref{lem:computing_correction}
a factorization formula for $V^\ab$. 

\Cref{prop:V_divides} gives the ``easy'' direction of the theorem, namely $V \: | \: \left
(V^\ab \cdot \Theta_\tau \right)$, which follows from standard facts about Fitting ideals
(but we supply an explicit proof).

In \Cref{lem:alt_pres_face} we give an alternate presentation of $\e^\triangle$ that allows
us to express $\Theta_\tau$ as the gcd of minors of a matrix involving $L$ and selected
columns of $L^\triangle$. 

\Cref{lem:cycle eq}, the $AB$-cycle equation, is an identity combining $L$, $L^\triangle$ and the
$AB$-cycles.  In \Cref{prop:facting_out_families} we use this equation to enable column
operations that express $V_\tau$ as one of the minors of the matrix in
\Cref{lem:alt_pres_face}.  This gives us, in \Cref{cor:factoring_families}, a result of
the form
$$
\left(\Theta_\tau\cdot \prod(1\pm g_i)\right) \: \big|\: V_\tau
$$
where the product is over a restricted collection of cycles called a reducing family.

We then introduce {\em $AB$-chains} which are cycles formed of segments of $AB$-cycles in
a restricted manner. We obtain in \Cref{lem:using_AB_chains} and \Cref{cor:factor_AB_chain} that
$$
\left(V^\ab\cdot \Theta_\tau\right) \: \big| \: (1\pm z) V_\tau
$$
where $z$ is the image in $G$ of such a cycle.

In \Cref{lem:AB_gen} we show that the homology classes of $AB$-chains and $AB$-cycles suffice to
generate $G$ and use this to show that, by applying \Cref{cor:factoring_families}
 and \Cref{cor:factor_AB_chain} over all $AB$-cycles
and $AB$-chains, we can establish
$$
\left(V^\ab\cdot \Theta_\tau\right) \: \big|\: V_\tau
$$
which completes the proof.

\subsection{Cycles and the factorization of $V^\ab$}
The map $A\colon F\to F$ is a permutation and we call its cycles the \define{$\bs{AB}$-cycles} of $\tau$. Recall that \emph{dual cycles} and \emph{dual paths} are directed cycles and paths respectively in the dual graph $\Gamma$. Each
$AB$-cycle $c$ determines a unique dual path whose $\Gamma$-edges correspond to the $\tau$-faces in $c$. We will often speak of these two objects interchangeably. In this language, an $AB$-cycle is a directed cycle in $\Gamma$ that makes only \emph{anti-branching} turns in the sense of \Cref{sec:flowgraph}.

A $\tau$-face $f\in F$ is dual to a $\Gamma$-edge pointing into the
tetrahedron for which $f$ is a bottom face, and we can label this tetrahedron (which is
dual to a $\Gamma$-vertex) by its bottom $\tau$-edge which is exactly $\varepsilon(f)$.
We say that an $AB$-cycle {\em passes through the faces}
$f,Af,A^2f,\ldots$ and, with a slight abuse of notation, {\em encounters the $\tau$-edges}
$\varepsilon(A^if)$.

Note that a $\tau$-face $f$ belongs to a unique $AB$-cycle, while
each $\tau$-edge is encountered twice by $AB$-cycles (possibly the same one). This
corresponds to the fact that the dual graph $\Gamma$ is $4$-valent.

Each $AB$-cycle $c$ determines a homology class in $M$ and hence an image $g=[c]\in G$. 
Let $k=k(c)$ denote the length of the cycle, or the number of $f\in F$ which it passes
through. This structure allows us to compute $V^\ab$:

\begin{lemma} \label{lem:computing_correction}
Let $c_1,\ldots,c_n$ denote the $AB$-cycles of $\tau$, $k_i$ their lengths, and
$g_i=[c_i]$ their images in $G$. Then  we may identify
\[
\ab (\widetilde \tau) \cong \bigoplus_{i=1}^n \frac{\Z[G]}{ \left(1+(-1)^{k_i+1}g_i \right)},
\]
where the map from the $i$th copy of $\Z[G]$ into $\mathcal{AB}(\widetilde \tau)$ maps $1$
to any (fixed) face in $c_i$. In particular, 
\[
V^\ab = \prod_{i=1}^n \left(1 + (-1)^{k_i+1}g_i \right)
\]
up to a unit factor in $\Z[G]$. 
\end{lemma}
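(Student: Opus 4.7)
The plan is to pick lifts of the faces of $\tau$ adapted to the $AB$-cycles so that $L^\ab$ becomes block-diagonal with one block per cycle, and then read off the determinant and cokernel of each block by elementary means.

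First, for each $AB$-cycle $c_i$, label its faces $f_{i,0}, f_{i,1}, \ldots, f_{i, k_i - 1}$ with $A f_{i,j} = f_{i, j+1}$ cyclically. Fix an arbitrary lift $\wt f_{i,0}$ in $\aM$ and set $\wt f_{i,j} := A^j \wt f_{i,0}$, where $A$ now denotes the lift of the permutation to the faces of $\wt\tau$. Because $c_i$ represents the class $g_i \in G$, the identity $A^{k_i} \wt f_{i,0} = g_i \cdot \wt f_{i,0}$ holds in $\Z[G]^F$; this is the one place needing a brief convention check, and it follows directly from the definition of $[c_i]$ in terms of deck transformations. Taking all these $\wt f_{i,j}$ as a $\Z[G]$-basis for $\Z[G]^F$, the map $L^\ab = I + A$ has no entries connecting distinct $AB$-cycles, so its matrix splits as a direct sum of blocks $L^\ab_i$ indexed by $i$.

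The $i$-th block, read off from $L^\ab \wt f_{i,j} = \wt f_{i,j} + \wt f_{i,j+1}$ for $j < k_i - 1$ and $L^\ab \wt f_{i, k_i - 1} = \wt f_{i, k_i - 1} + g_i \wt f_{i, 0}$, is the $k_i \times k_i$ matrix
$$
L^\ab_i = \begin{pmatrix} 1 & 0 & \cdots & 0 & g_i \\ 1 & 1 & \cdots & 0 & 0 \\ 0 & 1 & \cdots & 0 & 0 \\ \vdots & & \ddots & & \vdots \\ 0 & 0 & \cdots & 1 & 1 \end{pmatrix}.
$$
Cofactor expansion along the first row has just two nonzero contributions: the $(1,1)$ entry yields a lower-triangular minor of determinant $1$, while the $(1, k_i)$ entry $g_i$ yields an upper-triangular minor of determinant $1$ carrying the sign $(-1)^{k_i+1}$. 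Hence $\det L^\ab_i = 1 + (-1)^{k_i+1} g_i$, and multiplicativity across blocks gives the claimed formula for $V^\ab$ up to a unit.

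For the module statement, the block decomposition yields $\ab(\wt\tau) \cong \bigoplus_i \mathrm{coker}(L^\ab_i)$. Within a single block, the first $k_i - 1$ column relations recursively force $\wt f_{i,j} = (-1)^j \wt f_{i,0}$, so $\mathrm{coker}(L^\ab_i)$ is cyclic with generator $\wt f_{i,0}$. The final column relation then reads $\bigl((-1)^{k_i - 1} + g_i\bigr)\wt f_{i,0} = 0$, which after multiplication by the unit $(-1)^{k_i-1}$ becomes $\bigl(1 + (-1)^{k_i+1} g_i\bigr)\wt f_{i,0} = 0$. This identifies $\mathrm{coker}(L^\ab_i) \cong \Z[G]/\bigl(1 + (-1)^{k_i+1} g_i\bigr)$ with $1$ corresponding to the chosen face lift $\wt f_{i,0}$, exactly as claimed. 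The only real subtlety is the sign convention linking $A^{k_i}$ to $g_i$; every other step is direct linear algebra over $\Z[G]$.
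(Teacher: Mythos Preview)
Your proof is correct and follows essentially the same approach as the paper: both exploit the cycle decomposition of the permutation $A$ to break $L^\ab$ into pieces indexed by $AB$-cycles, then read off the cyclic cokernel and its annihilator $(1+(-1)^{k_i+1}g_i)$ from the relation $A^{k_i}\wt f_{i,0} = g_i\cdot\wt f_{i,0}$. The only difference is presentational---you write out explicit $k_i\times k_i$ matrix blocks and compute determinants by cofactor expansion, whereas the paper constructs the isomorphism $\ab(\wt\tau)\cong\bigoplus_i \Z[G]/(1\pm g_i)$ directly and then invokes presentation-independence of the Fitting ideal---but the underlying argument is the same.
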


\begin{remark}
We will often write the expression $(1 + (-1)^{k_i+1}g_i)$ as
    $(1\pm g_i)$, for brevity.
\end{remark}

\begin{proof}
 In this proof we abbreviate $\ab:=\ab(\wt\tau)$. Fix a face $f_i$ for each cycle $c_i$ and define a map
  $$
  \gamma\colon  \Z[G]^n \to \Z[G]^F
  $$
  as in the statement, taking the generator of the $i$-th copy of $\Z[G] $ to
  $f_i$. Let $\hat\gamma$ denote $\gamma$ composed with the surjection $\Z[G]^F \to \ab$.
  The map $\hat\gamma$ is surjective because, in $\ab$, the identity $f = -Af$ holds (since
  $L^\ab(f) = f+Af $ is a relation), so one face from each cycle suffices to generate the rest.

  Lifting the permutation $A$ to the faces in the universal cover, it acts as a
  translation on the lift of each cycle. Thus if we denote  
  the distinguished lift of $f_i$ by $f_i$ again,  we have the $A$-orbit $ f_i \to A f_i \to \cdots A^{k_i}
   f_i$ where $A^{k_i} f_i$ is just $g_i \cdot  f_i$, the translate of the lift $ f_i$
  by the element of the deck group associated to the cycle.

Since in $\ab$, we have $A^j f_i = -A^{j+1} f_i$, 
we obtain the relation $g_i\cdot f_i = (-1)^{k_i} f_i $. Hence $\hat\gamma$
factors through the quotient $\oplus_{i=1}^n \Z[G]/(1\pm g_i)$. 
An inverse map is easily constructed by appealing to the same relations, and so
we obtain the desired description of $\ab$.

This gives us a new  presentation of $\ab$, namely $\Z[G]^n \to \Z[G]^n \to \ab$
where the matrix is diagonal with $(1\pm g_i)$ along the diagonals. Since the Fitting
ideal of $\ab$ is independent of presentation, 
the expression for $V^\ab$ follows.   
\end{proof}

\subsection{Diagrams and presentations}
It is helpful to organize our maps in the following commutative diagram: 

\begin{proposition}\label{prop:cd}
We have the following commutative diagram with exact rows and columns:
\[
\begin{CD}
 \Z[G]^F  @> \varepsilon >>  \Z[G]^E @>>> 0\\
 @V L^\ab VV     @V L VV          \\
 \Z[G]^F @> L^\triangle >> \Z[G]^E@>>>  \e^\triangle(\widetilde \tau)@>>> 0\\
 @VVV     @VVV        @| \\
 \ab(\widetilde \tau) @> \overline{L^\triangle}>> \e(\widetilde \tau) @>>>
 \e^\triangle(\widetilde \tau) @ >>> 0\\
 @VVV @VVV\\
0 @. 0
  \end{CD}
\]
\end{proposition}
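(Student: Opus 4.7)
The plan is to verify the diagram cell by cell: most of the rows and columns are exact by definition, the nontrivial commutativity reduces to the identity \eqref{eq:Lab L}, and exactness of the bottom row follows by a one-line diagram chase (or equivalently the snake lemma applied to the top two rows).

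First I would record the exactness statements that are immediate from the definitions. The middle row and middle column are the presentations \eqref{eq:face_mod} and \eqref{eq:edge_mod} of $\e^\triangle(\widetilde\tau)$ and $\e(\widetilde\tau)$; the left column is the definition of $\ab(\widetilde\tau)$; and the right column is the identity, hence exact. The only remaining exactness among the top two rows and three columns is the surjectivity of $\varepsilon\colon \Z[G]^F\to\Z[G]^E$, and this is immediate: each $\tau$-edge $e$ is the bottom edge of a unique tetrahedron $t$, and for either of the two bottom faces $f$ of $t$ we have $\varepsilon(f)=e$.

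Next I would check commutativity of the upper-left square, which is exactly the identity $L^\triangle \circ L^\ab = L\circ\varepsilon$ recorded in \eqref{eq:Lab L} and proven via \Cref{lem:face_implies_tet}: for a bottom face $f$ of a tetrahedron $t$ with bottom edge $e=\varepsilon(f)$ and $Af$ the unique top face pairing with $f$, one has $L^\triangle(f)+L^\triangle(Af) = L(e)$, i.e.\ $L^\triangle(L^\ab(f)) = L(\varepsilon(f))$. From this commutativity and the surjectivity of $\varepsilon$ we obtain the inclusion
\[
L(\Z[G]^E) \;=\; L\circ\varepsilon(\Z[G]^F) \;=\; L^\triangle\circ L^\ab(\Z[G]^F) \;\subseteq\; L^\triangle(\Z[G]^F),
\]
which has two consequences needed to make the lower half of the diagram meaningful. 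First, since $L^\triangle$ carries the image of $L^\ab$ into the image of $L$, it induces a well-defined $\Z[G]$-module homomorphism $\overline{L^\triangle}\colon \ab(\widetilde\tau)\to \e(\widetilde\tau)$. Second, the quotient $\Z[G]^E \twoheadrightarrow \e^\triangle(\widetilde\tau)$ factors through $\e(\widetilde\tau)$, producing the map $\e(\widetilde\tau)\to\e^\triangle(\widetilde\tau)$; the resulting lower-right square commutes by construction, and the lower-left square commutes because it is the quotient of the upper-left.

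Finally I would verify exactness of the bottom row $\ab(\widetilde\tau) \xrightarrow{\overline{L^\triangle}} \e(\widetilde\tau) \to \e^\triangle(\widetilde\tau) \to 0$. The map $\e(\widetilde\tau)\to \e^\triangle(\widetilde\tau)$ is surjective because $\Z[G]^E\to\e^\triangle(\widetilde\tau)$ is. For exactness at $\e(\widetilde\tau)$, any class in the kernel is represented by some $x\in L^\triangle(\Z[G]^F)$, say $x=L^\triangle(y)$; then $[y]\in\ab(\widetilde\tau)$ satisfies $\overline{L^\triangle}([y])=[x]\in\e(\widetilde\tau)$, showing the kernel lies in the image of $\overline{L^\triangle}$. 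The reverse containment is automatic since $L^\triangle$ lands in the kernel of $\Z[G]^E\to\e^\triangle(\widetilde\tau)$. The only nontrivial input in the whole argument is the face-to-tetrahedron identity \eqref{eq:Lab L}, and so there is no real obstacle; the rest is bookkeeping in the given presentations.
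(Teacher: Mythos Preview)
Your proof is correct and follows essentially the same approach as the paper: identify the rows and columns that are exact by definition, invoke \eqref{eq:Lab L} for the top square, and obtain the bottom row by the induced-on-cokernels diagram chase. You have simply made explicit the ``short diagram chase'' that the paper leaves to the reader.
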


\begin{proof}
  Surjectivity of $\varepsilon$ is immediate from the definition, and commutativity of the
  top square is \Cref{eq:Lab L}.

  Exactness of the middle row is the definition of $\e^\triangle$, and  exactness of the first
  two columns is the definition of $\ab$ and $\e$. 

  Commutativity of the bottom left square is the definition of $\overline{L^\triangle}$.

  The definition of the  arrow $\e(\wt\tau) \to \e^\triangle(\wt\tau)$, and exactness of
  the bottom row, follow from a short diagram chase. 
\end{proof}

\subsubsection{Presenting $\e$:}
We can now consider an alternative presentation of $\e=\e(\wt\tau)$:
\begin{proposition} \label{prop:alt_pres_edge}
Consider the module homomorphism 
\[
\mathcal{L} \colon \Z[G]^F \oplus \Z[G]^F \to \Z[G]^F \oplus \Z[G]^E
\]
given in block form by the $(F+E)\times(2F)$ matrix
\[
\mathcal{L} =
 \left[
\begin{array}{c|c}
L^\ab & I \\
\hline
0 & L^\triangle
\end{array}
\right].
\]
Then $\e(\wt \tau)$ is isomorphic to the cokernel of $\mathcal{L}$.
\end{proposition}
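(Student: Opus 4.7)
The plan is to define an explicit surjective $\Z[G]$-module map
$\psi\colon \Z[G]^F \oplus \Z[G]^E \to \e(\wt\tau)$
and show that $\ker(\psi) = \im(\mathcal{L})$. The natural candidate suggested by the block form of $\mathcal{L}$ is
\[
\psi(a, b) \;=\; \overline{b - L^\triangle(a)},
\]
where the bar denotes the class in $\e(\wt\tau) = \Z[G]^E / \im(L)$. Surjectivity is immediate since $\psi(0, b) = \bar{b}$ and the quotient map $\Z[G]^E \to \e(\wt\tau)$ is already surjective.

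For the inclusion $\im(\mathcal{L}) \subseteq \ker(\psi)$, a general element of $\im(\mathcal{L})$ has the form $(L^\ab(x) + y,\ L^\triangle(y))$ for some $x, y \in \Z[G]^F$. Applying $\psi$ and using the identity $L^\triangle \circ L^\ab = L \circ \varepsilon$ from \eqref{eq:Lab L}, I compute
\[
\psi\bigl(L^\ab(x) + y,\ L^\triangle(y)\bigr) \;=\; \overline{L^\triangle(y) - L^\triangle(L^\ab(x) + y)} \;=\; \overline{-L^\triangle L^\ab(x)} \;=\; \overline{-L \varepsilon(x)} \;=\; 0.
\]

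For the reverse inclusion, suppose $\psi(a, b) = 0$, so that $b - L^\triangle(a) = L(c)$ for some $c \in \Z[G]^E$. Here is where the argument uses the key surjectivity of $\varepsilon\colon \Z[G]^F \to \Z[G]^E$, which holds because every $\tau$-edge is the bottom edge of some tetrahedron (this is recorded in the top row of the diagram in \Cref{prop:cd}). Write $c = \varepsilon(x)$ for some $x \in \Z[G]^F$; then invoking \eqref{eq:Lab L} once more,
\[
b \;=\; L^\triangle(a) + L\varepsilon(x) \;=\; L^\triangle(a) + L^\triangle L^\ab(x) \;=\; L^\triangle\bigl(a + L^\ab(x)\bigr).
\]
Setting $y := a + L^\ab(x)$, we find $(a, b) = (y - L^\ab(x),\ L^\triangle(y)) = \mathcal{L}(-x, y) \in \im(\mathcal{L})$, completing the identification $\coker(\mathcal{L}) \cong \e(\wt\tau)$.

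The entire argument is a direct unpacking of the commutative diagram in \Cref{prop:cd}; no step is really an obstacle, and the only nontrivial ingredient beyond bookkeeping is the surjectivity of $\varepsilon$, which is used to invert the bottom-edge correspondence on cochains.
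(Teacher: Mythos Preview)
Your proof is correct and essentially the same as the paper's: both use the map $(a,b)\mapsto b-L^\triangle(a)$ as the key homomorphism and rely on the identity $L^\triangle\circ L^\ab = L\circ\varepsilon$. The only procedural difference is that the paper constructs an explicit inverse (induced by the inclusion $\Z[G]^E\hookrightarrow \Z[G]^F\oplus\Z[G]^E$) and checks the two maps compose to the identity, whereas you compute $\ker(\psi)$ directly; your route makes explicit use of the surjectivity of $\varepsilon$, which the paper's inverse-map argument sidesteps.
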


\begin{proof}
Denote the cokernel of $\mc L$ by $\e'$ and note $\e'$ is the free module $\Z[G]^F \oplus \Z[G]^E$ modulo two relations for each face $f$:
\begin{align*}
&L^\ab(f) = f + f' =0 \\
&f + L^\triangle(f) = 0.
\end{align*}

Hence, if 
$e = \varepsilon(f)$, then
\begin{align*}
L(e) &= L^\triangle(f) + L^\triangle(f') \\
& =  (f+ L^\triangle(f)) + (f'+L^\triangle(f')) -  L^\ab(f)  \\
& \in  \im(\mc L),
\end{align*}
and so the factor inclusion $\Z[G]^E \to \Z[G]^F \oplus \Z[G]^E$ descends to a module homomorphism $\e \to \e'$. 

An inverse homomorphism is induced by first defining a homomorphism $\Z[G]^F \oplus \Z[G]^E \to \Z[G]^E$ by 
\[
(f,e) \to e - L^\triangle(f).
\] 
Since this maps the first relation on $\e'$ to a tetrahedron relation of $\e$ and the second relation on $\e'$ to $0$, it induces a map $\e' \to \e$. This is easily seen to be the inverse of the one defined above.
\end{proof}

From this we obtain one direction of our main theorem:

\begin{proposition} \label{prop:V_divides}
\[
V_\tau \: | \: \left (V^\ab \cdot \Theta_\tau \right)
\]
\end{proposition}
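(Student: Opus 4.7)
The plan is to combine the alternative presentation of $\e(\wt\tau)$ from \Cref{prop:alt_pres_edge} with the invariance of $0$-th Fitting ideals under change of presentation. Since the original presentation $\Z[G]^E \xrightarrow{L} \Z[G]^E \to \e(\wt\tau) \to 0$ has a square matrix, its $0$-th Fitting ideal is simply the principal ideal $(V_\tau)$. By \Cref{prop:alt_pres_edge}, the same module is also presented by the $(F+E) \times 2F$ block matrix $\mc L$. Since $\mathrm{Fitt}_0$ is an invariant of the module (even when the number of generators changes), the ideal generated by all $(F+E) \times (F+E)$ minors of $\mc L$ also equals $(V_\tau)$. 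In particular, every such minor is divisible by $V_\tau$ in $\Z[G]$.

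Next, I will extract the family of minors in which $V^\ab$ appears explicitly. Take all $F$ columns of the left block of $\mc L$ together with any $E$ columns of the right block indexed by a subset $T\subseteq F$ with $|T|=E$. The resulting square submatrix has the block form
\[
\begin{bmatrix} L^\ab & I_T \\ 0 & L^\triangle_T \end{bmatrix},
\]
where $I_T$ is the $F \times E$ submatrix of the identity selected by $T$ and $L^\triangle_T$ is the corresponding $E \times E$ submatrix of $L^\triangle$. This matrix is block lower triangular with square diagonal blocks, so its determinant is $\det(L^\ab)\cdot\det(L^\triangle_T)=V^\ab\cdot\det(L^\triangle_T)$. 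By the previous paragraph this element lies in $(V_\tau)$, so $V_\tau \mid V^\ab\cdot\det(L^\triangle_T)$ for every $E$-element $T\subseteq F$.

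Finally, since $\Z[G]$ is a UFD and $\Theta_\tau$ is by definition the gcd of the $E \times E$ minors of $L^\triangle$, the fact that $V_\tau$ divides each $V^\ab\cdot m$ (as $m$ ranges over such minors) forces $V_\tau$ to divide their gcd, namely $V^\ab\cdot\Theta_\tau$. There is no genuine obstacle here; the only point to invoke carefully is the invariance of the $0$-th Fitting ideal under change of presentation when the number of generators changes, which is standard (see e.g.\ \cite{northcott2004finite}).
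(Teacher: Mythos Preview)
Your proof is correct and follows essentially the same approach as the paper: use the alternative presentation $\mc L$ from \Cref{prop:alt_pres_edge}, invoke invariance of the Fitting ideal, and observe that among the $(|F|+|E|)$-minors of $\mc L$ are the products $\det(L^\ab)\cdot\det(L^\triangle_T)$, so $V_\tau$ divides each $V^\ab\cdot m$ and hence their gcd $V^\ab\cdot\Theta_\tau$. One trivial slip: the displayed block matrix is block \emph{upper} triangular (the zero block is in the lower left), not lower triangular, but this does not affect the determinant computation.
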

\begin{proof}
The fitting ideal $(V_\tau)$ of $\e$ is independent of the presentation of $\e$. Hence, we get that $(V_\tau)$ is also the ideal generated by the minors of $\mc L$ of size $|F| + |E|$. In particular $V_\tau$ divides the product of $\det (L^\ab)$ with each minor of $L^\triangle$ of size $|E|$. Hence $V_\tau \: | \: \left (V^\ab \cdot \Theta_\tau \right)$.
\end{proof}

\begin{remark}
This is actually a general fact: If $P\to Q \to R \to 0$ is an exact sequence of
finitely generated modules then $\mathrm{Fitt}(P) \cdot \mathrm{Fitt}(R) \subset \mathrm{Fitt}(Q)$. 
For example, this can be derived from \cite[Chapter 3, Exercise 2]{northcott2004finite}. 
  We can apply this to the sequence   $\ab\to\e\to\e^\triangle\to 0$ from \Cref{prop:cd}. 
We chose to give an explicit proof for completeness.
\end{remark}

\subsubsection{Presenting $\e^\triangle$:}
The following presentation of $\e^\triangle(\wt\tau)$ gives us a new description of
$\Theta_\tau$. 

\begin{lemma} \label{lem:alt_pres_face}
Let $c_1, \ldots, c_n$ be the collection of all $AB$-cycles and let $f_i$ be a face of $c_i$. Then
$\e^\triangle(\wt \tau)$ is presented by the $|E| \times (n+|E|)$ matrix
\[
[ L^\triangle(f_1) \: |\:  \ldots  \: |\:  L^\triangle(f_n) \: | \: L].
\]
Hence, $\Theta_\tau$ is the gcd of the $|E| \times |E|$ minors of this matrix.
\end{lemma}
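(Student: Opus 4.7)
The plan is to reduce this to a statement about the column spans of two matrices and then invoke the invariance of Fitting ideals under change of presentation. Write $\mathcal{L}' = [L^\triangle(f_1) \mid \cdots \mid L^\triangle(f_n) \mid L]$ for the candidate presentation matrix. I claim that $\im(\mathcal{L}') = \im(L^\triangle)$, so that $\e^\triangle(\wt\tau)$ is the cokernel of $\mathcal{L}'$ as well. Once that is shown, the definition of the Fitting ideal $I^\triangle = \mathrm{Fitt}_0(\e^\triangle(\wt\tau))$ guarantees that it is generated by the $|E|\times|E|$ minors of \emph{any} presentation matrix with $|E|$ rows, so the gcd of the minors of $\mathcal{L}'$ is $\Theta_\tau$ up to a unit.

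The key tool is identity \eqref{eq:Lab L}, which unwinds to the pointwise relation
\[
L^\triangle(f) + L^\triangle(A f) \;=\; L(\varepsilon(f)) \qquad \text{for every } f \in F.
\]
For the containment $\im(\mathcal{L}') \subseteq \im(L^\triangle)$, the columns $L^\triangle(f_i)$ are already columns of $L^\triangle$, and for any edge $e$ I can pick a face $f$ with $\varepsilon(f)=e$ and rewrite $L(e) = L^\triangle(f) + L^\triangle(Af) \in \im(L^\triangle)$. For the reverse containment, I need to show $L^\triangle(f) \in \im(\mathcal{L}')$ for every face $f$. Since $A$ is a permutation whose orbits are the $AB$-cycles $c_1,\dots,c_n$, I can write $f = A^j f_i$ for some $i$ and some $j \ge 0$, and I proceed by induction on $j$. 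The base case $j=0$ is trivial since $L^\triangle(f_i)$ is a column of $\mathcal{L}'$. For the inductive step, \eqref{eq:Lab L} gives
\[
L^\triangle(A^j f_i) \;=\; L\bigl(\varepsilon(A^{j-1} f_i)\bigr) - L^\triangle(A^{j-1} f_i),
\]
and both terms on the right lie in $\im(\mathcal{L}')$ — the first because every column of $L$ does, and the second by the inductive hypothesis.

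Having established $\im(\mathcal{L}') = \im(L^\triangle)$, the cokernels agree, so $\mathcal{L}'$ is a presentation matrix for $\e^\triangle(\wt\tau)$ with target $\Z[G]^E$. The $0$-th Fitting ideal of a finitely presented module does not depend on the choice of presentation (see \cite[Theorem 1]{northcott2004finite}), so the ideal generated by the $|E|\times|E|$ minors of $\mathcal{L}'$ coincides with $I^\triangle$. Taking greatest common divisors yields the final claim about $\Theta_\tau$. The only step with any subtlety is the inductive reduction in the second paragraph, but it is entirely mechanical once the identity \eqref{eq:Lab L} is in hand; no extra facts beyond the combinatorics of $AB$-cycles as orbits of the permutation $A$ are needed.
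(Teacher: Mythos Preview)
Your proof is correct and follows essentially the same route as the paper's: both arguments show $\im(\mathcal{L}') = \im(L^\triangle)$ by using the identity $L^\triangle(f) + L^\triangle(Af) = L(\varepsilon(f))$ in each direction (the paper citing surjectivity of $\varepsilon$ from the commutative diagram where you pick a preimage directly), then induct along each $AB$-cycle, and finally appeal to the presentation-independence of the Fitting ideal.
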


Here we have used the following notation: If $A$ and $B$ are matrices with the same number of rows, then $[A|B]$ denotes the matrix whose columns are the columns of $A$ follows by the columns of $B$. If $A$ or $B$ is a vector, then we interpret it as a column vector. 

\begin{proof}
This matrix defines a map  $M\colon \Z[G]^{|E|+n}\to \Z[G]^E$ and it suffices to show that its
image equals the image of $L^\triangle$.

Returning to the diagram of \Cref{prop:cd}, since $\varepsilon$ is surjective, the image of $L$ must be contained in the image of
$L^\triangle$. Thus $\im(M) \subset \im( L^\triangle)$.

Now for any $f\in F$ we have $L^\triangle(f+Af) = L(\varepsilon(f))$. Hence
$L^\triangle(Af)$ is contained in the span of $\im(L)$ and $L^\triangle(f)$. Repeating inductively,
the entire cycle of $f_1$ has $L^\triangle$-image in the span of $\im(L)$ and
$L^\triangle(f_1)$.
Applying to all the cycles we see that the image of $L^\triangle$ is contained in the
image of $M$. 

The statement about $\Theta_\tau$ follows again from the fact that the Fitting ideal is
independent of presentation.
\end{proof}

\subsection{Factoring $V_\tau$}
We next turn to describing our primary means of factoring the veering polynomial.

\subsubsection{The $AB$-cycle equation}
The cycle structure of the permutation $A$ gives rise to a useful identity relating $L$
and $L^\triangle$. This uses a combination of the discussion in the proof of \Cref{lem:computing_correction} and the identity
\Cref{eq:Lab L}. 

\begin{lemma}[$AB$-cycle equation]\label{lem:cycle eq}
  Let $c$ be an $AB$-cycle of $F$, $e(c)$ the set of $\tau$-edges encountered by $c$, 
  $f$ one of the $\tau$-faces of $c$, and $g$ the class $[c]$ in $G$. Then the
 following identity holds: 
  \begin{equation}\label{eq:ab_cycles_eq}
\sum_{e\in e(c)} \alpha_e\cdot L(e) = (1\pm g) \cdot L^\triangle(f)
  \end{equation}
  where $\alpha_e$ is a unit in $\Z[G]$ if $e$ appears once along $c$, and a sum of two
  units if $e$ appears twice. 
\end{lemma}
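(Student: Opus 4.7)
The plan is to iterate the basic face identity $L^\triangle(f) + L^\triangle(Af) = L(\varepsilon(f))$ (the content of \Cref{lem:face_implies_tet}, packaged as the commutativity relation \Cref{eq:Lab L}) along the $AB$-cycle $c$, and then take an alternating sum so that adjacent $L^\triangle$-terms cancel in a telescoping fashion.

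Concretely, I would lift $f = f_0$ to a face $\wt f_0$ of $\wt\tau$ and set $\wt f_j = A^j \wt f_0$ for $j = 0, 1, \ldots, k$. As observed in the proof of \Cref{lem:computing_correction}, the cyclic structure of $A$ on $c$ together with $G$-equivariance of $A$ gives $\wt f_k = g \cdot \wt f_0$. For each $j \in \{0, 1, \ldots, k-1\}$, \Cref{eq:Lab L} applied to $\wt f_j$ reads
\[
L^\triangle(\wt f_j) + L^\triangle(\wt f_{j+1}) = L(\varepsilon(\wt f_j)).
\]
Multiplying the $j$-th identity by $(-1)^j$ and summing over $j$, the interior $L^\triangle$-terms cancel in pairs and the left hand side collapses to
\[
L^\triangle(\wt f_0) + (-1)^{k+1} L^\triangle(\wt f_k) = \bigl(1 + (-1)^{k+1} g\bigr) \cdot L^\triangle(f),
\]
which is exactly $(1 \pm g) \cdot L^\triangle(f)$ with the sign convention matching that of \Cref{lem:computing_correction}.

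For the right hand side, each $\varepsilon(\wt f_j)$ is a lift of the edge $e_j := \varepsilon(f_j)\in e(c)$ and so differs from the chosen lift of $e_j$ by some deck element $h_j \in G$; thus $L(\varepsilon(\wt f_j)) = h_j \cdot L(e_j)$ in $\Z[G]^E$. Grouping terms by edge,
\[
\sum_{j=0}^{k-1} (-1)^j L(\varepsilon(\wt f_j)) = \sum_{e \in e(c)} \alpha_e \cdot L(e), \qquad \alpha_e = \sum_{j:\, e_j = e} (-1)^j h_j.
\]
Because a single $AB$-cycle passes through each tetrahedron at most twice, each $e \in e(c)$ contributes either one or two indices, making $\alpha_e$ either a single unit $\pm h \in \Z[G]$ or a sum of two such units, as claimed.

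There is no serious obstacle here: the telescoping is immediate once \Cref{eq:Lab L} is in hand, and essentially all of the work is bookkeeping. The only point worth checking carefully is that the sign $(-1)^{k+1}$ produced by the telescoping matches the sign convention built into $V^\ab$ in \Cref{lem:computing_correction}, but this is an immediate parity check against the two cases $k$ even and $k$ odd.
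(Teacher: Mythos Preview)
Your proof is correct and follows essentially the same approach as the paper's: lift the cycle, iterate the identity $L^\triangle(f_j)+L^\triangle(f_{j+1})=L(\varepsilon(f_j))$ coming from \Cref{eq:Lab L}, take the alternating sum so the interior $L^\triangle$-terms telescope, and then group the $L$-terms by edge using that $\varepsilon$ is $2$-to-$1$. The only cosmetic difference is notation (your $h_j$ are the paper's $a_i$) and that you note $(-1)^{k+1}=(-1)^{k-1}$ to match the sign in \Cref{lem:computing_correction}.
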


\begin{proof}
We identify  $f$ with its selected lift in $\aM$, and enumerate a lift of the
cycle as $f_i = A^i f$, $i=0,\ldots,k$ where $k$ is the period of $c$ - so that $f_k =
A^kf = gf$. Then each $\wt\tau$-edge $\varepsilon(f_i)$ can be written as
$a_i e_i$ where $e_i$ is one of the elements of $e(c)$ (again identified with its selected
lift in $\aM$) and $a_i\in G$.
The identity $L\circ \varepsilon = L^\triangle\circ (I+A) $  gives us
$$
a_i L(e_i) = L^\triangle(f_i) + L^\triangle(f_{i+1}). 
$$
If we combine these in an alternating sum we therefore obtain, after cancellations,
$$
\sum_{i=0}^{k-1} (-1)^i a_i L(e_i) = L^\triangle(f_0) + (-1)^{k-1} L^\triangle(f_k).
$$
Now since $f_k=g  f$, the right hand side is $(1\pm g)
L^\triangle(f)$.
On the left side, if $e$ appears once as $e_i$ then we have a term of the form $\pm a_i
L(e)$, and if it appears twice as $e_i$ and $e_j$ we combine two terms to get $(\pm a_i \pm
a_j) L(e)$. This gives the desired statement. 
\end{proof}

As an immediate corollary we have: 
\begin{corollary}\label{cor:once-edge}
  Let $c$ be an $AB$-cycle of $F$ and $e$ an edge appearing exactly once in $c$. Then
  \begin{equation}\label{eq:once-edge}
    L(e) = u(1\pm g) \cdot L^\triangle(f)+ \sum_{e'\in e(c)\ssm\{e\}} \beta_{e'} \cdot L(e').
  \end{equation}
where $g=[c]\in G$, $f$ is a face of $c$, $u$ is a unit in $\Z[G]$, and $\beta_{e'}$ is sums of at most two units in $\Z[G]$. 
\end{corollary}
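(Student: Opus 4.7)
The plan is to derive this corollary by simply solving the $AB$-cycle equation of \Cref{lem:cycle eq} for the term $L(e)$. Since $e$ appears exactly once along $c$, the coefficient $\alpha_e$ in \Cref{eq:ab_cycles_eq} is a unit of $\Z[G]$ (by the statement of the lemma, this unit has the form $\pm a_i$ for some $a_i \in G$, where $a_i$ records the $G$-translate needed to align the chosen lift of the face $f_i = A^i f$ with the chosen lift of $e$).

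First, I would start from the identity
\[
\alpha_e \cdot L(e) + \sum_{e' \in e(c) \ssm \{e\}} \alpha_{e'} \cdot L(e') = (1 \pm g) \cdot L^\triangle(f),
\]
which is just \Cref{eq:ab_cycles_eq} with the $L(e)$ term separated out. Next, I would multiply through by $u := \alpha_e^{-1}$, which exists in $\Z[G]$ because $\alpha_e$ is a unit. Rearranging yields
\[
L(e) = u \cdot (1 \pm g) \cdot L^\triangle(f) \;-\; \sum_{e' \in e(c) \ssm \{e\}} (u \cdot \alpha_{e'}) \cdot L(e'),
\]
so setting $\beta_{e'} = -u \cdot \alpha_{e'}$ gives the desired formula.

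Finally, I would verify the claim about the size of the coefficients $\beta_{e'}$. By \Cref{lem:cycle eq}, each $\alpha_{e'}$ is either a single unit (if $e'$ appears once in $c$) or a sum of two units (if $e'$ appears twice). Multiplying by the unit $u$ preserves this property, so each $\beta_{e'}$ is indeed a sum of at most two units in $\Z[G]$, as required.

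There is essentially no obstacle to overcome here: the corollary is purely a matter of algebraic rearrangement, and the only substantive input is the unit-ness of $\alpha_e$ when $e$ appears exactly once along $c$, which is already supplied by \Cref{lem:cycle eq}.
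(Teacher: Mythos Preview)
Your proof is correct and is precisely the argument the paper has in mind: the corollary is stated as an ``immediate corollary'' of \Cref{lem:cycle eq} with no further proof given, and your derivation---isolate the $L(e)$ term, multiply through by the unit $\alpha_e^{-1}$, and check that unit-multiplying preserves the ``sum of at most two units'' form of the remaining coefficients---is exactly the intended rearrangement.
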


\subsubsection{Factoring with reducing families of cycles}
A collection $\C$ of $AB$-cycles forms a \define{reducing family of cycles} if it can be
ordered $\C= \{c_1, \ldots, c_k\}$ so that for each $1\le i \le k$, there is a $\tau$-edge $e_i$ through which $c_i$ passes exactly once and which is not visited by any $c_j$ for $j>i$.
Such an ordering is not necessarily unique. However, given such an ordering, we call it the \emph{preferred order}. We call the $\tau$-edges $e_i$ the \emph{distinguished edges} of the cycles in the family (these too
may not be unique, but we can make an arbitrary choice). In the results and proofs to follow, given a reducing family $\{c_1,\dots, c_k\}$ we will often write $g_i=[c_i]\in G$.

We first show that reducing families are not hard to find: 
\begin{lemma}[Finding families] \label{lem:finding_family}
Any proper subset of $AB$-cycles is a reducing family.
\end{lemma}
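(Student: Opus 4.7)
The plan is to induct on $|\C|$, with the empty family trivially reducing. For the inductive step, with $\C$ non-empty and proper, I aim to find a cycle $c \in \C$ together with a $\tau$-edge $e$ visited exactly once by $c$ and by no other cycle in $\C$. Placing $c$ as $c_1$ and setting $e_1 = e$, the remainder $\C \smallsetminus \{c\}$ is still proper; by induction it admits a reducing ordering $c_2, \ldots, c_k$ with distinguished edges $e_2, \ldots, e_k$, and the combined ordering reduces $\C$: the requirement that $e_i$ is not visited by any $c_j$ with $j > i$ holds for $i = 1$ by construction and for $i \ge 2$ by the inductive hypothesis.

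The heart of the argument is the existence of such a ``boundary edge'' $e$, visited once by a cycle in $\C$ and once by a cycle in $\C^c$. I would argue by contradiction: assume that for every $\tau$-edge $e'$, the two $AB$-visits to $e'$ lie entirely within $\C$ or entirely within $\C^c$. For a tetrahedron $t$ with bottom edge $e(t)$ and two bottom faces $b_1, b_2$, the two visits to $e(t)$ are precisely via $b_1, b_2$ (since $\varepsilon(b_1) = \varepsilon(b_2) = e(t)$ and $\varepsilon$ is $2$-to-$1$), so under the assumption $b_1$ and $b_2$ lie in $AB$-cycles of the same type ($\C$ or $\C^c$). Because $AB$-cycles are the $A$-orbits and $A$ sends each bottom face of $t$ to a top face of $t$, the two top faces of $t$ lie in the same $AB$-cycles as $b_1, b_2$ respectively; thus all four faces of $t$ lie in cycles of a single type, and we may call $t$ a $\C$-tetrahedron or a $\C^c$-tetrahedron accordingly.

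Now each face of $\tau$ is shared between exactly two tetrahedra (one above, one below via the coorientation) and lies in a unique $AB$-cycle, so these two adjacent tetrahedra must share the same type. Since the tetrahedral adjacency graph of $\tau$ is connected (as $M$ is), all tetrahedra have a single type, which makes either $\C$ or $\C^c$ empty --- contradicting the hypotheses that $\C$ is non-empty and proper. The main obstacle in this argument is the propagation step that extends the ``same type'' conclusion from the bottom faces of a tetrahedron to all four of its faces; once one observes that $A$ preserves $AB$-cycles and pairs bottom faces of $t$ with its top faces, the connectedness-of-$M$ argument is essentially formal.
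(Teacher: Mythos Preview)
Your proof is correct and follows essentially the same inductive strategy as the paper: find a cycle $c_1\in\C$ with a distinguished edge not visited by the rest of $\C$, then recurse on $\C\smallsetminus\{c_1\}$. The only difference is in how the existence of the ``boundary edge'' is argued. The paper works directly in the dual graph $\Gamma$: since $\Gamma$ is $4$-valent and strongly connected, any nonempty collection of $AB$-cycles crossing every $\Gamma$-vertex either zero or two times must use every $\Gamma$-edge and hence be the full collection. You instead dualize, arguing via tetrahedra and faces and appealing to connectedness of $M$; this is the same argument in different language (your ``$\C$-tetrahedra'' are exactly the $\Gamma$-vertices crossed twice by $\C$, and face-adjacency of tetrahedra is edge-adjacency in $\Gamma$). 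Your extra step using that $A$ bijects the bottom faces of $t$ with its top faces is the explicit unpacking of what the paper's $4$-valence observation encodes.
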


\begin{proof}
For the argument, we think of $AB$-cycles as directed cycles in the dual graph $\Gamma$,
so that $\tau$-edges of a cycle correspond to $\Gamma$-vertices.

A collection of directed cycles in $\Gamma$ that crosses every vertex either zero or two times must
in fact cross every vertex, since $\Gamma$ is strongly connected by \Cref{lem:sc_dual}. Thus, a proper subset
$\C$ of $AB$-cycles must cross some vertex $v_1$ of $\Gamma$ exactly once. 
Let $c_1$ be the cycle that contains $v_1$. Now apply the same argument to $\C\ssm \{c_1\}$
and continue inductively.
\end{proof}

Reducing families provide factorizations of $V_\tau$ of the following type: 

\begin{proposition} \label{prop:facting_out_families}
For any reducing family of cycles $\C$, we have the factorization (up to a unit $\pm h \in \Z[G]$)
\[
V_\tau = \left(\prod_{c_i \in \C} (1\pm g_i)\right) \cdot  \det[ L^\triangle(f_1) \: |\:  \cdots  \: |\:  L^\triangle(f_r) \: | \: L'],
\]
where $f_i$ is the (unique) face in the cycle $c_i \in \C$ such that $\varepsilon(f_i)$ is
the distinguished edge $e_i$ and $L'$ is the matrix obtained from $L$ by removing the
columns corresponding to the distinguished edges of $\C$.
\end{proposition}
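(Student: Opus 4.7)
The plan is to realize the desired factorization by elementary column operations on the square matrix $L$, driven by the $AB$-cycle equation of \Cref{lem:cycle eq}. After a harmless reordering of columns (changing $\det(L)$ by at most a sign), I would partition $L = [L_D \mid L_{E'}]$, where the columns of $L_D$ are $L(e_1),\dots,L(e_r)$ for the distinguished edges and $L_{E'}$ holds the remaining columns. Writing $L^\triangle_{\C} := [L^\triangle(f_1)\mid\cdots\mid L^\triangle(f_r)]$, the target is to produce an invertible matrix $N\in M_{|E|}(\Z[G])$ with $[L_D\mid L_{E'}] = [L^\triangle_{\C}\mid L_{E'}]\,N$ and $\det(N)$ equal to $\prod_i(1\pm g_i)$ times a unit.

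For each $c_i\in\C$, \Cref{lem:cycle eq} gives an identity of columns
\[
\sum_{e\in e(c_i)}\alpha_{e,i}\,L(e) \;=\; (1\pm g_i)\,L^\triangle(f_i),
\]
in which the coefficient $\alpha_{e_i,i}$ of the distinguished edge column $L(e_i)$ is a unit in $\Z[G]$ because $c_i$ passes through $e_i$ exactly once. Assembling these $r$ identities as a single matrix equation yields
\[
L_D\,A + L_{E'}\,B \;=\; L^\triangle_{\C}\,D,
\]
where $D=\mathrm{diag}(1\pm g_1,\dots,1\pm g_r)$, $A\in M_r(\Z[G])$ has entries $A_{ji}=\alpha_{e_j,i}$, and $B$ records the analogous coefficients on non-distinguished edges.

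The crux of the argument---and the main obstacle---is to show that $A$ is triangular with units on its diagonal. The diagonal case was handled in the previous paragraph. For the off-diagonal entries the preferred order on $\C$ is essential: whenever $i>j$, the distinguished edge $e_j$ is not visited by $c_i$ by the defining property of a reducing family, so $A_{ji}=\alpha_{e_j,i}=0$. Consequently $A$ is lower triangular with unit diagonal, hence invertible over $\Z[G]$, and $\det(A)=u$ is a unit.

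With $A^{-1}$ in hand, the equation above rearranges to $L_D = L^\triangle_{\C}DA^{-1} - L_{E'}BA^{-1}$, which can be repackaged as the block identity
\[
[L_D\mid L_{E'}] \;=\; [L^\triangle_{\C}\mid L_{E'}]\cdot\begin{pmatrix} DA^{-1} & 0 \\ -BA^{-1} & I \end{pmatrix}.
\]
The right-hand block matrix is lower block triangular with determinant $\det(DA^{-1})=\prod_i(1\pm g_i)/u$. Taking determinants and absorbing the unit $u^{-1}$ (together with any sign from the initial column reordering) into the ambiguity $\pm h\in\Z[G]$ yields the asserted factorization.
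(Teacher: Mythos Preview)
Your argument is correct and is essentially the paper's proof repackaged as a single block-matrix identity: both use the $AB$-cycle equation (\Cref{lem:cycle eq}) together with the reducing-family property to replace the distinguished columns $L(e_i)$ by $(1\pm g_i)L^\triangle(f_i)$, with the key observation being that the coefficients $\alpha_{e_j,i}$ on distinguished edges form a triangular matrix with units on the diagonal. The paper carries this out inductively as a sequence of column replacements (via \Cref{cor:once-edge}), while you assemble all $r$ equations simultaneously and invert the triangular matrix $A$; the content is the same.
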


\begin{proof}
Let $\C = \{c_1, \ldots c_r\}$ be written in preferred order, and reorder the columns of $L$ so that the $i$th column
(for $i \le i \le r$) corresponds to the distinguished edge $e_i$ of $c_i$. Since $e_1$
appears exactly once in $c_1$, \Cref{cor:once-edge} of the 
$AB$-cycle equation implies that
we can perform a column replacement on the first column of $L$ to obtain  (up to units)
\begin{align*}
V_\tau &= \det( [L(e_1) \: | \: L_1' ]) \\
&= \det([ (1\pm g_1) L^\triangle(f_1) \: | \: L_1' ]) \\
&= (1\pm g_1) \cdot \det([ L^\triangle(f_1) \: | \: L_1' ]),
\end{align*}
where $L'_1$ denotes $L$ with its first column removed.

Now since $\C$ is a reducing family, the cycle $c_2$ does not encounter $e_1$. This is to
say that  \Cref{eq:ab_cycles_eq} for the cycle $c_2$ does not involve
$L(e_1)$. Hence, again using that $e_2$ appears exactly once in $c_2$, we can apply a
column replacement on the second column of $[L^\triangle(f_1) \: | \: L_1' ]$ to obtain 
\[
V_\tau = (1\pm g_1)(1\pm g_2) \cdot \det([ L^\triangle(f_1) \: | \: L^\triangle(f_1) \: | \: L_{1,2}' ], 
\]
where $ L_{1,2}'$ is obtained by removing the first two columns from $L$. Continuing in this manner proves the proposition.
\end{proof}

The proposition along with \Cref{lem:alt_pres_face} gives the following:
\begin{corollary} \label{cor:factoring_families}
For each reducing family of cycles $\C$,
\[
\left (  \Theta_\tau\cdot \prod_{c_i \in \C} (1\pm g_i)  \right ) \: \big| \: V_\tau.
\]
\end{corollary}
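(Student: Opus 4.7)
The plan is to read off this corollary by directly combining the two preceding results, so the proof should be very short. First, I would invoke Proposition~\ref{prop:facting_out_families}, which gives (up to a unit in $\Z[G]$) the factorization
\[
V_\tau = \left(\prod_{c_i \in \C}(1\pm g_i)\right) \cdot D, \qquad D := \det[L^\triangle(f_1) \mid \cdots \mid L^\triangle(f_r) \mid L'],
\]
where $f_i$ is the face of $c_i$ with $\varepsilon(f_i)=e_i$, and $L'$ is $L$ with the columns corresponding to the distinguished edges $e_1,\dots,e_r$ deleted. Since $\Z[G]$ is a UFD, to conclude it suffices to show that $\Theta_\tau \mid D$.

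For that step I would appeal to Lemma~\ref{lem:alt_pres_face}. Extend $\{c_1,\dots,c_r\}$ arbitrarily to the full list $c_1,\dots,c_n$ of all $AB$-cycles, choosing faces $f_{r+1},\dots,f_n$ for the added cycles. Then the $|E|\times (n+|E|)$ matrix
\[
M = [L^\triangle(f_1) \mid \cdots \mid L^\triangle(f_n) \mid L]
\]
presents $\e^\triangle(\wt\tau)$, so $\Theta_\tau$ is the gcd of the $|E|\times|E|$ minors of $M$. The key observation is that $D$ is exactly one such minor: it arises by choosing the first $r$ columns of $M$ (namely $L^\triangle(f_1),\dots,L^\triangle(f_r)$) together with the $|E|-r$ columns of the $L$-block of $M$ indexed by edges that are \emph{not} distinguished for $\C$. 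Hence $\Theta_\tau \mid D$, and substituting into the factorization above yields
\[
\Theta_\tau \cdot \prod_{c_i \in \C}(1\pm g_i) \;\Big|\; V_\tau,
\]
as claimed.

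There is essentially no obstacle: the corollary is a mechanical combination of Proposition~\ref{prop:facting_out_families} (which extracts the product of $(1\pm g_i)$ factors) with Lemma~\ref{lem:alt_pres_face} (which characterizes $\Theta_\tau$ as a gcd of minors). The only point that requires a quick check is that the specific determinant $D$ produced by Proposition~\ref{prop:facting_out_families} genuinely appears among the $|E|\times|E|$ minors identified in Lemma~\ref{lem:alt_pres_face}; this is immediate because $\C$ is a subset of the full collection of $AB$-cycles, so the described column selection of $M$ makes sense and reproduces $D$ on the nose.
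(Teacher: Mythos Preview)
Your proof is correct and follows essentially the same approach as the paper. The paper's proof is extremely terse, simply noting that the quotient $V'=V_\tau/\prod(1\pm g_i)$ is a minor appearing in the presentation of Lemma~\ref{lem:alt_pres_face}; you spell out explicitly which column selection of $M$ realizes $D$ as such a minor, which is a helpful elaboration but not a different argument.
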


\begin{proof}
By \Cref{prop:facting_out_families}, $V' = V_\tau /  \prod_{c_i \in \C} (1\pm g_i)$ defines an element of $\Z[G]$ up to a unit, and it suffices to show $\Theta_\tau | V'$. For this, note that the expression for $V'$ given by \Cref{prop:facting_out_families} is a minor appearing in the definition of $\Theta_\tau$ from \Cref{lem:alt_pres_face}. \end{proof}

\subsubsection{Factoring with $AB$-chains}\label{sec:abchains}
We now consider 
a class of dual cycles which will be useful for factoring $V_\tau$. 

First, define the \define{$\bs{AB}$ length} of a dual cycle $c$ to be its number of branching turns.
For any dual cycle $c$ of positive $AB$ length there is a  decomposition of $c$ as a concatenation of dual paths $c=(p_1,\dots, p_k)$ such that each anti-branching turn of $c$ is interior to some $p_i$, and where $k$ is the $AB$-length of $c$. This decomposition is unique up to cyclic permutation and we call it the \textbf{$\bs{AB}$ decomposition} of $c$. 
We say such a $c$ is an \textbf{$\bs{AB}$-chain} if it has the following two properties with respect to its $AB$ decomposition $c=(p_1,\dots, p_k)$:
\begin{enumerate}
\item (${AB}$ simple) each $p_i$ is a proper subpath of some $AB$-cycle $c_i$ such that $c_i\ne c_j$ for $i \neq j$, and
\item (Endpoint simple) each endpoint of each $p_i$ is visited exactly once by $c$.
\end{enumerate}

Note that if $c$ is a simple (i.e. embedded) dual cycle with $AB$ length $k>0$ then $c$ is automatically endpoint simple, although we allow nonsimple $AB$-chains. The endpoints of the paths $p_1,\dots, p_k$ in the $AB$ decomposition correspond to the branching turns of $c$, and we call the corresponding edges of $\tau$ the \textbf{branching $\bs{\tau}$-edges} of $c$. If $c_i$ is the $AB$-cycle containing $p_i$, we say $c$ \textbf{uses} the $AB$-cycles $c_1,\dots, c_k$.

Note that an $AB$-cycle has $AB$ length $0$ and so the above definitions do not apply.

 \Cref{lem:AB_gen} below will furnish many useful $AB$-chains, but for now let
 us explain how these objects can be used to factor (a multiple of) $V_\tau$. 

\begin{lemma} \label{lem:using_AB_chains}
Suppose that $c$ is an $AB$-chain which has branching $\tau$-edges $e_1, \ldots, e_k$ and
which uses distinct $AB$-cycles $c_1, \ldots, c_k$. Let $f_i$ be the face in $c$ with
$\varepsilon(f_i) = e_i$. Then, up to a unit in $\Z[G]$,
\[
(1\pm z) \cdot V_\tau = \left(\prod_{1\le i \le k}(1\pm g_i) \right)\cdot  \det[ L^\triangle(f_1) \: |\:  \cdots  \: |\:  L^\triangle(f_k) \: | \: L_c'], 
\]
where $z = [c]$, $g_i = [c_i]$, and $L_c'$ is the matrix obtained from $L$ by removing the columns corresponding to $e_1, \ldots, e_k$. 
\end{lemma}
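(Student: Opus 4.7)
The strategy adapts the proof of \Cref{prop:facting_out_families} to the cyclic structure of an $AB$-chain. Where a reducing family allowed one to process cycles in an order such that each cycle's distinguished edge appeared in no later cycle, an $AB$-chain has its cycles $c_1,\dots,c_k$ interlocked: the branching edge $e_i$ is shared between $c_i$ and $c_{i+1}$ (cyclically). A key preliminary observation is that, by $AB$-simplicity (the $c_i$'s are distinct) together with the fact that each $\tau$-edge is encountered by the set of all $AB$-cycles exactly twice, the only branching $\tau$-edges that lie in $e(c_i)$ are $e_{i-1}$ and $e_i$. Moreover, each of them appears once in $c_i$, so by \Cref{lem:cycle eq} their coefficients $\alpha_{e_{i-1}}^{(i)}$ and $\alpha_{e_i}^{(i)}$ are units in $\Z[G]$.

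I would then apply the $AB$-cycle equation (\Cref{lem:cycle eq}) to each $c_i$ with $f_i$ as the chosen face, writing
\[
(1\pm g_i)L^\triangle(f_i) \;=\; \alpha_{e_{i-1}}^{(i)} L(e_{i-1}) + \alpha_{e_i}^{(i)} L(e_i) + \sum_{e\in e(c_i)\setminus\{e_{i-1},e_i\}} \alpha_e^{(i)} L(e).
\]
Multiplying the target determinant by $\prod_i(1\pm g_i)$ scales the $i$-th column of $M$ by $(1\pm g_i)$; substituting, and then performing column operations to subtract appropriate multiples of the columns of $L_c'$ (which, by the preliminary observation, contain exactly the $L(e)$ with $e\notin\{e_1,\dots,e_k\}$), reduces the first $k$ columns to pure $\Z[G]$-combinations $R_i'=\alpha_{e_{i-1}}^{(i)} L(e_{i-1}) + \alpha_{e_i}^{(i)} L(e_i)$ of just the two adjacent branching columns. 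Factoring the resulting matrix as a product of $[L(e_1)|\cdots|L(e_k)|L_c']$ with the $k\times k$ cyclic-bidiagonal coefficient matrix $B$ (with $B_{ii}=\alpha_{e_i}^{(i)}$, $B_{i,i-1}=\alpha_{e_{i-1}}^{(i)}$, all other entries zero) gives
\[
\prod_i(1\pm g_i)\cdot\det M \;=\; \pm\det(B)\cdot V_\tau.
\]
Because of its cyclic-bidiagonal shape, only the identity and the cyclic shift $i\mapsto i-1$ contribute to $\det B$, yielding $\det B = \prod_i \alpha_{e_i}^{(i)} + (-1)^{k-1}\prod_i \alpha_{e_{i-1}}^{(i)}$.

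The main obstacle is then to identify $\det(B)$, up to a unit in $\Z[G]$, with $(1\pm z)$. Choosing lifts consistently so that $\varepsilon(\tilde f_i)=\tilde e_i$ normalizes $\alpha_{e_i}^{(i)}=1$, so the identification reduces to interpreting $(-1)^{k-1}\prod_i\alpha_{e_{i-1}}^{(i)} = (-1)^{k-1+\sum(k_i-\ell_i)}\prod_i a_{k_i-\ell_i}^{(i)}$ in $\pm G$. Each $a_{k_i-\ell_i}^{(i)}$ is the deck translation recorded by lifting the complementary subpath $q_i:=c_i\smallsetminus p_i$ in $\aM$ starting from $\tilde f_i$, and so corresponds in $M$ to a loop that concatenates $q_i$ with a path from $\tilde f_{i-1}$ to $\tilde f_i$ (together with trivial $\varepsilon$-jumps). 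The key geometric point is that no system of lifts $\tilde f_1,\dots,\tilde f_k$ closes up exactly around $c$ — the closing-up requires a deck translation by $z$, since $\sum_i[p_i]=[c]=z$ — and this single $z$-discrepancy is precisely what is absorbed into the product $\prod_i a_{k_i-\ell_i}^{(i)}$, using also $\sum_i[q_i]=\sum_i g_i-z$ in $H_1(M)$. Bookkeeping the signs from the $(-1)^{k_i-\ell_i}$ factors and absorbing the resulting $g_i$-dependent group element into a unit of $\Z[G]$ is the technical heart of the argument, and is where I expect to spend the most care.
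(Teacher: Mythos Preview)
Your approach is essentially the same as the paper's: apply the $AB$-cycle equation to each $c_i$, use the columns of $L_c'$ to clear away all but the branching-edge terms, and then extract $(1\pm z)$ from the resulting cyclic structure. The only genuine difference is in how that last extraction is organized. You package the residual first $k$ columns as $[L(e_1)|\cdots|L(e_k)]\cdot B$ with $B$ cyclic bidiagonal and then compute $\det B$; the paper instead lifts the entire chain $c$ to $\aM$ at the outset, so that the coefficients in \Cref{eq:pi term} are literally $\pm a_i,\pm a_{i+1}$ with $a_1=1$ and $a_{k+1}=z$. Then a simple cascade of column additions telescopes the first $k$ columns to $\pm L(e_1)\pm a_{i+1}L(e_{i+1})$ and the last one to $\pm(1\pm z)L(e_1)$, with no separate determinant computation needed. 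This is exactly the ``bookkeeping'' you anticipate in your final paragraph, and the paper's choice of lifts along $c$ rather than per-cycle is what makes it evaporate. Your route is correct but heavier; I would recommend adopting the paper's lift.

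Two small gaps to note. First, there is an off-by-one in your indexing: if $p_i\subset c_i$ runs from $e_{i-1}$ to $e_i$ (as your conventions imply), then the face $f_i$ of $c$ with $\varepsilon(f_i)=e_i$ is the first face of $p_{i+1}\subset c_{i+1}$, not of $c_i$, so you cannot apply \Cref{lem:cycle eq} to $c_i$ with $f_i$ as the chosen face. This is easily repaired by shifting indices. Second, the case $k=1$ needs separate treatment: then $e_0=e_1$ and $c_1$ encounters $e_1$ \emph{twice}, so your claim that each branching edge appears once in its cycle fails, $B$ degenerates to a $1\times1$ matrix whose entry is a sum of two units, and your two-permutation expansion of $\det B$ no longer applies. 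The paper handles $k=1$ by a short direct argument using the two-unit form of the coefficient in \Cref{lem:cycle eq}.
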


\begin{proof}
Let $(p_1, \dots ,p_k)$ be the $AB$ decomposition of $c$. We label $\tau$-edges so that $p_i$ is an oriented path from $e_i$ to $e_{i+1}$ with indices taken mod $k$. (Here, we are using the correspondence between vertices of $\Gamma$ and edges of $\tau$.)

Lift $c$ to $\aM$ so that it begins with a fixed lift of $e_1$ 
which we also denote $e_1$. Then each $p_i$ lifts to a path from $a_ie_i$ to $a_{i+1}e_{i+1}$
where $a_i$ denotes the group element translating a fixed lift of $e_i$ to
the one encountered by our lift of $c$. Thus $a_1=1$ and the last $\tau$-edge is $a_{k+1}e_1$, where
$a_{k+1}$ is exactly $z = [c]$. 

First suppose that $k\ge2$. (The case of $k=1$ is easier and handled later.)
Note that because $c$ is $AB$ simple and $k\ge 2$, each $e_i$ is encountered by two distinct $AB$-cycles $c_i,c_{i+1}$ and therefore no single $AB$-cycle encounters $e_i$ twice. Hence applying the $AB$-cycle equation (\Cref{lem:cycle eq}) to $c_i$ we obtain 
\begin{equation}\label{eq:pi term}
\pm a_iL(e_i) \pm a_{i+1}L(e_{i+1}) + \sum_{e'} \alpha_{e'}L(e')= h_i (1\pm g_i)L^\triangle(f_i),
\end{equation}
where $h_i\in G$ corrects for the difference between our lift of $p_i$ here and the 
lift of $c_i$ from \Cref{lem:cycle eq}. In \Cref{eq:pi term} above, the sum on the lefthand side is over all $e'$ in $e(c_i)\ssm\{e_i,e_{i+1}\}$, where $e(c_i)$ is the set of $\tau$-edges encountered by $c_i$ and each $\alpha_{e'}$ is a sum of at most two units in $\Z[G]$.
For the last segment $p_k$ the lefthand side becomes $\pm a_kL(e_k) \pm z L(e_1) + \sum_{e'} \alpha_{e'}L(e')$. 

We claim that no edge encountered by $c_i$ other than $\{e_i,e_{i+1}\}$ is a branching edge of $c$. 
In other words, for each $1\le i\le k$ and for each $e'\in e(c_i)\ssm\{e_i,e_{i+1}\}$, $L(e')$ is a column of $L_c'$. Otherwise we would have $e'=e_j$ for some $j\in\{1,\dots,k\}\ssm\{i,i+1\}$. This would force $e_j$ to be part of distinct $AB$-cycles $c_i, c_{j-1},c_j$, which is impossible as each $\tau$-edge is encountered by at most two $AB$-cycles.

This enables us to make the following computation, in which the equalities are taken up to
multiplication by units $\pm h$: 
\begin{align*}
&\left(\prod_{1\le i \le k}(1\pm g_i) \right)\cdot  \det[ L^\triangle(f_1) \: |\:  \cdots  \: |\:  L^\triangle(f_k) \: | \: L_c'] \\
=&  \det[ (1\pm g_1) L^\triangle(f_1) \: |\:  \cdots  \: |\: (1\pm g_k) L^\triangle(f_k) \: | \: L_c'] \\
=& \det[ \pm L(e_1) \pm a_2 L(e_2) \: |  \cdots  \: |\: \pm a_kL(e_k) \pm z L(e_1)  \: | \: L_c']. 
\end{align*}
In the last equality we use the substitution of \Cref{eq:pi term} in each column and then 
use the columns of $L_c'$ to remove all the terms that do involve the branching $\tau$-edges $e_i$.

Now adding (or subtracting) the $i$th column to (from) the $(i+1)$st column, starting at $i=1$, the
columns are replaced by  $\pm L(e_1) \pm a_{i+1} L(e_{i+1})$ for $i=1,\ldots,k-1$, and
$\pm L(e_1) \pm zL(e_1)$ for the $k$-th column. Thus we have: 
\begin{align*}
&\det  [ \pm L(e_1) \pm a_2 L(e_2) \: |\:  \cdots  \: |\: \pm L(e_1) \pm z L(e_1)  \: | \: L_c']\\
=& \det[ \pm L(e_1) \pm a_2 L(e_2) \: |\:  \cdots  \: |\: \pm (1\pm z) \cdot L(e_1)  \: | \: L_c']\\
=& (1 \pm z) \cdot \det[ \pm L(e_1) \pm a_2 L(e_2) \: |\:  \cdots  \: |\: L(e_1)  \: | \: L_c']\\
=& (1 \pm z) \cdot \det[ \pm a_2L(e_2)  \: |\:  \cdots  \: |\: L(e_1)  \: | \: L_c']\\
=& (1 \pm z) \cdot V_\tau.
\end{align*}
In the penultimate line we used the $k$-th column to remove the $L(e_1)$ terms from the
previous columns.
In the last line we note that the determinant is, up to unit multiples, equal to the
determinant of $L$, or $V_\tau$. 

Finally, when $k=1$ we have that $p_1$ is a proper subpath of $c_1$ which starts and ends at $e_1$ and forms the directed cycle $c$. In particular, $c_1$ encounters $e_1$ twice and so in place of \Cref{eq:pi term} we have
\begin{equation}\label{eq:pi term_2}
\pm L(e_1) \pm zL(e_{1}) + \sum_{e'} \alpha_{e'}L(e')= h_1 (1\pm g_1)L^\triangle(f_1),
\end{equation}
where notation is as before. Then proceeding exactly as above, we have

\begin{align*}
(1\pm g_1) \cdot  \det[ L^\triangle(f_1) \: |\:  L_c'] 
&=  \det[ (1\pm g_1) L^\triangle(f_1) \: |\:  L_c'] \\
&= \det[ \pm L(e_1) \pm z L(e_1) \: | L_c'] \\
& = (1\pm z) \cdot V_\tau,
\end{align*}
and the proof is complete.
\end{proof}

As a corollary we obtain the following factorization: 
\begin{corollary} \label{cor:factor_AB_chain}
For each $AB$-chain $c$ with $z = [c]$:
\[
\left (V^{AB }\cdot \Theta_\tau \right ) \: \big | \: (1\pm z) \cdot V_\tau.
\]
\end{corollary}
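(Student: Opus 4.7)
The plan is to bootstrap Lemma \ref{lem:using_AB_chains} by running a column-reduction argument, in the spirit of Proposition \ref{prop:facting_out_families}, on the $AB$-cycles that $c$ does \emph{not} use. Concretely, Lemma \ref{lem:using_AB_chains} gives, up to a unit in $\Z[G]$,
\[
(1\pm z)\cdot V_\tau \;=\; \Bigl(\prod_{i=1}^k (1\pm g_i)\Bigr)\cdot D, \qquad D:=\det[L^\triangle(f_1)\,|\,\cdots\,|\,L^\triangle(f_k)\,|\,L'_c],
\]
where $c_1,\ldots,c_k$ are the distinct $AB$-cycles used by $c$, $g_i=[c_i]$, and $L'_c$ is $L$ with the columns indexed by the branching edges $e_1,\ldots,e_k$ of $c$ deleted. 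Since $V^\ab=\prod_{j=1}^n(1\pm g_j)$ up to a unit and $\Z[G]$ is a UFD, the corollary reduces to proving
\[
\Bigl(\prod_{j=k+1}^n (1\pm g_j)\Bigr)\cdot \Theta_\tau \;\big|\; D.
\]

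Next I would invoke Lemma \ref{lem:finding_family}: assuming $k<n$, the collection $\{c_{k+1},\ldots,c_n\}$ is a proper subset of all $AB$-cycles and therefore forms a reducing family, with some choice of distinguished edges $e_{k+1},\ldots,e_n$. The critical compatibility is that no $c_j$ with $j>k$ passes through any branching edge of $c$: by $AB$-simplicity each $e_i$ ($i\le k$) is encountered only by the two cycles $c_i,c_{i+1}\in\{c_1,\ldots,c_k\}$, and any $\tau$-edge is encountered by at most two $AB$-cycles in total. Consequently every edge encountered by any $c_j$ with $j>k$ corresponds to a column of $L'_c$, so in particular the distinguished edges $e_{k+1},\ldots,e_n$ are all available as columns of $D$.

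With this compatibility in place, I would process $c_{k+1},\ldots,c_n$ in preferred order and perform exactly the column operations from the proof of Proposition \ref{prop:facting_out_families}, now applied to $D$. For each $c_j$, Corollary \ref{cor:once-edge} rewrites $L(e_j)$ as $(1\pm g_j)L^\triangle(f'_j)$ plus a $\Z[G]$-combination of columns $L(e')$ for $e'\in e(c_j)\ssm\{e_j\}$; by the previous paragraph such $e'$ is not a branching edge of $c$, and by the reducing-family property $e'$ is not the distinguished edge of any previously-processed cycle, so the required columns are still present in the current matrix. Factoring out $(1\pm g_j)$ from the replaced column at each step yields, up to units,
\[
D \;=\; \Bigl(\prod_{j=k+1}^n (1\pm g_j)\Bigr)\cdot D',
\]
where $D'$ is an $|E|\times|E|$ minor of $[L^\triangle(f_1)\,|\,\cdots\,|\,L^\triangle(f_n)\,|\,L]$. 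By Lemma \ref{lem:alt_pres_face}, $\Theta_\tau\mid D'$, which completes the divisibility. The edge case $k=n$ requires no reducing-family step since $D$ is already such a minor. I do not expect a serious obstacle; the main care needed is the bookkeeping that ensures every column referenced by an $AB$-cycle equation is still present when its turn comes, and the $AB$-simplicity of $c$ together with the reducing-family property deliver precisely this.
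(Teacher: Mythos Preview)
Your proposal is correct and follows essentially the same route as the paper's proof: start from Lemma~\ref{lem:using_AB_chains}, observe that the $AB$-cycles not used by $c$ form a reducing family (via Lemma~\ref{lem:finding_family}) whose members avoid the branching $\tau$-edges of $c$, and then run the column-replacement argument of Proposition~\ref{prop:facting_out_families} on those remaining cycles to extract the missing $(1\pm g_j)$ factors and land on a minor governed by Lemma~\ref{lem:alt_pres_face}. The only cosmetic point is that your phrasing ``each $e_i$ is encountered only by the two cycles $c_i,c_{i+1}$'' should be read with indices mod $k$ and, when $k=1$, as ``$e_1$ is encountered twice by $c_1$''; either way the needed conclusion---that no $c_j$ with $j>k$ meets any branching edge---holds.
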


\begin{proof}
  Let $\C_c$ be the collection of $AB$-cycles not used by $c$. Since $AB$-chains are
  nonempty by definition, $\C_c$ is a proper collection and hence a reducing family by
  \Cref{lem:finding_family}. Note that $\C_c$ may be empty if $c$ uses all of the $AB$-cycles in $\Gamma$.

Each branching $\tau$-edge of $c$ is contained in two $AB$-cycles used
by $c$. Since a $\tau$-edge is encountered by at most two $AB$-cycles, 
the branching $\tau$-edges are not encountered by any of the $AB$-cycles of
$\C_c$. 

Enumerate the branching $\tau$-edges of $c$ by $e_1,\ldots,e_k$ as above, and let
$e_{k+1},\ldots,e_n$ denote the distinguished $\tau$-edges of the cycles of $\CC_c$. Let
$f_i$ be the associated $\tau$-faces as before.

Let $L''$ be the matrix obtained from $L$ by removing the columns corresponding to $e_1,\dots, e_n$. We see that 
\begin{align*}
(1\pm z) \cdot V_\tau &=\Big(\prod_{1\le i \le k}(1\pm g_i)\Big) \cdot  \det[ L^\triangle(f_1) \: |\:  \cdots  \: |\:  L^\triangle(f_k) \: | \: L_c'] \quad \text{ (by \Cref{lem:using_AB_chains})}\\
&= \Big(\prod_{1\le i \le k}(1\pm g_i) \Big)\cdot  \Big(\prod_{k+1\le j \le n}(1\pm g_j)\Big) \cdot  \det[ L^\triangle(f_1) \: |\:  \cdots  \: |\:  L^\triangle(f_n) \: | \: L''] \\
&= V^{AB} \cdot  \det[ L^\triangle(f_1) \: |\:  \cdots  \: |\:  L^\triangle(f_n) \: | \: L''],
\end{align*}
 The second equality above follows from techniques in the proof of \Cref{prop:facting_out_families}. More specifically, we repeatedly apply \Cref{cor:once-edge} using the facts that $\CC_c$ is a reducing family and that no $AB$-cycle in $\CC_c$ encounters any of $e_1,\dots,e_k$.
  
 As in the proof of \Cref{cor:factoring_families}, $[
  L^\triangle(f_1) \: |\:  \cdots  \: |\:  L^\triangle(f_n) \: | \: L'']$ is a minor of the presentation matrix for $\mc E^\triangle(\wt \tau)$ from \Cref{lem:alt_pres_face}. It follows that $\Theta_\tau$ divides its determinant.
  \end{proof}

\subsection{$\bs{AB}$-chains and homology}
Until now we have not proved the existence of any $AB$-chains, but in fact there are
enough of them to generate the homology of $\Gamma$:

\begin{lemma}\label{lem:AB_gen}
The $AB$-chains and $AB$-cycles together generate $H_1(\Gamma)$.
\end{lemma}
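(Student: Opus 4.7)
The plan is to induct on a complexity measure for edge-simple directed cycles in $\Gamma$, after using \Cref{lem:sc_dual} and standard Eulerian decomposition of nonnegative $1$-cycles to reduce generating $H_1(\Gamma)$ to showing $[c] \in S$ for every edge-simple directed cycle $c$, where $S$ denotes the subgroup generated by $AB$-cycles and $AB$-chains. For such a cycle $c$ with $AB$-decomposition $(p_1,\dots,p_k)$, I would define the \emph{excess multiplicity}
\[
N(c)=\sum_{c^*} \max\bigl(m_c(c^*)-1,\,0\bigr),
\]
summed over $AB$-cycles $c^*$, where $m_c(c^*)$ counts the pieces of $c$ lying in $c^*$. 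Note $N(c)=0$ is precisely the $AB$-simplicity condition (1) in the definition of an $AB$-chain.

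For the base case $N(c)=0$: if $k(c)=0$ then $c$ follows a single $AB$-orbit and edge-simplicity forces $c$ to equal a single $AB$-cycle. If $k(c)\geq 1$, I plan to deduce endpoint-simplicity of $c$ by arguing that two distinct branching turns at a common vertex $v$ would force the $AB$-cycles attached to certain pieces of $c$ near $v$ to coincide (because the two $AB$-turns and two $B$-turns at $v$ couple the $c_i$'s of adjacent pieces rigidly via $AB$-predecessors and $AB$-successors), contradicting $N(c)=0$. A small degenerate case at $k(c)=2$ in which both pieces are loops at the same vertex equal to their full $AB$-cycles is handled directly, since then $c$ equals a $1$-chain sum of two $AB$-cycles. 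In the remaining situation $c$ is an $AB$-chain and so $[c]\in S$.

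For the inductive step $N(c)\geq 1$, choose an $AB$-cycle $c^*$ with $m_c(c^*)\geq 2$ and two pieces $p_{r_1}, p_{r_2}$ of $c$ lying in $c^*$ which are \emph{cyclically adjacent as arcs of $c^*$}, so the ``short'' arc $q_1$ of $c^*$ running from the end of $p_{r_1}$ to the start of $p_{r_2}$ contains no other $c^*$-using piece of $c$. Writing $c^*=p_{r_1} q_1 p_{r_2} q_2$ cyclically, form
\[
c_1=(p_{r_1} q_1 p_{r_2},\,p_{r_2+1},\dots,p_{r_1-1}), \qquad c_2=(p_{r_1+1},\dots,p_{r_2-1},\,p_{r_2} q_2 p_{r_1}).
\]
A direct chain-level computation gives $c+c^*=c_1+c_2$, so $[c]=[c_1]+[c_2]-[c^*]$, and a multiplicity count shows $N(c_i)\leq N(c)-1$. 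Using the adjacency in $c^*$ together with the fact that distinct $AB$-cycles partition the edges of $\Gamma$, the cycle $c_1$ is edge-simple. The main obstacle I expect is the long-arc cycle $c_2$, which can fail to be edge-simple when $m_c(c^*)\geq 3$ because the arc $q_2$ then meets other $c^*$-using pieces; I plan to handle this either by iteratively applying the good-side reduction only (always eliminating one $c^*$-usage at a time through $c_1$ until $m_c(c^*)=2$, when both $c_1$ and $c_2$ are automatically edge-simple), or by decomposing $c_2$ into its edge-simple components and verifying that $N$ does not rise in the process. With this handled, induction closes and $[c]\in S$, completing the proof.
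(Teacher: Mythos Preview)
Your overall strategy — reduce every directed cycle to $AB$-simple ones, then show that $AB$-simple implies endpoint-simple except for the degenerate $k=2$ concatenation of two $AB$-cycles — is exactly the paper's strategy, and your base case argument is essentially the paper's case analysis. The difference lies entirely in the inductive reduction, and there the edge-simplicity hypothesis you carry creates a genuine obstacle that you identify but do not resolve.

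Concretely: your cycle $c_2$ contains the long arc $q_2\subset c^*$ as part of its merged piece, and when $m_c(c^*)\ge 3$ there is some third $c^*$-piece $p_s$ lying in $q_2$. If $s\in\{r_1+1,\dots,r_2-1\}$ then $p_s$ is also a piece of $c_2$, so $c_2$ traverses every edge of $p_s$ twice. Your first proposed fix (``use only the $c_1$ side'') cannot work as stated because the homology relation $[c]=[c_1]+[c_2]-[c^*]$ forces you to deal with $c_2$. Your second fix (decompose $c_2$ into edge-simple components and bound $N$) is the real issue: an Eulerian split of $c_2$ at a repeated edge cuts through pieces of its $AB$-decomposition, and the resulting components can have pieces that use the same $AB$-cycle in new ways. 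There is no evident reason the excess multiplicity $N$ should not rise for some component, and you give no mechanism to prevent it.

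The paper sidesteps all of this by dropping edge-simplicity and inducting directly on $AB$-length $k(c)$ over all directed cycles (closed walks). Given two pieces $p_1,p_j$ in the same $AB$-cycle $g$, it routes through $g$ to form $G_1=(g_1,p_{j+1},\dots,p_k)$ and $G_2=(g_2,p_2,\dots,p_{j-1})$ with $c=G_1+G_2-g$ in $H_1(\Gamma)$; here $g_1,g_2$ are the two arcs of $g$ joining the endpoints of $p_1,p_j$. Both $G_1$ and $G_2$ have strictly smaller $AB$-length, so the induction terminates with no simplicity bookkeeping. Your construction is in fact a variant of this same move (you even have $c+c^*=c_1+c_2$), and your bound $N(c_i)\le N(c)-1$ is correct — so if you simply abandon the edge-simplicity requirement and induct on $N$ (or, more simply, on $k$) over all directed cycles, your argument goes through. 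The edge-simplicity hypothesis is what breaks it.
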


\begin{proof}
We first show that any dual cycle $c$ is homologous to an integer linear combination of $AB$ cycles and $AB$ simple dual cycles. 
For this, we induct on the {$AB$ length} of $c$. If $c$ has $AB$ length 0 then $c$ is an $AB$-cycle; also, if $c$ has $AB$ length 1 then $c$ is easily seen to be an $AB$-chain, so is $AB$-simple.

Now suppose $c$ has $AB$ length $k>1$ with $AB$ decomposition $c=(p_1,\dots,p_k)$, and suppose $p_1$ and $p_j$ are part of the same $AB$-cycle $g$.  Now:
\begin{itemize}
\item Let $g_1$ be the directed subpath of $g$ from the initial point of $p_1$ to the terminal point of $p_j$. Note that this includes the edges $p_1$ and $p_j$.
\item Let $g_2$ be the directed subpath of $g$ from the initial point of $p_j$ to the terminal point of $p_1$.
\item Let $G_1=(g_1,p_{j+1},\dots,p_k)$.
\item Let $G_2= (g_2, p_2, \dots, p_{j-1})$.
\end{itemize}
\begin{figure}[htbp]
\begin{center}
\includegraphics[height=1.5in]{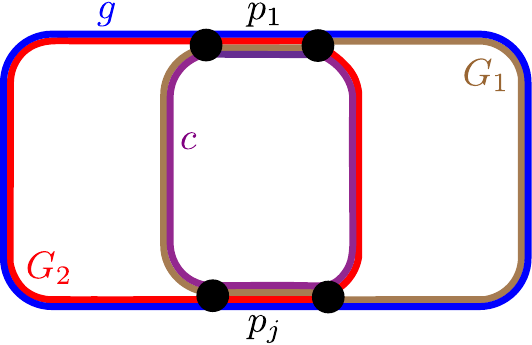}
\caption{Decomposing the dual cycle $c$ as $c=G_1+G_2-g$ as in the proof of \Cref{lem:AB_gen}, where we have drawn the cycles so that orientations are clockwise in the page. }
\label{fig:decomposing_c}
\end{center}
\end{figure}
Then $c=G_1+G_2-g$ in $H_1(\Gamma)$ (see \Cref{fig:decomposing_c}). Note that  $G_1, G_2$ both have smaller $AB$ length than $c$, as does $g$ whose $AB$ length is $0$. 
By induction, $G_1$ and $G_2$ are each homologous to a $\Z$-linear combination of $AB$ cycles and $AB$ simple dual cycles whereby $c$ is as well.

We next claim that any $AB$ simple dual cycle is also endpoint simple unless it is the concatenation of two $AB$-cycles. 
Combined with the discussion above, this will show that each directed cycle is an integer linear combination of $AB$-chains and $AB$-cycles, proving the lemma by \Cref{lem:sc_dual}.

For this, suppose that $c$ is an $AB$ simple dual cycle. We may assume that the $AB$ length of $c$ is greater than $1$ since otherwise $c$ is an $AB$-chain or $AB$-cycle. As before, let $c=(p_1,\dots, p_k)$ be the $AB$ decomposition of $c$.
Let $c_i$ be the $AB$-cycle containing $p_i$. By $AB$ simplicity, each $c_i$ is distinct.

By symmetry it suffices to show that the terminal vertex $v$ of $p_1$ appears exactly once in $c$ unless $c$ is the concatenation of two dual cycles. Suppose $v$ appears again in the directed cycle $c$. We split the possibilities into four cases.

\emph{Case 1: $v$ belongs to $p_j$ for $j\ne 1,2$}. In this case $v$ is contained in the $AB$-cycle $c_j$ and by $AB$ simplicity $c_1, c_2, c_j$ are all distinct. This is impossible since any $\Gamma$-vertex is contained in at most two distinct $AB$-cycles.

\emph{Case 2: $v$ is an interior vertex of either $p_1$ or $p_2$}. In this case either $c_1$ or $c_2$ must meet the vertex $v$ twice, which is impossible since $v$ meets $c_1$ and $c_2$ once each and so meets neither twice. 

\emph{Case 3: $v$ is the initial vertex of $p_1$}. This implies $v$ is the terminal vertex of $p_k$, so $v$ belongs to the $AB$ cycles $c_k, c_1, c_2$. We conclude that $k=2$ so $c=(p_1,p_2)$ and each of $p_1$ and $p_2$ is an $AB$-cycle as claimed.

\emph{Case 4: $v$ is the terminal vertex of $p_2$}. A symmetric argument to Case 3 shows that  in this case $c$ is also a concatenation of two $AB$-cycles.

This proves our claim that every $AB$ simple dual cycle is either endpoint simple or the concatenation of two $AB$-cycles, completing the proof of the lemma.
\end{proof}

In the proof of \Cref{th:factorization!} we will also need this short lemma about relatively prime
elements in $\Z[G]$:
\begin{lemma}\label{lem:rel_prime}
  In the group ring $\Z[G]$,
$(1\pm g)$ and $(1\pm h)$ are relatively prime if $\langle g,h \rangle \le G$ has rank $2$.
\end{lemma}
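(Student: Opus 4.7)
The plan is to argue by contradiction. Suppose $q \in \Z[G]$ is a non-unit common divisor of $(1 \pm g)$ and $(1 \pm h)$; since $\Z[G]$ is a UFD, I may take $q$ to be irreducible. Because $G$ is free abelian (being $H_1(M)/\text{torsion}$), I can write $g = g_0^d$ and $h = h_0^e$ where $g_0, h_0 \in G$ are primitive (not proper powers of other elements) and $d, e \ge 1$. The hypothesis that $\langle g, h\rangle \le G$ has rank $2$ forces $\langle g_0, h_0\rangle$ to also have rank $2$, since the two subgroups span the same $\Q$-subspace of $G \otimes \Q$. In particular, $g_0, h_0$ are $\Z$-linearly independent in $G$.

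The key step, and where the main work lies, is a Newton-polytope argument showing that every factor of $1 \pm g$ in $\Z[G]$ lies in the subring $\Z[g_0^{\pm 1}]$. Since $g_0$ is primitive in the free abelian group $G$, it extends to a $\Z$-basis of $G$; using this basis to identify $\Z[G] \cong \Z[t_1^{\pm 1}, \ldots, t_n^{\pm 1}]$ with $t_1 = g_0$, the element $1 \pm g$ becomes the binomial $1 \pm t_1^d$, whose Newton polytope is the segment from the origin to $(d, 0, \ldots, 0)$ on the $t_1$-axis. Because Newton polytopes add under multiplication of Laurent polynomials, any factor of $1 \pm t_1^d$ has its Newton polytope contained in this segment and therefore lies in $\Z[t_1^{\pm 1}] = \Z[g_0^{\pm 1}]$, as claimed. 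The same reasoning applied to $h$ shows that any factor of $1 \pm h$ lies in $\Z[h_0^{\pm 1}]$.

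Combining these two conclusions, $q$ lies in $\Z[g_0^{\pm 1}] \cap \Z[h_0^{\pm 1}]$. Because $g_0$ and $h_0$ are $\Z$-linearly independent, the cyclic subgroups $\langle g_0 \rangle$ and $\langle h_0 \rangle$ of $G$ meet only in the identity, so this intersection of subrings equals $\Z \cdot 1 = \Z$. Thus $q$ is an integer; but $q$ divides the binomial $1 \pm g$ whose $\Z$-coefficients have gcd $1$, forcing $q = \pm 1$. This contradicts $q$ being a non-unit and completes the proof.
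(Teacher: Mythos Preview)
Your Newton-polytope approach is sound in spirit and genuinely different from the paper's argument, but there is a small slip in the key step. You claim that any factor of $1\pm t_1^d$ has Newton polytope \emph{contained in the segment} $[0,de_1]$ and hence lies in $\Z[t_1^{\pm1}]$. What the additivity of Newton polytopes actually gives is that the factor's Newton polytope is a Minkowski summand of that segment, hence a segment \emph{parallel} to the $t_1$-axis but possibly translated off it. So a factor $q$ is only a unit (monomial) times an element of $\Z[g_0^{\pm1}]$, not literally an element of $\Z[g_0^{\pm1}]$. Consequently you cannot immediately conclude $q\in\Z[g_0^{\pm1}]\cap\Z[h_0^{\pm1}]$.

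The fix is easy: replace $q$ by an associate $q'\in\Z[g_0^{\pm1}]$. Then $q'$ still divides $1\pm h$, so by the same reasoning $q'=m\cdot q''$ for some monomial $m$ and some $q''\in\Z[h_0^{\pm1}]$. If $\supp(q')$ contained two distinct powers $g_0^a,g_0^b$, then $g_0^{a-b}$ would lie in $\langle h_0\rangle$, contradicting the rank-$2$ hypothesis. Hence $q'$ has singleton support, i.e.\ $q'=c\cdot g_0^k$ for some integer $c$, and your content argument finishes as written.

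For comparison, the paper reduces to the case $(1-g),(1-h)$ via $(1-g^2)=(1-g)(1+g)$, then factors $1-g$ into cyclotomic polynomials $\Phi_d(u)$ in a variable $u$ with $g$ a power of $u$, proves each $\Phi_d(u)$ remains irreducible in $\Z[G]$ by exhibiting $\Z[G]/(\Phi_d)$ as a domain, and checks that $1-h$ does not vanish in that quotient. Your route avoids cyclotomics and irreducibility statements altogether, trading them for the convex-geometric fact that Minkowski summands of a segment are parallel subsegments; once patched as above it is a clean and somewhat more elementary alternative.
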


\begin{proof}
Since $(1-g^2) = (1+g)(1-g)$, it suffices to prove that $(1-g)$ and $(1-h)$ are relatively prime. Let $n$ be the rank of $G$, and write $G =   \langle u\rangle \oplus \langle t \rangle \oplus \ \Z^{n-2}$ where $g = u^a$ and $h = u^b t^c$ 
for $a,c \ge 1$ and $b\ge 0$. Begin by factoring 
\[
1-g = 1- u^a = \prod_{d | a} \Phi_d(u),
\]
where $\Phi_d(u)$ is the $d$th cyclotomic polynomial in the variable $u$. Note that each $\Phi_d$ is irreducible in $\Z[G]$ since $R_d =\Z[G] / (\Phi_d) \cong \Z[G/\langle u \rangle] [\omega_d]$ is a domain. Here $\omega_d$ is a primitive $d$th root of unity.

Since $\Z[G]$ is a UFD, it suffices to show that $\Phi_d(u)$ does not divide $(1-h)$ for any $d$. If it did, then $(1-h)$ would be zero in the ring $R_d$. But by further quotienting $R_d \to \Z[G / \langle u \rangle]$, we have
\[
1-h = 1-t^c = \prod_{d | c} \Phi_d(t) \quad \text{ in } \Z[G/\langle u \rangle],
\] 
which is clearly nonzero since $t$ is primitive in $G/\langle u \rangle$.
\end{proof}

\subsection{Completing the proof}
We can now assemble the proof of this section's main theorem on factoring $V_\tau$.

\begin{proof}[Proof of \Cref{th:factorization!}]
For any $AB$-cycle $c$ with homology class $g$, Let $\C_c$ be the set of cycles other
than $c$, which by \Cref{lem:finding_family} is a reducing family. 
\Cref{cor:factoring_families} gives us
\[
\left (  \Theta_\tau \cdot \prod_{c_i \in \C_c} (1\pm g_i)\right ) \: \big| \: V_\tau, 
\]
and multiplying by $(1\pm g)$ gives
\[
\left ( V^{AB} \cdot \Theta_\tau \right ) \: \big | \: (1\pm g) \cdot V_\tau.
\]
This, together with \Cref{cor:factor_AB_chain}, implies that 
\begin{align}\label{eq:up_to_gcd}
\left ( V^{AB} \cdot \Theta_\tau \right ) \: \big | \: p \cdot V_\tau,
\end{align}
where $p$ is the gcd of all polynomials of the form $(1\pm g)$ for $g$ the homology class of an $AB$-cycle and $(1\pm z)$ for $z$ the homology class of an $AB$-chain. 
Since $\mathrm{rank}(H_1(M)) >1$, \Cref{lem:AB_gen} implies there are such homology classes that are independent in $H_1(M)$ (i.e. do not generate a rank $1$ subgroup). By \Cref{lem:rel_prime}, the corresponding polynomials are relatively prime and so $p$ is a unit in $\Z[G]$.
Combining this with \Cref{prop:V_divides} completes the proof.
\end{proof}

\begin{remark} \label{rmk:rank_1}
If $\mathrm{rank}(H_1(M)) =1$, so that $G=\langle t\rangle$,  the conclusion of \Cref{th:factorization!} still holds after possibly multiplying $V_\tau$ by $(1\pm t)$. To see this, first note that  \Cref{prop:V_divides} is unaffected by the rank assumption. Second, as in the proof of \Cref{th:factorization!}, \Cref{eq:up_to_gcd} holds where $p$ is the gcd of polynomials of the form $(1\pm t^{k_i})$, where each $t^{k_i}$ corresponds to an $AB$-chain or $AB$-cycle. The homology classes of $AB$-chains and $AB$-cycles still generate $H_1(M)$ by \Cref{lem:AB_gen}, so the gcd of the $k_i$ is 1. Factoring $1-t^{k_i}$ and $1+t^{k_i}$ into cyclotomics, we see that the only possible divisors of all the $1\pm t^{k_i}$ are $\Phi_1=1-t$ and $\Phi_2=1+t$. However, at most one of $\Phi_1,\Phi_2$ is a common divisor of the $1\pm t^{k_i}$ since $(1-t)\nmid (1+t^{k_i})$ and $(1+t) \nmid (1-t^{k_i})$ for $k_i$ odd. 
\end{remark}

\section{The fibered case and the Teichm\"uller polynomial} \label{sec:layered}
In this section, we examine the veering polynomial $V_\tau$ in the case of a layered triangulation $\tau$, when $\tau$ is canonically associated to a \emph{fibered face} $\mathbf{F}_\tau$ of the Thurston norm ball (as in \Cref{th:omni}). 
Ultimately, we demonstrate the connection between the veering polynomial and McMullen's Teichm\"uller polynomial. We first do this for 
the manifolds admitting veering triangulations
(\Cref{th:modules_agree})
and then derive the general case (\Cref{prop:Teich_punctured}) using the fact that any hyperbolic fibered manifold admits a veering triangulation after puncturing along the singular
orbits of its associated suspension flow.
We conclude by showing how these results combine with those of the previous section to give a `determinant formula' for computing the Teichm\"uller polynomial using only the veering data (\Cref{cor:veering_teich}).

Throughout this section, all veering triangulations are layered.

\subsection{Teichm\"uller polynomials from veering triangulations}
Fix a layered veering triangulation $\tau$ of a manifold $M$ and denote its 
associated fibered face by $\bf F = \bf F_\tau$. Since our goal is to introduce a veering interpretation of the Teichm\"uller polynomial, we closely follow McMullen's paper \cite{mcmullen2000polynomial} 
and refer the reader there for the complete construction.

Let $\mathcal{L}$ be the $2$-dimensional expanding lamination of $M$ 
associated to $\bf F$, and $\widetilde {\mathcal{L}}$ its lift to 
$\aM$, the free abelian cover of $M$ with deck group $G$.
We recall that up to isotopy $\mathcal{L}$ is obtained by 
suspending the expanding lamination of the monodromy for any
fiber in $\R_+\bf F$  \cite[Corollary 3.2]{mcmullen2000polynomial}.

Associated to $\wt{\mc L }$ and its $G$ action is a \define{module of transversals} $T(\widetilde{\mathcal{L}})$, which like the modules of \Cref{sec:veering} is a $\Z[G]$-module. We will not require its precise 
definition here but instead rely on several of its properties that will be recalled below.
Let $\Theta_{\bf F}$ be the Teichm\"uller polynomial associated to $\bf F$. By definition, $\Theta_{\bf F}$ is the gcd of the elements of the Fitting ideal for $T(\widetilde{\mathcal{L}})$ and is hence well defined up to a unit $\pm g \in \Z[G]$.

We will show in \Cref{th:modules_agree} that $\Theta_{\bf F} = \Theta_\tau$ up to a unit, where $\Theta_\tau$ is the taut polynomial for $\tau$ defined in \Cref{sec:taut}.

Before stating \Cref{th:modules_agree}, we address two technical issues arising from the different conventions used to define Teichm\"uller and taut polynomials. First, our definition of the face module $\e^\triangle(\wt \tau)$ from \Cref{sec:taut} is most naturally related to the module of transversals for the \emph{contracting lamination} dual to $\mc L$. To remedy this, we define a $\Z[G]$-module $\e^\bigtriangledown(\wt \tau)$ that is a variation on the definition of the face module. For this, we note that each face $f$ lies at the top of a unique tetrahedron and we refer to the edge $\bf{t}$ at the top of this tetrahedron as the \define{top} edge of $f$. The $\Z[G]$-module $\e^\bigtriangledown(\wt \tau)$ is defined exactly as $\e^\triangle(\wt \tau)$ but with the following modification: for each face $f$ with top edge $\bf{t}$, instead of the relation from \Cref{eq:face_rel}
we use the relation
\begin{align}\label{eq:top}
\bf{t} = {\bf w} + {\bf z},
\end{align}
where $\bf{w},\bf{z}$ are the other two edges of $f$.  As shown by Parlak \cite{Parlak1}, the $\Z[G]$-modules $\e^\triangle(\wt \tau)$ and $\e^\bigtriangledown(\wt \tau)$ are isomorphic. Indeed, 
an isomorphism is induced by the map $\Z[G]^E\to \Z[G]^E$ which sends an edge $\bf e$ to itself if it is right veering and to $-\bf{e}$ if it is left veering. For each face $f$, this map exchanges the relation from \Cref{eq:face_rel} with the relation from \Cref{eq:top}.
This follows easily 
from two facts: $(1)$ every face $f$ of $\tau$ has edges of each veer, and $(2)$ the bottom and top edges of $f$ have the same veer. 
The first fact is immediate from the veering definition and the second follows from \Cref{rmk:top_of_fan}. Thus we get a $\Z[G]$-module homomorphism which is easily seen to be invertible.

Second, to define his module $T(\widetilde{\mathcal{L}})$, McMullen uses the action of $G$ on transversals by taking preimages under deck transformations, whereas our action of $G$ on the edges of $\wt \tau$ (which form a special class of transversals) is by taking images. 
To deal with this discrepancy, we introduce the group isomorphism $\inv \colon G \to G$ defined by $\inv(g) = g^{-1}$. We extend this to a ring isomorphism $\inv  \colon \Z[G] \to \Z[G]$ and use it to define a $\Z[G]$-module $\e^\triangle_*(\wt \tau)$ which as a $\Z$-module
 is equal to
 the face module $\e^\triangle(\tau)$ from \Cref{sec:taut}  and whose $\Z[G]$-module structure is determined by
\[
g \cdot x = \inv(g) x,
\]
where $g \in G$ and $x \in \e^\triangle_*(\tau)$. In words, we are simply replacing the action of an element by its inverse. We define $\e^\bigtriangledown_*(\wt \tau)$ similarly.

From the definitions, it is clear that the gcd of the elements of the Fitting ideal of $\e^\triangle_*(\wt \tau)$ (and of $\e^\bigtriangledown_*(\wt \tau)$ by the discussion above) is equal to $\inv(\Theta_\tau)$. In what follows, we will use the symmetry of the Teichm\"uller polynomial \cite[Corollary 4.3]{mcmullen2000polynomial} which states that $\Theta_{\bf F} = \inv(\Theta_{\bf F})$, up to a unit $\pm g \in \Z[G]$.

\begin{theorem} \label{th:modules_agree}
Let $\tau$ be a layered veering triangulation of $M$
representing a fibered face ${\bf F}$. 
The module of transversals $T(\widetilde{\mathcal{L}})$ is isomorphic as a $\Z[G]$-module to $\e^\bigtriangledown_*(\wt \tau)$ and hence to $\e^\triangle_*(\wt \tau)$. In particular, 
\[
\Theta_\tau = \Theta_{\bf F}
\]
up to a unit in $\Z[G]$.
\end{theorem}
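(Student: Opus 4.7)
The plan is to build a canonical $\Z[G]$-module isomorphism $\Phi \colon \e^\bigtriangledown_*(\widetilde\tau) \xrightarrow{\sim} T(\widetilde{\mathcal L})$, from which the equality $\Theta_\tau = \Theta_{\bf F}$ (up to a unit) is immediate, since the gcd of the Fitting ideal of a finitely presented module is an invariant of the module. The parallel isomorphism $T(\widetilde{\mathcal L}) \cong \e^\triangle_*(\widetilde\tau)$ then follows from the isomorphism $\e^\triangle \cong \e^\bigtriangledown$ recalled just before the theorem.

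To define $\Phi$, I would fix a fiber $S \in \R_+{\bf F}$ carried by $\tau$ and invoke the Agol--Gu\'eritaud description of the layered veering triangulation: the intersection $\sigma := S \cap \tau^{(2)}$ is a train track on $S$ carrying the restricted lamination $\mathcal L \cap S$, whose branches correspond bijectively to the edges of $\tau$ (the branch associated to an edge $\mathbf e$ is the arc along which $S$ crosses $\mathbf e$), and whose switches correspond bijectively to the faces of $\tau$ (the switch associated to $f$ lies at the point where $S$ crosses $f$). Moreover, at the switch inside a face $f$, the large branch is the one coming from the top edge $\mathbf t$ of $f$ and the two small branches come from the other two edges $\mathbf w, \mathbf z$. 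Passing to $\aM$ and sending each lifted edge $\widetilde{\mathbf e}$ to the class of the corresponding transversal in $T(\widetilde{\mathcal L})$ then gives a well-defined $\Z[G]$-linear map, because the switch equation $\mathbf t = \mathbf w + \mathbf z$ holding in $T(\widetilde{\mathcal L})$ is exactly the face relation \Cref{eq:top} defining $\e^\bigtriangledown(\widetilde\tau)$. The starred convention $\e^\bigtriangledown_*$ (rather than $\e^\bigtriangledown$) is forced by the fact that McMullen's deck action on transversals is by preimage, whereas the deck action on edges of $\widetilde\tau$ is by image; this is precisely what the involution $\inv$ corrects.

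For bijectivity I would appeal to McMullen's presentation of $T(\widetilde{\mathcal L})$ in terms of a carrying train track (see \cite[\S 3]{mcmullen2000polynomial}): for any train track on a fiber carrying $\mathcal L$, the module $T(\widetilde{\mathcal L})$ is presented by the branches of the lifted track modulo the switch conditions. Under the branch/edge and switch/face identifications above, this presentation is term-by-term the defining presentation of $\e^\bigtriangledown_*(\widetilde\tau)$. Hence $\Phi$ is a map between two modules with identical presentations sending generators to generators and relations to relations, and is therefore an isomorphism of $\Z[G]$-modules.

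The main obstacle — really the combinatorial heart of the proof — is verifying that under the Agol--Gu\'eritaud identification, the large branch of $\sigma$ at the switch inside a face $f$ is indeed the branch coming from the top edge of $f$ (and the small branches come from the other two). This has to be extracted from the veer labels of the tetrahedron $t$ having $f$ as a top face, using that each side edge's veer encodes whether the associated rectangle is expanding or contracting along a layer, and that the top edge of $t$ is characterized by being adjacent to two side edges of opposite veers (exactly what makes it the large branch at the switch). One also has to check carefully that the $G$-action on transversals, under McMullen's preimage convention, does match the starred action on $\e^\bigtriangledown_*$. Once both of these bookkeeping points are pinned down, the rest of the argument is formal.
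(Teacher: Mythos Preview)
Your overall strategy---send each edge of $\widetilde\tau$ to the transversal it determines and check that the face relation \eqref{eq:top} becomes a switch relation---is the right one, and matches what the paper does. But your bijectivity argument has a genuine gap.

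You assert that for a fiber $S$ carried by $\tau$, the branches of the induced train track biject with the edges of $\tau$ and the switches biject with the faces of $\tau$. This is false. The carried fiber $S$ inherits an ideal triangulation $\mathcal T$ whose triangles correspond to the faces of $\tau$ \emph{with multiplicity} (the weight of the carrying), and whose edge count is a fixed invariant $E_S$ of the surface $S$, independent of $\tau$. For a once-punctured torus bundle with $n$ tetrahedra, $|E|=n$ while $E_S=3$, so the two sets almost never match. Consequently the two presentations you propose to compare are not term-by-term identical, and you cannot read off bijectivity this way. (Also, $S\cap\tau^{(2)}$ is not the object you want, since $S$ is carried by $\tau^{(2)}$; the relevant track is $S\cap B^u$, dual to $\mathcal T$.)

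The paper closes this gap as follows. It first restricts scalars to $\Z[H]$ where $H=\mathrm{im}(H_1(S)\to G)$, and uses Gu\'eritaud's projection $\Pi\colon \aM\to\widetilde S$ to send \emph{every} edge of $\widetilde\tau$ (not just those lying on a single section) to a saddle-connection transversal of $\widetilde\lambda$. This gives $\Pi_*\colon \e_H^\bigtriangledown(\widetilde\tau)\to T(\widetilde\lambda)$. Surjectivity follows from McMullen's train-track presentation of $T(\widetilde\lambda)$, but injectivity requires a separate argument: one shows that the edges of any single section $\widetilde{\mathcal T}\subset\widetilde\tau$ already generate $\e_H^\bigtriangledown(\widetilde\tau)$, because any other edge can be reached by a finite sequence of diagonal exchanges (tetrahedron moves), each of which expresses the new edge as a $\Z$-combination of old ones via \eqref{eq:top}. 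Only then does one tensor back up with $\Z[u]$ to recover the $\Z[G]$-module structure and conclude $\e^\bigtriangledown_*(\widetilde\tau)\cong T(\widetilde{\mathcal L})$. The missing ingredient in your sketch is precisely this ``edges of a section generate'' step, which is where the layered hypothesis is actually used.
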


Before the proof we need some additional preliminaries. 
Let $S$ be a fiber of $M$ representing a class in the 
cone over $\bf F$. Let $\psi \colon S \to S$ be the corresponding monodromy so that
$M$ can be recovered as the mapping torus of $\psi$:
\[
M = \frac{S \times [0,1]}{(s,1) \sim (\psi(s),0)}.
\]
We denote $\psi$'s expanding lamination by $\lambda$ so that the suspension of $\lambda$ is isotopic to $\mc{L}$. Either directly from Agol's construction \cite{agol2011ideal}, from \cite[Lemma 3.2]{minsky2017fibered}, or from \Cref{th:omni} above, the fiber $S$ is carried by $\tau$ up to isotopy. We fix any carrying map of $S$ into $\tau^{(2)}$. Then if we pull back edges and faces we obtain an ideal triangulation 
$\T$ of $S$ and a simplicial map $S \to M$. Such a map is called a \emph{section} in the terminology of \cite{minsky2017fibered}. The ideal triangulation $\T$ of $S$ is dual to a train track
$\mc V$ on $S$ which is obtained by pulling back the intersection of $S$ with the \emph{unstable} branched surface $B^u$. 
We observe that with our setup, the track $\mc V$ carries $\lambda$ and is an invariant track for 
$\psi$ in the sense that $\psi(\mc V)$ is carried by $\mc V$. 
This can be seen by noting that the sequence of upward diagonal exchanges from $\mc T$ to itself in $\tau$ corresponds to a sequence of diagonal exchanges from $\psi(T)$ to $T$ on $S$, and this sequence is dual to a \emph{folding} sequence from $\psi(\mc V)$ to $\mc V$ (see e.g. \Cref{fig:unstable_branched}.)

Next we follow a modified version of the discussion in \cite[Section 3]{mcmullen2000polynomial}.
An elevation $\wt S$ of $S$ to $\aM$ is the cover of $S$ corresponding to the kernel of the homomorphism $\pi_1(S) \to H_1(S) \to G$, whose image we denote by $H$. 
Fix a lift $\wt \psi \colon \wt S \to \wt S$ and, using that $M$ is the mapping torus of $\psi$, split $G = H \oplus \Z u$. Here, $u$ acts on $\aM= \wt S \times \RR$ as the deck transformation $\wt \Psi$ mapping $(s,t) \mapsto (\wt \psi (s), a(s,t))$, where $a(\cdot,\cdot)$ is real-valued function 
such that for each $s \in \wt S$ and $t \in \RR$, $a(\psi(s),t) \le a(s,t)$.
(Since the fixed section $S \to M$ may not be an embedding, we cannot necessarily make the conventional choice $a(s,t) = t-1$.)

Similarly, McMullen shows (\cite[Theorem 3.5]{mcmullen2000polynomial}) that if $\wt
\lambda$ denotes the lift of $\lambda$ to $\wt S$
and $T(\wt \lambda)$ is the $\Z[H]$-module of transversals for $\wt \lambda$, 
then $\wt{\mc L} = \wt \lambda \times \RR$ and $T(\wt{\mc L}) = T(\wt \lambda)$ as $\ZZ[H]$-modules. 
Then by considering the action of $u$ on $T(\wt \lambda)$, McMullen establishes that
\[
 T(\wt{\mc L}) \cong \frac{T(\wt \lambda) \otimes \Z[u]}{\mathrm{im}(uI - \wt \psi^*)},
\]
as $\Z[H][u] = \Z[G]$-modules. Here $\wt \psi^*$ is the action of $\wt \psi$ on $T(\wt \lambda)$ defined by taking preimages of transversals.

To connect the discussion back to the veering triangulation $\tau$, we recall a consequence of Gu\'eritaud's construction \cite{gueritaud} of $\tau$ in the layered setting.
We refer the reader to  \cite{minsky2017fibered} for additional details.
Fix a $\psi$-invariant quadratic differential $q$ on $S$ and let $\wt q$ be its lift to $\wt S$. Then there is a projection $\Pi \colon \aM \to \wt S$ that maps edges of $\wt \tau$ to saddle connections of $\wt q$. More precisely, $\Pi$ induces a bijection between edges of $\wt \tau$ and saddle connections of $\wt q$ that span singularity-free, immersed euclidean rectangles whose vertical/horizontal sides are segments of the vertical/horizontal foliations of $\wt q$.
The projection $\Pi$ is also equivariant in the sense that 
$\Pi(\wt \Psi (\wt e)) = \wt \psi (\Pi(\wt e))$. In particular, $\Pi(\wt e)$ can be naturally regarded as a transversal of $\wt \lambda$, which is the lamination associated to the horizontal foliation of $\wt q$. See \Cref{fig_expandingtt}.

\begin{figure}[h]
\centering
\includegraphics[]{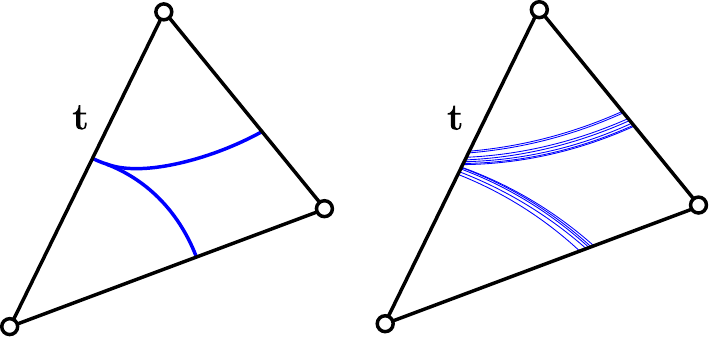}
\caption{A face $f$ with top edge $\bf{t}$. Each edge of $f$ is dual to a branch of the intersection $f \cap B^u$ (left) and is a transversal to the expanding lamination (right).}
\label{fig_expandingtt}
\end{figure}

We now turn to the proof of \Cref{th:modules_agree}.

\begin{proof}[Proof of \Cref{th:modules_agree}]
Let $\e_H^\bigtriangledown(\wt \tau)$ denote the $\Z[H]$-module obtained from the $\Z[G]$-module $\e^\bigtriangledown_*(\wt \tau)$ by restricting scalars. For each edge $\wt e$ of $\wt \tau$, $\Pi(\wt e)$ is a transversal of $\wt \lambda$ and this induces a homomorphism from the free module on edges of $\wt \tau$ to the free module on transversals. This is a $\Z[H]$-module homomorphism because the action on each is given by pullback under deck transformations. Moreover, this homomorphism descends 
to a $\Z[H]$-module homomorphism $\Pi_* \colon \e_H^\bigtriangledown(\wt \tau) \to T(\wt \lambda)$. Indeed, it is easily checked that relations among edges of $\wt \tau$ from \Cref{eq:top} are satisfied in $T(\wt \lambda)$, see \Cref{fig_expandingtt}.
In fact, we have:

\begin{claim}
$\Pi_* \colon \e_H^\bigtriangledown(\wt \tau) \to T(\wt \lambda)$ is an isomorphism of $\Z[H]$-modules.
\end{claim}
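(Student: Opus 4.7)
The plan is to construct a $\Z[H]$-linear inverse $\Psi \colon T(\wt \lambda) \to \e^\bigtriangledown_H(\wt \tau)$ to $\Pi_*$. We exploit the standard fact (see Penner--Harer) that if a train track carries a measured lamination, then the module of transversals is presented by the free module on branches modulo the switch relations. Since $\wt{\mc V}$ carries $\wt \lambda$, this identifies $T(\wt \lambda)$ as a $\Z[H]$-module with the quotient of the free $\Z[H]$-module on branches of $\wt{\mc V}$ by switch relations at each switch.

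Branches of $\wt{\mc V}$ are dual to edges of $\wt{\mc T}$, and the carrying map identifies the edges of $\wt{\mc T}$ with a distinguished subset of edges of $\wt \tau$, namely the ``base-layer'' edges sitting in the fiber $\wt S \hookrightarrow \aM$. As illustrated in Figure \ref{fig_expandingtt}, the top edge of a face $\wt f \subset \wt \tau$ is dual to the large branch at the unique switch of $\wt{\mc V}$ interior to the corresponding triangle, while the other two edges are dual to the two small branches. Consequently the switch condition ``width of large branch $=$ sum of widths of small branches'' translates exactly into the face relation \Cref{eq:top} for $\wt f$. This yields a well-defined $\Z[H]$-linear map $\Psi$ from $T(\wt \lambda)$ into the sub-$\Z[H]$-module of $\e^\bigtriangledown_H(\wt \tau)$ generated by base-layer edges modulo base-layer face relations.

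To show this sub-module coincides with all of $\e^\bigtriangledown_H(\wt \tau)$, and to verify that $\Psi$ inverts $\Pi_*$, we use the layered structure of $\tau$: every edge of $\wt \tau$ lies on some fiber $\wt \psi^k(\wt S)$, and consecutive fibers differ by a diagonal exchange realized by a single tetrahedron. The two top-face relations of such a tetrahedron sum (as in \Cref{lem:face_implies_tet}) to its tetrahedron relation \Cref{eq:tet_rel}, and together they express the top edge of the tetrahedron as a $\Z[H]$-combination of edges on the fiber below. By induction on $|k|$, every edge of $\wt \tau$ reduces modulo face relations to a $\Z[H]$-combination of base-layer edges, and the equivariance of $\Pi$ (together with the bijective correspondence between edges of $\wt\tau$ and rectangle-spanning saddle connections) forces this reduction to intertwine with $\Pi_*$.

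The main obstacle is checking that this reduction process introduces no relations among base-layer edges beyond the switch relations of $\wt{\mc V}$. Concretely, we must verify that after reducing higher-layer edges to base-layer combinations via the tetrahedron relations, each non-base face relation becomes a switch relation for the retriangulated fiber produced by the intervening diagonal exchanges, and that such relations are already consequences of the original switch relations on $\wt{\mc V}$. This follows because a diagonal exchange is invertible at the level of measured train tracks—switch weights transfer back and forth across the exchange via the tetrahedron relation—so no additional constraints on base weights are produced. Assembling these pieces shows $\Psi$ and $\Pi_*$ are mutually inverse, yielding the isomorphism.
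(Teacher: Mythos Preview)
Your approach is essentially the same as the paper's, though organized a bit differently. The paper introduces the track module $T(\wt{\mc V})$ (your free module on branches modulo switch relations), cites McMullen's \cite[Theorem 2.5]{mcmullen2000polynomial} for the isomorphism $T(\wt{\mc V}) \cong T(\wt\lambda)$ (your Penner--Harer citation), and names your map $\Psi$ as $\mc D \colon T(\wt{\mc V}) \to \e_H^\bigtriangledown(\wt\tau)$. The key observation is that the composition $\Pi_* \circ \mc D$ is precisely the cited isomorphism, so $\Pi_*$ is surjective and injectivity of $\Pi_*$ reduces to surjectivity of $\mc D$. Surjectivity of $\mc D$ is then the layered/diagonal-exchange argument you give.

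The practical difference is that the paper's organization makes your ``main obstacle'' paragraph unnecessary: once $\Pi_*\circ\mc D$ is known to be an isomorphism and $\mc D$ is surjective, it follows formally that $\Pi_*$ is injective, without needing to track whether non-base face relations introduce new constraints among base edges. Your direct check via invertibility of diagonal exchanges is correct in spirit, but it is doing extra work that the factored argument avoids.
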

\begin{proof}[Proof of claim]

The preimage of $\mc V$ in $\wt S$ is a $\wt \psi$-invariant train track $\wt{\mc V}$ which is dual to the ideal triangulation $\wt{\mc T}$ of $\wt S$ induced by the simplicial map $\wt S \to \aM$. Mapping each branch of $\wt{\mc V}$ to its dual edge of $\wt{\mc T}$, regarded as an edge of $\wt \tau$, induces a $\Z[H]$-module homomorphism $\mc{D} \colon T(\wt{\mc V}) \to \e_H^\bigtriangledown(\wt \tau)$.
Here $T(\wt{\mc V})$ is the $\Z[H]$-module generated by the edges of $\wt{\mc V}$ modulo relations imposed by the switch conditions---these relations 
are mapped by $\mc D$ to the relations from \Cref{eq:top} used to define
 $\e_H^\bigtriangledown(\wt \tau)$. See the lefthand side of \Cref{fig_expandingtt}.

The composition $\Pi_* \circ \mc{D} \colon T(\wt{\mc V}) \to T(\wt \lambda)$ can alternatively be described as follows: If $\wt f \colon \wt \lambda \to \wt{\mc V}$ is the carrying map, then the branch $b$ is mapped to $f^{-1}(x)$, where $x$ is any point in the interior of $b$. Hence, \cite[Theorem 2.5]{mcmullen2000polynomial} 
proves that the composition $\Pi_* \circ \mc{D} \colon T(\wt{\mc V}) \to T(\wt \lambda)$ is an isomorphism of $\Z[H]$-modules. This immediately gives that $\Pi_*$ is surjective. Injectivity will follow from the fact that $\mc D$ is surjective.

To prove that $\mc D$ is surjective, we show the more general statement that for any ideal triangulation $\wt {\mc T}$ of $\wt S$ corresponding to a simplicial map $\wt S \to \aM$ lifting a section $S \to M$, the edges of $\wt{\mc T}$ generate $\e_H^\bigtriangledown(\wt \tau)$. This follows from two facts. First, if $\wt {\mc T} \to \wt{\mc T}'$ is a diagonal exchange corresponding to pushing through a single tetrahedron $t$ of $\wt \tau$, then the new edge of $\wt{\mc T'}$ is a sum or difference of two edges of $\wt{\mc T}$ that belong to $t$ (depending on whether the tetrahedron lies above or below $\wt{\mc T}$). Second, for any edge $\wt e$ of $\wt \tau$ there is triangulation $\wt{\mc T'}$ of $\wt S$ containing $\wt e$ and a finite sequence $\wt{\mc T} = \wt{\mc T}_1, \ldots ,\wt{\mc T}_n = \wt{\mc T'}$ so that each $\wt{\mc T_i} \to \wt{\mc T}_{i+1}$ corresponds to simultaneously pushing through an $H$-orbit of tetrahedra of $\wt \tau$ \cite[Lemma 3.2 and Proposition 3.3]{minsky2017fibered}. These two facts show that every edge of $\wt \tau$ is a linear combination of edges of $\wt {\mc T}$, so $\mc D$ is surjective as desired.
\end{proof}

Returning to the proof of the theorem, we note that since $\Pi(\wt \Psi^{-1} (\wt e)) = \wt \psi^* (\Pi(\wt e))$, the isomorphism $\Pi_*$ takes the action of $\wt \Psi^{-1}$ on $\e_H^\bigtriangledown(\wt \tau)$ to the action of $\wt \psi^*$ on $T(\lambda)$. Hence, it extends to an isomorphism of $\Z[H] \otimes \Z[u] = \Z[G]$-modules:
\[
\frac{\e_H^\bigtriangledown(\wt \tau) \otimes \Z[u]}{\mathrm{im}(uI - \wt \Psi^{-1})} \longrightarrow \frac{T(\lambda) \otimes \Z[u]}{\mathrm{im}(uI - \wt \psi^*)},
\]
where the second module is $T(\wt{\mc L})$, as explained above. Since the first module simply reintroduces the action of $u$, it is isomorphic to $\e^\bigtriangledown_*(\wt \tau)$. Hence, $\e^\bigtriangledown_*(\wt \tau)$ is isomorphic to $T(\wt{\mc L})$ as a $\Z[G]$-module. 

Using the remarks preceding the theorem, we also have that $\e^\triangle_*(\wt \tau)$ is isomorphic to $T(\wt{\mc L})$ as a $\Z[G]$-module and we
conclude that $\inv(\Theta_\tau) = \Theta_{\bf F} = \inv(\Theta_{\bf F})$ and so $\Theta_\tau = \Theta_{\bf F}$, up to a unit.
This completes the proof of \Cref{th:modules_agree}.
\end{proof}

\subsection{The general case via puncturing}

On its face \Cref{th:modules_agree} applies only when starting with a fibered face associated to a veering triangulation.
However, the  following proposition shows that the constructions are compatible with puncturing along orbits of the associated suspension flow.

Let $N$ be any hyperbolic $3$-manifold with fibered face ${\bf F}_N$.
There is a flow $\phi$ such that each fibration 
associated to $\R_+{\bf F}_N$ may be isotoped so 
that the first return map to a fiber is pseudo-Anosov and that, up to 
reparametrization, $\phi$ is the associated suspension flow
\cite[Theorem 14.11]{fried1979fibrations}. 
The orbits of the pseudo-Anosov's singularities are called the 
singular orbits of $\phi$.

Now let $M$ be the result of puncturing $N$ along the singular orbits of $\phi$ and let $\tau$ be the associated veering triangulation of $M$ (see \Cref{sec:normbkgd}). 
Let $i \colon M \hookrightarrow N$ be the inclusion map. Then $i^* \colon H^1(N) \to H^1(M)$ maps the cone over ${\bf F}_N$ into the cone over a fibered face ${\bf F}_M := {\bf F}_\tau$ of $M$.

The first statement of the next proposition is observed by McMullen to prove Equation 6.1 in \cite{mcmullen2000polynomial}. We provide some details using results from \cite[Section 4]{mcmullen2000polynomial}.

\begin{proposition} \label{prop:Teich_punctured}
With the setup as above, $\Theta_{{\bf F}_N} = i_*(\Theta_{{\bf F}_M})$ up to a unit. Hence, if $\tau$ is the layered veering triangulation on $M$ associated to the fibered face ${\bf F}_M$, then 
\[
\Theta_{{\bf F}_N} = i_*(\Theta_\tau),
\]
up to a unit $\pm g \in \Z[G]$.
\end{proposition}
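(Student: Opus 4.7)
The plan is to reduce Proposition \ref{prop:Teich_punctured} to a base-change identity for McMullen's modules of transversals and then invoke Theorem \ref{th:modules_agree}. I would first fix a fiber $S$ of $N$ representing an integral class in $\R_+{\bf F}_N$, with pseudo-Anosov monodromy $\psi\colon S\to S$ and singular set $\Sigma\subset S$. Then $S':= S\ssm\Sigma$ is a fiber of $M$ with monodromy $\psi':=\psi|_{S'}$, and since the leaves of the expanding lamination $\lambda$ of $\psi$ avoid the isolated singularities, $\lambda$ coincides with the expanding lamination $\lambda'$ of $\psi'$; correspondingly the suspensions satisfy $\mc L_M = i^{-1}(\mc L_N)$ in $M\subset N$.

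I would next write $G_N = H_N\oplus \Z u$ and $G_M = H_M\oplus \Z u$ compatibly with the fibrations, where $H_\bullet$ is the image of $H_1(\text{fiber})$ in $G_\bullet$, so that $i_*\colon G_M\twoheadrightarrow G_N$ restricts to a surjection $H_M\twoheadrightarrow H_N$ with $u\mapsto u$. McMullen's Theorem 3.5 of \cite{mcmullen2000polynomial} gives
\[
T(\wt{\mc L}) \cong T(\wt\lambda)\otimes \Z[u] /\mathrm{im}(uI-\wt\psi^*),
\]
so it suffices to prove the surface-level identity $T(\wt\lambda)\cong T(\wt{\lambda'})\otimes_{\Z[H_M]}\Z[H_N]$. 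This should hold because the $H_N$-cover of $S'$ is the quotient of the $H_M$-cover $\wt{S'}$ by $\ker(H_M\to H_N)$ and modules of transversals convert such quotient covers into coinvariants; moreover the $H_N$-cover of $S'$ differs from the $H_N$-cover $\wt S$ of $S$ only by removing a discrete preimage of $\Sigma$, which leaves the transversal module unchanged since transversals avoid discrete sets. Choosing lifts of $\psi$ to $\wt{S'}$ and $\wt S$ compatibly with the projection makes $\wt\psi^*$ the base change of $\wt{\psi'}^*$, and tensoring with $\Z[u]$ and passing to the quotient then promotes the identity to $T(\wt{\mc L}_N)\cong T(\wt{\mc L}_M)\otimes_{\Z[G_M]}\Z[G_N]$.

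From the base-change identity, the standard formula $\mathrm{Fitt}_0(T\otimes_R S)= \mathrm{Fitt}_0(T)\cdot S$ gives $\mathrm{Fitt}_0(T(\wt{\mc L}_N)) = i_*\bigl(\mathrm{Fitt}_0(T(\wt{\mc L}_M))\bigr)\cdot \Z[G_N]$, and taking gcds yields $\Theta_{{\bf F}_N} = i_*(\Theta_{{\bf F}_M}) = i_*(\Theta_\tau)$ up to a unit in $\Z[G_N]$, where the second equality is Theorem \ref{th:modules_agree}. The main technical obstacle is the last step: gcds can increase under surjective ring maps, so $\gcd(i_*(I))$ need not coincide with $i_*(\gcd(I))$ in general. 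To rule this out I would use that for a pseudo-Anosov $\lambda$ a $\psi$-invariant train track carrying $\lambda$ provides a square $\Z[H]$-presentation of $T(\wt\lambda)$ via McMullen's Theorem 2.5, so that $T(\wt{\mc L})$ inherits the square presentation $uI-\wt\psi^*$ and its Fitting ideal is principal with generator $\Theta_{\bf F}$; principality of a Fitting ideal is preserved under surjective base change, which forces the gcds to match and gives $\Theta_{{\bf F}_N} = i_*(\Theta_{{\bf F}_M})$ on the nose up to a unit.
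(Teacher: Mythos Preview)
Your module-level base-change strategy is natural and the reduction $T(\wt{\mc L}_N)\cong T(\wt{\mc L}_M)\otimes_{\Z[G_M]}\Z[G_N]$ is plausible, but the final step contains a genuine gap. You correctly flag that $\gcd(i_*(I))$ need not equal $i_*(\gcd(I))$, and your proposed remedy is to argue that the Fitting ideal of $T(\wt{\mc L})$ is principal. However, the justification you give does not hold: a $\psi$-invariant train track $\mc V$ has strictly more branches than switches (for a trivalent track the counts are $3k$ and $2k$), so McMullen's isomorphism $T(\wt\lambda)\cong T(\wt{\mc V})$ furnishes a \emph{non-square} presentation $\Z[H]^{\text{switches}}\to \Z[H]^{\text{branches}}$. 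Consequently $T(\wt\lambda)$ is not free, the map $uI-\wt\psi^*$ is not between free modules, and the resulting presentation of $T(\wt{\mc L})$ over $\Z[G]$ has more relations than generators. Its Fitting ideal is generated by many minors (this is exactly why $\Theta_{\bf F}$ is \emph{defined} as a gcd, and why McMullen's determinant formula is a ratio $\det(uI-P_E)/\det(uI-P_V)$ rather than a single determinant). Without principality, the base-change argument yields only $i_*(\Theta_{{\bf F}_M})\mid \Theta_{{\bf F}_N}$, not the reverse divisibility.

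The paper sidesteps this algebraic obstruction entirely. Instead of comparing Fitting ideals, it uses \emph{specialization}: after a canonical normalization of $\Theta$ determined by the fibered cone, two elements of $\Z[G]$ agree once their specializations $\Theta(u^\alpha)$ agree for a single generic integral $\alpha\in\R_+{\bf F}_N$. McMullen's Theorem~4.2 identifies $\Theta_{\bf F}(u^\alpha)$ (up to a unit and a possible $(u-1)$ factor) with the characteristic polynomial of the monodromy acting on its expanding lamination. Since puncturing $S$ along the $\psi$-invariant singular set leaves $\lambda$ and the action $\psi|_\lambda$ unchanged, the characteristic polynomials for $N$ and $M$ coincide, and hence so do the specializations. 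This one-variable comparison avoids the gcd-versus-pushforward problem entirely.
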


\begin{proof}
The second claim follows immediately from the first and \Cref{th:modules_agree}.

We recall that if $\Theta $ is the Teichm\"uller polynomial associated to a fibered face $\bf F$, then there is a unique $g \in \supp(\Theta)$ such that  for all $\alpha$ in the interior of $\R_+\bf{F}$
\[
\alpha(g) > \alpha(h) \text{ for all } h \in \supp(\Theta) \ssm \{g\}.
\]
This follows from the proof of \cite[Theorem 6.1]{mcmullen2000polynomial}; see also item 5 in the subsection `Information packaged in $\Theta_{\bf F}$' of \cite[Section 1]{mcmullen2000polynomial}. Hence, we can normalize $\Theta$ by multiplying by the unit $\pm g$ so that for any $\alpha$ as above, $\Theta(u^\alpha)$ has a positive constant term
and all other terms have negative exponent (here we are thinking of $\alpha$ as a first cohomology class). We perform this normalization on both
$\Theta_{{\bf F}_N}$ and $\Theta_{{\bf F}_M}$.

As in the proof of \cite[Corollary 4.3]{mcmullen2000polynomial}, to show that $\Theta_{{\bf F}_N} = i_*(\Theta_{{\bf F}_M})$ it suffices to show that for each integral $\alpha = [S] \in \RR_+{\bf F}_N$, there is the equality of specializations $\Theta_{{\bf F}_N}(u^\alpha) = i_*(\Theta_{{\bf F}_M})(u^\alpha)$.
This is because $\RR_+{\bf F}_N$ is open and so one can find such an $\alpha$ so that the values $\alpha(g)$ are 
distinct for all $g \in \supp (\Theta_{{\bf F}_N}) \bigcup \supp( i_*(\Theta_{{\bf F}_M}))$.

Next we note that  $i_*(\Theta_{{\bf F}_M})(u^\alpha) = \Theta_{{\bf F}_M}(u^{i^*\alpha})$. The pullback $i^*\alpha$ is dual to the class $[\mr S]$, where $\mr S$ is obtained from $S$ by puncturing at the singularities of its monodromy $\psi$. Hence, by \cite[Theorem 4.2]{mcmullen2000polynomial}, up to units $\pm u^k$,
$\Theta_{{\bf F}_N}(u^\alpha)$ is equal to the characteristic polynomial of $\psi$ and 
$\Theta_{{\bf F}_M}(u^{i^*\alpha})$ is equal to the characteristic polynomial of $\mr \psi$, the induced monodromy of $\mr S$ (unless the $2$-dimensional lamination $\mc L$ is orientable, in which case we multiply first by $(u-1)$ in both cases). However, the characteristic polynomial from \cite{mcmullen2000polynomial} is defined solely in terms of $\psi$'s action on its expanding lamination $\lambda$, and since we have punctured $S$ at an $\psi$-invariant set in the complement of $\lambda$, $\lambda$ is also the expanding lamination of $\mr \psi$. Hence, the maps $\psi \colon \lambda \to \lambda$ and $\mr \psi \colon \lambda \to \lambda$ are equal. (This can also be seen by considering the intersection of $\mc L$ with $S$ and $\mr S$ in $N$.) We conclude that their characteristic polynomials are equal and therefore that 
$\Theta_{{\bf F}_N}(u^\alpha) = \Theta_{{\bf F}_M}(u^{i^*\alpha})$
 up to a unit $\pm u^k$. However, it is clear from our normalizations that this unit must be the identity,
and the proof is complete.
\end{proof}

\subsection{Computing $\Theta_{\bf F}$ via $V_\tau$}
Here we state an analog of McMullen's determinant formula which follows immediately from \Cref{th:modules_agree} and \Cref{th:factorization!}. By further applying \Cref{prop:Teich_punctured}, we obtain a general method to compute the Teichm\"uller polynomial from the veering triangulation on the associated fully punctured manifold.

\begin{corollary} \label{cor:veering_teich}
Let $(M,\tau)$ be a layered veering triangulation associated to a fibered face $\bf F$ and suppose that $\mathrm{rank}(H_1(M)) >1$. Then 
\[
\Theta_{\bf F}  = \frac{V_\tau}{V^{AB}_\tau} = \frac{\det L}{\det L^{AB}}
\]
where $L$ is as in \Cref{sec:veering} and $L^{AB}$ is as in \Cref{sec:polys}.
\end{corollary}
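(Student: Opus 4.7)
The plan is essentially to combine the two main results already established in this section and the previous one. By Theorem~\ref{th:modules_agree}, when $\tau$ is layered and represents the fibered face $\bf F$, the taut polynomial $\Theta_\tau$ equals the Teichm\"uller polynomial $\Theta_{\bf F}$ up to a unit $\pm g \in \Z[G]$. Separately, Theorem~\ref{th:factorization!} gives that under the hypothesis $\mathrm{rank}(H_1(M)) > 1$, the veering polynomial factors as $V_\tau = V^{AB}_\tau \cdot \Theta_\tau$, again up to a unit in $\Z[G]$. So the first step is just to substitute the second equation into the first to obtain
\[
V_\tau = V^{AB}_\tau \cdot \Theta_{\bf F}
\]
up to a unit, which rearranges to $\Theta_{\bf F} = V_\tau / V^{AB}_\tau$.

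For the second equality, I would simply invoke the definitions from Section~\ref{sec:veering} and Section~\ref{sec:polys}: by construction $V_\tau = \det(L)$ where $L$ is the presentation matrix of $\e(\wt\tau)$ in \Cref{eq:edge_mod}, and $V^{AB}_\tau = \det(L^{AB})$ where $L^{AB} = I + A$ is the square presentation matrix for $\ab(\wt\tau)$. Substituting these gives the second form of the identity.

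The one point requiring a little care is that both identities hold only up to multiplication by a unit $\pm g \in \Z[G]$, and that the quotient $V_\tau / V^{AB}_\tau$ makes sense as an element of $\Z[G]$ (rather than merely its field of fractions). The divisibility $V^{AB}_\tau \mid V_\tau$ follows from \Cref{th:factorization!} itself (since $\Theta_\tau \in \Z[G]$), so the quotient is a genuine element of $\Z[G]$, well-defined up to a unit. I would state the corollary with this ``up to a unit'' understanding, consistent with how $\Theta_{\bf F}$, $V_\tau$, and $V^{AB}_\tau$ have been treated throughout the paper. There is no genuine obstacle here since all the hard work has been done in the two theorems cited; this corollary is a direct algebraic consequence, and the only thing to flag is the rank hypothesis, which is inherited from \Cref{th:factorization!} (with the rank-one case handled by the modification in \Cref{rmk:rank_1}).
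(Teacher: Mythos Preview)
Your approach is exactly the paper's: the corollary is stated as following immediately from \Cref{th:modules_agree} and \Cref{th:factorization!}, and the second equality is just the definitions $V_\tau=\det L$ and $V^{AB}_\tau=\det L^{AB}$.

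The one small point you gloss over is that the rearrangement from $V_\tau = V^{AB}_\tau\cdot\Theta_{\bf F}$ to $\Theta_{\bf F}=V_\tau/V^{AB}_\tau$ requires $V^{AB}_\tau\neq 0$, not merely divisibility (if $V^{AB}_\tau=0$ then $V_\tau=0$ as well and the quotient is undefined). The paper handles this explicitly: since $\tau$ is layered, \Cref{th:omni} guarantees that no dual cycle---in particular no $AB$-cycle---is trivial in $H_1(M)$, so each factor $(1\pm g_i)$ in $V^{AB}_\tau$ is nonzero and hence $V^{AB}_\tau\neq 0$. You should add this observation; otherwise your argument is complete.
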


Note that in this case (i.e. when $\tau$ is layered), $V^{AB}_\tau \neq 0$ since no $AB$-cycle (nor any dual cycle) can be trivial in $H_1$ by \Cref{th:omni}.

\begin{remark}
\Cref{cor:veering_teich} indicates that a slight modification is needed for McMullen's determinant formula \cite[Theorem 3.6]{mcmullen2000polynomial}. For this we first define, given a veering triangulation $\tau$, its negative $-\tau$ to be the veering triangulation obtained by reversing the coorientation on faces. With this definition, we have $\e^\bigtriangledown(\wt \tau) = \e^\triangle(\wt {-\tau}) $.

Then one can directly show that if the train track $\mc V$ from above
is used in McMullen's construction,
then $V_{-\tau} = \det(uI -P_E)$, up to a unit in $\Z[G]$. 
(Here, $P_E$ and $P_V$ are the matrices with entries in $\Z[G]$ that represent the action of the lifted monodromy on the branches and switches of the lifted track $\wt{\mc V}$, respectively.)
Since the determinant formula states that 
 \[
 \Theta_{\bf F} = \frac{\det(uI -P_E)}{ \det(uI -P_V)},
 \]
 one would then expect that $V^{AB}_{-\tau} =   \det(uI -P_V)$, up to a unit, but this need not be the case. 
The issue is that the definition of $P_V$ implied in McMullen's paper needs to be correctly interpreted to account for switches being mapped to themselves with their sides reversed 
(by making certain entries negative).  
In short, for a switch $v$ of $\mc V$, McMullen's Equation $3.4$ only commutes up to sign.
From our point of view this occurs in the presence of $AB$-cycles of odd length (see 
\Cref{lem:computing_correction}). 
However, the addition of appropriate minus signs in $P_V$ would make the above equality true and correct the general determinant formula.
\end{remark}

\bibliography{veering_poly1.bbl}

\bibliographystyle{amsalpha}

\end{document}